\theoremstyle{plain}
\newtheorem{theorem}{Theorem}[section]
\newtheorem{corollary}[theorem]{Corollary}
\newtheorem{lemma}[theorem]{Lemma}
\newtheorem{prop}[theorem]{Proposition}
\theoremstyle{definition}
\newtheorem{example}[theorem]{Example}
\newcounter{claim}
\newtheorem{claim}[claim]{Claim}
\newcounter{conjecture}
\newtheorem{conjecture}[conjecture]{Conjecture}
\theoremstyle{remark}
\newtheorem*{remark}{Remark}
\newtheorem*{remarks}{Remarks}
\numberwithin{theorem}{section}
\numberwithin{claim}{chapter}
\numberwithin{equation}{section}
\numberwithin{conjecture}{chapter}
\newcommand{\todo}[1]{
\ifthenelse{\boolean{todos}}{~\\{\bf TODO(wjd):} \emph{#1}\\}{}
}
\newcommand{\indexTuma}{\index{T\r{u}ma, Ji\v{r}\'i}}
\newcommand{\indexPudlak}{\index{Pudl\'ak, Pavel}}
\newcommand{\Jiri}{Ji\v{r}\'i}
\newcommand{\Tuma}{T\r{u}ma}
\newcommand{\Palfy}{P\'alfy}
\newcommand{\Pudlak}{Pudl\'ak}
\newcommand{\PP}{P\'alfy-Pudl\'ak}
\newcommand{\PAP}{P\'alfy\ and Pudl\'ak}
\newcommand{\Gratzer}{Gr\"{a}tzer}
\newcommand{\<}{\ensuremath{\langle}}
\renewcommand{\>}{\ensuremath{\rangle}}
\newcommand{\arity}{\ensuremath{\mathfrak{a}}}
\renewcommand{\leq}{\ensuremath{\leqslant}}
\renewcommand{\nleq}{\ensuremath{\nleqslant}}
\renewcommand{\geq}{\ensuremath{\geqslant}}
\renewcommand{\ngeq}{\ensuremath{\ngeqslant}}
\newcommand{\ssubnormal}{\ensuremath{\vartriangleleft}}
\newcommand{\subnormal}{\ensuremath{\trianglelefteqslant}}
\newcommand{\notsubnormal}{\ensuremath{\ntrianglelefteqslant}}
\newcommand{\meet}{\ensuremath{\wedge}}
\newcommand{\join}{\ensuremath{\vee}}
\newcommand{\Meet}{\ensuremath{\bigwedge}}
\renewcommand{\Join}{\ensuremath{\bigvee}}
\newcommand{\F}{\ensuremath{\mathbb{F}}}   
\newcommand{\Z}{\ensuremath{\mathbb{Z}}}   
\newcommand{\Q}{\ensuremath{\mathbb{Q}}}   
\newcommand{\N}{\ensuremath{\mathbb{N}}}   
\newcommand{\Hom}{\ensuremath{\mathrm{Hom}}}
\newcommand{\End}{\ensuremath{\mathrm{End}}}
\newcommand{\Aut}{\ensuremath{\mathrm{Aut}}}
\newcommand{\ID}[1]{\ensuremath{\mathcal{ID}(#1)}}
\newcommand{\idemdec}{\ID{X}}
\newcommand{\EqX}{\ensuremath{\mbox{Eq}(X)}}
\newcommand{\Eq}{\ensuremath{\mathrm{Eq}}}
\newcommand{\bEq}{\ensuremath{\mathbf{Eq}}}
\newcommand{\bEqX}{\ensuremath{\mathbf{Eq}(X)}}
\newcommand{\Cg}{\ensuremath{\mathrm{Cg}}}
\newcommand{\Sg}{\ensuremath{\mathrm{Sg}}}
\newcommand{\SD}{\ensuremath{\mathrm{SD}}}
\newcommand{\Stab}{\ensuremath{\mathrm{Stab}}}
\newcommand{\bCon}{\ensuremath{\mathbf{Con\,}}}
\newcommand{\Con}{\ensuremath{\mathrm{Con\,}}}
\newcommand{\Sub}{\ensuremath{\mathrm{Sub}}}
\newcommand{\Pol}{\ensuremath{\mathrm{Pol}}}
\newcommand{\Clo}{\ensuremath{\mathrm{Clo}}}
\newcommand{\Sym}{\ensuremath{\mathrm{Sym}}}
\newcommand{\core}{\ensuremath{\mathrm{core}}}
\newcommand{\htheta}{\ensuremath{\hat{\theta}}}
\newcommand{\supi}{\ensuremath{^{i}}}
\newcommand{\supj}{\ensuremath{^{j}}}
\newcommand{\FLRP}{{\small FLRP}}
\newcommand{\GAP}{\textsf{GAP}}
\newcommand{\uacalc}{\textsf{UACalc}}
\newcommand{\power}[1]{\ensuremath{\mathscr{P}(#1)}}
\newcommand{\2}{\ensuremath{\mathbf{2}}}
\newcommand{\3}{\ensuremath{\mathbf{3}}}
\newcommand{\5}{\ensuremath{\mathbf{5}}}
\newcommand{\bA}{\ensuremath{\mathbf{A}}}
\newcommand{\bB}{\ensuremath{\mathbf{B}}}
\newcommand{\bBi}{\ensuremath{\mathbf{B}_i}}
\newcommand{\sB}{\ensuremath{\mathscr{B}}}
\newcommand{\bC}{\ensuremath{\mathbf{C}}}
\newcommand{\sC}{\ensuremath{\mathscr{C}}}
\newcommand{\bd}{\ensuremath{\mathbf{d}}}
\newcommand{\sE}{\ensuremath{\mathscr{E}}}
\newcommand{\bF}{\ensuremath{\mathbf{F}}}
\newcommand{\sF}{\ensuremath{\mathscr{F}}}
\newcommand{\bG}{\ensuremath{\mathbf{G}}}
\newcommand{\sG}{\ensuremath{\mathscr{G}}}
\newcommand{\barG}{\ensuremath{\bar{G}}}
\newcommand{\barg}{\ensuremath{\bar{g}}}
\newcommand{\G}{\ensuremath{\mathfrak{G}}}
\newcommand{\bH}{\ensuremath{\mathbf{H}}}
\newcommand{\sH}{\ensuremath{\mathscr{H}}}
\newcommand{\barH}{\ensuremath{\overline{H}}}
\newcommand{\barh}{\ensuremath{\overline{h}}}
\newcommand{\sI}{\ensuremath{\mathscr{I}}}
\newcommand{\sK}{\ensuremath{\mathscr{K}}}
\newcommand{\bL}{\ensuremath{\mathbf{L}}}
\newcommand{\sL}{\ensuremath{\mathscr{L}}}
\newcommand{\bM}{\ensuremath{\mathbf{M}}}
\newcommand{\bn}{\ensuremath{\mathbf{n}}}
\newcommand{\bO}{\ensuremath{\mathbf{O}}}
\newcommand{\sO}{\ensuremath{\mathscr{O}}}
\newcommand{\bOmega}{\ensuremath{\alg \Omega}}
\newcommand{\ConO}{\ensuremath{\Con \bOmega}}
\newcommand{\bP}{\ensuremath{\mathbf{P}}}
\newcommand{\sP}{\ensuremath{\mathscr{P}}}
\newcommand{\cP}{\ensuremath{\mathcal{P}}}
\newcommand{\bS}{\ensuremath{\mathbf{S}}}
\newcommand{\bs}{\ensuremath{\mathbf{s}}}
\newcommand{\V}{\ensuremath{\mathrm{V}}}
\newcommand{\sV}{\ensuremath{\mathscr{V}}}
\newcommand{\bX}{\ensuremath{\mathbf{X}}}
\newcommand{\barx}{\ensuremath{\overline{x}}}
\newcommand{\bary}{\ensuremath{\overline{y}}}
\newcommand{\hlambda}{\ensuremath{\hat{\lambda}}}
\newcommand{\dotsize}{.8pt}
\newcommand{\giant}{\ensuremath{\mathfrak{Gi}}}
\newcommand{\solvable}{\ensuremath{\mathfrak{S}}}
\renewcommand{\iff}{\ensuremath{\quad \Leftrightarrow \quad}}
\newcommand{\ISLE}{{\small ISLE}}
\newcommand{\Soc}{\ensuremath{\mathrm{Soc}}}
\newcommand{\AGL}{\ensuremath{\mathrm{AGL}}}
\newcommand{\Out}{\ensuremath{\mathrm{Out}}}
\newcommand{\Inn}{\ensuremath{\mathrm{Inn}}}
\newcommand{\stab}[1]{\ensuremath{G_{#1}}}
\newcommand{\Gset}{\ensuremath{G\text{-set}}}
\newcommand{\Gsets}{\ensuremath{G\text{-sets}}}
\newcommand{\Mset}{\ensuremath{M\text{-set}}}
\newcommand{\op}{\operatorname}			
\newcommand{\alg}[1]{\mathbf{#1}}
\newcommand{\la}{\langle}	     
\newcommand{\ra}{\rangle}	     
\newcommand{\id}{\ensuremath{\mathrm{id}}}
\renewcommand{\phi}{\ensuremath{\varphi}}
\newcommand{\upalpha}{\ensuremath{\alpha^{\uparrow}}}
\newcommand{\downalpha}{\ensuremath{\alpha^{\downarrow}}}
\newcommand{\upbeta}{\ensuremath{\beta^{\uparrow}}}
\newcommand{\downbeta}{\ensuremath{\beta^{\downarrow}}}
\newcommand{\resB}{\ensuremath{|_{_B}}}
\newcommand{\resBi}{\ensuremath{|_{_{B_i}}}}
\newcommand{\eps}{\ensuremath{\varepsilon}}
\newcommand{\hatmap}{\ensuremath{\widehat{\phantom{x}}}}
\newcommand{\one}{\ensuremath{\mathbf{1}}}
\newcommand{\two}{\ensuremath{\mathbf{2}}}
\newcommand{\tbeta}{\ensuremath{\widetilde{\beta}}}
\newcommand{\hbeta}{\ensuremath{\widehat{\beta}}}
\newcommand{\cick}{\ensuremath{\sC_i}}
\newcommand{\CE}{\ensuremath{\sC_i^{\sE}}}
\newcommand{\CO}{\ensuremath{\sC_i^{\sO}}}
\newcommand{\CICK}{\ensuremath{c_i/\beta \cup c^{K+1}_i/\beta^{\bB_{K+1}}}}
\newcommand{\indexit}[1]{\index{#1|textit}}
\def\defn#1{\gdef\defnstring{#1}%
  \xdef\dodefnii{{\noexpand\em
       \defnstring}\noexpand\indexit{\defnstring}\noexpand\makeatother}%
  \futurelet\nextthing\dodefn}
\def\dodefn{%
  \ifx\nextthing[\let\next=\dodefni
    \else\let\next=\dodefnii\fi
  \makeatletter
  \next}
\def\dodefni[#1]{%
  {\em\defnstring}%
  \indexit{#1}%
  \makeatother}
\begin{document}

    \title{{\bf Congruence Lattices of Finite Algebras}}
    \author{William J.~DeMeo}

\degreemonth{May}
\degreeyear{2012}
\degree{Doctor of Philosophy}
\field{Mathematics}
\chair{Ralph Freese}
\othermembers{William Lampe\\
J.B.~Nation\\
Peter Jipsen\\
Nick Kaiser}
\numberofmembers{5}
\versionnum{2.2.0}

\maketitle

\begin{frontmatter}

\copyrightpage


\begin{acknowledgments}
First, I would like to thank my advisor, Ralph Freese, for his patience, support, and expert 
guidance, without which I could not have completed this dissertation.
Next, I thank the members of my dissertation
committee, Peter Jipsen, Bill Lampe, and J.B. Nation.  All have made
significant contributions to this work.
Bill Lampe, in particular, is responsible for introducing me to the
beautiful subject of universal algebra.

I thank Nick Kaiser for agreeing to act as the University Representative on my
dissertation committee, and for enduring long meetings about topics unrelated to his
area of expertise (though I suspect he understands far more than he lets on).

A number of other professors played a significant role in my mathematical
training. Among them, I would especially like to thank Ron Brown, Tom
Craven, Erik Guentner, Bj{\o}rn Kjos-Hanssen, Tom Ramsey, and Wayne Smith.  Mike
Hilden was kind enough 
to administer my French language exam, and I thank him for his help
with this minor hurdle, and for not setting the bar too high.

The Mathematics Department at the University of Hawai`i has generously supported
me through the doctoral program, and for that I am grateful.
I would also like to thank other members of the department who have played vital
roles in my progress through the program; in particular, I thank Susan Hasegawa,
Shirley Kikiloi, and Troy Ludwick. 

I thank the ARCS Foundation of Honolulu for generously supporting me with the
Sarah Ann Martin award for outstanding research in mathematics, 
as well as the Graduate Student Organization of the
University of Hawai`i for supporting me with a travel grant.  

My deepest appreciation goes to Hyeyoung Shin, my greatest source of inspiration, 
to my sister, B.J.~Casey, and to my parents, Bill and Benita DeMeo, and Barbara
and Ted Terry, whose contributions to this dissertation are
immeasurable.  Their moral support and encouragement seem unbounded and
independent of their understanding or appreciation of my work.

Finally, I dedicate this dissertation to my mother, Barbara Anderson Terry,
for her unconditional love and support, for her patience, and for inspiring me
to do good work. I owe her everything.  
\end{acknowledgments}


\begin{abstract}
An important and long-standing open problem in universal algebra asks whether
every finite lattice is isomorphic to the congruence lattice of a finite
algebra. Until this problem is resolved, our understanding of finite algebras is 
incomplete, since, given an arbitrary finite algebra, we cannot say whether
there are any restrictions on the shape of its congruence lattice. If we find a
finite lattice that does not occur as the congruence lattice of a finite algebra
(as many suspect we will), then we can finally declare that such restrictions do
exist.   

By a well known result of \Palfy\ and \Pudlak, the problem would be solved if we
could prove the existence of a finite lattice that is not the congruence lattice
of a transitive group action or, equivalently, is not an interval in the lattice
of subgroups of a finite group.  Thus the problem of characterizing
congruence lattices of finite algebras is closely related to the problem of
characterizing intervals in subgroup lattices.

In this work, we review a number of methods for finding a finite algebra with a
given congruence lattice, including searching for intervals in
subgroup lattices.  We also consider methods for proving that algebras with a given
congruence lattice exist without actually constructing them. By combining these
well known methods with a new method we have developed, and with much help from
computer software like the \uacalc\ and \GAP, we prove that with one possible
exception every lattice with at most seven elements is isomorphic to the
congruence lattice of a finite algebra.  As such, we have identified the unique
smallest lattice for which there is no known representation. 
We examine this exceptional lattice in detail, and prove results that
characterize the class of algebras that could possibly represent this lattice.

We conclude with what we feel are the most interesting open questions
surrounding this problem and discuss possibilities for future work. 

\end{abstract}

\tableofcontents

\newcommand{\skipamt}{0mm}
\listofnewresults{
\vskip\skipamt \noindent
{\bf Proposition~\ref{Concrete-prop-1}} \dotfill \pageref{Concrete-prop-1}
\vskip\skipamt \noindent
{\bf Proposition~\ref{Concrete-prop-1'}} \dotfill  \pageref{Concrete-prop-1'}
\vskip\skipamt \noindent
{\bf Lemma~\ref{Concrete-lemma-1}} \dotfill  \pageref{Concrete-lemma-1}
\vskip\skipamt \noindent
{\bf Lemma~\ref{Concrete-lemma-2}} \dotfill  \pageref{Concrete-lemma-2}
\vskip\skipamt \noindent
{\bf Theorem~\ref{Concrete-thm-1}} \dotfill  \pageref{Concrete-thm-1}
\vskip\skipamt \noindent
{\bf Corollary~\ref{Concrete-cor-2}} \dotfill  \pageref{Concrete-cor-2}
\vskip\skipamt \noindent
{\bf Corollary~\ref{Concrete-cor-nondensity-1}} \dotfill  \pageref{Concrete-cor-nondensity-1}
\vskip\skipamt \noindent
{\bf Lemma~\ref{lemma-intransGsets}}  \dotfill \pageref{lemma-intransGsets}
\vskip\skipamt \noindent
{\bf Proposition~\ref{prop:parachute}} \dotfill \pageref{prop:parachute}
\vskip\skipamt \noindent
{\bf Lemma~\ref{lemma-wjd-1}} \dotfill  \pageref{lemma-wjd-1}
\vskip\skipamt \noindent
{\bf Lemma~\ref{lemma-wjd-2}} \dotfill  \pageref{lemma-wjd-2}
\vskip\skipamt \noindent
{\bf Lemma~\ref{lemma-wjd-2}${}'$} \dotfill  \pageref{lemma-wjd-2}
\vskip\skipamt \noindent
{\bf Lemma~\ref{lemma-wjd-3}} \dotfill  \pageref{lemma-wjd-3}
\vskip\skipamt \noindent
{\bf Corollary~\ref{cor:isle-prop-groups-1}} \dotfill  \pageref{cor:isle-prop-groups-1}
\vskip\skipamt \noindent
{\bf Lemma~\ref{lem:ISLE-must-have-wreaths}} \dotfill  \pageref{lem:ISLE-must-have-wreaths}
\vskip\skipamt \noindent
{\bf Lemma~\ref{lemma-wjd-4}} \dotfill  \pageref{lemma-wjd-4}
\vskip\skipamt \noindent
{\bf Lemma~\ref{lemma-wjd-5}} \dotfill  \pageref{lemma-wjd-5}
\vskip\skipamt \noindent
{\bf Theorem~\ref{thm:sevenelementlattices}} \dotfill  \pageref{thm:sevenelementlattices}
\vskip\skipamt \noindent
{\bf Theorem~\ref{thm:except-seven-elem}} \dotfill  \pageref{thm:except-seven-elem}
\vskip\skipamt \noindent
{\bf Lemma~\ref{lem:residuation-lemma}} \dotfill  \pageref{lem:residuation-lemma}
\vskip\skipamt \noindent
{\bf Theorem~\ref{OAthm1}} \dotfill  \pageref{OAthm1}
\vskip\skipamt \noindent
{\bf Theorem~\ref{OAthm2}} \dotfill  \pageref{OAthm2}
\vskip\skipamt \noindent
{\bf Proposition~\ref{prop:expansion}} \dotfill  \pageref{prop:expansion}
\vskip\skipamt \noindent
{\bf Theorem~\ref{OAthm3}} \dotfill  \pageref{OAthm3}
\vskip\skipamt \noindent
{\bf Lemma~\ref{lem3.1}} \dotfill  \pageref{lem3.1}
\vskip\skipamt \noindent
{\bf Theorem~\ref{OAthm4}} \dotfill  \pageref{OAthm4}
\vskip\skipamt \noindent
{\bf Theorem~\ref{thm-overalgebras-iii}} \dotfill  \pageref{thm-overalgebras-iii}
\vskip\skipamt
}

\listoffigures



\listofsymbols{
\begin{table}[h!]
    \begin{tabular}{ll}
      $\2$ & $\{0, 1\}$, or the two element lattice \\
      $\3$ & $\{0, 1, 2\}$, or the three element lattice \\
      $\bn$ & the set $\{0, 1, \dots, n-1\}$, or the $n$ element chain\\
      $\omega$ & the natural numbers, $\{0, 1, 2, \dots \}$\\
      \Z & the integers, $\{\dots, -1, 0, 1, \dots\}$\\
      \Q & the rational numbers\\
      \F & an arbitrary field \\
      $\bA, \bB, \bC, \dots$ & universal algebras\\
      $\bA = \<A, F\>$ & an algebra with universe $A$ and operations $F$\\
      $\Clo (\bA)$ & the clone of term operations of $\bA$\\
      $\Pol(\bA)$ & the clone of polynomial operations of $\bA$\\
      $\Pol_n(\bA)$ & the set of $n$-ary members of $\Pol(\bA)$\\
      $\Aut(\bA)$ & the group of automorphisms of $\bA$\\
      $\Inn(\bA)$ & the inner automorphisms of $\bA$\\
      $\Out(\bA)$ & the outer of automorphisms of $\bA$\\
      $\End(\bA)$ & the monoid of endomorphisms of $\bA$\\
      $\Hom(\bA, \bB)$& the set of homomorphisms from $\bA$ into $\bB$\\
      $\Con(\bA)$& the lattice of congruence relations of $\bA$\\
      $\Sub(\bA)$& the lattice of subalgebras of $\bA$\\
      $\Sg^{\bA}(X)$& the subuniverse of $\bA$ generated by the set $X\subseteq A$\\
      $\Cg^{\bA}(X)$& the congruence of $\bA$ generated by the set $X\subseteq A\times A$\\
      $\Eq(X)$& the lattice of equivalence relations on the set $X$\\
      $X^X$ & the set of unary maps from a set $X$ into itself\\
      $\ker f$ & the kernel of $f$, $\{(x,y) \mid f(x) = f(y)\}$\\
      $\ID{X}$ & the idempotent decreasing functions in $X^X$\\
      $\sqsubseteq$ & the partial order defined on $\ID{X}$ by $f\sqsubseteq g \;
      \Leftrightarrow \; \ker f \leq \ker g$\\
      $\sK$ & a class of algebras\\
      $\bH (\sK)$ & the class of homomorphic images of algebras in $\sK$\\
      $\bS (\sK)$ & the class of subalgebras of algebras in $\sK$\\
      $\bP (\sK)$ & the class of direct products of algebras in $\sK$\\
      $\bP_{\mathrm{fi}} (\sK)$ & the class of finite direct products of
      algebras in $\sK$\\ 
      $\sV$ & a variety, or equational class, of algebras\\
      $\V(\bA)$ & the variety generated by $\bA$ 
      (thus $\V(\bA) = \bH \bS \bP(\bA)$\\
      $\V(\sK)$ & the variety generated by the class $\sK$\\
      $\bF_{\sV}(X)$& the free algebra in the variety $\sV$ over the
      generating set $X$\\
      $\sL_0$ & the class of finite lattices\\
      $\sL_1$ & the class of lattices isomorphic to sublattices of finite partition lattices\\
      $\sL_2$ & the class of lattices isomorphic to strong congruence lattices of
      finite partial algebras\\
      $\sL_3$ & the class of lattices isomorphic to congruence lattices of finite algebras\\
      $\sL_4$ & the class of lattices isomorphic to intervals in subgroup lattices of finite groups\\
      $\sL_5$ & the class of lattices isomorphic to subgroup lattices of finite groups\\
    \end{tabular}
\end{table}
}

\end{frontmatter}

\part{Background}

\chapter{Introduction}
\label{cha:introduction}

We begin with an informal overview of some of the basic objects
of study.
This will help to fix notation and motivate our discussion.
(Italicized terms are defined more formally in later sections or
in the appendix.)  Then we 
introduce the problem that is the main focus of this
\index{FLRP}%
dissertation, \emph{the finite lattice representation problem} (\FLRP).  
In subsequent sections, we give further
notational and algebraic prerequisites and summarize the well known results
surrounding the FLRP.  In the final section of this chapter we provide a list of
the new results of this thesis.

\section{Motivation and problem statement}
Among the most basic objects of study in all of mathematics are algebras.
An 
\index{algebra}%
\emph{algebra} 
$\bA = \<A, F\>$ consists of a nonempty set $A$ and a
collection $F$ of operations;  
the most important examples are lattices, groups, rings, and modules.  To
understand a particular algebra, $\bA$, we often study its representations,
which are 
\emph{homomorphisms} from $\bA$ into some other algebra $\bB$.
A very important feature of such a homomorphism $\varphi$ is its 
\defn{kernel}, 
which we define as the set $\{(x,y)\in A^2 \mid \varphi(x) = \varphi(y)\}$.
This is a 
\index{congruence relation}%
\emph{congruence relation}
 of the algebra $\bA$ which tells us how
$\bA$ is ``reduced'' when represented by its image under $\varphi$ in $\bB$.   

Thus, every homomorphism gives rise to a congruence relation, and the set 
$\Con \bA$ of all congruence relations of the algebra $\bA$ forms a 
\index{lattice}%
\emph{lattice}.  
For example, if $\bA$ happens to be a group, $\Con \bA$ is isomorphic
to the lattice of normal subgroups  of $\bA$.\footnote{In this context, by
  ``kernel'' of a homomorphism $\varphi$ one typically means the normal subgroup
  $\{a \in A \mid \varphi(a) = e\}$, whereas 
  this is a single  congruence class of the kernel as we have defined it.}
To each congruence $\theta \in \Con \bA$ there corresponds the natural
homomorphism of $\bA$ onto $\bA/\theta$ which has $\theta$ as its kernel.   
Thus, there is a one-to-one correspondence between
$\Con\bA$ and the natural homomorphisms, and the shape of $\Con\bA$
provides useful information about the algebra and its representations.  
For instance, $\Con \bA$ tells us whether and how $\bA$ can be decomposed as, or
embedded in, a product of simpler algebras. 

Given an arbitrary algebra, then, we ought to know whether there are, 
{\it a priori}, any restrictions on the possible shape of its congruence
lattice.  A celebrated result of 
\index{Gr\"{a}tzer, George}%
Gr\"{a}tzer and
\index{Schmidt, E.T.}%
Schmidt says that there are (essentially) no such
restrictions. Indeed, in~\cite{GratzerSchmidt:1963} it is
proved that every (algebraic) lattice is the congruence lattice of some algebra.   
Moreover, as \Jiri\ \Tuma\ proves in~\cite{Tuma:1986},
the Gr\"{a}tzer-Schmidt Theorem still holds if we restrict ourselves to
intervals in subgroup lattices.  That is,  every algebraic lattice is isomorphic
to an interval in the subgroup lattice of an (infinite) group.  

Now, suppose we restrict our attention to \emph{finite} algebras.  
Given an arbitrary finite algebra, it is natural to ask whether there are any
restrictions (besides finiteness) on the shape of its congruence lattice. 
If it turns out that, given an arbitrary finite lattice $\bL$, we can always find
a finite algebra $\bA$ that has $\bL$ as its congruence lattice, then apparently
there are no such restrictions.

We call a lattice 
\index{representable lattice}%
\defn{finitely representable}, 
or simply \emph{representable}, if it is isomorphic to the congruence lattice of a finite
algebra, and deciding whether every finite lattice is
representable is known as the 
\defn{finite lattice representation problem} (\FLRP).  For 
the reasons mentioned above, this is a fundamental question of modern
algebra, and the fact that it remains unanswered is quite remarkable.

\section{Universal algebra preliminaries}
\label{sec:univ-algebra-prel}
We now describe in greater detail some of the algebraic objects that are central
to our work.  A more complete introduction to this material can be found in
the books and articles listed in the bibliography.  In particular,
the following are the main references for this work: \cite{alvi:1987}, \cite{Palfy:1980},
\cite{Dixon:1996}, \cite{Rose:1978}, and \cite{Isaacs:2008}.  Two excellent
survey articles on the finite lattice representation problem
are~\cite{Palfy:1995} and~\cite{Palfy:2001}. 

First, a few words about notation.  When discussing universal algebras,
such as $\bA = \<A, F\>$, we denote the algebras using bold symbols, as in $\bA,
\bB, \dots$, 
and reserve the symbols $A, B, \dots$ for the universes of these algebras.
However, this convention becomes tiresome and inconvenient if strictly adhered to
for all algebras, and we often find ourselves referring to an algebra by its universe.
For example, we frequently use $L$ when referring to the lattice
$\bL = \<L,\join, \meet\>$, and we usually refer to ``the lattice of congruence
relations $\Con \la A, F \ra$,'' even though it would be more precise to 
call $\Con \la A, F \ra$ the universe (a set) and use
$\bCon \bA = \<\Con \la A, F \ra, \meet, \join\>$ to denote the 
lattice (an algebra).  Certainly we will feel free to commit this sort of abuse when
speaking about groups, preferring to use $G$ when referring to the group 
$\bG = \<G, \cdot, ^{-1}, 1\>$.
Sometimes we use the more precise notation $\bEqX$ to denote the lattice of
equivalence relations on the set $X$, but more frequently we will refer to this
lattice by its universe, $\EqX$.  This has never been a source of confusion.

An \defn{operation symbol} $f$ is an object that
has an associated \defn{arity}, which we denote by $\arity(f)$.  A set of operation
symbols $F$ is called a  \defn{similarity type}.  
An \defn{algebra} of similarity type $F$ is a pair $\bA = \<A, F^\bA \>$ consisting of
a set $A$, which we call the \defn{universe} of $\bA$, and a set
$F^\bA = \{f^\bA : f\in F\}$  of \defn{operations} on $A$, which are functions
$f^\bA : A^{\arity(f)} \rightarrow A$ of arity $\arity(f)$.
Occasionally the set of operations only enters the discussion abstractly,
and it becomes unnecessary to refer to specific operation symbols.  In such
instances, we often denote the algebra by $\<A, \dots \>$.

Note that the symbol $f$ -- like the operation symbol $+$ that is
used to denote addition in \emph{some} algebras -- is an abstract operation
symbol which, apart from its arity, has no specific meaning attached to it.  We use the
notation $f^\bA$ to signify that we have given the operation symbol a specific
interpretation as an operation in the algebra $\bA$. 
Having said that, when there is only one algebra under consideration, it
seems pedantic to attach the superscript $\bA$ to every operation.  In such cases,
when no confusion can arise, we allow the operation symbol $f$ to
denote a specific operation interpreted in the algebra.  Also, if $F$ is the set
\index{$F_n$, the $n$-ary operations in $F$}%
of operations (or operation symbols) of $\bA$, we let $F_n\subseteq F$ denote the $n$-ary
operations (or operation symbols) of $\bA$.

Let $A$ and $B$ be sets and let $\varphi : A\rightarrow B$ be any 
mapping.
We say that a pair $(a_0, a_1)\in A^2$ belongs to the \defn{kernel} of $\varphi$, and we
write $(a_0, a_1) \in \ker \varphi$, provided
$\varphi(a_0)=\varphi(a_1)$. 
It is easily verified that $\ker \varphi$ is an equivalence relation on the set $A$.
If $\theta$ is an equivalence relation on a set $A$, then $a/\theta$ denotes the
equivalence class containing $a$; that is, 
$a/\theta := \{ a' \in A \mid (a,a')\in \theta \}$. The set of all
equivalence classes of $\theta$ in $A$ is denoted $A/\theta$. That is, 
$A/\theta = \{a/\theta \mid a\in A\}$. 

Let $\bA = \< A, F^\bA \>$ and $\bB = \<B, F^\bB\>$ be algebras of the same
similarity type.  A 
\defn{homomorphism} from $\bA$ to $\bB$ is a function
$\varphi : A \rightarrow B$ that respects the interpretation of the operation
symbols.  That is, if $f\in F$ with, say, $n = \arity(f)$, and if 
$a_1, \dots, a_n \in A$, then
$\phi(f^\bA(a_1, \dots, a_n)) = f^{\bB}(\phi(a_1), \dots, \phi(a_n))$.
A \defn{congruence relation} of $\bA$ is the
kernel of a homomorphism defined on $\bA$.
We denote the set of all congruence relations
of $\bA$ by $\Con \bA$. 
Thus, $\theta \in \Con \bA$ if and only if $\theta = \ker \varphi$ for some 
homomorphism 
$\varphi : \bA \rightarrow \bB$.
It is easy to check that this is
equivalent to the following:
$\theta \in \Con \bA$ if and only if $\theta \in \Eq(A)$ and for all $n$
\begin{equation}
\label{eq:cong-re}
(a_i, a_i') \in \theta \quad (0\leq i < n) \quad \Rightarrow \quad 
(f(a_0, \dots, a_{n-1}), f(a_0', \dots, a_{n-1}')) \in \theta,
\end{equation}
for all $f\in F_n$ and all $a_0, \dots, a_{n-1}, a_0', \dots, a_{n-1}' \in A$. 
Equivalently, $\Con \bA = \Eq(A) \cap \Sub(\bA\times \bA)$.

Given a congruence relation $\theta\in \Con \bA$, the 
\defn{quotient algebra} $\bA/\theta$ is the algebra with universe
$A/\theta = \{a/\theta \mid a\in A\}$ and operations $\{f^{\bA/\theta} \mid f\in
F\}$ defined as follows: 
\[
f^{\bA/\theta}(a_1/\theta, \dots, a_n/\theta) = f^\bA(a_1, \dots, a_n)/\theta,
\text{ where $n=\arity(f)$.}
\]

A \defn{partial algebra} is a set $A$ (the universe) along with
a set of \defn{partial operations}, that is, operations which may be defined
on only part of the universe.  A \defn{strong congruence relation} of a partial algebra $\bA$ is
an equivalence relation $\theta \in \Eq(A)$ with the following property: for each (partial)
operation $f$ of $A$, if $f$ is $k$-ary, if
$(x_i, y_i)\in \theta$ $\,(1\leq i\leq k)$, and if $f(x_1, \dots, x_k)$ exists, then
$f(y_1, \dots, y_k)$ exists, and 
$(f(x_1, \dots, x_k), f(y_1, \dots, y_k)) \in \theta$.
We will have very little to say about partial algebras, but
they appear below in our overview of significant results related to the
\FLRP. 

Let $\bA = \<A, \dots \>$ be an algebra with congruence lattice $\Con\<A, \dots \>$.
Recall that a 
\defn{clone} 
on a non-void set $A$ is a set of operations on $A$
that contains the projection operations and is closed under compositions. 
The 
\index{clone!of term operations}%
\emph{clone of term operations}
of the algebra $\bA$, denoted by 
\index{$\Clo (\bA)$|see{clone of term operations}}%
$\Clo (\bA)$,
is the smallest clone on $A$ containing the basic operations of $\bA$.
The 
\index{clone!of polynomial operations}%
\emph{clone of polynomial operations} of $\bA$, 
denoted by 
\index{$\Pol(\bA)$|see{clone of polynomial operations}}%
$\Pol(\bA)$, 
is the clone generated by the basic operations
of $\bA$ and the constant unary maps on $A$. 
The set of $n$-ary members of $\Pol(\bA)$ is denoted by 
\index{$\Pol_n(\bA)$}%
$\Pol_n(\bA)$.

By a \defn{unary algebra} we mean an algebra with any number of unary
operations.\footnote{Note that some authors reserve this
term for algebras with a single unary operation, and use the term
  \defn{multi-unary algebra} when referring to what we call unary algebra.}
In our work, as we are primarily concerned with congruence lattices, we may
restrict our attention to unary algebras whenever helpful or convenient,
as the next result shows (cf.~Theorem~4.18 of~\cite{alvi:1987}).
\begin{lemma}
\label{sec:unarycongruences}
If $F$ is a set of operations on $A$, then
\[
\Con \la A, F \ra = \Con \la A, F' \ra,
\]
where $F'$ is any of $\op{Pol}(\alg A)$, $\op{Pol_1}(\alg A)$,
or the set of basic translations (operations in $\op{Pol_1}(\alg A)$
obtained from $F$ by fixing all but one coordinate).
\end{lemma}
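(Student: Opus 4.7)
The plan is to establish a chain of equalities
\[
\Con \la A, F \ra \;=\; \Con \la A, \Pol(\bA)\ra \;=\; \Con \la A, \Pol_1(\bA)\ra \;=\; \Con \la A, T\ra,
\]
where $T$ denotes the set of basic translations. Each equality is proved by showing that the set of operations preserving a fixed equivalence relation $\theta$ enjoys the required closure properties.

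For the first equality, the inclusion $\Con\la A, \Pol(\bA)\ra \subseteq \Con\la A, F\ra$ is immediate because $F \subseteq \Pol(\bA)$. For the reverse inclusion, fix $\theta \in \Con\la A, F\ra$ and let $\mathrm{Pres}(\theta)$ denote the collection of operations on $A$ that preserve $\theta$ in the sense of~\eqref{eq:cong-re}. I would verify directly that $\mathrm{Pres}(\theta)$ (i) contains all projections and constants, (ii) contains all basic operations in $F$ by hypothesis, and (iii) is closed under composition. Then $\mathrm{Pres}(\theta)$ is a clone containing $F$ and the constant unary maps, so it contains $\Pol(\bA)$ by minimality.

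The second equality reduces to showing that $\theta \in \Con\la A, \Pol_1(\bA)\ra$ implies $\theta \in \Con\la A, \Pol(\bA)\ra$. This is the standard one-coordinate-at-a-time argument: given an $n$-ary $p \in \Pol(\bA)$ and pairs $(a_i, b_i) \in \theta$, the chain
\[
p(a_1,\dots,a_n),\; p(b_1,a_2,\dots,a_n),\; p(b_1,b_2,a_3,\dots,a_n),\; \dots,\; p(b_1,\dots,b_n)
\]
has consecutive entries related by $\theta$ because the $i$-th step is obtained by applying the unary polynomial $x \mapsto p(b_1,\dots,b_{i-1},x,a_{i+1},\dots,a_n)$ to the related pair $(a_i,b_i)$. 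Transitivity of $\theta$ then yields $(p(\bar a),p(\bar b)) \in \theta$. The third equality is handled analogously: every member of $\Pol_1(\bA)$ is a composition of basic translations and constant maps, both of which preserve any equivalence relation that is preserved by $T$; here the closure of $\mathrm{Pres}(\theta) \cap A^A$ under composition does the work.

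The main obstacle, if any, is purely bookkeeping: one must be careful in the inductive argument that builds $\Pol_1(\bA)$ out of basic translations to allow the "outer" coordinates in a basic translation to themselves be results of previously constructed polynomials, i.e.\ one must confirm that the monoid generated by $T \cup \{\text{constant maps}\}$ under composition is all of $\Pol_1(\bA)$. Once that is in hand, every step above is a routine closure argument and the three equalities combine to give the lemma.
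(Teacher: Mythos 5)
The paper states this lemma without proof, citing Theorem~4.18 of~\cite{alvi:1987}, so there is no in-paper argument to compare against; your proposal must stand on its own. Your first two equalities do: the observation that the operations preserving a fixed $\theta$ form a clone containing $F$ and the constants (hence all of $\Pol(\bA)$), and the one-coordinate-at-a-time chain reducing $n$-ary polynomials to unary ones, are both correct and standard.

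The gap is in the third equality. You rest it on the claim that every member of $\Pol_1(\bA)$ is a composition of basic translations and constant maps, and you explicitly flag as the ``main obstacle'' the need to confirm that the monoid generated by $T \cup \{\text{constant maps}\}$ is all of $\Pol_1(\bA)$. That confirmation is impossible, because the claim is false. Take $\bA = \la \Z_3, +\ra$: the basic translations are the maps $x \mapsto x + a$, and the monoid they generate together with the constants and the identity contains only maps of that form and constants, whereas $x \mapsto x + x = 2x$ is a unary polynomial (indeed a unary term operation) not of that form. The obstruction is structural: a unary polynomial such as $x \mapsto f(g(x), h(x))$, in which the variable enters more than one coordinate, is not a composition of unary maps each of which fixes all but one coordinate of a basic operation at a \emph{constant}. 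Fortunately the lemma does not need this claim, and your own tools repair the argument. Prove instead the inclusion $\Con \la A, T\ra \subseteq \Con \la A, F\ra$ by the very same one-coordinate-at-a-time chain you used for the second equality: for a basic operation $f \in F$, the unary maps $x \mapsto f(b_1, \dots, b_{i-1}, x, a_{i+1}, \dots, a_n)$ appearing in the chain are themselves basic translations, so preservation of $T$ yields preservation of $f$ in the multi-variable sense. Then close the cycle
\[
\Con \la A, T\ra \subseteq \Con \la A, F\ra = \Con \la A, \Pol(\bA)\ra
\subseteq \Con \la A, \Pol_1(\bA)\ra \subseteq \Con \la A, T\ra,
\]
where the middle inclusion holds because $\Pol_1(\bA) \subseteq \Pol(\bA)$ and the last because $T \subseteq \Pol_1(\bA)$. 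With that rerouting, all four congruence lattices coincide and the lemma follows.
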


The lattice formed by all subgroups of a group $G$, denoted $\Sub(G)$, is
called the \defn{subgroup lattice} of $G$. It is a 
\defn{complete lattice}: any number of 
subgroups $H_i$ have a \defn{meet} (greatest lower bound) $\Meet H_i$, namely their
intersection
$\bigcap H_i$, and a \defn{join} (least upper bound) $\Join H_i$, namely the subgroup
generated by the union of them.  We denote the group generated by the
subgroups $\{H_i : i \in I\}$ 
by $\<H_i : i \in I\>$ when
$I$ is infinite, and by $\<H_0, H_1, \dots, H_{n-1}\>$, otherwise. 
Since a complete lattice is algebraic if and only if
every element is a join of compact elements, we see that subgroup lattices are
always algebraic.  We mention these facts because of their general importance,
but we remind the reader that all groups in this work are finite.

\section{Overview of well known results}
\label{sec:overv-known-results}
Major inroads toward a solution to the \FLRP\ have been made by many prominent
researchers, including 
\index{Aschbacher, Michael}%
Michael Aschbacher, 
\index{Feit, Walter}%
Walter Feit, 
\index{Kurzweil, Hans}%
Hans Kurzweil, 
\index{Lucchini, Adrea}%
Adrea Lucchini, 
\index{McKenzie, Ralph}%
Ralph McKenzie, 
\index{Netter, Raimund}%
Raimund Netter,
\index{P\'alfy, P\'eter}%
P\'eter \Palfy,
\index{Pudl\'ak, Pavel}%
Pavel \Pudlak, 
\index{Snow, John}%
John Snow, and 
\indexTuma%
\Jiri\ \Tuma, to name a few. 
We will have occasion to discuss and apply a number of their results in the
sequel.  Here we merely mention some of the highlights, in roughly chronological
order.  

\index{Gr\"{a}tzer, George}%
In his 1968 book {\it Universal Algebra}~\cite{Gratzer:1968}, George \Gratzer\ defines the
following classes of lattices:
\begin{itemize}
\index{$\mathscr{L}_0$}%
\item $\sL_0 =$ the class of finite lattices;
\index{$\mathscr{L}_1$}%
\item $\sL_1 =$ the class of lattices isomorphic to sublattices of finite partition lattices;
\index{$\mathscr{L}_2$}%
\item $\sL_2 =$ the class of lattices isomorphic to strong congruence lattices of
  finite partial algebras;
\index{$\mathscr{L}_3$}%
\item $\sL_3 =$ the class of lattices isomorphic to congruence lattices of finite algebras.
\end{itemize}
Clearly $\sL_0 \supseteq \sL_1 \supseteq \sL_2 \supseteq \sL_3$.
\Gratzer\ asks (\cite{Gratzer:1968} prob.~13, p.~116) whether equality holds
in each case. 
Whether $\sL_0 = \sL_1$ is the finite version of a question 
\index{Birkhoff, Garrett}%
Garrett Birkhoff had asked by 1935.  
In~\cite{Birkhoff:1935} Birkhoff asks whether every lattice is isomorphic 
\index{Whitman, P.M.}%
to a sublattice of some partition lattice.  Whitman~\cite{Whitman:1946} answered
this affirmatively in 1946, but his proof embeds every finite lattice in a countably
infinite partition lattice.  Still, the result of Whitman also proves that 
there is no non-trivial law that holds in the subgroup lattice of every group.
That is,
\begin{theorem}[Whitman~\cite{Whitman:1946}] Every lattice is isomorphic to a
  sublattice of the subgroup lattice of some group.
\end{theorem}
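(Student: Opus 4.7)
The plan is to factor the embedding through a partition lattice: first embed $\bL$ into $\bEqX$ for some set $X$, and then embed $\bEqX$ into $\Sub(\bG)$ for a suitable group $\bG$.

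For the first stage, given a lattice $\bL$, build $X$ canonically from $\bL$---say, as the set of finite sequences over $L$---and assign to each $a \in L$ an equivalence relation $\theta_a \in \EqX$ by an inductive recipe engineered so that $\theta_{a \meet b} = \theta_a \cap \theta_b$ (so meets come for free, because intersection computes meets in $\bEqX$) and so that the transitive closure of $\theta_a \cup \theta_b$ equals $\theta_{a \join b}$. Injectivity is checked by exhibiting, for each $a \in L$, a distinguished pair of sequences lying in $\theta_a$ but in no $\theta_b$ with $b < a$.

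For the second stage, take $\bG = \Sym(X)$ and associate to each $\theta \in \EqX$ the subgroup
\[
G_\theta := \{ g \in \Sym(X) : g[C] = C \text{ for every } \theta\text{-block } C\},
\]
i.e., the direct product of the symmetric groups on the $\theta$-blocks. This assignment is manifestly order-preserving, and meets are preserved because fixing every $\theta \meet \varphi$-block (each an intersection of a $\theta$-block with a $\varphi$-block) is equivalent to simultaneously fixing every $\theta$-block and every $\varphi$-block, giving $G_{\theta \meet \varphi} = G_\theta \cap G_\varphi$. For joins, the inclusion $\langle G_\theta, G_\varphi \rangle \leq G_{\theta \join \varphi}$ is immediate; for the reverse inclusion, one observes that $G_{\theta \join \varphi}$ is generated by transpositions supported inside a single $(\theta \join \varphi)$-block, and any such transposition factors as a product of transpositions each supported inside a single $\theta$-block or a single $\varphi$-block, using the zig-zag path of $\theta$- and $\varphi$-edges witnessing the transitive closure of $\theta \cup \varphi$.

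The main obstacle throughout is join preservation, since joins in both $\bEqX$ and $\Sub(\bG)$ are computed by closure operations---transitive closure of relations and subgroup generation---rather than by set-theoretic union. At the first stage, the combinatorial design of $X$ must be rich enough that the transitive closure introduces no spurious identifications beyond $\theta_{a \join b}$; at the second stage, the choice of $\Sym(X)$ as the ambient group ensures that any permutation preserving a coarser partition decomposes into permutations preserving the two finer partitions whose join it is.
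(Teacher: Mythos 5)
Your overall route---factor the embedding through a partition lattice and then embed $\Eq(X)$ into a subgroup lattice---is exactly the route the paper has in mind: it presents this theorem as an immediate consequence of Whitman's 1946 result that every lattice embeds into $\Eq(X)$ for some (countably infinite) set $X$. Your first stage is therefore best handled by citation rather than by the sketch you give; the inductive construction of the relations $\theta_a$ with controlled transitive closures is precisely the hard content of Whitman's paper (the question had been open since Birkhoff posed it in 1935), and the few sentences you devote to it do not constitute a proof of that step.

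The genuine gap is in your second stage, and it matters because $X$ must be infinite: Whitman's embedding lands in a countably infinite partition lattice even when $\bL$ is finite, and the theorem is asserted for arbitrary lattices. With $\bG = \Sym(X)$ and $G_\theta$ the full setwise stabilizer of the $\theta$-blocks, your key claim that $G_{\theta\join\varphi}$ is generated by transpositions supported inside single $(\theta\join\varphi)$-blocks is false when blocks are infinite, since the full symmetric group on an infinite set is not generated by transpositions. Concretely, take $X=\omega$, let $\theta$ have blocks $\{2i,2i+1\}$ and let $\varphi$ have blocks $\{0\}$ and $\{2i+1,2i+2\}$. Then $\theta\join\varphi=1_X$, so $G_{\theta\join\varphi}=\Sym(\omega)$; but every element of $\langle G_\theta,G_\varphi\rangle$ is a word of some finite length $k$ in elements each displacing every point by at most $1$, hence displaces every point by at most $k$, so $\langle G_\theta,G_\varphi\rangle$ consists only of permutations of bounded displacement and is a proper subgroup of $\Sym(\omega)$. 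The repair is standard: work inside the finitary symmetric group, or equivalently define $G_\theta$ to be the subgroup generated by the transpositions $(x\,y)$ with $(x,y)\in\theta$. One checks that this $G_\theta$ is exactly the group of finitary permutations preserving every $\theta$-block; with that description your meet argument goes through unchanged, injectivity is still witnessed by a single transposition, and joins are preserved by your zig-zag factorization, now applied to a legitimate generating set.
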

Confirmation that $\sL_0=\sL_1$ did not come until the late 1970's, when 
\index{Pudl\'ak, Pavel}%
\indexTuma%
Pavel \Pudlak\ and \Jiri\ \Tuma\ published~\cite{Pudlak:1980}, in which they
prove that every finite lattice can be embedded in a finite partition lattice,
thus settling this important and long-standing open question.
This result also yields the following finite analogue of Whitman's result:
\begin{theorem}[\Pudlak-\Tuma~\cite{Pudlak:1980}]
Every finite lattice is isomorphic to a sublattice of the subgroup lattice of
some finite group.
\end{theorem}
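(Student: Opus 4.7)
The plan is to deduce the theorem as an immediate corollary of the \Pudlak-\Tuma\ embedding theorem stated just above. That result produces, for any finite lattice $\bL$, a finite set $X$ and a lattice embedding $\bL \hookrightarrow \bEqX$, so it suffices to embed the partition lattice $\bEqX$ into the subgroup lattice of some finite group. The natural candidate is the symmetric group $G = \Sym(X)$, together with the assignment $\pi \mapsto S_\pi$, where $S_\pi$ is the \emph{Young subgroup} consisting of those permutations of $X$ that stabilize each block of $\pi$ setwise; equivalently, if $\pi$ has blocks $B_1, \dots, B_k$, then $S_\pi = \Sym(B_1) \times \cdots \times \Sym(B_k)$.

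My first task is to check that $\pi \mapsto S_\pi$ is order-preserving, injective, and meet-preserving. Order-preservation is immediate: refining a partition can only shrink its Young subgroup. Injectivity follows by observing that the orbits of the natural action of $S_\pi$ on $X$ are precisely the blocks of $\pi$, so $\pi$ is recoverable from $S_\pi$. For meets, a permutation stabilizes every $\pi$-block \emph{and} every $\sigma$-block setwise exactly when it stabilizes each block of $\pi \meet \sigma$ setwise, which gives $S_\pi \cap S_\sigma = S_{\pi \meet \sigma}$.

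The step I expect to require real work is join-preservation, $S_{\pi \join \sigma} = \<S_\pi, S_\sigma\>$. The inclusion $\supseteq$ is immediate from order-preservation. For $\subseteq$, I would note that $S_{\pi \join \sigma}$, being the direct product of symmetric groups on the blocks of $\pi \join \sigma$, is generated by the transpositions $(x\,y)$ with $x$ and $y$ lying in a common block $B$ of $\pi \join \sigma$. By the very definition of the join in $\bEqX$, any such pair is connected by a chain $x = z_0, z_1, \dots, z_m = y$ in $B$ with each consecutive pair $\{z_i, z_{i+1}\}$ lying in a common block of either $\pi$ or $\sigma$; hence each elementary transposition $(z_i\,z_{i+1})$ already belongs to $S_\pi \cup S_\sigma$, and $(x\,y)$ is recovered via the standard identity expressing an arbitrary transposition as a conjugation-product of overlapping adjacent transpositions. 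Composing the resulting embedding $\bEqX \hookrightarrow \Sub(\Sym(X))$ with the \Pudlak-\Tuma\ embedding $\bL \hookrightarrow \bEqX$ then yields the theorem.
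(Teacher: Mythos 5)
Your proposal is correct and follows essentially the same route the paper takes: compose the \Pudlak--\Tuma\ embedding $\bL \hookrightarrow \bEqX$ with the standard (Whitman-style) embedding of the finite partition lattice into $\Sub(\Sym(X))$ via Young subgroups. The paper leaves that second embedding implicit, and your verification of meets, joins, and injectivity correctly supplies the omitted details.
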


If we confine ourselves to distributive lattices, the analogue of the
\FLRP\ is relatively easy.  By the 1930's it was already known to 
\index{Dilworth, Robert}%
Robert Dilworth that every finite distributive lattice is the congruence lattice
of a finite lattice.\footnote{This is mentioned in~\cite{Birkhoff:1940} without proof.}
(In fact, if we allow representations by infinite algebras -- which, as
a rule in this work, we do not -- then the congruence lattices of
modular lattices already account for all distributive lattices.  This is shown by
\index{Schmidt, E.T.}%
E.T.~Schmidt in~\cite{Schmidt:1982}, and extended by
\index{Freese, Ralph}%
Ralph Freese who shows in~\cite{Freese:1975} that finitely generated modular
lattices suffice.)\footnote{It turns out that the finite distributive lattices
  are representable as congruence lattices of other restricted classes of
  algebras.  We will say a bit more about this below, but we refer the
  reader to~\cite{Palfy:1987} for more details.}

A lattice $L$ is called \defn{strongly representable} 
if, whenever $L$ is isomorphic to a \defn{spanning sublattice}\footnote{By a
  \defn{spanning sublattice} of a bounded lattice $L_0$, we mean a sublattice
  $L\leq L_0$ that has the same top and bottom as $L_0$.  That is  $1_L =
  1_{L_0}$ and $0_L = 0_{L_0}$.}  
$L_0 \leq \Eq(X)$ for some $X$, then there is an algebra $\<X, \dots\>$ whose
congruence lattice is $L_0$. 
\index{Berman, Joel} \index{Quackenbush, R.} \index{Wolk, B.}%
\begin{theorem}[Berman~\cite{Berman:1970}, Quackenbush and Wolk~\cite{Quack:1971}]
\label{thm:distr-lattices}
Every finite distributive lattice is strongly representable.
\end{theorem}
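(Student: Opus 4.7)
The plan is to build an algebra on the given set $X$ whose operations are simply all unary maps that respect $L_0$. Concretely, let $L$ be a finite distributive lattice and suppose $L \cong L_0 \leq \Eq(X)$ is a spanning embedding. Define
\[
F := \{\, f \in X^X \mid (a,b)\in \theta \;\Rightarrow\; (f(a),f(b))\in\theta \text{ for every } \theta \in L_0 \,\},
\]
and put $\bA := \langle X, F\rangle$. Because every $\theta\in L_0$ is preserved by every basic operation of $\bA$ and each $f\in F$ is unary, Lemma~\ref{sec:unarycongruences} immediately gives $L_0 \subseteq \Con\bA$. The entire content of the theorem is therefore the reverse inclusion $\Con\bA \subseteq L_0$.

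To get the reverse inclusion, I would argue contrapositively: for each $\sigma \in \Eq(X)\setminus L_0$, exhibit an $f \in F$ and a pair $(a,b)\in\sigma$ with $(f(a),f(b))\notin\sigma$. Set
\[
\theta(a,b) := \bigwedge\{\theta \in L_0 : (a,b)\in\theta\},
\]
which lies in $L_0$ by meet-closure, and is the smallest member of $L_0$ containing $(a,b)$. Then $\sigma \notin L_0$ means there exist $(a,b)\in\sigma$ with $\theta(a,b)\not\subseteq \sigma$, so we may also pick $(c,d)\in \theta(a,b)\setminus\sigma$. If we can construct an $f\in F$ with $f(a)=c$ and $f(b)=d$ we are done, since then $(f(a),f(b))=(c,d)\notin\sigma$ while $(a,b)\in\sigma$.

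The hard step is therefore the following separation lemma: whenever $(a,b)\in X^2$ and $(c,d)\in\theta(a,b)$, there is some $f\in F$ with $f(a)=c$ and $f(b)=d$. This is where distributivity will be essential, and it is where I expect the main obstacle to lie. The plan is to invoke Birkhoff's representation theorem: write $L_0$ as the lattice $\mathcal{O}(P)$ of down-sets of its poset $P$ of join-irreducibles, so that each $\theta\in L_0$ corresponds to a down-set $I_\theta \subseteq P$ and $\theta = \bigvee\{\alpha : \alpha\in I_\theta\}$. For each join-irreducible $\alpha\in P$ with unique lower cover $\alpha_*$, choose a witnessing pair in $\alpha\setminus\alpha_*$, and define $f$ piecewise on the $\alpha_*$-classes using those witnesses; distributivity is exactly what guarantees that the join of contributions along a chain of join-irreducibles below $\theta(a,b)$ still defines a single-valued, $L_0$-preserving map, and that the construction can be carried out coherently on all classes.

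Once the separation lemma is established, the two inclusions $L_0 \subseteq \Con\bA$ and $\Con\bA \subseteq L_0$ combine to give $\Con\bA = L_0$ for the specific spanning embedding we started with, which is precisely the definition of strong representability. Since the spanning embedding $L_0 \leq \Eq(X)$ was arbitrary, $L$ is strongly representable.
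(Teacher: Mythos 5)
Your overall frame is the same as the paper's: take $F=\lambda(L_0)$, note $L_0\subseteq\Con\langle X,F\rangle$ for free, and reduce the reverse inclusion to producing, for each $\sigma\in\Eq(X)\setminus L_0$, a map in $\lambda(L_0)$ that violates $\sigma$. Your reduction step is also fine (if every $(a,b)\in\sigma$ had $\theta(a,b)\subseteq\sigma$, then $\sigma$ would equal the join of the $\theta(a,b)$, which lies in $L_0$ because $L_0$ is a sublattice of $\Eq(X)$). The gap is that your separation lemma is false. Take $X=\{0,1,2\}$, $\mu=|0,1|2|$, $\nu=|0|1,2|$, and $L_0=\{0_X,\mu,\nu,1_X\}$, a spanning distributive (four-element Boolean) sublattice of $\Eq(X)$. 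With $(a,b)=(0,2)$ we have $\theta(0,2)=1_X\ni(2,0)$, yet no $f\in\lambda(L_0)$ has $f(0)=2$ and $f(2)=0$: from $(0,1)\in\mu$ and $2/\mu=\{2\}$ we are forced to $f(1)=2$, and then $(1,2)\in\nu$ forces $f(2)\in 2/\nu=\{1,2\}$. The obstruction is that $a$ and $b$ are linked only through a chain of alternating join-irreducible steps, and prescribing both endpoints over-constrains the intermediate values; your Birkhoff-representation sketch never engages with this, and restricting to $(c,d)\in\theta(a,b)\setminus\sigma$ is neither obviously sufficient nor proved. Since this lemma is the entire content of the theorem in your plan, the proof does not go through as written.

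The paper's proof (following Quackenbush--Wolk) avoids prescribing where $a$ and $b$ must go. Distributivity is used only through the fact that join-irreducibles are join-prime: given $\sigma\notin L_0$, choose a join-irreducible $\alpha$ below $\bigwedge\{\gamma\in L_0:\gamma\geq\sigma\}$ but not below $\bigvee\{\gamma\in L_0:\gamma\leq\sigma\}$, and set $\beta=\bigvee\{\gamma\in L_0:\gamma\not\geq\alpha\}$. Join-primeness gives $\alpha\not\leq\beta$, and by construction $L_0$ is the union of the filter above $\alpha$ and the ideal below $\beta$. Picking $(u,v)\in\alpha\setminus\sigma$ and $(x,y)\in\sigma\setminus\beta$, the two-valued map sending the $\beta$-block of $y$ to $u$ and everything else to $v$ has image consisting of a single $\alpha$-related pair (so it respects everything above $\alpha$) and kernel containing $\beta$ (so it respects everything below $\beta$), hence respects all of $L_0$, while sending $(x,y)\in\sigma$ to $(v,u)\notin\sigma$. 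If you want to salvage your argument, replace the separation lemma by this filter-plus-ideal decomposition; that is where distributivity actually does its work.
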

\noindent (We give a short proof of this result in Section~\ref{sec:distr-latt}.)
Berman also proves  that if $\bA_p$ is a finite partial unary algebra with
strong congruence lattice $\mathrm{Con}_s \bA_p$, then there is a finite unary
algebra $\bA$ with $\Con \bA \cong \mathrm{Con}_s \bA_p$. Therefore, by
Lemma~\ref{sec:unarycongruences}, $\sL_2 = 
\sL_3$.  
As our focus
is mainly on whether $\sL_0 = \sL_3$, we will not say more about partial
algebras except to note
\index{Pudl\'ak, Pavel}%
\indexTuma%
\index{Berman, Joel}%
that the results of \Pudlak, \Tuma, and Berman imply that $\sL_0 = \sL_3$
holds if and only if $\sL_1=\sL_2$ holds.

Next, we mention another deep result of 
\index{Pudl\'ak, Pavel}%
\indexTuma%
\Pudlak\ and \Tuma, which proves the existence of congruence lattice
representations for a large class of lattices.
\begin{theorem}[\Pudlak\ and \Tuma~\cite{Pudlak:1976}]
\label{thm:fermentable}
Let $L$ be a finite lattice such that
both $L$ and its congruence lattice 
have the same number of join irreducible elements. 
Then $L$ is representable. 
\end{theorem}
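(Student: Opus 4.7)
The strategy is to produce a finite unary algebra $\bA = \<X, F\>$ with $\Con \bA \cong L$, which suffices by Lemma~\ref{sec:unarycongruences}. The role of the hypothesis is to make the structure of the join-irreducibles especially rigid. For any finite lattice $L$, every join-irreducible $j \in J(L)$ has a unique lower cover $j_*$, and the assignment $\psi: j \mapsto \Theta_L(j,j_*)$ maps $J(L)$ \emph{onto} $J(\Con L)$. Since $|J(L)| = |J(\Con L)|$, the map $\psi$ is a bijection. In particular, no nontrivial join-identity forces two distinct join-irreducibles of $L$ to become equal or comparable in $\Con L$ beyond what is visible from the poset $J(L)$ itself; this is the independence we will exploit.

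First, I would embed $L$ as a bounded sublattice of a finite partition lattice $\Eq(X)$, so that each join-irreducible $j \in J(L)$ is represented by an equivalence $\eta_j \in \Eq(X)$ that is \emph{principal} (generated by a single pair $(x_j, y_j)$). The Pudl\'ak--\Tuma\ partition embedding theorem already supplies some such embedding of $L$ into $\Eq(X)$, and the bijectivity of $\psi$ allows one to refine this so that distinct $\eta_j$ are generated by distinct pairs and satisfy no relations beyond those dictated by the poset structure of $J(L)$. Concretely, one takes $X$ to be a carefully chosen set of combinatorial configurations (for instance, maximal chains of $L$ or a ``yeast graph'' in the terminology of \Pudlak--\Tuma) on which the covering relations $j_* \prec j$ act by local modifications, and one reads off $\eta_j$ from the orbits of this action.

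Next, I would convert this sublattice representation into a full congruence-lattice representation by adjoining unary operations to $X$. For each covering relation $j_* \prec j$ in $L$, I would introduce a unary operation $f_j : X \to X$ whose kernel contains the generating pair $(x_j, y_j)$ of $\eta_j$ and, crucially, nothing else in $\Eq(X)\setminus L$. The hypothesis intervenes here: because $\psi$ is a bijection, there are no hidden identities among the $\psi(j)$ in $\Con L$ that would force any operation respecting $\eta_j$ to also respect some $\eta_{j'}$ with $j' \ne j$. This is what permits the partial maps $f_j$ to be extended to total functions without forcing spurious identifications.

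The main obstacle is verifying that $\Con \<X, F\>$ is exactly $L$, rather than some proper supralattice. One containment is automatic from the construction: each element of $L$ corresponds to an equivalence that is closed under every $f_j$ by design. The reverse containment is the delicate part and would be established by induction on the depth of $J(L)$: if $\theta$ is a congruence of $\<X, F\>$ identifying a pair $(u,v)$ not already forced by some $\eta_j \in L$, then by tracking how $\theta$ must propagate under the generating operations $f_j$, one produces a violation of the hypothesis — specifically, a coincidence among the $\psi(j)$ in $\Con L$ that would reduce $|J(\Con L)|$ below $|J(L)|$. This is where the fermentation-style combinatorial bookkeeping is required and where the technical heart of the Pudl\'ak--\Tuma\ argument lies.
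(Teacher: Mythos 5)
First, a point of reference: the paper does not prove Theorem~\ref{thm:fermentable} at all. It is stated as a citation to the 1976 paper of \Pudlak\ and \Tuma\ and is used only as a black box (for instance, to re-derive the representability of finite distributive lattices). So there is no in-paper argument to measure your proposal against; it has to stand on its own.

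On its own, your write-up is a correct reading of the hypothesis followed by an outline, not a proof. The one step you actually establish is the right one to start with: for a finite lattice the map $j \mapsto \Cg^{L}(j_*, j)$ carries $J(L)$ onto the set of join-irreducible congruences of $L$, so the numerical hypothesis is precisely the statement that this map is a bijection. That is a genuine and correct reduction. But both substantive steps after it are asserted rather than proved. First, you claim the \Pudlak--\Tuma\ partition embedding can be ``refined'' so that each join-irreducible of $L$ is represented by a principal equivalence and so that the $\eta_j$ ``satisfy no relations beyond those dictated by the poset $J(L)$''; no construction is given, it is not made precise what the latter condition means, and it is not shown why bijectivity of $\psi$ delivers it. (As an aside, the partition embedding theorem dates from 1980, after the result you are proving, so it cannot be the engine of the original argument.) Second, the reverse containment $\Con\<X,F\>\subseteq L$ --- which you correctly identify as the delicate part --- is waved at with ``fermentation-style combinatorial bookkeeping'' and an explicit appeal to ``where the technical heart of the \Pudlak--\Tuma\ argument lies.'' Deferring the technical heart of the theorem to the paper you are meant to be reproving is the gap: none of the load-bearing verifications (the construction of $X$ and the $f_j$, the closure computation, the induction on the depth of $J(L)$) is actually carried out, so what you have is a plausible road map for the yeast-graph construction rather than a proof.
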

\noindent Notice that finite distributive lattices satisfy the
assumption of Theorem~\ref{thm:fermentable}, so this provides yet another proof
that such lattices are representable.

We now turn to subgroup lattices of finite groups and their connection with the \FLRP.
The study of subgroup lattices has a long history, starting with Richard
\index{Dedekind, Richard}%
Dedekind's work~\cite{Dedekind:1877} in 1877, including 
\index{Rottlaender, Ada}%
Ada Rottlaender's paper~\cite{Rottlaender:1928} from 1928, and later numerous
important contributions by 
\index{Baer, Reinhold}%
Reinhold Baer, 
\index{Ore, {\O}ystein}%
{\O}ystein Ore, 
\index{Iwasawa, Kenkichi}%
Kenkichi Iwasawa, 
Leonid Efimovich Sadovskii, 
\index{Suzuki, Michio}%
Michio Suzuki, 
Giovanni Zacher, Mario Curzio, Federico Menegazzo, 
\index{Schmidt, Roland}%
Roland Schmidt, Stewart Stonehewer, Giorgio Busetto, and many
others.  The book~\cite{Schmidt:1994} by Roland Schmidt gives a comprehensive
account of this work.

Suppose $H$ is a subgroup of $G$  (denoted $H\leq G$).  
By the \defn{interval sublattice} $[H, G]$ we mean the sublattice of $\Sub(G)$
given by: 
    \[
      [H,G] := \{K \mid H\leq K \leq G\},
      \]
That is $[H,G]$ is the lattice of subgroups of $G$ that contain 
$H$.\footnote{The reader may anticipate confusion arising from the
  conflict between our notation and the well-established notation for the
  \defn{commutator subgroup}, $[H,G] := \<\{hgh^{-1}g^{-1} \mid h\in H, g\in G\}\>$,
  which we will also have occasion to use.  However, we have found that context always
  makes clear which meaning is intended.  In any case, we often refer to ``the interval
  $[H,G]$'' or ``the commutator $[H,G]$.''} 

We define the following classes of lattices:
\begin{itemize}
\index{$\mathscr{L}_4$}%
\item $\sL_4 = $ the class of lattices isomorphic to intervals
in subgroup lattices of finite groups;
\index{$\mathscr{L}_5$}%
\item  $\sL_5 = $  the class of lattices isomorphic to subgroup lattices of finite groups.
\end{itemize}
Recall that $\sL_3$, the class of all lattices isomorphic to congruence lattices
of finite algebras, is known as the class of \emph{representable}
lattices. We adhere to this convention throughout and, moreover, we will call a
lattice \defn{group representable} if it belongs to $\sL_4$.

Clearly,  $\sL_4 \supseteq \sL_5$, since $\Sub(G)$ is itself the interval 
$[1, G]$.  Moreover, it's easy to find a lattice that is in $\sL_4$ but not it
$\sL_5$, so the inclusion is strict.  For example, there is no group $G$ for which
$\Sub(G)$ is isomorphic to the lattice shown below.  
\begin{center}
\begin{tikzpicture}[scale=0.8]
\node (0) at (0,0) [draw, circle,inner sep=1pt] {};
\node (1) at (0.75,1) [draw, circle, inner sep=1pt] {};
\node (2) at (-0.75,1) [draw, circle, inner sep=1pt] {};
\node (3) at (-0,2) [draw, circle, inner sep=1pt] {};
\node (4) at (-0,3) [draw, circle, inner sep=1pt] {};
\draw[semithick]
(0) to (1)
(0) to (2)
(1) to (3)
(2) to (3)
(3) to (4);
\end{tikzpicture}

\end{center}
To see this, note that if
$G$ has a unique maximal subgroup $H$, then there exists $g\in G\setminus
H$ and we must have $\< g\> = G$.  Thus, if $\Sub(G)$ has a unique
coatom, then $G$ is cyclic, and subgroup lattices of cyclic groups are
self-dual, unlike the lattice shown above.
However, this lattice belongs to $\sL_4$.  For example, it is the 
filter above $H = C_3$ in the subgroup lattice of $G = C_3 \times (C_3 \rtimes
C_4)$. 

We will have a lot more to say about intervals in subgroup lattices throughout
this thesis.
Perhaps the most useful fact for our work is the following:
\begin{equation}
  \label{eq:Intro1}
\text{\emph{Every interval in a subgroup lattice is the congruence lattice of
  a finite algebra.}}
\end{equation}
In particular, as we explain below in Chapter~\ref{cha:congr-latt-group}, if 
$\<G/H, G\>$ is the algebra consisting of the group $G$
acting on the left (right) cosets of a subgroup $H \leq G$ by left (right)
multiplication, then $\Con\<G/H, G\> \cong [H, G]$.
Thus, we see that $\sL_3 \supseteq \sL_4$.

Whether the converse of (\ref{eq:Intro1}) holds -- and thus whether 
$\sL_3 = \sL_4$ -- is an open question.  In other words, it is not known whether
every congruence lattice of a finite algebra is isomorphic to an interval in the
subgroup lattice of a finite group.
However, a surprising and deep result related to this question was proved in
1980 by 
\index{P\'alfy, P\'eter}%
\index{Pudl\'ak, Pavel}%
P\'eter \Palfy\ and Pavel \Pudlak.
In~\cite{Palfy:1980}, they prove 
\begin{theorem}
\label{thm:IntroP5}
The following statements are equivalent:
\begin{enumerate}[(i)]
\item Every finite lattice is isomorphic to
  the congruence lattice of a finite algebra.
\item Every finite lattice is isomorphic to the congruence lattice of a finite transitive G-set.
\end{enumerate}
\end{theorem}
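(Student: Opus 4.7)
The implication (ii)$\Rightarrow$(i) is trivial: any finite transitive $G$-set $\<X, G\>$ is, in particular, a finite algebra (with the elements of $G$ viewed as unary operations on $X$), so if every finite lattice arises as the congruence lattice of such an object, then every finite lattice is representable in the sense of (i). All of the content lies in the converse (i)$\Rightarrow$(ii).

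For (i)$\Rightarrow$(ii), let $L$ be a finite lattice and, by hypothesis, fix a finite algebra $\bA$ with $\Con\bA \cong L$. My first move is to invoke Lemma~\ref{sec:unarycongruences} to replace $\bA$ by a finite unary algebra with the same universe and the same congruence lattice; write this as $\bA = \<A, F\>$ with $F \subseteq A^A$. It is harmless to close $F$ under composition so that $F = M$, a monoid of transformations of $A$. Equivalently, $\bA$ becomes the $M$-set $\<A, M\>$, and $\Con \<A, M\> \cong L$. The task is now to fabricate a \emph{group} acting \emph{transitively} on some finite set, with congruence lattice $L$; by the remarks preceding the theorem, this is the same as exhibiting $L \cong [H,G]$ for some $H \leq G$ in a finite group.

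The plan for the main construction is to enlarge the universe so that the elements of $M$ can be realized by permutations. One such recipe: take $X$ to be a suitable finite disjoint union of copies of $A$ (say $X = A \times \{0,1,\dots,k-1\}$ for $k$ chosen large enough to accommodate all $f \in M$), and for each $f \in M$ define a permutation $\sigma_f \in \Sym(X)$ that records the action of $f$ on $A$ while shuffling points between copies in a bookkeeping layer so that each $\sigma_f$ is a bijection of $X$. Let $G \leq \Sym(X)$ be the group generated by all the $\sigma_f$, together with, if needed, a fixed transitive group of permutations of the bookkeeping coordinates to force transitivity of $G$ on $X$. Define $\Phi : \Con\<X,G\> \to \Con\<A,M\>$ by restricting a congruence of the $G$-set to the diagonal copy $A \times \{0\} \cong A$. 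The design of the $\sigma_f$ is such that the extra $G$-orbits carry no independent congruence information, so $\Phi$ has an inverse $\Psi$ that extends each $M$-congruence $\theta$ of $A$ to the $G$-congruence on $X$ obtained by taking the $G$-orbit of $\theta$ inside $\Eq(X)$.

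The main obstacle is precisely the verification that $\Phi$ and $\Psi$ are mutually inverse lattice isomorphisms. The inclusion $\Phi \circ \Psi = \id$ amounts to saying the extension operation doesn't add identifications between points of $A \times \{0\}$ that weren't in $\theta$, and $\Psi \circ \Phi = \id$ amounts to saying every $G$-congruence is determined by its trace on $A \times \{0\}$. Both require that the auxiliary permutations added to enforce transitivity interact with the $\sigma_f$ in a sufficiently rigid way; this is where the choice of the bookkeeping coordinate, and the choice of the extra transitive generators, must be made with care. Once this lattice isomorphism is in hand, $\Con\<X, G\> \cong \Con\<A, M\> \cong L$, and since $\<X, G\>$ is a finite transitive $G$-set, statement (ii) follows.
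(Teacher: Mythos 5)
Your direction (ii)$\Rightarrow$(i) is fine, but your argument for (i)$\Rightarrow$(ii) attempts something strictly stronger than the theorem asserts, and that stronger statement is an open problem. You fix a lattice $L$, take a finite algebra (equivalently an $M$-set) with congruence lattice $L$, and try to convert \emph{that particular representation} into a transitive \Gset\ with the \emph{same} congruence lattice. If this worked it would show $\sL_3 \subseteq \sL_4$, i.e.\ that every representable lattice is group representable; but Theorem~\ref{thm:IntroP5} does \emph{not} say this, and whether $\sL_3 = \sL_4$ is explicitly unknown. The theorem is a statement about classes: $\sL_0 = \sL_3$ if and only if $\sL_0 = \sL_4$. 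Its proof must therefore use hypothesis (i) globally --- applied to a lattice \emph{other than} $L$ --- rather than transforming a given representation of $L$ itself.

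The construction also cannot be repaired as stated. If $f \in M$ is not injective, no permutation $\sigma_f$ of $X = A \times \{0,\dots,k-1\}$ can restrict to $f$ on $A \times \{0\}$; you must separate $f$-collided points into different copies, and then the trace on $A\times\{0\}$ of a congruence of the resulting \Gset\ no longer tracks $\ker f$. Worse, the auxiliary transitive generators you adjoin typically combine with the $\sigma_f$ to generate a group whose congruence lattice as a transitive \Gset\ (equivalently, the interval $[G_x, G]$ in $\Sub(G)$, by Theorem~\ref{thm:g-set-isomorphism2}) bears no controlled relation to $\Con\<A,M\>$; the assertion that ``the extra $G$-orbits carry no independent congruence information'' is precisely the unproved, and in general false, heart of the matter. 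The actual \PP\ argument runs differently: given $L$, one builds an auxiliary finite lattice $L_1$ containing $L$ as an upper interval and having special properties (its top element is a join of atoms); hypothesis (i) yields a finite algebra with congruence lattice $L_1$, one takes such an algebra of \emph{minimal cardinality}, and the form of $L_1$ together with minimality forces every nonconstant unary polynomial to be a permutation, so the algebra is a transitive \Gset. Then $L$, being an upper interval of its congruence lattice, is an interval $[H,G]$ in a subgroup lattice and hence the congruence lattice of the transitive \Gset\ $\<G/H, G\>$. The detour through an auxiliary lattice and the minimality argument are indispensable; no direct conversion of an arbitrary representation is known.
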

As we will see later (Theorem~\ref{thm:g-set-isomorphism2}), statement (ii) is equivalent to
\\[4pt]
{\it (ii)${}'$ Every finite lattice is isomorphic to an interval in the subgroup lattice of a finite group.}

It is important to note that Theorem~\ref{thm:IntroP5} does \emph{not} say
$\sL_3 = \sL_4$.  Rather, it says that $\sL_0 = \sL_3$ if and only if
$\sL_0=\sL_4$.  Moreover, this result implies that if we prove the existence of a lattice
which is not isomorphic to an interval in a subgroup lattice of a finite group,
then we have solved the \FLRP. 

It is surprising that a problem about general algebras can be reduced to
a problem about such a special class of algebras -- finite transitive
$G$-sets.  Also surprising, in view of all that we know about 
finite groups and their actions, is that we have
yet to determine whether these statements are true or false.
To put it another way, given an arbitrary finite lattice $L$, 
it is unknown whether there must be a finite group having this lattice as an
interval in its lattice of subgroups.  

\index{P\'alfy, P\'eter}%
We pause for a moment to consider the $\sL_3 = \sL_4$ question in the restricted 
case of finite distributive lattices (which we know are strongly
representable).  
Silcock~\cite{Silcock:1977} and \Palfy~\cite{Palfy:1987} prove that 
every finite distributive lattice is an interval in the subgroup lattice of some
finite solvable group.  The main result is stated below as
Theorem~\ref{thm:diag-normals}, and this can be combined with the
following easy lemma to establish the claim. 
\begin{lemma}
\label{lem:diag-normals}
If $D = \{(g,g) \in G \times G \mid g\in G\}$ then the interval $[D, G \times G]$ is isomorphic to the
lattice of normal subgroups of $G$.
\end{lemma}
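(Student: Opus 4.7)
The plan is to produce an explicit order-isomorphism between $[D, G \times G]$ and the lattice of normal subgroups of $G$. The cleanest route exploits the special case of the $G$-set correspondence already invoked in~(\ref{eq:Intro1}), so I would proceed in two steps.

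First I would realize the coset space $(G \times G)/D$ concretely as $G$ itself. Let $G \times G$ act on $G$ via $(a,b)\cdot g = a g b^{-1}$; this action is transitive, and the stabilizer of $1 \in G$ is exactly $\{(a,b) : ab^{-1} = 1\} = D$. Hence $G \cong (G \times G)/D$ as $(G \times G)$-sets, and by the correspondence recalled in~(\ref{eq:Intro1}), $[D, G \times G] \cong \Con \<G, G \times G\>$.

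Second, I would identify $\Con\<G, G \times G\>$ with the lattice of normal subgroups of $G$. A congruence $\theta$ of $\<G, G \times G\>$ is an equivalence relation on $G$ satisfying $g_1 \mathrel{\theta} g_2 \;\Rightarrow\; a g_1 b^{-1} \mathrel{\theta} a g_2 b^{-1}$ for all $a, b \in G$. Specializing $a = 1$ yields right-translation invariance and $b = 1$ yields left-translation invariance, so $\theta$ is a two-sided group congruence on $G$. The classical bijection $\theta \mapsto 1/\theta$ between such relations and normal subgroups of $G$, with inverse $N \mapsto \{(g, h) : g h^{-1} \in N\}$, is an order isomorphism, and composing it with the preceding display produces the required lattice isomorphism.

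The argument is a guided unpacking of standard correspondences, and no step looks genuinely difficult. The main care required lies in the first step: one must choose the action of $G \times G$ on $G$ so that the stabilizer is exactly $D$ (the rule $(a,b)\cdot g = agb^{-1}$, rather than the naive $(a,b)\cdot g = agb$, is essential), and one must invoke the interval/congruence-lattice correspondence of~(\ref{eq:Intro1}) in the form appropriate to this particular transitive $(G \times G)$-set. Once these alignments are in place, the normal-subgroup description falls out immediately from the combination of left- and right-translation invariance.
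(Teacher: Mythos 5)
Your argument is correct. The paper itself states this as ``the following easy lemma'' and supplies no proof, so there is nothing to compare line by line; but your two-step route is sound: the action $(a,b)\cdot g = agb^{-1}$ is transitive with stabilizer of $1$ equal to $D$, so Theorem~\ref{thm:g-set-isomorphism2} gives $[D, G\times G]\cong \Con\<G, G\times G\>$, and the congruences of that $(G\times G)$-set are exactly the left- and right-translation-invariant equivalence relations, i.e.\ the group congruences of $G$, i.e.\ the normal subgroups. You are right that the choice $(a,b)\cdot g = agb^{-1}$ (rather than $agb$) is the point where care is needed, and you handled it correctly.

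It is worth noting that the author almost certainly intended the even more elementary direct computation, which avoids the $G$-set machinery entirely: map $K \in [D, G\times G]$ to $N_K := \{g\in G \mid (g,1)\in K\}$. Since $(h,h)\in D\leq K$ for all $h$, conjugating $(g,1)$ by $(h,h)$ shows $N_K \subnormal G$; the inverse map sends $N \subnormal G$ to $K_N := \{(a,b) \mid ab^{-1}\in N\}$, and the identities $N_{K_N}=N$ and $K_{N_K}=K$ follow from $(a,b) = (ab^{-1},1)(b,b)$. Both maps are order-preserving, hence give a lattice isomorphism. Your proof buys a conceptual explanation (it exhibits $[D,G\times G]$ as the congruence lattice of a concrete transitive $(G\times G)$-set, in the spirit of~(\ref{eq:Intro1})), while the direct argument is shorter and self-contained; either is acceptable here.
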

\begin{theorem}
\label{thm:diag-normals}
Every finite distributive lattice is isomorphic to the lattice of normal subgroups
of a finite solvable group.
\end{theorem}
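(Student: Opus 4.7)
The plan is to combine Birkhoff's representation theorem for finite distributive lattices with a direct group-theoretic construction. By Birkhoff's theorem, every finite distributive lattice $L$ is isomorphic to the lattice $\mathcal{O}(P)$ of order ideals (down-sets) of the poset $P = J(L)$ of its join-irreducible elements, ordered by the restriction of $\leq_L$. So it suffices to produce, for each finite poset $P$, a finite solvable group $G = G(P)$ whose lattice of normal subgroups is isomorphic to $\mathcal{O}(P)$.

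For the construction of $G(P)$, I would assign to each $p \in P$ a distinct prime $q_p$ and a simple cyclic $q_p$-group $M_p$. Then I would build $G$ as an iterated semidirect product of the $M_p$'s in which, for each covering relation $p \lessdot q$ in $P$, the factor $M_q$ acts faithfully on $M_p$ (and trivially on all other $M_r$). Concretely, one can realize this inside a product of upper-unitriangular matrix groups, or inductively by peeling off maximal elements of $P$: if $p$ is maximal in $P$, build $G(P \setminus \{p\})$ first, and then form $G(P) = M_p \rtimes G(P \setminus \{p\})$ with the action dictated by the down-set of $p$. Solvability is automatic, because the construction is an iterated extension of abelian groups.

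The heart of the proof is to verify that the normal subgroups of $G$ are exactly the subgroups $N_S := \prod_{p \in S} M_p$ for $S$ ranging over order ideals $S \in \mathcal{O}(P)$, and that the assignment $S \mapsto N_S$ is a lattice isomorphism. One direction is easy: $N_S$ is normal iff $S$ is closed under the conjugation action, which by construction means $S$ is closed downward in $P$. For the other direction, one uses the distinctness of the primes $q_p$: any normal subgroup $N \trianglelefteq G$ decomposes compatibly with the Sylow decomposition, so the Sylow $q_p$-component of $N$ is $M_p$-invariant, and the simplicity of $M_p$ forces it to be either trivial or all of $M_p$. Hence $N = N_S$ for some $S \subseteq P$, and normality of $N$ forces $S$ to be a down-set.

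The main obstacle is precisely this last step: ruling out ``spurious'' normal subgroups that are not of the form $N_S$. In particular, one must check that no normal subgroup can arise as a ``diagonal'' inside some $M_p \times M_{p'}$, and this is exactly where the choice of pairwise distinct primes is essential, together with the fact that each $M_p$ is simple and the action on each $M_p$ of the rest of $G$ factors through a group of order coprime to $q_p$. Once this is in place, verifying that $S \mapsto N_S$ preserves joins and meets is routine, since union and intersection of down-sets correspond to product and intersection of the corresponding $N_S$ in $G$.
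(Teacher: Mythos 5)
The thesis does not actually prove Theorem~\ref{thm:diag-normals}; it only cites Silcock and \Palfy, so there is no internal proof to compare against. Your strategy --- Birkhoff representation plus an iterated semidirect product of cyclic groups of pairwise distinct prime orders indexed by the join-irreducibles, with $M_q$ acting faithfully on each $M_p$ it covers --- is essentially the standard construction (close to \Palfy's; Silcock instead uses generalized wreath products), and the overall plan is sound. You also correctly identify the main danger (``diagonal'' normal subgroups) and the reason it cannot occur (distinct primes force any normal subgroup to be a product of some of the $M_p$).

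Two steps need repair before this is a proof. First, you cannot assign arbitrary distinct primes and then demand that $M_q\cong C_{q_q}$ act faithfully on $M_p\cong C_{q_p}$: such an action exists only when $q_q$ divides $\lvert\mathrm{Aut}(C_{q_p})\rvert=q_p-1$ (for instance, $C_3$ has no faithful action on $C_5$). The primes must be chosen from the top of $P$ downward, taking $q_p\equiv 1\pmod{\prod_{q\gtrdot p}q_q}$, which exists by Dirichlet's theorem; this should be stated explicitly. Second, the assertion that a normal subgroup $N$ ``decomposes compatibly with the Sylow decomposition'' is not available, since $N$ need not be nilpotent. The correct route: $\lvert G\rvert$ is squarefree, so an element of order $q_p$ in $N$ (Cauchy) generates a full Sylow $q_p$-subgroup of $G$, and normality of $N$ together with Sylow conjugacy forces $M_p\leq N$; a counting argument then gives $N=\langle M_p : p\in S\rangle$ with $S=\{p : q_p \text{ divides } \lvert N\rvert\}$, and $S$ is a down-set because $p\lessdot q\in S$ implies $M_p=[M_p,M_q]\leq N$. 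Finally, a notational slip: for $p$ maximal you want $G(P\setminus\{p\})\rtimes M_p$ with $M_p$ acting, not $M_p\rtimes G(P\setminus\{p\})$, since for maximal $p$ the factor $M_p$ is precisely the one that must fail to be normal.
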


Beyond those mentioned in this brief introduction, many other results
surrounding the \FLRP\ have been proven.  Some of these are not as relevant to
our work, and others will be discussed in detail in
Chapter~\ref{cha:an-overview-finite}. A more complete overview of
the \FLRP\ with an emphasis on group theory can be found in the articles by
\Palfy, \cite{Palfy:1995} and~\cite{Palfy:2001}.

\chapter{An Overview of Finite Lattice Representations}
\label{cha:an-overview-finite}
In this chapter we give a brief overview of various known methods for
representing a given lattice as the congruence lattice of a finite algebra or
proving that such a representation exists.
In later chapters we describe these methods in greater detail and show how to
apply them.  In particular, in Section~\ref{sec:seven-elem-latt}, we use them
along with some new methods to show that, with one possible exception, every
lattice with no more than seven elements is isomorphic to the congruence
lattice of a finite algebra. Throughout this chapter, we continue to use $\sL_3$
to denote the class of finite lattices that are isomorphic to congruence lattices of
finite algebras.  Again, we call the lattices that belong to $\sL_3$ \emph{representable}
lattices. 

\section{Closure properties of the class of representable lattices}
\label{sec:clos-prop-class}
This section concerns 
\defn{closure properties} 
of the class $\sL_3$. 
More precisely, if $\bO$ is an operation that can be applied to a lattice or
collection of lattices, we say that $\sL_3$ is \emph{closed under $\bO$} provided
$\bO(\sK) \subseteq \sL_3$ for all 
$\sK\subseteq \sL_3$. For example, if $\bS(\sK) = \{\text{all sublattices of
  lattices in $\sK$}\}$, then it is clearly unknown whether $\sL_3$ is closed under
$\bS$, for otherwise the \FLRP\ would be solved.
(Clearly, $\Eq(X)\in \sL_3$ for every finite set $X$ --
take the algebra to be the set $X$ with no operations. 
Then $\Con\<X, \emptyset\> = \Eq(X)$.  So, if $\sL_3$ were
closed under $\bS$, then $\sL_3$ would contain all finite lattices, by the
\index{Pudl\'ak, Pavel}%
\indexTuma%
result of \Pudlak\ and \Tuma\ mentioned above; that is, $\sL_0=\sL_1$.) 

The following is a list of known closure properties of $\sL_3$ and the names of those
who first (or independently) proved them.  We discuss some of these results in greater
detail later in this section.
The class $\sL_3$ of lattices isomorphic to congruence lattices of finite
algebras is closed under 
      \begin{enumerate} 
      \item lattice duals\footnote{Recall, the \defn{dual of a lattice} is simply the
            lattice turned on its head, that is, the lattice obtained by
            reversing the partial order of the original lattice.}  (Hans Kurzweil~\cite{Kurzweil:1985} and
        Raimund Netter~\cite{Netter:1986}, 1986),
        \index{Kurzweil, Hans}        \index{Netter, Raimund}
      \item interval sublattices (follows from Kurzweil-Netter),
      \item  direct products (\Jiri\ \Tuma~\cite{Tuma:1986}, 1986), \indexTuma%
      \item  ordinal sums \index{ordinal sum}
        \index{McKenzie, Ralph}
        (Ralph McKenzie~\cite{McKenzie:1984}, 1984; John Snow~\cite{Snow:2000}, 2000),
      \item  parallel sums (John Snow~\cite{Snow:2000}, 2000), \index{parallel sum}
      \item certain sublattices of lattices in $\sL_3$ -- namely, those which
        are obtained as a union of a filter and an ideal of a lattice in
       $\sL_3$ (John Snow~\cite{Snow:2000}, 2000). \index{Snow, John}
      \end{enumerate}
\begin{center}
        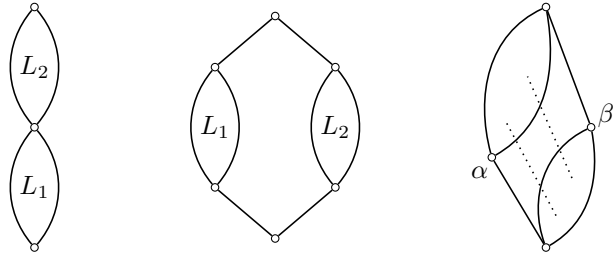
\begin{figure}[h!]
          \label{fig:ordinal-and-parallel}
\centering
  \begin{tikzpicture}[scale=0.4]

    \node (botL1) at (-8,-4) [draw, circle,inner sep=1pt] {};
    \node (m80) at (-8,0) [draw, circle,inner sep=1pt] {};
    \node (topL2) at (-8,4) [draw,circle,inner sep=1pt] {};
    \node (bot) at (0,-3.7) [draw, circle,inner sep=1pt] {};
    \node (top) at (0,3.7) [draw,circle,inner sep=1pt] {};
    \node (botL) at (-2,-2) [draw,circle,inner sep=1pt] {};
    \node (topL) at (-2,2) [draw,circle,inner sep=1pt] {};
    \node (botN) at (2,-2) [draw,circle,inner sep=1pt] {};
    \node (topN) at (2,2) [draw,circle,inner sep=1pt] {};

    \draw (-8,-2) node {$L_1$};
    \draw (-8,2) node {$L_2$};
    \draw (-2,0) node {$L_1$};
    \draw (2,0) node {$L_2$};

    \draw[semithick] 
    (bot) to (botL)
    (bot) to (botN)
    (top) to (topL)
    (top) to (topN);

    \draw [semithick]  
    (botL) to [out=50,in=-50] (topL)
    (botL) to [out=130,in=-130] (topL)
    (botN) to [out=50,in=-50] (topN)
    (botN) to [out=130,in=-130] (topN);

    \draw [semithick]  
    (botL1) to [out=50,in=-50] (m80)
    (botL1) to [out=130,in=-130] (m80)
    (m80) to [out=50,in=-50] (topL2)
    (m80) to [out=130,in=-130] (topL2);

          \node (c) at (9.5,-3.25)   {};
          \node (d) at (7.5,0.5)   {};
          \node (e) at (10,-2)   {};
          \node (f) at (8.25,2)   {};
          \node (bottom) at (9,-4)  [draw, circle, inner sep=1pt] {};
          \node (topfi) at (9,4)  [draw, circle, inner sep=1pt] {};
          \node (alpha) at (7.2,-1)  [draw, circle, inner sep=1pt] {};
          \node (beta) at (10.5,0)  [draw, circle, inner sep=1pt] {};
          \draw[semithick, dotted] (c) to (d) (e) to (f);
          \draw[semithick] (bottom) to (alpha)  (beta) to (topfi);
          \draw[semithick] 
          (alpha) to [out=30, in=-80] (topfi)
          (topfi) to [out=205, in=105] (alpha)
          (bottom) to [out=30, in=-80] (beta)
          (beta) to [out=205, in=110] (bottom);
          \draw (6.8,-1.5) node {$\alpha$};
          \draw (11,0.4) node {$\beta$};

  \end{tikzpicture}
          \caption{The ordinal (left) and parallel (middle) sum of the lattices
            $L_1$ and $L_2$; a sublattice obtained as a union of a filter $\alpha^\uparrow $ and an ideal
            $\beta^\downarrow$ (right).}
        \end{figure}
\end{center}

      \begin{remarks}
        ~
      \begin{enumerate} 
      \item  The first result says that if $L$ is representable then so is the
        dual of $L$. \index{dual}
        \item It follows from item 1.~that any interval sublattice of a representable lattice is
        representable.  For, let $[\alpha, \beta] := \{\theta \in L \mid \alpha \leq \theta \leq \beta\}$ be an
        interval in the representable lattice $L= \Con\bA$.  Then $[\alpha, 1_A] \cong \Con
        \bA/\alpha$. By 1., the dual of $\ell := [\alpha, 1_A]$ is
        representable. 
        Now take the filter above $\beta'$ in $\ell'$ (where $\beta'$ is the
        image of $\beta$ under dualization) and we obtain a representation of a
        lattice isomorphic to the dual of $[\alpha, \beta]$.  Apply 1.
        again and we have the desired representation of $[\alpha, \beta]$.

      \item Of course, by direct products we mean \emph{finite} direct products.
      \item[4.-5.] By the
        \index{ordinal sum}  \index{parallel sum}
        ordinal (parallel) sum of two lattices $L_1, L_2$, we mean the lattice
        on the left (middle) of Figure~\ref{fig:ordinal-and-parallel}.

\item[6.] The property in item 6.~is very useful and we discuss it further in
  Section~\ref{sec:union-filter-ideal} below, where we present a very short proof of
  this result.  It will come up again in
  Section~\ref{cha:lattices-with-at} when we prove the existence of
  representations of small lattices.
      \end{enumerate}
      Whether the class $\sL_3$ is closed under homomorphic images
      seems to be an open question. 
      \end{remarks}

\section{Lattice duals: the theorem of Kurzweil and Netter}
\label{sec:duals-interv-subl-detail}
As mentioned above, 
the class $\sL_3$ -- the lattices isomorphic to congruence lattices of finite
algebras -- is closed under
dualization.
That is, if $L$ is representable, then so is the dual of $L$. This was proved in
\index{Netter, Raimund}%
1986 by Raimund Netter~\cite{Netter:1986}, generalizing the idea of his advisor,
\index{Kurzweil, Hans}%
Hans Kurzweil~\cite{Kurzweil:1985}. 
Though Kurzweil's article did appear (in German), it is unclear whether Netter's
article was ever published.
In this section we present a proof of their result.
The argument requires a fair bit of machinery, but it is a nice idea and
well worth the effort.\footnote{We learned 
  of the main argument used in the proof from slides of a series of three
\index{P\'alfy, P\'eter}%
  lectures given by P{\'e}ter \Palfy\ in 2009~\cite{Palfy:2009}.
  \Palfy\ gives credit for the argument to Kurzweil and Netter.} 

If $G$ is a group and $X$ a set, then the set $\{f \mid X\rightarrow G\}$ of 
functions from $X$ into $G$ is denoted by $G^X$.  This is a group with binary
operation $(f,g) \mapsto f\cdot g$, where,  
for each $x\in X$, $(f\cdot g)(x)= f(x)g(x)$ is simply multiplication
in the group $G$.  The identity of the group $G^X$ is of course the constant map $f(x) =
1_G$ for all $x\in X$.

Let $X$ be a finite totally ordered set, with order relation $\leq$,
and consider the set $X^X$ of functions mapping $X$ into itself.  
The subset of $X^X$ consisting of functions that are both idempotent and
decreasing\footnote{When we say that the map $f$ is \emph{decreasing} we mean
  $f(x)\leq x$ for all $x$. (We do not mean $x\leq y$ implies $f(y) \leq x$.)}
 will be denoted by $\ID{X}$.  That is,
\[
\ID{X} = \{f\in X^X \mid f^2 = f \text{ and }\; \forall x\; f(x) \leq x\}.
\]
Define a partial order $\sqsubseteq$ on the set $\ID{X}$ by
\begin{equation}
  \label{eq:MID111}
 f\sqsubseteq g \quad \Leftrightarrow \quad \ker f \leq \ker g,
\end{equation}
where $\ker f = \{(x,y) \mid f(x) = f(y)\}$.
It is easy to see that $f\sqsubseteq g$ holds if and only if $gf = g$.  
Moreover, under this partial ordering $\ID{X}$ is a lattice which is 
isomorphic to $\bEqX$ (viz.~the map $\Theta : \EqX \rightarrow
\ID{X}$ given by $\Theta(\alpha) = f_\alpha$, where
$f_\alpha(x) = \min\{y\in X \mid (x,y)\in \alpha\}$.) 

\renewcommand{\bn}{\ensuremath{n}}  
\renewcommand{\5}{\ensuremath{5}}  

Suppose $S$ is a finite nonabelian simple
group, and consider $S^\bn$, the direct power of $n$ copies of $S$.
An element of $S^\bn$ may be viewed as a map from the set 
$\bn = \{0, 1, \dots, n-1\}$ into $S$.  Thus, if 
$x = (x_0, x_1, \dots, x_{n-1})\in S^n$, then by 
$\ker x$ we mean the relation $(i,j) \in \ker x$ if and only if $x_i = x_j$.
The set of constant maps is a subgroup $D < S^\bn$, sometimes called the
\defn{diagonal subgroup}; that is,
$D = \{(s, s, \dots, s) \mid s\in S\} \leq S^\bn$.

For each $f \in \ID{\bn}$, define
\[
K_f = \{(x_{f(0)}, x_{f(1)}, \dots, x_{f(n-1)}) \mid x_{f(i)}\in S, \; i = 0, 1,
\dots, n-1\}.
\]
Then $D \leq K_f\leq S^\bn$, and $K_f$ is the set of maps
$K_f = \{x f \in S^\bn \mid x \in S^\bn \}$; i.e., compositions of the given
map $f\in n^n$, followed by  any $x\in S^n$.  Thus, 
$K_f = \{ y\in S^n \mid \ker f \leq \ker y \}$.
For example, 
if $f = (0, 0, 2, 3, 2)\in \ID{\5}$, then 
$\ker f = |0,1|2,4|3|$ and 
$K_f$ is the subgroup 
of all $(y_0, y_1, \dots, y_4)\in S^\5$ having $y_0 = y_1$ and $y_2 = y_4$. That is,
$K_f = \{(x_{0}, x_{0}, x_2, x_3, x_2) \mid x\in S^5\}$.

\begin{lemma}
\label{lem:latt-duals}
  The map $f \mapsto K_f$ is a dual lattice isomorphism from $\bEq(\bn)$ onto the
  interval sublattice $[D, S^\bn] \leq \Sub(S^\bn)$.
\end{lemma}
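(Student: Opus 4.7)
The plan is to break the proof into three parts: well-definedness plus order-reversal, injectivity, and surjectivity onto $[D, S^\bn]$. After identifying $\bEq(\bn)$ with $\ID{\bn}$ via the lattice isomorphism $\Theta$ described just above, the assertion becomes: the map $f \mapsto K_f$ is an order-reversing bijection from $\ID{\bn}$ onto $[D, S^\bn]$.

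For well-definedness, note that $D \leq K_f$ because constant tuples trivially satisfy any kernel condition, and $K_f \leq S^\bn$ since $K_f = \{y \in S^\bn : \ker f \leq \ker y\}$ is closed under componentwise multiplication and inversion (if $\ker f \leq \ker y$ and $\ker f \leq \ker z$, then $y_i = y_j$ and $z_i = z_j$ whenever $(i,j) \in \ker f$, hence $(yz^{-1})_i = (yz^{-1})_j$). Order reversal is then immediate from the description $K_f = \{y : \ker f \leq \ker y\}$: if $f \sqsubseteq g$, i.e.\ $\ker f \leq \ker g$, then any $y$ satisfying $\ker g \leq \ker y$ a fortiori satisfies $\ker f \leq \ker y$, so $K_g \subseteq K_f$. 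For injectivity, I recover $\ker f$ from $K_f$ as the minimum of $\{\ker y : y \in K_f\}$; the minimum is attained by choosing representatives $s_1, \dots, s_r \in S$, one per block of $\ker f$, and setting $y$ to be $s_k$ on the $k$-th block. Since $\Theta$ is a bijection, recovering $\ker f$ recovers $f$.

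Surjectivity is the hard part, and it rests on the following structural fact about nonabelian simple $S$: \emph{the only subgroups of $S \times S$ containing the diagonal $\Delta$ are $\Delta$ and $S \times S$}. This in turn follows from a short normality argument: for $H \leq S \times S$ with $\Delta \leq H$, the sets $N_1 = \{s : (s,1) \in H\}$ and $N_2 = \{s : (1,s) \in H\}$ are normal in $S$ (using conjugation by diagonal elements), hence each is $1$ or $S$, and the four cases give only $\Delta$ and $S \times S$. Given $D \leq H \leq S^\bn$, I define $\alpha \in \bEq(\bn)$ by $i \,\alpha\, j$ iff $\pi_{ij}(H) = \Delta$ (the alternative $\pi_{ij}(H) = S \times S$ being the only other possibility); transitivity follows because $i\,\alpha\,j$ and $j\,\alpha\,k$ force $y_i = y_j = y_k$ for every $y \in H$. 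Then $H \subseteq K_{f_\alpha}$ is immediate, and I aim to set $f = f_\alpha$ and prove the reverse inclusion.

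The main obstacle, and the only nontrivial calculation, is $K_{f_\alpha} \subseteq H$. Let $B_1, \dots, B_r$ be the blocks of $\alpha$, pick representatives $i_1 \in B_1, \dots, i_r \in B_r$, and let $\pi \colon S^\bn \to S^r$ be projection onto these coordinates. The claim reduces to showing $\pi(H) = S^r$; for if $z \in K_{f_\alpha}$ is constant on each block and $y \in H$ matches $z$ on the representatives, then $y$ and $z$ agree everywhere, so $z \in H$. Now $\pi(H)$ is a subgroup of $S^r$ containing the diagonal of $S^r$, and by the choice of representatives every $2$-coordinate projection of $\pi(H)$ equals $S \times S$. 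I will prove by induction on $r$ that such a subgroup must be all of $S^r$: projecting off the last coordinate gives $S^{r-1}$ by induction, and then analyzing the kernel $N = \{s : (1,\dots,1,s) \in \pi(H)\}$, which is normal in $S$ by conjugation with the diagonal, forces either $N = S$ (giving $\pi(H) = S^r$ directly) or $N = 1$, in which case $\pi(H)$ would be the graph of a homomorphism $S^{r-1} \to S$ incompatible with the $2$-projection hypothesis, since the normal subgroups of $S^{r-1}$ are products of factors and no such graph has all $2$-projections onto $S \times S$.
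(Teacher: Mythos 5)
Your proof is correct, and it is worth noting that it does substantially more than the paper's own argument. The paper's proof is a one-liner: it verifies only the biconditional $f\sqsubseteq h \Leftrightarrow K_h \leq K_f$, which gives well-definedness, order reversal, and injectivity, and then treats surjectivity onto $[D, S^\bn]$ as ``clear.'' Surjectivity is in fact the only part of the lemma where the hypothesis that $S$ is nonabelian simple is needed (the anti-embedding works for any group $S$ with $|S|\geq 2$), and your proposal supplies a complete argument for it: the dichotomy that a subgroup of $S\times S$ containing the diagonal is either the diagonal or everything, the extraction of the partition $\alpha$ from $H$ via the two-coordinate projections, and the induction showing that a subgroup of $S^r$ containing the diagonal with all two-coordinate projections full must be all of $S^r$. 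All of these steps check out; in particular the induction correctly locates where nonabelian simplicity enters (normal subgroups of $S^{r-1}$ are sub-products of the factors, which rules out the graph-of-a-homomorphism case), whereas the two-factor dichotomy itself already holds for abelian simple $S$ by a counting argument, so attributing it to nonabelian simplicity is a harmless overstatement. The upshot is that your write-up fills a genuine gap that the paper leaves to the reader, and it makes visible the one place the standing assumption on $S$ is actually used.
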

\begin{proof}
This is clear since $\ID{\bn}$ is ordered by (\ref{eq:MID111}), and 
we have 
$f\sqsubseteq h$ if and only if
$K_h = \{y \in S^\bn \mid \ker h \leq \ker y \}
\leq \{y \in S^\bn \mid \ker f \leq \ker y \} =  K_f$.
\end{proof}

\index{Kurzweil-Netter Theorem}%
\begin{theorem}[Kurzweil~\cite{Kurzweil:1985}, Netter~\cite{Netter:1986}]
\label{thm:duals-interv-subl}
  If the finite lattice $L$ is representable (as the congruence lattice of a
  finite algebra), then so is the dual lattice $L'$.
\end{theorem}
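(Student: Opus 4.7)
The plan is to start with a finite algebra $\bA$ with $\Con\bA \cong L$ and construct from it a finite algebra $\bC$ whose congruence lattice is isomorphic to $L'$. By Lemma~\ref{sec:unarycongruences} I may replace $\bA$ by its reduct to basic translations and assume $\bA = \<A, F\>$ is unary. Choose a finite nonabelian simple group $S$ with $|S|\geq |A|$, set $G = S^A$, and let $D\leq G$ be the diagonal subgroup. Lemma~\ref{lem:latt-duals} provides the dual lattice isomorphism $\alpha \mapsto K_\alpha$ from $\bEq(A)$ onto $[D, G]$, so since $\Con\bA$ is a sublattice of $\bEq(A)$, its image $\{K_\alpha : \alpha \in \Con\bA\}$ is a sublattice of $[D, G]$ anti-isomorphic to $\Con\bA$, i.e.\ isomorphic to $L'$. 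It therefore suffices to exhibit a finite algebra whose congruence lattice agrees, under the standard correspondence between $\Con\<G/D, G\>$ and $[D, G]$, with this sublattice.

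For each $f\in F$ define the endomorphism $\hat{f}: G \to G$ by $\hat{f}(x) = x \circ f$. A constant map precomposed with $f$ is the same constant map, so $\hat{f}$ fixes $D$ pointwise and therefore descends to a well-defined unary map $\tilde{f}: G/D \to G/D$, $\tilde{f}(xD) = \hat{f}(x)D$. Take $\bC$ to be the finite unary algebra
\[
\bC = \<G/D,\; \{L_g : g \in G\} \cup \{\tilde{f} : f \in F\}\>,
\]
where $L_g(xD) := gxD$. The congruences of the $G$-set reduct $\<G/D, \{L_g\}\>$ are in bijection with the subgroups $K \in [D, G]$ via $xD\,\theta_K\,yD$ iff $x^{-1}y \in K$, and since $\hat{f}$ is a homomorphism, $\theta_K$ is compatible with $\tilde{f}$ iff $\hat{f}(K) \subseteq K$. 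Hence $\Con\bC$ corresponds to the sublattice of $[D, G]$ consisting of subgroups simultaneously invariant under every $\hat{f}$ for $f \in F$.

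The main step, and the only one I expect to require real care, is to verify that $K_\alpha \in [D, G]$ is $\hat{f}$-invariant if and only if $f$ respects $\alpha$. Unpacking the definition, $\hat{f}(K_\alpha) \subseteq K_\alpha$ amounts to the statement that for every $x: A \to S$ with $\alpha \leq \ker x$, also $\alpha \leq \ker(x \circ f) = f^{-1}(\ker x)$; equivalently, $(f(a), f(b)) \in \ker x$ whenever $(a, b) \in \alpha$. Quantifying over all such $x$ this becomes $(f(a), f(b)) \in \bigcap\{\ker x : \alpha \leq \ker x\}$. This is precisely where the size assumption $|S| \geq |A| \geq |A/\alpha|$ pays off: I may pick $x_0: A \to S$ injective on the $\alpha$-classes so that $\ker x_0 = \alpha$, forcing the intersection to be $\alpha$ itself. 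Therefore $\hat{f}$-invariance of $K_\alpha$ is equivalent to $f$ preserving $\alpha$, and simultaneous invariance under all $\hat{f}$, $f\in F$, is equivalent to $\alpha \in \Con\bA$. Combined with the previous paragraph this yields $\Con\bC \cong \{K_\alpha : \alpha \in \Con\bA\} \cong L'$.
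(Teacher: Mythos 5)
Your proof is correct and follows essentially the same route as the paper's: the same diagonal-subgroup construction realizing the dual of $\Eq(A)$ as the interval $[D, S^A]$, the same lifting of the unary operations by precomposition to the coset algebra on $S^A/D$, and the same identification of the surviving congruences with the $F$-invariant partitions. The only differences are cosmetic --- you spell out the final equivalence (that $K_\alpha$ is $\hat{f}$-invariant iff $f$ respects $\alpha$), which the paper asserts in one sentence, and your hypothesis $|S|\geq |A|$ is harmless but not actually needed, since to force $\bigcap\{\ker x : \alpha \leq \ker x\} = \alpha$ it suffices to separate any two distinct $\alpha$-classes with a two-valued map.
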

\begin{proof}
  Without loss of generality, we assume that $L$ is concretely represented
  as $L = \Con \<\bn, F\>$.
  By Lemma~\ref{sec:unarycongruences},  we can further
  assume that $F$ consists of unary operations: $F \subseteq \bn^\bn$.
  As above, let $S$ be a nonabelian simple group
  and let $D$ be the diagonal subgroup of $S^\bn$.
  Then the unary algebra $\<S^\bn/D, S^\bn\>$  is a transitive $S^\bn$-set which (by
  Theorem~\ref{thm:g-set-isomorphism2} below) has congruence lattice isomorphic
  to the interval $[D, S^\bn]$.  By Lemma~\ref{lem:latt-duals}, this is the dual
  of the lattice $\bEq(\bn)$.  That is, 
  $\Con \<S^\bn/D, S^\bn\> \cong (\bEq(\bn))'$.
  
  Now, each operation $\phi \in F$ gives rise to an operation on $S^\bn$
  by composition:
  \[
  \hat{\phi}(\bs) = \hat{\phi}(s_{0}, s_1  \dots, s_{n-1}) = (s_{\phi(0)},
  s_{\phi(1)}\dots, s_{\phi(n-1)}). 
  \]
  Thus, $\phi$ induces  an operation on $S^\bn/D$ since, for 
  $\bd = (d, d, \dots, d) \in D$ and $\bs \in S^\bn$ we have 
  $\bs \bd = (s_{0}d, s_{1}d, \dots , s_{n-1}d)$ and 
  $\hat{\phi}(\bs \bd) = (s_{\phi(0)}d, s_{\phi(1)}d, \dots , s_{\phi(n-1)}d) = \hat{\phi}(\bs) \bd$,
  so $\hat{\phi}(\bs D)  = \hat{\phi}(\bs) D$.  Finally, add the set of operations 
  $\hat{F} = \{\hat{\phi} \mid \phi \in F\}$ to $\<S^\bn/D, S^\bn\>$, yielding the
  new algebra  $\<S^\bn/D, S^\bn \cup \hat{F}\>$, and observe
  that a congruence $\theta \in \Con\<S^\bn/D, S^\bn\>$ remains a congruence of
  $\<S^\bn/D, S^\bn \cup \hat{F}\>$ if and only if it correponds to a partition on
  $\bn$ that is invariant under $F$.
\end{proof}

\todo{Perhaps we should give more details in the last sentence of the proof.
  Some notes are below, but they need to be cleaned up.}


\renewcommand{\bn}{\ensuremath{\mathbf{n}}}
\renewcommand{\5}{\ensuremath{\mathbf{5}}}

\section{Union of a filter and ideal}
\label{sec:union-filter-ideal}
The lemma in this section was originally proved by John Snow using primitive positive
formulas.  Since it provides such a useful tool for proving that certain finite lattices 
are representable as congruence lattices, we give our own direct
proof of the result below.  In Chapter~\ref{cha:lattices-with-at} we use this lemma to prove
the existence of representations of a number of small lattices.

Before stating the lemma, we need a couple of definitions.  (These will be
discussed in greater detail in Section~\ref{sec:closure-method}.)
Given a relation $\theta \subseteq X\times X$, we say that the map 
$f: X^n\rightarrow X$ \emph{respects} $\theta$ and we write 
$f(\theta) \subseteq \theta$ provided $(x_i, y_i)\in \theta$ implies
$(f(x_1, \dots, x_n), f(y_1, \dots, y_n))\in \theta$.
For a set $L\subseteq \Eq(X)$ of equivalence relations we define
      \[
      \lambda(L) = \{f\in X^X: (\forall \theta \in L) \; f(\theta) \subseteq \theta \},
      \]
which is the set of all unary maps on $X$ which respect all relations in $L$.
\begin{lemma} 
\label{lemma:union-filter-ideal}
Let $X$ be a finite set.
  If $\bL \leq \bEqX$ is representable and $\bL_0\leq \bL$ is a sublattice with universe
  $\upalpha\cup \downbeta$ where $\upalpha=\{x\in L \mid \alpha \leq x\}$ and 
$\downbeta=\{x\in L \mid x\leq \beta\}$ for some $\alpha, \beta \in L$, then $\bL_0$ is representable.
\end{lemma}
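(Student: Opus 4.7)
The approach is to augment a given representation of $\bL$ with additional unary operations that kill every congruence outside $\bL_0$ while preserving all of $\bL_0$.  Since $\bL$ is representable, some finite algebra has $\bL$ as its congruence lattice; after transferring along this isomorphism and invoking Lemma~\ref{sec:unarycongruences} to pass to a unary reduct, we may assume $\bL = \Con\<X, F\>$ for some $F \subseteq X^X$.  Enlarging $F$ can only shrink the congruence lattice, and $\bL_0 \subseteq \bL$, so it suffices to produce, for each $\theta \in \bL \setminus \bL_0$, a unary map $f_\theta \in \lambda(\bL_0)$ that fails to respect $\theta$.  Setting $F' := F \cup \{f_\theta : \theta \in \bL \setminus \bL_0\}$ then yields $\Con\<X, F'\> = \bL_0$: each $\gamma \in \bL_0$ survives because every $f_\theta \in \lambda(\bL_0)$ respects it, while each $\theta \in \bL \setminus \bL_0$ is killed by its own $f_\theta$.

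For the construction, fix $\theta \in \bL \setminus \bL_0$.  Because $\theta \notin \upalpha \cup \downbeta$ we have $\alpha \nleq \theta$ and $\theta \nleq \beta$, so we may select pairs $(a,b) \in \alpha \setminus \theta$ and $(c,d) \in \theta \setminus \beta$; note $(c,d) \notin \beta$ forces $c/\beta \neq d/\beta$.  Define
\[
  f_\theta(x) = \begin{cases} a, & (x,c) \in \beta, \\ b, & \text{otherwise.} \end{cases}
\]
Three properties require verification.  First, $f_\theta$ is constant on $\beta$-classes, so $\ker f_\theta \geq \beta$; hence for each $\delta \in \downbeta$, any $\delta$-related pair is $\beta$-related, hence mapped to the diagonal, so $f_\theta$ respects $\delta$.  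Second, the image of $f_\theta$ lies in $\{a,b\}$, and $(a,b) \in \alpha$; so for any $\gamma \in \upalpha$ and any pair $(x,y) \in \gamma$, the image pair $(f_\theta(x), f_\theta(y))$ is either on the diagonal or equals $(a,b)$ or $(b,a)$, and in every case lies in $\alpha \leq \gamma$.  Third, $f_\theta(c) = a$ and $f_\theta(d) = b$, so $(c,d) \in \theta$ maps to $(a,b) \notin \theta$, and $f_\theta$ does not respect $\theta$.

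I expect the main challenge to be pinpointing the right shape for $f_\theta$: the two-value range chosen $\alpha$-relatedly secures compatibility with the filter $\upalpha$, while partitioning the domain according to membership in $c/\beta$ secures compatibility with the ideal $\downbeta$.  Once this design is spotted, the three verifications above are entirely routine, and no case analysis on the comparability of $\alpha$ and $\beta$ is required.
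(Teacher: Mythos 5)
Your proof is correct and follows essentially the same route as the paper's: for each $\theta \in L \setminus L_0$ you build the same two-valued map (sending the $\beta$-class of one endpoint of a pair in $\theta \setminus \beta$ to $a$ and everything else to $b$, where $(a,b) \in \alpha \setminus \theta$), and the three verifications match the paper's word for word. The only cosmetic difference is that you adjoin these maps to an arbitrary set $F$ with $\Con\<X,F\> = L$, whereas the paper works with $\lambda(L)$ and concludes $\bL_0 = \bCon\<X,\lambda(L_0)\>$; the two bookkeeping schemes are interchangeable.
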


\vskip3mm

      \begin{center}
        \begin{tikzpicture}[scale=.4]
          \node (c) at (.5,.75)   {};
          \node (d) at (-1.5,4.5)   {};
          \node (e) at (1,2)   {};
          \node (f) at (-.75,6)   {};
          \node (bottom) at (0,0)  [fill, circle, inner sep=.8pt] {};
          \node (top) at (0,8)  [fill, circle, inner sep=.8pt] {};
          \node (alpha) at (-1.8,3)  [fill, circle, inner sep=.8pt] {};
          \node (beta) at (1.5,4)  [fill, circle, inner sep=.8pt] {};
          \node (theta) at (-.5,3)  [fill, circle, inner sep=.8pt] {};
          \draw (-.3,3.5) node {$\theta$};
          \draw[semithick] 
          (bottom) to [out=15, in=-15] (top) 
          (top) to [out=195, in=165] (bottom);
          \draw[dotted] (c) to (d) (e) to (f);
          \draw[dotted] (bottom) to (alpha)  (beta) to (top);
          \draw[semithick] 
          (alpha) to [out=30, in=-80] (top)
          (top) to [out=205, in=105] (alpha)
          (bottom) to [out=30, in=-80] (beta)
          (beta) to [out=205, in=110] (bottom);
          \draw (0,-1.5) node {$L_0 \leq  L$};
          \draw (-1.8,2.5) node {$\alpha$};
          \draw (1.5,4.5) node {$\beta$};
        \end{tikzpicture}
      \end{center}

\vskip3mm

\begin{proof}
Assume $\bL_0 \ncong \two$, otherwise the result holds trivially. 
Since $\bL\leq \bEqX$ is representable, we have $\bL = \bCon
\<X, \lambda(L)\>$ (cf.~Section~\ref{sec:closure-method}).  Take an arbitrary
$\theta \in L \setminus L_0$. Since $\theta \notin \upalpha$, 
there is a pair 
$(a,b) \in \alpha \setminus \theta$.  Since $\theta \notin \downbeta$, there is
a pair $(u,v)\in \theta\setminus \beta$. Define $h\in X^X$ as follows:
\begin{equation*}
h(x) = \begin{cases}
a,& \quad x\in u/\beta,\\
b,& \quad \text{ otherwise.}
\end{cases}
\end{equation*}
Then, $\beta \leq \ker h = (u/\beta)^2 \cup ((u/\beta)^c)^2$, where $(u/\beta)^c$ denotes the
complement of the $\beta$ class containing $u$.  Therefore, $h$ respects every
$\gamma \leq \beta$.  Furthermore, $(a, b) \in \gamma$ for all $\gamma \geq \alpha$,
so $h$ respects every $\gamma$ above $\alpha$.  This proves that $h\in \lambda(L_0)$.
Now, $\theta$ was arbitrary, so we have proved that for every $\theta \in L
\setminus L_0$ there exists a function in $\lambda(L_0)$ which respects every
$\gamma \in \upalpha\cup \downbeta = L_0$, but violates $\theta$.  Finally,
since 
$\bL_0 \leq \bL$, we have $\lambda(L)\subseteq \lambda(L_0)$.  Combining these
observations, we see that every $\theta \in \Eq(X) \setminus L_0$ is
violated by some function in $\lambda(L_0)$. Therefore, $\bL_0 = \bCon \< X, \lambda(L_0)\>$.
\end{proof}

\section{Ordinal sums}
\label{sec:ordinal-sums}
The following theorem is a consequence of 
\index{McKenzie, Ralph}%
McKenzie's shift product construction~\cite{McKenzie:1984}. 
\todo{Possibly add a short description of the shift product.}
\index{adjoined ordinal sum}%
\index{ordinal sum}%
\begin{theorem}
\label{thm:ordinal-sums}
If $L_1, \dots, L_n \in \sL_3$ is a collection of representable lattices, then
the ordinal sum and the adjoined ordinal sum, shown in
Figure~\ref{fig:adjordinal}, are representable.
\end{theorem}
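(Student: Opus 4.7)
The plan is to induct on $n$ and reduce to the two-summand case, then realize $L_1 \oplus L_2$ as a sublattice of $L_1 \times L_2$ of the form covered by Lemma~\ref{lemma:union-filter-ideal}. The adjoined case I will then derive from the ordinary one by interposing copies of the two-element chain.

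By a straightforward induction on $n$, using the associativity identity $L_1 \oplus \cdots \oplus L_n = (L_1 \oplus \cdots \oplus L_{n-1}) \oplus L_n$, it suffices to prove the theorem for $n=2$. So I fix representable lattices $L_1, L_2 \in \sL_3$. By closure of $\sL_3$ under direct products (item 3 of the list in Section~\ref{sec:clos-prop-class}), the product $L_1 \times L_2$ is representable, say $L_1 \times L_2 \cong \bCon\bA$ for some finite algebra $\bA$ with universe $X$, so that $L_1 \times L_2$ may be viewed as a sublattice of $\bEqX$.

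Inside $L_1 \times L_2$ I now take $\alpha = \beta = (1_{L_1}, 0_{L_2})$. Then the principal filter $\upalpha = \{1_{L_1}\} \times L_2$ is isomorphic to $L_2$, the principal ideal $\downbeta = L_1 \times \{0_{L_2}\}$ is isomorphic to $L_1$, and $\upalpha \cap \downbeta = \{\alpha\}$. For any $x \in \upalpha$ and $y \in \downbeta$ one has $y \leq \alpha \leq x$, so $x \join y = x \in \upalpha$ and $x \meet y = y \in \downbeta$; hence $\upalpha \cup \downbeta$ is indeed a sublattice of $L_1 \times L_2$, and it is plainly order-isomorphic to $L_1 \oplus L_2$, with the copy of $L_1$ sitting below the copy of $L_2$, glued at $\alpha$. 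Applying Lemma~\ref{lemma:union-filter-ideal} to this sublattice of $\bCon\bA \leq \bEqX$ then produces a finite algebra whose congruence lattice is isomorphic to $L_1 \oplus L_2$.

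For the adjoined ordinal sum of Figure~\ref{fig:adjordinal}, which differs from the ordinary ordinal sum only in that the top of $L_i$ and the bottom of $L_{i+1}$ remain distinct elements joined by a covering pair, I realize it as an iterated ordinary ordinal sum with the two-element chain $\two$ interposed between consecutive summands, namely $L_1 \oplus \two \oplus L_2 \oplus \two \oplus \cdots \oplus \two \oplus L_n$. Since $\two$ is trivially representable (it is $\Con$ of any two-element set with no operations) and ordinary ordinal sums of representable lattices are representable by what I just proved, the adjoined case follows by a second application of the $n=2$ result. The only real obstacle in the whole argument is verifying that the sublattice built inside $L_1 \times L_2$ really is $L_1 \oplus L_2$, but this is routine order-theoretic bookkeeping once the filter/ideal decomposition is laid out; no deep new idea is required beyond Lemma~\ref{lemma:union-filter-ideal} and closure under direct products.
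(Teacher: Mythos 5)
Your proof is correct and follows essentially the same route as the paper's: \Tuma's closure of $\sL_3$ under direct products, followed by Snow's filter-plus-ideal lemma (Lemma~\ref{lemma:union-filter-ideal}) applied to $\upalpha\cup\downbeta$ with $\alpha=\beta=(1_{L_1},0_{L_2})$ inside $L_1\times L_2$, and interposition of copies of $\two$ to pass between the two kinds of sum, together with an induction on $n$. The only (cosmetic) difference is that you correctly note that this union of a filter and an ideal yields the glued ordinal sum directly, obtaining the adjoined sum from it via the interposed $\two$'s, whereas the paper's prose assigns these two roles the other way around.
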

A more direct proof of Theorem~\ref{thm:ordinal-sums} follows the argument given
\index{Snow, John}%
by John Snow in~\cite{Snow:2000}.  As noted above, 
\indexTuma%
\Jiri\ \Tuma\ proved that
the class of finite representable lattices is closed under direct products.
Thus, if $L_1$ and 
$L_2$ are representable, then so is $L_1 \times L_2$.  Now note that the
adjoined ordinal sum of $L_1$ and $L_2$ is the union, $\alpha^\uparrow \cup
\beta^\downarrow$,  of a filter and ideal  
 in the lattice $L_1 \times L_2$, where
$\alpha = \beta = 1_{L_1} \times 0_{L_2}$.  
Therefore, by Lemma~\ref{lemma:union-filter-ideal},
the adjoined ordinal sum is representable.  A trivial induction argument proves the
result for adjoined ordinal sums of $n$ lattices.  The same result for ordinal
sums (Figure~\ref{fig:adjordinal} left) follows since the two element lattice is
obviously representable. 

\begin{center}
  \begin{figure}[h!]
    \label{fig:adjordinal}
    \centering
{\scalefont{.8}
    \begin{tikzpicture}[scale=0.3]
      \node (botL1) at (8,-4) [fill, circle,inner sep=.6pt] {};
      \node (00) at (8,0) [fill, circle,inner sep=.6pt] {};
      \node (topL2) at (8,4) [fill,circle,inner sep=.6pt] {};
      \node (topLn) at (8,10) [fill, circle,inner sep=.6pt] {};
      \node (botLn) at (8,6) [fill, circle,inner sep=.6pt] {};

      \draw (8,-2) node {$L_1$};
      \draw (8,8) node {$L_n$};
      \draw (8,2) node {$L_2$};
      \draw (8,5.5) node {$\vdots$};

    \draw 
    (botL1) to [out=50,in=-50] (00)
    (botL1) to [out=130,in=-130] (00)
    (00) to [out=50,in=-50] (topL2)
    (00) to [out=130,in=-130] (topL2)
    (botLn) to [out=50,in=-50] (topLn)
    (botLn) to [out=130,in=-130] (topLn);

      \node (botL1) at (0,-6) [fill, circle,inner sep=.6pt] {};
      \node (topL1) at (0,-2) [fill, circle,inner sep=.6pt] {};
      \node (botL2) at (0,-1) [fill, circle,inner sep=.6pt] {};
      \node (topL2) at (0,3) [fill,circle,inner sep=.6pt] {};
      \node (04) at (0,4) [fill, circle,inner sep=.6pt] {};
      \node (06) at (0,6) [fill, circle,inner sep=.6pt] {};
      \node (botLn) at (0,7) [fill, circle,inner sep=.6pt] {};
      \node (topLn) at (0,11) [fill, circle,inner sep=.6pt] {};

      \draw (0,-4) node {$L_1$};
      \draw (0,9) node {$L_n$};
      \draw (0,1) node {$L_2$};
      \draw (0,5.5) node {$\vdots$};

    \draw 
    (botL1) to [out=50,in=-50] (topL1)
    (botL1) to [out=130,in=-130] (topL1) 
    to (botL2) to [out=50,in=-50] (topL2)
    (botL2) to [out=130,in=-130] (topL2)
    (topL2) to (04)
    (06) to (botLn) to [out=50,in=-50] (topLn)
    (botLn) to [out=130,in=-130] (topLn);
  \end{tikzpicture}
}
          \caption{The ordinal sum (left) and the adjoined ordinal sum (right) of the lattices
            $L_1, \dots, L_n$.}
        \end{figure}
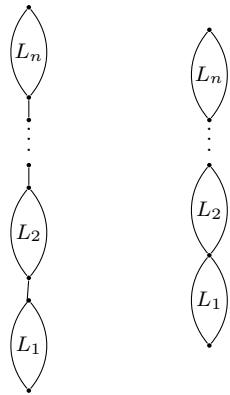
\end{center}


\part{Finite Lattice Representations}

\chapter{Concrete Representations}
\label{cha:concr-repr}


In this chapter we introduce a strategy that has proven very useful for showing
that a given lattice is representable as a congruence lattice of a finite
algebra. We call it the \defn{closure method}, and it has become especially
useful with the advent of powerful computers which can search for such
representations.  Here, as above, $\Eq(X)$ denotes the lattice of equivalence
relations on $X$. Sometimes we abuse notation and take $\Eq(X)$ to mean the
lattice of partitions of the set $X$. This has never caused problems because these
two lattices are isomorphic.  

\section{Concrete versus abstract representations}
\index{J\'onsson, Bjarni}
As Bjarni J\'onsson explains in~\cite{Jonsson:1972}, there are two types of
representation problems for congruence lattices, the concrete and the
abstract.  The \emph{concrete representation problem} asks whether a specific family of
equivalence relations on a set $A$ is equal to $\Con \bA$ for some
algebra $\bA$ with universe $A$.  The \defn{abstract representation problem}
asks whether a given lattice is isomorphic to $\Con \bA$ for some algebra $\bA$.

These two problems are closely related, and have become even more so since the
publication in 1980 
\indexPudlak
of~\cite{Pudlak:1980}, in which Pavel \Pudlak\ and
\indexTuma
\Jiri\ \Tuma\ prove that every finite
lattice can be embedded as a 
spanning sublattice\footnote{Recall, by a 
\defn{spanning sublattice}
of a bounded lattice $L_0$, we mean a sublattice $L\leq L_0$ that has the same top and
bottom as $L_0$.  That is  $1_L = 1_{L_0}$ and $0_L = 0_{L_0}$.}
of the lattice $\Eq(X)$ of equivalence relations on a finite set $X$.   
Given this result, we see that even if our goal is to solve the abstract
representation problem for some (abstract) lattice $L$, then
we can embed $L$ into $\Eq(X)$ as $L\cong L_0\leq \Eq(X)$, for some finite set
$X$, and then try to solve the concrete representation problem for $L_0$.  

A point of clarification is in order here.  The term 
\defn{representation} 
has become a bit overused in the literature about the finite lattice
representation problem.  On the one hand, given a finite lattice $L$, if there
is a finite algebra $\bA$ such that $L \cong \Con \bA$, then $L$ is called a
\defn{representable lattice}.  On the other hand, given a sublattice $L_0\leq \Eq(X)$, 
if $L_0\cong L$, then $L_0$ is sometimes called a 
\defn{concrete representation}
of the lattice $L$ (whether or not it is the congruence lattice of an algebra).    
Below we will define the notion of a \defn{closed concrete representation}, and if we
have this special kind of concrete representation of a give lattice, then that
lattice is indeed representable in the first sense.

As we will see below, there are many examples in which a particular concrete
representation $L_0\leq \Eq(X)$ of $L$ is not a congruence lattice of a
finite algebra.  (In fact, we will describe general situations in which we can
guarantee that there are no non-trivial\footnote{By a 
\defn{non-trivial function} we mean a function that is
  not constant and not the identity.} operations which respect the equivalence
relations of $L_0$.)  This does not imply that $L \notin \sL_3$.  It may
simply mean that $L_0$ is not the ``right'' concrete representation of $L$, and
perhaps we can find some other $L \cong L_1\leq \Eq(X)$ such that $L_1 = \Con
\<X, \lambda(L_1)\>$.

\section{The closure method}
\label{sec:closure-method}
The idea described in this section
first appeared in \emph{Topics in Universal Algebra}~\cite{Jonsson:1972}, pages
174--175, where J\'onsson states, ``these or related results were discovered
independently by at least three different parties during the summer and fall of
1970: by Stanley Burris, Henry Crapo, Alan Day, Dennis Higgs and Warren Nickols
at the University of Waterloo, by R.~Quackenbush and B.~Wolk at the University
of Manitoba, and by B.~J\'{o}nsson at Vanderbilt University.''

Let $X^X$ denote the set of all (unary) maps from the set $X$ to itself, and let 
$\Eq(X)$ denote the lattice of equivalence relations on the set $X$.  If $\theta
\in \Eq(X)$ and $h\in X^X$, we write $h(\theta) \subseteq \theta$ and say
that ``$h$ respects $\theta$'' if and only if for all $(x,y)\in X^2$ $(x,y)\in
\theta$ implies 
$(h(x),h(y)) \in \theta$.  If $h(\theta) \nsubseteq \theta$, we sometimes say
that ``$h$ violates $\theta$.''

For $L\subseteq \Eq(X)$ define
      \[
      \lambda(L) = \{h\in X^X: (\forall \theta \in L) \; h(\theta) \subseteq \theta \}.
      \]

For $H\subseteq X^X$ define
      \[
      \rho(H) = \{\theta \in \Eq(X) \mid   (\forall h\in H) \; h(\theta) \subseteq \theta\}.
      \]
The map $\rho \lambda$ is a \defn{closure operator} on $\Sub[\Eq(X)]$.
That is, $\rho \lambda$ is
\begin{itemize}
\item \emph{idempotent:}\footnote{In fact, $\rho \lambda \rho = \rho$ and 
$\lambda \rho \lambda = \lambda$.} $\rho \lambda \rho \lambda = \rho \lambda$;
\item \emph{extensive:} $L \subseteq \rho \lambda (L)$ for every $L \leq \Eq(X)$;
\item \emph{order preserving:} $\rho \lambda (L) \leq \rho \lambda (L_0)$ if $L \leq L_0$.
\end{itemize}
Given $L\leq \Eq(X)$, if $\rho\lambda(L) = L$, then we say $L$ is a 
\index{closed sublattice}%
\emph{closed} sublattice of $\Eq(X)$, in which case we clearly have
      \[L = \Con \<X, \lambda(L)\>.\]
This suggests the following strategy for solving the representation problem for a
given abstract finite lattice $L$: search for a concrete representation $L \cong
L_0\leq \Eq(X)$,
compute $\lambda(L_0)$, compute $\rho\lambda(L_0)$, and determine whether 
$\rho\lambda(L_0) = L_0$.  If so, then we have solved the abstract representation
problem for $L$, by finding a \defn{closed concrete representation}, or simply
\emph{closed representation}, of $L_0$.  We call this strategy the \defn{closure method}.

We now state without proof a well known theorem which shows that the finite lattice
  representation problem can be formulated in terms of closed concrete
representations (cf.~\cite{Jonsson:1972}).
\begin{theorem}\label{Concrete-thm-3}
If $\bL \leq \bEqX$, then $\bL= \bCon\bA$ for some algebra 
$\mathbf{A} = \langle X, F\rangle$ if and only if $\bL$ is closed.
\end{theorem}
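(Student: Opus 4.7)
The plan is to exploit the Galois connection between $X^X$ and $\Eq(X)$ induced by the maps $\lambda$ and $\rho$, together with Lemma~\ref{sec:unarycongruences}, which lets us replace any algebra on $X$ by an equivalent unary one without changing the congruence lattice. Once this setup is in hand, both implications fall out of standard Galois-theoretic manipulations.

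\emph{The direction $(\Leftarrow)$.} Assume $\bL$ is closed, i.e., $\rho\lambda(L) = L$. Consider the unary algebra $\bA = \langle X, \lambda(L)\rangle$. By unfolding the definitions, $\theta \in \Con \bA$ exactly when $h(\theta) \subseteq \theta$ for every $h \in \lambda(L)$, that is, exactly when $\theta \in \rho(\lambda(L))$. Hence $\Con \bA = \rho\lambda(L) = L$, so $\bL = \bCon\bA$.

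\emph{The direction $(\Rightarrow)$.} Suppose $\bL = \bCon\bA$ for some $\bA = \langle X, F\rangle$. By Lemma~\ref{sec:unarycongruences} we may replace $F$ by the set $F'$ of basic translations, obtaining a unary algebra $\langle X, F'\rangle$ with the same congruence lattice. Then $F' \subseteq X^X$ and, by the characterization of congruences in~(\ref{eq:cong-re}), $L = \rho(F')$. Since every relation in $L$ is respected by every map in $F'$, we have $F' \subseteq \lambda(L)$. Applying the order-reversing map $\rho$ yields $\rho\lambda(L) \subseteq \rho(F') = L$. Combined with extensivity of $\rho\lambda$, this gives $\rho\lambda(L) = L$, so $\bL$ is closed.

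There is no real obstacle here; the argument is essentially formal once one knows that $(\lambda, \rho)$ is a Galois connection on $X^X$ and $\Eq(X)$ (which is used implicitly in verifying that $\rho\lambda$ is a closure operator). The only mildly substantive ingredient beyond that is the reduction to unary operations, but this is handed to us by Lemma~\ref{sec:unarycongruences}, and it is precisely what allows the sets $F$ and $\lambda(L)$ to live in the same ambient set $X^X$ so that the inclusion $F' \subseteq \lambda(L)$ makes sense.
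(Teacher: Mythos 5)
Your proof is correct. The paper actually states Theorem~\ref{Concrete-thm-3} without proof (citing J\'onsson), and your argument is precisely the standard one that its setup is designed to support: the $(\Leftarrow)$ direction is the observation $\Con\<X,\lambda(L)\>=\rho\lambda(L)$, and the $(\Rightarrow)$ direction is the general Galois-connection fact that $L=\rho(H)$ for some $H$ forces $\rho\lambda(L)=L$, with Lemma~\ref{sec:unarycongruences} supplying the reduction to unary operations so that $F'\subseteq X^X$ and $L=\rho(F')$.
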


In the remaining sections of this chapter, we consider various aspects of the
closure method and prove some results about it.  Later, in
Section~\ref{sec:seven-elem-latt}, we apply it to the problem of finding
closed representations of all lattices of small order.  
Before proceeding, however, we introduce a slightly different set-up than the
one introduced above that we have found particularly useful
for implementing the closure method on a computer. Instead of considering the
set of equivalence relations on a finite set, we work with the set of idempotent
decreasing maps.  These were introduced above in
Section~\ref{sec:duals-interv-subl-detail}, but we briefly review the definitions here
for convenience.

Given a totally ordered set $X$, 
let the set $\idemdec = \{f\in X^X: f^2 = f \text{ and } f(x) \leq x\}$ be partially
ordered by $\sqsubseteq$ as follows:
\[
 f\sqsubseteq g \quad \Leftrightarrow \quad \ker f \leq \ker g.  
\]
As noted above, 
this makes \idemdec\ into a lattice that is isomorphic to $\bEqX$.   
Define a relation $R$ on $X^X \times \idemdec$ as follows: 
\[
(h,f) \in R \quad \Leftrightarrow \quad (\forall (x,y) \in \ker f)\; (h(x),h(y))
\in \ker f.
\]
If $h\, R\, f$, we say that  $h$ \emph{respects} $f$.

Let $\sF = \power{\idemdec}$ and $\sH = \power{X^X}$ be partially ordered by set
inclusion, and define the maps 
$\lambda: \sF \rightarrow \sH$ and $\rho: \sH \rightarrow \sF$ as follows:
\[
\lambda(F) = \{h\in X^X: \forall f\in F,\, h\, R \,f\} \quad (F \in \sF)
\]
\[
\rho(H) = \{f\in \idemdec: \forall h\in H,\, h\, R\, f\} \quad (H \in \sH)
\]
The pair $(\lambda, \rho)$ defines a \defn{Galois correspondence} between
$\idemdec$ and $X^X$.  That is, $\lambda$ and $\rho$ are 
antitone 
maps such that $\lambda \rho \geq \id_{\sH}$ and $\rho
\lambda \geq \id_{\sF}$.  In particular, for any set $F \in \sF$ we have 
$F \subseteq \rho \lambda (F)$.  These statements are all trivial verifications, and
a couple of easy consequences are:
\begin{enumerate}
\item  $\rho\lambda\rho = \rho$ and $\lambda\rho \lambda= \lambda$,
\item $\rho \lambda$ and $\lambda \rho$ are idempotent.
\end{enumerate}

Since the map $\rho \lambda$ from $\sF$ to itself is idempotent, extensive, 
and order preserving, it is a 
\defn{closure operator} 
on $\sF$, and we say a set $F\in \sF$ is
\emph{closed} if and only if $\rho\lambda(F) = F$. Equivalently,
$F$ is closed if and only if $F = \rho(H)$ for some $H\in \sH$.

\section{Superbad representations}
\label{sec:superbad}
In this section we describe what is in some sense the worst kind of concrete
representation. Given an abstract finite lattice $\bL$, it may happen that, upon
computing the closure of a particular representation $\bL\cong \bL_0 \leq
\bEqX$,  we find that $\rho\lambda (L_0)$ is all of $\Eq(X)$.  We call such an
$\bL_0$ a  \defn{dense sublattice} of $\bEqX$, or more colloquially, a
\defn{superbad representation} of $\bL$.    

More generally, if $A$ and $B$ are subsets of $\idemdec$, we say that $A$ is 
\defn{dense} in $B$ if and only if $\rho\lambda(A) \supseteq B$.  If $\bL$ is a
finite lattice and there exists an embedding $\bL\cong \bL_0\leq \bEqX$ such that
$\rho\lambda(L_0) = \EqX$,  we say 
 that $\bL$ can be \defn{densely embedded} in $\bEqX$.

\subsection{Density}
One of the first questions we asked concerned the 5-element modular lattice,
denoted $\bM_3$ (sometimes called the 
\defn{diamond}; see Figure~\ref{fig:diamond}).  
We asked for which sets $X$ does the lattice of equivalence relations on $X$ contain
a dense $\bM_3$ sublattice. 
The answer is given by
\begin{prop}\label{Concrete-prop-1}
The lattice \bEqX\ contains a proper dense $\bM_3$ sublattice if and only if $|X|\geq 5$.
\end{prop}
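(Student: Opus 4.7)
The plan is to reformulate the density condition in explicit terms. For $|X|\geq 3$, the maps in $\lambda(\Eq(X))$ --- those respecting every equivalence relation on $X$ --- are precisely the identity and the constant maps; one sees this by a short direct check using the minimal non-trivial equivalences $\Delta\cup\{(a,b),(b,a)\}$ and propagating the resulting constraints. Hence $\rho\lambda(L_0)=\Eq(X)$ is equivalent to $\lambda(L_0)=\{\id_X\}\cup\{\text{constant maps on }X\}$, and the task becomes: decide, in terms of $|X|$, whether a proper $\bM_3$ sublattice $L_0\leq\bEq(X)$ can meet this rigidity requirement.

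For the forward direction, $|X|\leq 2$ is vacuous and $|X|=3$ gives $\bEq(X)\cong\bM_3$ itself, so no proper $\bM_3$ sublattice exists. For $|X|=4$, I would enumerate the $\bM_3$ sublattices of $\bEq(4)$. Up to permutation of $X=\{1,2,3,4\}$, the ones with bottom $\Delta$ and top $X\times X$ are either (i) the three $(2,2)$-type partitions $\{12|34,\,13|24,\,14|23\}$, or (ii) a mixed family of the form $\{12|3|4,\,13|24,\,14|23\}$. In case (i) the translation $x\mapsto x+(1,1)$ on $X\cong(\Z/2)^2$ respects all three partitions and is neither $\id$ nor constant; in case (ii) the double transposition $(1\,2)(3\,4)$ does the same job. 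The few remaining $\bM_3$'s sit inside a proper interval of $\bEq(4)$ and admit obvious symmetries preserving all five of their elements, so likewise fail to be dense.

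For the backward direction, I would give an explicit construction on $X=\{0,1,2,3,4\}$:
\[
\alpha = 01\mid 23\mid 4, \qquad \beta = 12\mid 34\mid 0, \qquad \gamma = 024\mid 13.
\]
A direct check shows that the pairwise meets are all $\Delta$ and the pairwise joins are all $X\times X$, so $L_0:=\{\Delta,\alpha,\beta,\gamma,X\times X\}$ is a proper $\bM_3$ sublattice of $\bEq(5)$. Density is then verified by a finite case analysis: fixing $h(0)$ in turn to each of $0,1,2,3,4$, the constraints from the singleton classes $\{4\}\in\alpha$ and $\{0\}\in\beta$ together with the three-element class $\{0,2,4\}\in\gamma$ force each of $h(1),\dots,h(4)$ step by step, leaving only $h=\id$ (when $h(0)=0$ and $h(1)=1$) or a constant map. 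For $n>5$, one extends the construction by placing the additional elements into classes of $\alpha,\beta,\gamma$ in a balanced pattern that preserves both the complementarity and the rigidity, so every $|X|\geq 5$ admits a proper dense $\bM_3$ sublattice.

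The main obstacle is the density verification for the $n=5$ construction: it is a finite but somewhat delicate case analysis, and the partitions have been chosen precisely so that the small distinguished classes pin down $h$ completely. A secondary point is giving a uniform extension to $n>5$; if a direct enlargement of the five-element construction proves awkward, one can instead build analogous asymmetric triples of partitions directly for each $n$.
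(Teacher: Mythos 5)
Your core construction and overall strategy coincide with the paper's. The $|X|=5$ triple you write down is, up to relabelling, exactly Snow's example used in the text ($|0,1|2,3|4|$, $|0|1,2|3,4|$, $|0,2,4|1,3|$), the reduction of density to ``$\lambda(L_0)$ contains only the trivial maps'' is precisely the mechanism behind Theorem~\ref{Concrete-thm-1}, and your necessity argument for $|X|\leq 4$ is the same direct check the paper invokes (your enumeration of the spanning $\bM_3$'s in $\bEq(4)$ and the explicit witnessing maps is, if anything, more detailed than what the paper records). One small caution: for the non-spanning $\bM_3$'s in $\bEq(4)$ the witnesses are generally collapsing maps rather than ``symmetries'' --- e.g.\ for the interval below $|1,2,3|4|$ the map fixing $1,2,3$ and sending $4\mapsto 1$ respects all five elements, whereas the $3$-cycle $(1\,2\,3)$ does not respect $|1,2|3|4|$.

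The genuine gap is the one sentence devoted to $|X|>5$. The claim that one can ``place the additional elements into classes in a balanced pattern'' is not innocuous, and the even case is exactly where the difficulty lives: on $X=\{0,\dots,5\}$ the natural extension $|0,1|2,3|4,5|$, $|0|1,2|3,4|5|$, $|0,2,4|1,3,5|$ is a perfectly good spanning $\bM_3$, yet the order-reversing permutation $x\mapsto 5-x$ preserves all three partitions, so that copy is \emph{not} dense. Any extension you choose must be checked to break this reversal symmetry (and every other residual symmetry) for each $n\geq 6$. The paper itself only inherits Snow's proof for odd $|X|$ --- the pattern of Proposition~\ref{Concrete-prop-1'} --- and reports that the even case was settled ``using the same technique (and some rather tedious calculations)'' without exhibiting them; so the part of the argument you wave at is precisely the part that requires real work, and as written your proof establishes the ``if'' direction only for $|X|=5$ and, implicitly, for odd $|X|$.
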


\begin{figure}[centering,h!]
\label{fig:diamond}
\begin{center}
\begin{tikzpicture}[scale=0.8]
\draw (0,-1) node {$\bM_3$};
  \node (m3) at (0,0) [fill,circle,inner sep=1.2pt] {};
  \node (m31) at (0,1.5) [fill,circle,inner sep=1.2pt] {};
  \node (m32) at (-1.5,1.5) [fill,circle,inner sep=1.2pt] {};
  \node (m33) at (1.5,1.5) [fill,circle,inner sep=1.2pt] {};
  \node (m34) at (0,3) [fill,circle,inner sep=1.2pt] {};
  \draw [semithick]  (m3) to (m31) to (m34) to (m33) to (m3) to (m32) to (m34);
\end{tikzpicture}
\end{center}
\caption{The 5-element non-distributive lattice, $\bM_3$.}
\end{figure}

This basically says that, when $|X|\geq 5$, the lattice of equivalences on $X$ contains a
spanning diamond $\bL$ with the property that 
every non-trivial operation in $X^X$ violates some equivalence relation in the
universe $L$ of $\bL$. Thus, the closure $\rho \lambda (L)$ is all of $\Eq(X)$.
John Snow proved this for $|X|$ odd. Using the same technique (and some rather
tedious calculations), we verified that the result holds for $|X|$ even as well.

Before moving on to the next result, we note that the necessity part of the
proposition above is obvious.  For, if $|X|\leq 2$, then $\bEqX$ has no $\bM_3$
sublattice.  If $|X|=3$, then $\bEqX$ is itself $\bM_3$.  It can be checked directly
(by computing all possibilities) that, when $|X| = 4$, $\bEqX$ has one closed $\bM_3$
sublattice and five $\bM_3$ sublattices that are neither closed nor dense. 

For ease of notation, let $\Eq(n)$ denote the set of equivalence relations on an
$n$-element set, and let $\bM_n$ denote the $(n+2)$-element lattice of height two (Figure~\ref{fig:mn}).
\begin{figure}[centering,h]
\caption{The $(n+2)$-element lattice of height 2, $\bM_n$.}
\label{fig:mn}
\begin{center}
\begin{tikzpicture}[scale=0.8]
\draw (0,-1) node {$\bM_n$};
  \node (mt) at (0,3) [fill,circle,inner sep=1.2pt] {};
  \node (m1) at (-1.5,1.5) [fill,circle,inner sep=1.2pt] {};
  \node (m2) at (-1,1.5) [fill,circle,inner sep=1.2pt] {};
  \node (m3) at (-.5,1.5) [fill,circle,inner sep=1.2pt] {};
  \node (m4) at (1.5,1.5) [fill,circle,inner sep=1.2pt] {};
  \node (mb) at (0,0) [fill,circle,inner sep=1.2pt] {};
  \draw [semithick]  (mb) to (m1) to (mt) to (m2) to (mb) to (m3) to (mt) to (m4) to (mb);
  \draw [semithick] (0,1.5) node {$\cdots$};
\end{tikzpicture}
\end{center}
\end{figure}
\begin{prop}\label{Concrete-prop-1'}
  For $n\geq 1$, $\bEq(2n+1)$ contains a dense $\bM_{n+2}$.
\end{prop}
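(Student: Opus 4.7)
The plan is to generalize the construction used in Proposition~\ref{Concrete-prop-1} for $\bM_3$. I will exhibit $n+2$ equivalence relations $\theta_0,\dots,\theta_{n+1}$ on a $(2n+1)$-element set $X$ that pairwise meet to $0_X$ and join to $1_X$ (so they generate an $\bM_{n+2}$ sublattice of $\bEq(X)$), and then show that $\rho\lambda$ of this sublattice equals $\Eq(X)$.

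For the construction, identify $X$ with $\Z_{2n+1}$ and, for each $i\in\{0,1,\dots,n+1\}$, let $\theta_i$ be the orbit partition of the reflection $r_i\colon x\mapsto 2i-x$; each $\theta_i$ has the singleton block $\{i\}$ and $n$ two-element blocks $\{i-k,i+k\}$ with $k=1,\dots,n$ (arithmetic mod $2n+1$). A nontrivial overlap $(a,b)\in\theta_i\cap\theta_j$ forces $a+b\equiv 2i\equiv 2j\pmod{2n+1}$, and since $\gcd(2,2n+1)=1$ this gives $i=j$; so the pairwise meets are trivial. The product $r_ir_j$ is translation by $2(j-i)$, so $\langle r_i,r_j\rangle$ acts transitively on $X$ whenever $\gcd(j-i,2n+1)=1$, yielding $\theta_i\vee\theta_j=1_X$. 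When $2n+1$ is prime this is automatic; in the composite case I plan to replace the naive index set $\{0,\dots,n+1\}$ by an $(n+2)$-subset of $\Z_{2n+1}$ whose pairwise differences are coprime to $2n+1$, obtained by a short counting argument against the fewer-than-$n+1$ forbidden residues.

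For density, it suffices to show that $\lambda(\{\theta_0,\dots,\theta_{n+1}\})$ consists only of the identity map and the constant maps, since these together respect every equivalence relation on $X$, forcing $\rho\lambda$ of the sublattice to be all of $\Eq(X)$. Given a nonconstant $h\in X^X$ respecting every $\theta_i$, the condition that $h$ sends each block of $\theta_i$ into some block of $\theta_i$, enforced simultaneously across $n+2$ different relations, is extremely restrictive. The plan is to first rule out the case that $h$ collapses any two-element block of some $\theta_i$ (such a collapse should cascade through the other relations and force $h$ to be constant), then conclude that $h$ is a bijection, and finally argue from the transitivity of $\langle r_0,\dots,r_{n+1}\rangle$ that the only such bijection is the identity.

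The main obstacle will be the density step, and in particular verifying that the chosen family of relations is rich enough to rigidify every point individually rather than merely permute blocks. If the reflection-based construction proves insufficient for this purpose, an alternative is to use orbit partitions of involutions arising from a near-$1$-factorization of $K_{2n+1}$, selected so that the resulting permutation group is $2$-transitive on $X$; nontrivial bijective respecters would then be immediately excluded by a standard argument, and the collapsing case would proceed as before.
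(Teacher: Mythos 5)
Your overall strategy is the same as the paper's in spirit: exhibit $n+2$ near-perfect matchings forming an antichain with pairwise meets $0_X$ and joins $1_X$, then show that only trivial unary maps respect them. However, there is a genuine gap in the composite case, and the repair you propose cannot work. The join $\theta_i \join \theta_j$ is the orbit partition of the dihedral group $\langle r_i, r_j\rangle$, whose translation subgroup is generated by $2(i-j)$; if $d = \gcd(i-j,\,2n+1) > 1$, every orbit has at most $2(2n+1)/d < 2n+1$ points, so the join is proper. Thus you really do need every pairwise difference of your index set to be a unit mod $2n+1$. But if $2n+1$ is composite with smallest prime factor $p$, then $p \leq \sqrt{2n+1} < n+2$, and any two indices congruent mod $p$ have a non-unit difference, so by pigeonhole no index set of size $n+2$ exists. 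Concretely, for $2n+1 = 9$ you would need six elements of $\Z_9$ lying in distinct classes mod $3$, which is impossible; the greedy/counting bound you invoke yields a set of size roughly $(2n+1)$ divided by one plus the number of forbidden residues, nowhere near $n+2$. So for composite $2n+1$ some of the atoms must have a different shape: the paper's construction uses $n+1$ matching-type partitions (not all of which are reflection orbits) together with the two-block partition $|\text{evens}\,|\,\text{odds}|$ as the last atom.

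On the density step, you can avoid the hardest part of your plan by using the monotonicity of the closure operator, which is exactly the paper's shortcut: if $L \leq L'$ then $\lambda(L') \subseteq \lambda(L)$, hence $\rho\lambda(L) \subseteq \rho\lambda(L')$, so once \emph{three} of the atoms generate a dense $\bM_3$, adjoining further atoms preserves density automatically; the extra atoms need only extend the antichain, not participate in the rigidity argument. This reduces the verification to one explicit $\bM_3$ on $2n+1$ points. Your direct approach is not wrong in principle for prime $2n+1$ --- a \emph{bijection} respecting every $\theta_i$ fixes each singleton block and commutes with each $r_i$, hence lies in the centralizer of a transitive group and fixes a point, so it is the identity --- but the non-injective case is precisely the part you leave as a ``cascade,'' and that is where the substantive work (or the reduction to a dense $\bM_3$) is required.
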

\noindent Thus, every $\bM_n$ can be densely embedded in $\bEqX$ for some finite
set $X$. 
\begin{proof} (sketch)
We begin with Snow's example of a dense $\bM_3$ sublattice of $\bEq(X)$, where 
$X =  \{0,1,2,3,4\}$.  
Define three partitions of $X$,
\[
\alpha_1 = |0,1|2,3|4|, \quad 
\alpha_2 = |0|1,2|3,4|, \quad 
\alpha_3 = |0,2,4|1,3|, \quad 
\]
let $L =\{0_X, \alpha_1, \alpha_2, \alpha_3, 1_X\}$ and let $\bL  = \<L, \meet, \join\>$ denote the 
sublattice of $\bEq(X)$ generated by the three equivalences $\alpha_1, \alpha_2,
\alpha_3$ (Figure~\ref{fig:diamondeq}).

\begin{figure}[centering,h]
\caption{The lattice $\bL  = \<\{ 0_X, \alpha_1, \alpha_2, \alpha_3, 1_X \};
  \meet, \join\>$.}
\label{fig:diamondeq}
\begin{center}
\begin{tikzpicture}[scale=0.8]
  \node (top) at (0,3) [fill,circle,inner sep=1.2pt] {};
  \node (a1) at (-1.5,1.5) [fill,circle,inner sep=1.2pt] {};
  \node (a2) at (0,1.5) [fill,circle,inner sep=1.2pt] {};
  \node (a3) at (1.5,1.5) [fill,circle,inner sep=1.2pt] {};
  \node (bot) at (0,0) [fill,circle,inner sep=1.2pt] {};
  \draw [semithick]  (bot) to (a1) to (top) to (a2) to (bot) to (a3) to (top);
\draw (.4,3.2) node {$1_X$};
\draw (-1.8,1.5) node {$\alpha_1$};
\draw (.4,1.5) node {$\alpha_2$};
\draw (1.9,1.5) node {$\alpha_3$};
\draw (.4,-.2) node {$0_X$};
\end{tikzpicture}
\end{center}
\end{figure}
\noindent Obviously $\bL \cong \bM_3$, and it is not hard to show that the only
unary maps which respect all equivalences in $\bL$ are the constants and the
identity.  In other words, the set $\lambda(L)\subseteq X^X$ consists of
the six trivial maps in $X^X$.   Therefore, $\rho \lambda(L) = \Eq(X)$.

Now notice that if we adjoin the equivalence $\alpha_4 = |0,3|1,4|2|$ to $\bL$ we get
an $\bM_4$, which we denote by $\bL(\alpha_4)$.  Obviously,
$\lambda(L) \supseteq \lambda(L(\alpha_4))$, as adding more equivalences
only shrinks the set of functions respecting all equivalences.
Therefore, $\Eq(X) = \rho \lambda(L) \subseteq \rho \lambda(L(\alpha_4))$, so
$\bL(\alpha_4)$ is a dense $\bM_4$ sublattice of $\bEq(5)$.

Similarly, letting
$X =  \{0,1,\dots, 6\}$ and 
\[
\alpha_1 = |0,1|2,3|4,5|6|, \quad 
\alpha_2 = |0|1,2|3,4|5,6|, \quad 
\alpha_3 = |0,2,4,6|1,3,5|, \quad 
\]
the sublattice 
$\bL  = \<\{0_X, \alpha_1, \alpha_2, \alpha_3, 1_X\}, \meet, \join\>$ is a dense $\bM_3$ in
\bEqX. Adjoining the partitions
\[
\alpha_4 = |0,3|2,5|1,6|4| \quad \text{ and } \quad \alpha_5 = |0,5|1,4|3,6|2|
\]
results in a dense $\bM_5$ in \bEqX.
Proceeding inductively, when $|X| = 2n+1$ there are $n+1$ partitions of
the form $\alpha_i = |x_{i_0}|x_{i_1}, x_{i_2}| \cdots | x_{i_{2n-1}},x_{i_{2n}}|$,
and one of the form 
$\alpha_{n+2} = |\text{evens} | \text{odds}|$, with the following properties:
\begin{enumerate}
\item $\alpha_i \meet \alpha_j = 0_X$,
\item $\alpha_i \join \alpha_j = 1_X$,
\item the lattice generated by $\alpha_{n+2}$ and at least two other $\alpha_i$ is dense in $\bEqX$.
\end{enumerate}
\end{proof}

\subsection{Non-density}
\label{sec:non-density}
The results in this section give sufficient conditions under which a lattice
cannot be densely embedded in a lattice of equivalence relations. 
These results require some standard terminology that we have not yet introduced,
so we begin the section with these preliminaries.  As always, we will only deal
with finite lattices $\bL = \<L, \meet, \join\>$, and we use $0_L = \Meet L$ to
denote the bottom of $\bL$ and $1_L = \Join L$ to denote the top.

If $\bL = \<L, \meet, \join\>$ is a lattice, a non-empty subset $I\subseteq L$
is called an \defn{ideal} of $\bL$ if 
\begin{enumerate}[(i)]
\item $I$ is a \defn{down-set}: if $\alpha \in I$ and $\beta\leq \alpha$, then $\beta\in I$;
\item $I$ is closed under finite joins: $\alpha, \beta \in I$ implies $\alpha \join \beta \in I$.
\end{enumerate}
A \defn{filter} of a lattice is defined dually as a non-empty \defn{up-set} that is closed
under finite meets. 
An ideal or filter is said to be \emph{proper} if it is not equal to all of $L$.
The smallest ideal that contains a given element $\alpha$ is a 
\defn{principal ideal} and $\alpha$ is said to be a 
\defn{principal element} or \emph{generator} 
of the ideal in this situation.  The 
\emph{principal ideal generated by $\alpha$} 
is defined and denoted by 
$\downalpha = \{\theta \in L \mid \theta \leq \alpha\}$. 
Similarly, $\upalpha = \{\theta \in L \mid \theta \geq \alpha\}$ is the 
\emph{principal filter generated by $\alpha$}.
An ideal $I$ called a \defn{prime ideal} provided
$\alpha \meet \beta \in I$ implies $\alpha \in I$ or $\beta \in I$ 
for all $\alpha, \beta \in L$.
Equivalently, a \defn{prime ideal} is an ideal whose set-theoretic complement is a filter.
Since we require ideals (filters) to be non-empty, every prime filter (ideal) is
necessarily proper.  
An element is called \defn{meet prime} if it is
the generator of a principal prime ideal. Equivalently, 
$\alpha \in L \setminus \{1_L\}$ is meet prime if for all $\beta, \gamma \in L$
we have $\beta \meet \gamma  \leq \alpha$  implies $\beta\leq \alpha$ or
$\gamma\leq \alpha$.  \defn{Join prime} is defined dually.  

\begin{lemma}
\label{Concrete-lemma-1}
Suppose $\bL = \langle L, \meet, \join\rangle$ is a complete $0,1$-lattice. Then the following
are equivalent:
\begin{enumerate}[(i)]
\item There is an element 
$\alpha \in L \setminus \{0_L\}$
such that $\bigvee\{\gamma\in L: \gamma \ngeq \alpha \} < 1_L$.
\item There is an element $\alpha \in L \setminus \{1_L\}$ such that $\bigwedge\{\gamma\in L:
  \gamma \nleq \alpha \} > 0_L$.
\item $\bL$ is the union of a proper principal ideal and a proper principal filter.
\end{enumerate}
\end{lemma}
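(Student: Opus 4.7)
The plan is to prove the cyclic chain of implications (i) $\Rightarrow$ (iii) $\Rightarrow$ (ii) $\Rightarrow$ (i); the arguments are bookkeeping rather than deep, and no genuine obstacle is expected. The only care needed is to line up the properness conditions correctly: the generator of a proper principal ideal must be $\neq 1_L$, and the generator of a proper principal filter must be $\neq 0_L$. Accordingly, the ``non-zero'' and ``non-top'' hypotheses in (i) and (ii) will match up exactly with the properness requirements in (iii).

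First I would prove (i) $\Rightarrow$ (iii). Suppose $\alpha \in L \setminus \{0_L\}$ with $\beta := \bigvee\{\gamma \in L : \gamma \ngeq \alpha\} < 1_L$. I would then verify $L = \downbeta \cup \upalpha$ by dichotomy: for any $\delta \in L$, either $\delta \geq \alpha$, in which case $\delta \in \upalpha$, or $\delta \ngeq \alpha$, in which case $\delta$ is one of the elements being joined and hence $\delta \leq \beta$, i.e., $\delta \in \downbeta$. The filter $\upalpha$ is proper because $\alpha > 0_L$ forces $0_L \notin \upalpha$, and the ideal $\downbeta$ is proper because $\beta < 1_L$.

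Next I would prove (iii) $\Rightarrow$ (ii). Given $L = \downalpha \cup \upbeta$ with $\alpha < 1_L$ and $\beta > 0_L$, I claim $\alpha$ itself witnesses (ii). The condition $\alpha \neq 1_L$ is immediate. For the meet, any $\gamma \in L$ with $\gamma \nleq \alpha$ fails to lie in $\downalpha$, so must lie in $\upbeta$; therefore $\gamma \geq \beta$, and consequently
\[
\bigwedge\{\gamma \in L : \gamma \nleq \alpha\} \;\geq\; \beta \;>\; 0_L,
\]
which is exactly (ii).

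Finally, (ii) $\Rightarrow$ (i) follows from order-duality: reversing the order on $L$ turns statement (i) into statement (ii) and vice versa, and carries (iii) to the statement that $L$ is a union of a proper principal filter and a proper principal ideal, which is just (iii) itself. Alternatively, a direct mirror argument works — set $\alpha' := \beta > 0_L$ (where $\beta$ is the meet from (ii)) and observe that any $\gamma \ngeq \beta$ fails to lie in $\upbeta$, hence lies in $\downalpha$, giving $\bigvee\{\gamma : \gamma \ngeq \beta\} \leq \alpha < 1_L$. This closes the cycle and completes the proof.
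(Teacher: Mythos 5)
Your proof is correct and uses essentially the same elementary dichotomy arguments as the paper; the only difference is that you traverse the cycle in the opposite order ((i) $\Rightarrow$ (iii) $\Rightarrow$ (ii) $\Rightarrow$ (i) rather than the paper's (i) $\Rightarrow$ (ii) $\Rightarrow$ (iii) $\Rightarrow$ (i)), with a duality observation as an optional shortcut for the final step. All the properness bookkeeping lines up correctly, so there is nothing to fix.
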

\begin{proof}
(i) $\Rightarrow$ (ii): Suppose $\alpha \in L \setminus \{0_L\}$ is such that the element
$\alpha' = \bigvee\{\gamma: \gamma \ngeq \alpha \}$ is strictly below $1_L$, and 
consider $\bigwedge\{\gamma: \gamma \nleq \alpha'\}$.  If $\beta \nleq \alpha'$, then
$\beta \notin \{\gamma: \gamma \ngeq \alpha \}$ so $\beta \geq \alpha$.  Therefore,
$\bigwedge\{\gamma: \gamma \nleq \alpha'\}\geq \alpha > 0_L$.  Thus $\alpha'\in
L\setminus \{1_L\}$ is such that 
$\bigwedge\{\gamma: \gamma \nleq \alpha'\}> 0_L$ so (ii) holds.\\[4pt]
(ii) $\Rightarrow$ (iii): Let $\alpha < 1_L$ be such that $\beta = \bigwedge
\{\gamma: \gamma\nleq  \alpha\} > 0_L$.  
Then, $\bL = \downalpha \cup \upbeta$ satisfies (iii).\\[4pt]
(iii) $\Rightarrow$ (i): Suppose 
 $\bL = \upalpha \cup \downbeta$ for some $\alpha > 0_L$,  $\beta < 1_L$.
 Then $\{\gamma\in L: \gamma \ngeq \alpha\} \subseteq \downbeta$; 
i.e.~$\gamma \ngeq \alpha \Rightarrow \gamma \leq \beta$.  Therefore,
 $\bigvee \{\gamma: \gamma\ngeq \alpha \} \leq \beta < 1_L$, so (i) holds.
\end{proof}

\begin{lemma} 
\label{Concrete-lemma-2} 
If $\bL\ncong \mathbf{2}$ is a sublattice of $\bEqX$ satisfying the
conditions of Lemma~\ref{Concrete-lemma-1}, then $\lambda(L)$ contains a non-trivial unary function.
\end{lemma}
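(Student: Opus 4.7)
The plan is to invoke part (iii) of Lemma~\ref{Concrete-lemma-1} to write $L = \upalpha \cup \downbeta$ for some $\alpha > 0_L$ and $\beta < 1_L$, and then to imitate the construction from the proof of Lemma~\ref{lemma:union-filter-ideal} to exhibit an explicit two-valued map $h \in \lambda(L)$. The guiding intuition is that the two pieces of the filter-ideal decomposition impose complementary constraints on any admissible map: relations $\theta \geq \alpha$ demand only that the image of $h$ sit inside a single $\alpha$-class, while relations $\theta \leq \beta$ demand that $h$ be constant on each $\beta$-class. A function built from an $\alpha$-related pair of values together with the indicator of a single $\beta$-class satisfies both demands simultaneously.

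Before writing anything down, I would first observe that the hypotheses force $|X| \geq 3$: the existence of $\alpha > 0_L$ and $\beta < 1_L$ gives $|L| \geq 2$, and the assumption $\bL \ncong \mathbf{2}$ then pushes $|L|$, and hence $|X|$, up to at least $3$. Since $\alpha > 0_X$ (the identity relation), there exist distinct $a, b \in X$ with $(a,b) \in \alpha$. Since $\beta < 1_X$ (the all-relation), there exists $u \in X$ whose $\beta$-class $u/\beta$ is a proper nonempty subset of $X$. I would then define
\[
h(x) = \begin{cases} a & \text{if } x \in u/\beta, \\ b & \text{otherwise.} \end{cases}
\]
Because $h$ takes exactly the two distinct values $a$ and $b$ on a set of size at least $3$, its range is a proper subset of $X$ of cardinality $2$, so $h$ is neither constant nor the identity map.

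The verification that $h \in \lambda(L)$ splits naturally along $L = \upalpha \cup \downbeta$. If $\theta \geq \alpha$, then any output pair $(h(x), h(y))$ lies in $\{a,b\}^2$, and since $(a,b) \in \alpha \leq \theta$ with $\theta$ an equivalence relation, all four pairs of elements of $\{a,b\}$ belong to $\theta$. If $\theta \leq \beta$, then any $(x,y) \in \theta$ also lies in $\beta$, so $x$ and $y$ share a $\beta$-class and are therefore both in $u/\beta$ or both outside it; either way $h(x) = h(y)$, so $(h(x),h(y))$ is a diagonal pair and lies in $\theta$.

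The main obstacle, to the extent that there is one, is edge-case bookkeeping: one must confirm that $\alpha > 0_L$ really produces the required distinct pair $(a,b)$, that $\beta < 1_L$ really produces a proper nonempty $\beta$-class, and that the clause $\bL \ncong \mathbf{2}$ is exactly what is needed to prevent $h$ from collapsing onto the identity in the smallest cases. All three reduce to the identifications of $0_L$ and $1_L$ with the identity and all-relation on $X$, which follow from $\bL \leq \bEqX$.
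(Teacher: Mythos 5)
Your proof is correct and follows essentially the same route as the paper's: both extract an $\alpha$-related pair and a proper $\beta$-class from the filter-ideal decomposition $L = \upalpha \cup \downbeta$ and define the same two-valued map $h$, checking it respects everything above $\alpha$ and everything below $\beta$. The only cosmetic difference is that the paper starts from condition (i) of Lemma~\ref{Concrete-lemma-1} and derives $\beta$ as the join $\bigvee\{\gamma : \gamma \ngeq \alpha\}$, whereas you invoke the equivalent condition (iii) directly; your explicit justification that $|X|\geq 3$ rules out $h$ being the identity is a slightly more careful rendering of the paper's terse ``$h$ is not the identity, since $\bL\ncong \mathbf{2}$.''
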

\begin{proof}
Suppose $\bL\ncong \mathbf{2}$ is a sublattice of $\bEqX$ which satisfies condition (i)
of the lemma.  We must show that there is a non-trivial (i.e.~non-constant, non-identity)
$h\in X^X$ which respects every $\theta \in L$.  
By condition (i), there is an element $\alpha \in L \setminus \{0_L\}$ such that $\beta =
\bigvee\{\gamma\in L: \gamma \ngeq \alpha \}$ is strictly below $1_L$. Since 
$\alpha > 0_L$, there is a pair $(u,v)$ of distinct elements of $X$ that are
$\alpha$ related.  Since $\beta < 1_L$, there is a $\beta$ equivalence class 
$B \subsetneqq X$.  Define $h\in X^X$ as follows:
\begin{equation}
  \label{eq:h}
h(x) = \begin{cases}
u,& \quad x\in B,\\
v,& \quad x\notin B.
\end{cases}
\end{equation}
Then $h$ is not constant, since $\emptyset \neq B \neq X$;
$h$ is not the identity, since $\bL\ncong \mathbf{2}$; $h$
respects everything above $\alpha$ and everything below $\beta$, and therefore, $h\in
\lambda(\upalpha \cup \downbeta) = \lambda(L)$.
\end{proof}

\begin{theorem}\label{Concrete-thm-1} If $\bL\ncong \mathbf{2}$ is a lattice satisfying the conditions of Lemma~\ref{Concrete-lemma-1}
  and $X$ is any set, then $\bL$ cannot be densely embedded in $\bEqX$.
\end{theorem}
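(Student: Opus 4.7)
My plan is to argue by contradiction, leveraging Lemma~\ref{Concrete-lemma-2} directly. First I would assume, toward contradiction, that $\bL$ admits a dense embedding $\bL \cong \bL_0 \leq \bEqX$, so that $\rho\lambda(L_0) = \EqX$. Since the conditions in Lemma~\ref{Concrete-lemma-1} are purely lattice-theoretic, $\bL_0$ inherits them from $\bL$, and in particular $\bL_0 \ncong \mathbf{2}$. Lemma~\ref{Concrete-lemma-2} would then hand me a non-trivial $h \in \lambda(L_0)$, and the density assumption would force $h$ to respect every equivalence relation on $X$. My goal then reduces to exhibiting a single explicit $\theta \in \EqX$ that $h$ violates.

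To produce the offending equivalence, I would use the explicit form of $h$ built in the proof of Lemma~\ref{Concrete-lemma-2}: it has image $\{u, v\}$ for distinct $u, v \in X$ with $h^{-1}(u) = B$ a proper nonempty subset of $X$. Before proceeding I would note that necessarily $|X| \geq 3$, since $\bL \ncong \mathbf{2}$ satisfying Lemma~\ref{Concrete-lemma-1}(iii) has at least three elements, while $\bEqX$ has fewer than three elements when $|X| \leq 2$. I would then pick $a \in B$ and $b \in X \setminus B$ with $\{a, b\} \neq \{u, v\}$, and let $\theta$ be the equivalence whose sole nontrivial block is $\{a, b\}$. Then $(a, b) \in \theta$ while $(h(a), h(b)) = (u, v) \notin \theta$ (since $\{u, v\} \neq \{a, b\}$), so $h$ violates $\theta$, contradicting $\theta \in \rho\lambda(L_0) = \EqX$.

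The main point to verify carefully will be the existence of such a pair $a, b$, which amounts to a short case split on the position of $\{u, v\}$ relative to $B$. If $B$ contains some element outside $\{u, v\}$, I use that element for $a$ and any $b \notin B$. Otherwise $B \subseteq \{u, v\}$, and because $|X| \geq 3$ the complement $X \setminus B$ must contain some $b \notin \{u, v\}$, which I can pair with any $a \in B$ (noting that $a \in \{u, v\}$ while $b \notin \{u, v\}$ forces $\{a, b\} \neq \{u, v\}$). This casework is essentially the only real obstacle; everything else is a direct application of Lemma~\ref{Concrete-lemma-2} together with the explicit form of its witness $h$.
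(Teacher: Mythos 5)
Your proof is correct and follows essentially the same route as the paper: both arguments obtain a non-trivial $h \in \lambda(L_0)$ from Lemma~\ref{Concrete-lemma-2} and then derive a contradiction with density by exhibiting an equivalence relation on $X$ that $h$ violates. The only real difference is that the paper proves the stronger standalone fact that \emph{any} non-trivial unary map on a set with at least three elements violates some equivalence relation (so it uses only the statement of Lemma~\ref{Concrete-lemma-2}), whereas you exploit the explicit two-valued form of the witness $h$ built inside that lemma's proof; your case split on the position of $\{u,v\}$ relative to $B$ is complete and correct.
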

\begin{proof}
The theorem says that, for any embedding $\bL\cong \bL_0 \leq \bEqX$ of
such a lattice, $\bL_0$ is not dense in $\bEqX$; 
i.e.~$\rho \lambda(L_0) \lneqq  \bEqX$.  
To prove that this follows from Lemma~\ref{Concrete-lemma-2}, we must verify 
the following statement: If $\mathbf{2} \ncong \bL \leq \bEqX$
and if there is a non-trivial unary function $h\in \lambda(L)$, then 
$\rho \lambda(L) \lneqq \EqX$.

If $h\in X^X$ is any non-trivial unary function, then there are elements 
$\{x, y, u, v\}$ of $X$ such that $x\neq y$ and $h(x) = u \neq v = h(y)$.
We can assume $X$ has at least three distinct elements since $\bL \ncong \mathbf{2}$.
There are two cases to consider.  In the first, $h$ simply permutes $x$ and $y$.
In this case, $x=v$ and $y=u$, and $h(v) = u$, $h(u)=v$. There must be a third
element of $X$, say, $w\notin \{u,v\}$.  
If $h(w) \neq u$, then $h$ violates any equivalence that puts $v, w$ in the same
block and puts $u$ and $h(w)$ in separate blocks. 
If $h(w) \neq v$, then $h$ violates any equivalence that puts $u, w$ in the same
block and $v$ and $h(w)$ in separate blocks. 
In the second case to consider, $\{x, u, v\}$ are three distinct elements. In
this case, $h$ violates every relation that puts $x, y$ in the same block and puts
$u$ and $v$ in separate blocks. 

We have thus proved that $\rho \lambda(L) \lneqq \EqX$ whenever $\lambda(L)$ contains
a non-trivial unary function.
\end{proof}

\begin{corollary}\label{Concrete-cor-2}
If $\bL\ncong \mathbf{2}$ is a finite lattice with a meet prime element and $X$ is any set, then $\bL$ cannot
be densely embedded in $\bEqX$.
\end{corollary}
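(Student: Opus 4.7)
The plan is to reduce the corollary directly to Theorem~\ref{Concrete-thm-1} by showing that the existence of a meet prime element forces condition~(ii) of Lemma~\ref{Concrete-lemma-1}. Once that is done, Theorem~\ref{Concrete-thm-1} applies verbatim and gives the conclusion.

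First, I would recall the definition: an element $\alpha \in L \setminus \{1_L\}$ is meet prime iff $\beta \meet \gamma \leq \alpha$ implies $\beta \leq \alpha$ or $\gamma \leq \alpha$. Contrapositively, the set $S = \{\gamma \in L \mid \gamma \nleq \alpha\}$ is closed under binary meets: if $\beta, \gamma \in S$, then $\beta \meet \gamma \in S$. Since $L$ is finite, $S$ is closed under arbitrary meets, so $\delta := \bigwedge S$ itself lies in $S$, i.e.\ $\delta \nleq \alpha$.

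Next, I would observe that this forces $\delta > 0_L$. Indeed, $0_L \leq \alpha$ (since $0_L$ is the bottom), so $0_L \notin S$, hence $\delta \neq 0_L$. Combined with $\alpha < 1_L$, this is precisely condition~(ii) of Lemma~\ref{Concrete-lemma-1} witnessed by the meet prime element $\alpha$.

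Finally, I would invoke Theorem~\ref{Concrete-thm-1}: since $\bL \ncong \mathbf{2}$ and $\bL$ satisfies condition~(ii), for any embedding $\bL \cong \bL_0 \leq \bEqX$ we have $\rho\lambda(L_0) \lneqq \EqX$, so $\bL$ cannot be densely embedded in $\bEqX$. There is no real obstacle here; the only thing to be careful about is that the meet over $S$ might be interpreted in the empty case, but since $\alpha \neq 1_L$ we have $1_L \in S$, so $S$ is nonempty and the meet is well defined. The whole argument is essentially a two-line observation that meet primeness is exactly the finitary closure property that makes the witness element for condition~(ii) drop out of $S$ itself.
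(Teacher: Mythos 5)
Your proposal is correct and follows the same route as the paper: the paper's proof simply asserts that a meet prime element "clearly" yields the conditions of Lemma~\ref{Concrete-lemma-1} and then invokes Theorem~\ref{Concrete-thm-1}, and your argument just fills in that assertion by noting that meet primeness makes $\{\gamma \mid \gamma \nleq \alpha\}$ meet-closed, so its meet witnesses condition~(ii). No gaps.
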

\begin{remark}
  The same result holds if we assume the lattice has a join prime element.
\end{remark}
\begin{proof}
It is clear by the definition of meet prime that a lattice 
satisfying the hypotheses of the corollary also satisfies the conditions
of Lemma~\ref{Concrete-lemma-1}, so the result follows from
Theorem~\ref{Concrete-thm-1}. 
\end{proof}

A lattice is called 
\defn{meet-semidistributive} 
if it satisfies the
\emph{meet-semidistributive law},
\[
\SD_\wedge: \quad
\alpha \meet \beta = \alpha \meet \gamma \quad \Rightarrow \quad \alpha
\meet (\beta \join \gamma) = \alpha \meet \beta.
\]

\begin{corollary}
\label{Concrete-cor-nondensity-1}
If $\bL\ncong \mathbf{2}$  is a finite meet-semidistributive lattice
and $X$ is any set, then $\bL$ cannot be densely embedded in $\bEqX$.
\end{corollary}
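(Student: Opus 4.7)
The plan is to reduce Corollary~\ref{Concrete-cor-nondensity-1} to Theorem~\ref{Concrete-thm-1} by showing that every finite meet-semidistributive $\bL\ncong \mathbf{2}$ satisfies condition~(i) of Lemma~\ref{Concrete-lemma-1}. First I would dispose of the trivial $|L|=1$ case (a one-element sublattice of $\bEqX$ can be dense only when $\bEqX$ itself is one-element), and thereafter assume $|L|\geq 3$ so that $\bL$ contains at least one atom.

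Next I would fix an atom $\alpha$ of $\bL$ and use the fact that, because $\alpha$ covers $0_L$, the condition $\gamma\ngeq \alpha$ is equivalent to $\gamma\meet \alpha = 0_L$ for every $\gamma\in L$. Setting $S = \{\gamma\in L : \gamma\meet \alpha = 0_L\}$, I would invoke $\SD_\wedge$ to show $S$ is closed under binary joins: if $\alpha\meet \gamma_1 = 0_L = \alpha\meet \gamma_2$, then meet-semidistributivity gives $\alpha\meet(\gamma_1\join \gamma_2) = 0_L$, so $\gamma_1\join \gamma_2 \in S$. Finiteness of $L$ then promotes $S$ to a principal ideal whose top element $\alpha^{*}=\bigvee S$ again lies in $S$, hence $\alpha^{*}\meet \alpha = 0_L$. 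Since $\alpha > 0_L$, this rules out $\alpha^{*} = 1_L$, yielding
\[
\bigvee\{\gamma\in L : \gamma\ngeq \alpha\} \;=\; \alpha^{*} \;<\; 1_L,
\]
which is exactly condition~(i) of Lemma~\ref{Concrete-lemma-1}. An application of Theorem~\ref{Concrete-thm-1} then finishes the proof.

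The argument is short and essentially routine once the key observation is isolated, so I do not anticipate a genuine obstacle. The only conceptual point is recognizing that $\SD_\wedge$ is precisely the statement that the ``zero-meet class'' of any fixed element is closed under joins; once this is in hand, finiteness automatically converts the complement of $\upalpha$ into a principal ideal with top strictly below~$1_L$, which is all that Theorem~\ref{Concrete-thm-1} requires.
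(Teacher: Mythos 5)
Your proof is correct and follows essentially the same route as the paper: both arguments hinge on the observation that $\SD_\wedge$ makes the set $\{\gamma \in L \mid \gamma \meet \alpha = 0_L\}$ closed under joins for an atom $\alpha$, so that its join stays strictly below $1_L$. The paper packages this as ``every finite meet-semidistributive lattice has a meet prime element'' and cites Corollary~\ref{Concrete-cor-2}, whereas you verify condition~(i) of Lemma~\ref{Concrete-lemma-1} directly and invoke Theorem~\ref{Concrete-thm-1} --- but since Corollary~\ref{Concrete-cor-2} is itself just that combination, the two proofs are the same in substance.
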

\begin{proof}
  We prove that every finite meet-semidistributive lattice $\bL$ contains a
  meet prime element. The result will then follow by Corollary~\ref{Concrete-cor-2}.
  Since $\bL$ is finite, there exists an atom $\alpha\in L$.  If $\alpha$ is the
  only atom, then $\upalpha$ is trivially prime.  Suppose $\beta \join \gamma
  \in \upalpha$.  Then $(\beta \join \gamma)\meet \alpha = \alpha$, and $\beta
  \meet \alpha \leq \alpha$ implies $\beta \meet \alpha \in \{0_L, \alpha\}$.
  Similarly for $\gamma$.  If both $\beta\meet \alpha = 0_L = \gamma \meet
  \alpha$ then $\SD_\meet$ implies $(\beta \join \gamma) \meet \alpha = 0_L$,
  which is a contradiction.
\end{proof}

The converse of Corollary~\ref{Concrete-cor-2}
is false.  That is, there exists a finite lattice
$\bL\ncong \mathbf{2}$ with no meet prime element that cannot be densely 
  embedded in some $\bEqX$. 
The lattice $\bM_{3,3}$ shown below is an example.  It has no meet prime element
but it does satisfy the conditions of Lemma~\ref{Concrete-lemma-1}. Thus, by
Theorem~\ref{Concrete-thm-1}, $\bM_{3,3}$ is not densely embeddable.  

\begin{figure}[!h]
\begin{center}
\begin{tikzpicture}[scale=0.6]
\draw (1,-1) node {$\bM_{3,3}$};
  \node (m3) at (0,0) [fill,circle,inner sep=1.2pt] {};
  \node (m31) at (0,1.5) [fill,circle,inner sep=1.2pt] {};
  \node (m32) at (-1.5,1.5) [fill,circle,inner sep=1.2pt] {};
  \node (m33) at (1.5,1.5) [fill,circle,inner sep=1.2pt] {};
  \node (m34) at (0,3) [fill,circle,inner sep=1.2pt] {};

  \node (top) at (1.5,4.5) [fill,circle,inner sep=1.2pt] {};
  \node (mid) at (1.5,3) [fill,circle,inner sep=1.2pt] {};
  \node (right) at (3,3) [fill,circle,inner sep=1.2pt] {};

  \draw [semithick]  (m3) to (m31) to (m34) to (m33) to (m3) to (m32) to (m34);
  \draw [semithick]  (m33) to (mid) to (top) to (right) to (m33) (top) to (m34);

\end{tikzpicture}
\end{center}
  \caption{The lattice $\bM_{3,3}$.}
  \label{fig:M33}
\end{figure}

\subsection{Distributive lattices}
\label{sec:distr-latt}
A lattice $\bL$ is called 
\defn{strongly representable} 
as a congruence lattice if
whenever $\bL \cong \bL_0 \leq \bEqX$ for some $X$ then there is an algebra based on $X$
whose congruence lattice is $\bL_0$.

\index{Berman, Joel} \index{Quackenbush, R.} \index{Wolk, B.}
\begin{theorem}[Berman~\cite{Berman:1970}, Quackenbush and Wolk~\cite{Quack:1971}]
Every finite distributive lattice is strongly representable.
\end{theorem}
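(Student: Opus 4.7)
The plan is to apply the closure method of Section~\ref{sec:closure-method}. Given an embedding $\bL \cong \bL_0 \leq \bEqX$ of the finite distributive lattice $\bL$ as a spanning sublattice (so that $0_{L_0} = 0_X$ and $1_{L_0} = 1_X$), I need to show $\rho\lambda(L_0) = L_0$; concretely, for each $\theta \in \EqX \setminus L_0$ I must exhibit a function $h \in \lambda(L_0)$ violating $\theta$. These functions will be produced by applying the construction underlying Lemma~\ref{Concrete-lemma-2} at each join-prime element of $\bL_0$, exploiting the fact that in a distributive lattice the join-irreducibles coincide with the join-primes.

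For each join-irreducible $j \in L_0$, the set $\{\gamma \in L_0 : \gamma \ngeq j\}$ is a down-set closed under finite joins (by join-primality), hence principal with greatest element $j^\circ := \bigvee\{\gamma \in L_0 : \gamma \ngeq j\}$. Then $j^\circ \ngeq j$, we have the decomposition $L_0 = j^\uparrow \cup (j^\circ)^\downarrow$, and $j^\circ < 1_{L_0} = 1_X$. Since $0 < j$ there is a pair $(u,v) \in j$ with $u \neq v$, and since $j^\circ < 1_X$ there is a $j^\circ$-class $B \subsetneq X$. Define $h_{j,u,v,B}(x) := u$ if $x \in B$ and $v$ otherwise. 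The calculation in the proof of Lemma~\ref{Concrete-lemma-2} shows directly that $h_{j,u,v,B}$ respects every element of $j^\uparrow \cup (j^\circ)^\downarrow = L_0$; let $F$ be the collection of all such functions as $j$, $(u,v)$, and $B$ vary.

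Now fix $\theta \in \EqX \setminus L_0$ and set $\theta^- := \bigvee\{\gamma \in L_0 : \gamma \leq \theta\} \in L_0$, so that $\theta^- < \theta$. The central claim is that there exists a join-irreducible $j \in L_0$ with both $j \nleq \theta$ and $\theta \nleq j^\circ$. Granted such $j$, pick $(u,v) \in j$ with $(u,v) \notin \theta$ and a $\theta$-pair $(x,y)$ whose endpoints lie in distinct $j^\circ$-classes; taking $B$ to be the $j^\circ$-class of $x$ yields $h_{j,u,v,B} \in F$ with $(x,y) \in \theta$ but $(h(x), h(y)) = (u,v) \notin \theta$, so $h_{j,u,v,B}$ violates $\theta$. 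To prove the claim, suppose for contradiction that every join-irreducible $j$ satisfies $j \leq \theta$ or $\theta \leq j^\circ$. Then $\theta \leq \bigwedge\{j^\circ : j \text{ join-irreducible},\; j \nleq \theta\}$, and the matter reduces to identifying this meet with $\theta^-$.

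The main obstacle is precisely this last identification, and it is handled by Birkhoff duality between $\bL_0$ and the lattice of down-sets of its poset $J$ of join-irreducibles: each $j^\circ$ corresponds to the maximal down-set of $J$ omitting $j$, namely $J \setminus \{k \in J : k \geq j\}$. Intersecting these down-sets over all $j \in J$ with $j \nleq \theta$ yields exactly $\{k \in J : k \leq \theta\}$, which is the down-set of $\theta^-$, so the meet equals $\theta^-$. This forces $\theta \leq \theta^- < \theta$, the desired contradiction. Once this combinatorial bookkeeping is settled, every other piece is a direct verification via Lemma~\ref{Concrete-lemma-2}.
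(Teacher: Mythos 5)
Your proof is correct and rests on the same core construction as the paper's: at a join-prime $j$ you form $j^\circ=\bigvee\{\gamma\in L_0 : \gamma\ngeq j\}$, observe that $L_0=j^\uparrow\cup (j^\circ)^\downarrow$, and use the two-valued map of Lemma~\ref{Concrete-lemma-2} to respect all of $L_0$ while violating a prescribed $\theta\in\Eq(X)\setminus L_0$. The one place you genuinely diverge is in locating the witness $j$. The paper introduces the upper approximation $\theta^{*}=\bigwedge\{\gamma\in L_0 : \gamma\geq\theta\}$ alongside $\theta_{*}$, takes a join-irreducible $\alpha\leq\theta^{*}$ with $\alpha\nleq\theta_{*}$ (possible because $\theta\notin L_0$ forces $\theta_{*}<\theta^{*}$), and then obtains $\theta\nleq\beta$ in one line: $\beta\geq\theta$ would give $\beta\geq\theta^{*}\geq\alpha$, contradicting join-primality of $\alpha$. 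You work only with the lower approximation $\theta^{-}=\theta_{*}$ and instead argue by contradiction through Birkhoff duality, identifying $\bigwedge\{j^\circ : j\nleq\theta\}$ with $\theta^{-}$. Both arguments are sound; the paper's is shorter because $\theta^{*}$ hands you the witness directly, whereas your route trades that for the down-set bookkeeping (which, as a small bonus, makes explicit that the elements $j^\circ$ are exactly the meet-irreducibles cutting $\theta$ down to $\theta^{-}$). Everything else --- the verification that $h$ respects $j^\uparrow\cup(j^\circ)^\downarrow$ and the passage from ``every $\theta\notin L_0$ is violated by some $h\in\lambda(L_0)$'' to $\bL_0=\bCon\<X,\lambda(L_0)\>$ --- matches the paper's proof.
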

{\bf Remark:} By Theorem \ref{Concrete-thm-3} above, the result of Berman,
Quackenbush and Wolk says, if $\bL$ is a finite distributive lattice then every
embedding $\bL\cong \bL_0\leq \bEqX$ is closed. 
The following proof is only slightly shorter than
to the original in~\cite{Quack:1971}, and the methods are similar.
\begin{proof}
Without loss of generality, suppose $\bL\leq \bEqX$. 
Fix $\theta\in \EqX \setminus L$ and define 
$\theta^* = \Meet \{ \gamma \in L \mid \gamma \geq \theta \}$ and 
$\theta_* = \Join \{ \gamma \in L \mid \gamma \leq \theta \}$.
Let $\alpha$ be a join irreducible in $L$ below $\theta^*$ and not below
$\theta_*$. 
Note that $\alpha$ is not below $\theta$.
Let $\beta = \Join \{ \gamma \in L \mid \gamma \ngeq \alpha \}$.
If $\beta$ were above $\theta$, then $\beta$ would be above $\theta^*$,
and so $\beta$ would be above $\alpha$. But $\alpha$ is join prime, so $\beta$ is not
above $\theta$. 

Choose $(u, v) \in \alpha \setminus \theta$ and note that $u \neq v$.
Choose $(x, y) \in \theta \setminus \beta$ and note that $x \neq y$.
Let $B$ be the $\beta$ block of $y$ and define $h\in X^X$ as in~(\ref{eq:h}). Then it
is clear that $h$ violates $\theta$, $h$ respects all elements in the sets
$\upalpha = \{\gamma \in L: \alpha \leq \gamma\}$ and 
$\downbeta = \{\gamma \in L: \gamma \leq \beta\}$, and $L = \upalpha \cup
\downbeta$. Since $\theta$ was an arbitrary element of $\EqX \setminus L$, we can
construct such an $h = h_\theta$ for each $\theta \in \EqX \setminus L$.  Let $\sH = \{h_\theta:
\theta \in \EqX \setminus L\}$ and let $\mathbf{A}$ be the algebra 
$\langle X, \sH\rangle$.  Then, $\bL =\bCon(\mathbf{A})$. 
\end{proof}

\section{Conclusions and open questions}

J.B.~Nation has found examples of densely embedded double-winged pentagons
  none of whose sublattices are densely embedded.  John Snow then asked if any
  of the sublattices are closed embeddings.  In general, we might ask the
  following: Are there closed sublattices of dense embeddings?

Another question we have not answered is whether the converse of
Theorem~\ref{Concrete-thm-1} is true, but this seems unlikely. Rather, we expect
there exists a finite lattice that is neither densely embeddable nor the union of
a proper principal ideal and a proper principal filter. 

Finally, we mention that even if we restrict ourselves to one of the smaller
classes of finite lattices mentioned above -- those satisfying the conditions of
Lemma~\ref{Concrete-lemma-1} or Corollary~\ref{Concrete-cor-2}, or the 
finite meet-semidistributive lattices -- it is still unknown whether every
lattice is this class is representable as the congruence lattice of a finite
algebra.  

\chapter{Congruence Lattices of Group Actions}
\label{cha:congr-latt-group}
Let $X$ be a finite set and consider the set $X^X$ of all maps from $X$ to
itself, which, when endowed with composition of maps and the identity mapping,
forms a monoid, $\<X^X, \circ, \id_X\>$.  The submonoid $S_X$ of all bijective
maps in $X^X$ is a group, the \defn{symmetric group on} $X$.  When the
underlying set is more complicated, or for emphasis, we denote the symmetric
group on $X$ by $\Sym(X)$.  When the  
underlying set isn't important, we usually write $S_n$ to denote the
symmetric group on an $n$-element set. 

If we have defined some set $F$ of basic operations on $X$, so that
$\bX = \<X, F\>$ is an algebra, then two other important submonoids of
$X^X$ are $\End(\bX)$, the set of maps in $X^X$ which respect all 
operations in $F$, and $\Aut(\bX)$, the set of bijective maps in  $X^X$ which
respect all operations in $F$.  It is apparent from the definition that
$\Aut(\bX)= S_X \cap \End(\bX)$, and  $\Aut(\bX)$ is a submonoid of $\End(\bX)$
and a subgroup of $S_X$.  These four fundamental monoids associated with the
algebra $\bX$, and their relative ordering under inclusion, are shown in the diagram
below. 

\begin{center}
  \begin{tikzpicture}[scale=.7]
    \node (Aut) at (0,0.2) [draw,circle,inner sep=1pt] {};
    \draw[font=\small] (0,-.30) node {$\Aut(\bX)$};

    \node (End) at (-1.6,2) [draw,circle,inner sep=1pt] {};
    \draw[font=\small] (-2.6,2) node {$\End(\bX)$};

    \node (Sx) at (1.6,2) [draw,circle,inner sep=1pt] {};
    \draw (2.2,2) node {$S_X$};

    \node (XX) at (0,3.8) [draw,circle,inner sep=1pt] {};
    \draw (0,4.2) node {$X^X$};
      \draw[semithick,dotted]    (Aut) to (End) to (XX) to (Sx) to (Aut);
  \end{tikzpicture}
\end{center}

Given a finite group $G$, and an algebra $\bX = \<X, F\>$, a
\index{representation!of a finite group}%
\emph{representation} of $G$ on $\bX$ is a group homomorphism
from $G$ into $\Aut(\bX)$.  That is, a representation of $G$ is a mapping
$\varphi : G \rightarrow \Aut(\bX)$ which satisfies $\varphi(g_1 g_2) =
\varphi(g_1) \circ \varphi(g_2)$, where (as above) $\circ$ denotes composition
of maps in $\Aut(\bX)$.

\section{Transitive $G$-sets}
From the foregoing, we see that a representation defines an action by $G$ on the
set $X$, as follows: $\bar{g} x = \varphi(g)(x)$.  If $\bar{G} = \varphi[G] \leq
\Aut(\bX)$
denotes the image of $G$ under $\varphi$, we call the algebra $\< X, \bar{G}\>$
a \defn{G-set}.\footnote{More 
  generally, a \Gset\ is sometimes defined to be a pair $(X, \varphi)$, where
  $\varphi$ is a homomorphism from a group into the symmetric group $S_X$, see
  e.g.~\cite{Suzuki:1982}.}   
The action is called
\index{transitive!action}%
\emph{transitive} if for each pair $x, y \in X$ there is some $g\in
G$ such that $\bar{g} x = y$. The representation $\varphi$ is called 
\emph{faithful}
\index{faithful!representation}%
if it is a monomorphism, in which case $G$ is isomorphic to its image under
$\varphi$, which is a subgroup of $\Aut(\bX)$.  We also say, in this case, that
the group acts faithfully, and call it a 
\defn{permutation group}.
A group which acts transitively on some set is called a 
\index{transitive!group}%
\emph{transitive group}.
Without specifying the set, however, this term is meaningless, since
every group acts transitively on some sets and intransitively on others.  
A representation $\varphi$ is called \emph{transitive} if the resulting action
is transitive. 
Finally, we define \defn{degree} of a group action on a set $X$ to be the
cardinality of $X$.

Two special cases are almost always what one means when one speaks of a
representation of a finite group.  These are the so called
\begin{itemize}
\item \defn{linear representations}, where $\bX = \<X, +, \circ, -, 0, 1, \F\>$
  is a finite dimensional vector space over a field $\F$, so $\Aut(\bX)$ is the
  set of invertible matrices with entries from $\F$; 
\item \defn{permutation representations}, where $\bX = X$ is just a set, so
  $\Aut(\bX) = S_X$. 
\end{itemize}

For us the most important representation of a group $G$ is its action 
on a set of cosets of a subgroup.  That is, for any subgroup $H\leq G$,
we define a transitive permutation representation of $G$, which we
will denote by $\hlambda_H$.  Specifically, $\hlambda_H$ is a group homomorphism
from $G$ into the symmetric group $\Sym(G/H)$ of permutations on the set $G/H =
\{H, x_1H, x_2H, \dots \}$ of \emph{left} cosets of $H$ in $G$.
The action is simply left multiplication by elements of $G$. That is,
$\hlambda_H(g)(xH)= gxH$.
Clearly, $\hlambda_H(g_1 g_2) = \hlambda_H(g_1)\hlambda_H(g_2)$ for all $g_1,
g_2 \in G$, so $\hlambda_H$ is a homomorphism.
Each $xH$ is a point in the set $G/H$, and the
\defn{point stabilizer} of $xH$ in $G$ is defined by
$G_{xH} = \{g\in G \mid gxH = xH \}$.  Notice that
\[
G_{xH} =\{g\in G \mid x^{-1}gxH  = H \} = 
x G_H x^{-1}  = x H x^{-1} = H^x,
\]
where $G_H = \{g\in G \mid g H = H \}$ is the point stabilizer of $H$ in $G$.  
Thus, the kernel of the homomorphism $\hlambda_H$ is 
\[
\ker \hlambda_H = \{g\in G \mid \forall x \in G,\; gxH = xH \} = 
\bigcap_{x\in G}G_{xH} = \bigcap_{x\in G} x H x^{-1}  = \bigcap_{x\in G} H^x.
\]
Note that $\ker \hlambda_H$ is the largest normal subgroup of $G$ 
contained in $H$, also known as the \defn{core} of $H$ in $G$, which we denote
by 
\[
\core_G(H) = \bigcap_{x\in G} H^x.
\]
If the subgroup $H$ happens to be \defn{core-free}, that is,
$\core_G(H)=1$, 
then $\hlambda_H : G \hookrightarrow \Sym(G/H)$ is an embedding, so 
$\hlambda_H$ is a 
\index{faithful!representation}%
faithful representation; 
\index{faithful!action}%
$G$ acts faithfully on $G/H$.
Hence the group $G$, being isomorphic to a subgroup of $\Sym(G/H)$, is itself a
permutation group.

Other definitions relating to \Gsets\ will be introduced as needed and in
the appendix, and we assume the reader is already familiar with these.
However, we mention one more important concept before proceeding, as it is a
potential source of confusion.  By a \defn{primitive group} we mean a group that
contains a core-free maximal subgroup.  This definition is not the typical one
found in group theory textbooks, but we feel it is better. (See the appendix
Section~\ref{sec:group-acti-perm} for justification.)

\subsection{$G$-set isomorphism theorems}
\label{subsec:g-set-isomorphism}
We have seen above that the action of a group on cosets of a subgroup $H$ is a
transitive permutation representation, and the representation is faithful when
$H$ is core-free. 
The first theorem in this section states that every
transitive permutation representation is of this form.
(In fact, as we will see in Lemma~\ref{lem:intransitive-gsets} below, every permutation
representation, whether transitive or not, can be viewed as an action on cosets.)

First, we need some more notation. Given a \Gset\ $\bA = \<A, G\>$ and any
element $a\in A$,  the set  
$G_a = \{g\in G \mid ga = a\}$ 
of all elements of $G$ which fix $a$ is a
\index{stabilizer subgroup}
subgroup of $G$, called the \emph{stabilizer of $a$ in $G$}.

\begin{theorem}[1st \Gset\ Isomorphism Theorem]
\label{thm:g-set-isomorphism1}
  If $\bA = \<A, \barG\>$ is a transitive \Gset, then $\bA$ is
  isomorphic to the \Gset 
  \[
  \Gamma :=  \<G/\stab{a}, \{\hat{\lambda}_g : g\in G\}\>
  \]
  for any $a\in A$.
\end{theorem}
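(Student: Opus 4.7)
The plan is to exhibit an explicit isomorphism between the two $G$-sets by fixing a base point $a\in A$ and using it to identify the orbit of $a$ (which, by transitivity, is all of $A$) with the coset space $G/\stab{a}$. Concretely, I would define the map
\[
\varphi : G/\stab{a} \;\longrightarrow\; A, \qquad \varphi(g\,\stab{a}) \;=\; \bar g\, a,
\]
and verify the four things required: (i) $\varphi$ is well defined, (ii) $\varphi$ is injective, (iii) $\varphi$ is surjective, and (iv) $\varphi$ respects the action, i.e.\ for every $g\in G$ and every coset $h\,\stab{a}\in G/\stab{a}$,
\[
\varphi\bigl(\hat\lambda_g(h\,\stab{a})\bigr) \;=\; \bar g\,\varphi(h\,\stab{a}).
\]

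For (i) and (ii) I would argue simultaneously from the equivalence
$g_1\,\stab{a} = g_2\,\stab{a} \Leftrightarrow g_2^{-1}g_1 \in \stab{a} \Leftrightarrow \overline{g_2^{-1}g_1}\,a = a \Leftrightarrow \bar{g_1}a = \bar{g_2}a$,
where the middle step uses the definition of the stabilizer and the last step uses that $\varphi$ is a group homomorphism into $\Aut(\bX)$ (so $\overline{g_2^{-1}g_1} = \bar{g_2}^{-1}\bar{g_1}$). Item (iii) is exactly the transitivity hypothesis: for any $b\in A$ there is $g\in G$ with $\bar g\, a = b$, whence $\varphi(g\,\stab{a}) = b$. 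Item (iv) is a direct calculation using the definition of $\hat\lambda_g$ as left multiplication on cosets:
\[
\varphi\bigl(\hat\lambda_g(h\,\stab{a})\bigr) = \varphi(gh\,\stab{a}) = \overline{gh}\,a = \bar g(\bar h\, a) = \bar g\,\varphi(h\,\stab{a}).
\]

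Since the operations of the $G$-set $\bA$ are precisely the unary maps $\{\bar g : g\in G\}$ and the operations of $\Gamma$ are precisely the unary maps $\{\hat\lambda_g : g\in G\}$, condition (iv) says exactly that $\varphi$ intertwines the $g$-th basic operation of $\Gamma$ with the $g$-th basic operation of $\bA$ for every $g\in G$; together with the bijectivity established in (i)--(iii), this makes $\varphi$ an isomorphism of algebras of the same similarity type. I do not expect a serious obstacle here: the argument is a mild abstraction of the classical orbit--stabilizer correspondence. The one point that requires a little care, and is the closest thing to a subtlety, is keeping the bookkeeping straight between an abstract group element $g\in G$, its image $\bar g \in \Aut(\bX)$ under the representation $\varphi$, and the permutation $\hat\lambda_g$ of $G/\stab{a}$; one must use that $g\mapsto \bar g$ is a homomorphism in order to pass from $\overline{g_2^{-1}g_1}\,a = a$ back and forth to $\bar{g_1}a = \bar{g_2}a$, and in order to verify equivariance.
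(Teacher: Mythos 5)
Your proposal is correct and is essentially the same argument as the paper's: both fix $a\in A$, use the evaluation map $g\mapsto \bar g\,a$, invoke transitivity for surjectivity, and identify the fibers with the cosets of $\stab{a}$. The only difference is presentational — the paper defines the map on all of $G$ (viewed as the regular $G$-set), applies the general first isomorphism theorem to get $\bG_\Lambda/\ker\varphi_a\cong\bA$, and then checks that the kernel classes are exactly the cosets $x\,\stab{a}$, whereas you unfold this by constructing the coset bijection and verifying well-definedness, injectivity, and equivariance directly.
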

\begin{proof}
  Suppose $\bA= \<A, \barG\>$ is a transitive \Gset, so 
  $A = \{\barg a \mid  g\in G\}$ for any $a\in A$.  The operations of the \Gset\ $\Gamma$ are defined, for each $g\in G$ and each
  coset $x \stab{a}\in G/\stab{a}$, by $\hat{\lambda}_g(x \stab{a}) = gx\stab{a}$. 

  Let 
  $\bG_\Lambda$ denote the \Gset\ $\<G, \{\lambda_g : g\in G\}\>$, that is,
  the group $G$ acting on itself by left multiplication. 
  Fix $a\in A$, and define $\varphi_a:G \rightarrow A$ by $\varphi_a(x) =
  \barx(a)$ for 
  each $x\in G$.  
  Then $\varphi_a$ is a homomorphism
  from 
  $\bG_\Lambda$ 
  into $\bA$ --
  that is, $\varphi_a$ respects operations:\footnote{In general, if $\bA
    =\< A, F\>$ and $\bB = \<B, F\>$ are two algebras of the same
    similarity type, then   
    $\varphi: \bA \rightarrow \bB$ is a homomorphism provided
    \[
    \varphi(f^\bA(a_1,\dots, a_n)) = f^\bB(\varphi(a_1),\dots,\varphi(a_n))
    \]
    whenever $f^\bA$ is an $n$-ary operation of $\bA$, $f^\bB$ is the
    corresponding $n$-ary operation of $\bB$, and $a_1,\dots, a_n$ are arbitrary 
    elements of $A$.  (Note that a one-to-one correspondence between the
    operations of 
    two algebras of the same similarity type is assumed, and required for the
    definition 
    of homomorphism to make sense.)
  }
  \[
  \varphi_a(\lambda_g(x)) = \varphi_a(gx)= \overline{gx}(a)
  = \barg \cdot \barx(a)
  = \barg \varphi_a(x).
  \]
  Moreover, since $\bA$ is transitive, $\varphi_a(G) = \{\barg a \mid  g\in G\}
  = A$, 
  so $\varphi_a$ is an epimorphism.  Therefore,
  $\bG_\Lambda/\ker \varphi_a \cong \bA$.  
  To complete the proof, one simply checks that the two algebras $\bG_\Lambda
  /\ker \varphi_a $ and $\Gamma$ are identical.\footnote{
    Indeed, 
    $\ker \varphi_a = \{(x,y) \in G^2 \mid  \varphi_a(x) = \varphi_a(y)\}$
    and the  universe of $\bG_\Lambda /\ker \varphi_a$ is 
    $G/\ker \varphi_a = \{x/\ker \varphi_a  \mid x\in G \}$. 
     where for each $x\in G$
  \begin{align*}
    x/\ker \varphi_a &= \{y\in G  \mid  (x,y) \in \ker \varphi_a\}
    = \{y\in G  \mid  \varphi_a(x) = \varphi_a(y)\}
    = \{y\in G  \mid  \barx(a) = \bary(a)\}\\
    &= \{y\in G  \mid  \id_{A}(a) = \overline{x^{-1}y}(a)\}
    = \{y\in G  \mid  x^{-1}y \in \stab{a}\}
    = x \stab{a}.
  \end{align*}
  These are precisely the elements of $G/\stab{a}$,
  so the universes of $\bG_\Lambda /\ker \varphi_a$ and $\Gamma$ are
  the same, as are their operations (left multiplication by $g\in G$).}
\end{proof}

The next theorem shows why intervals of subgroup lattices are so important for
our work.
\begin{theorem}[2nd \Gset\ Isomorphism Theorem]
\label{thm:g-set-isomorphism2}
  Let $\bA = \<A, G\>$ be a transitive \Gset\ and fix $a\in A$.  
  Then the lattice $\Con \bA$ is isomorphic to the
  interval $[G_a, G]$ in the subgroup lattice of $G$.
\end{theorem}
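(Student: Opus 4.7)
The plan is to first reduce to the standard model. By Theorem~\ref{thm:g-set-isomorphism1}, $\bA \cong \<G/\stab{a}, G\>$ with $G$ acting by left multiplication, and isomorphic algebras have isomorphic congruence lattices. So, writing $H = \stab{a}$, it suffices to exhibit a lattice isomorphism $[H, G] \cong \Con \<G/H, G\>$.

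The forward map is $K \mapsto \theta_K$, where
\[
(xH,\, yH) \in \theta_K \quad \Longleftrightarrow \quad x^{-1}y \in K
\quad \Longleftrightarrow \quad xK = yK.
\]
I would first check that this relation is well-defined on cosets, which uses $H \leq K$: replacing $x$ by $xh$ or $y$ by $yh'$ with $h,h' \in H$ does not change the $K$-coset. It is plainly an equivalence relation (pulled back from equality on $G/K$), and it is a congruence of $\<G/H, G\>$ because left multiplication by $g$ sends $x^{-1}y$ to $(gx)^{-1}(gy) = x^{-1}y$, which lies in $K$ precisely when the original pair did.

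The reverse map is $\theta \mapsto K_\theta := \{\, g \in G : (H,\, gH) \in \theta\,\}$. I would verify that $K_\theta$ is a subgroup containing $H$: it contains $H$ since $(H,H) \in \theta$; it is closed under products because $(H, g_1 H)\in\theta$ together with the $G$-invariance of $\theta$ yields $(g_1 H, g_1 g_2 H)\in\theta$, so transitivity of $\theta$ gives $(H, g_1 g_2 H) \in \theta$; and it is closed under inverses by applying $g^{-1}$ to $(H, gH)\in\theta$ and using symmetry. Both maps are clearly order-preserving: $K_1 \leq K_2$ forces $\theta_{K_1} \leq \theta_{K_2}$, and $\theta \leq \theta'$ forces $K_\theta \leq K_{\theta'}$.

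The final step is the mutual-inverse check: $K_{\theta_K} = K$ is immediate from the definitions (take $x = 1$), while $\theta = \theta_{K_\theta}$ requires the observation that the $G$-invariance of $\theta$ lets one transport any pair $(xH, yH)\in\theta$ to $(H, x^{-1}yH)\in\theta$, so $(xH,yH)\in\theta$ iff $x^{-1}y \in K_\theta$. I expect this last translation step to be the only place where one must really use the transitivity of the $G$-action; everything else is formal. No serious obstacle is anticipated, since the heavy lifting (the passage from the abstract $\bA$ to a coset space) has already been done in Theorem~\ref{thm:g-set-isomorphism1}.
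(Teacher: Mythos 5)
Your proof is correct and is essentially the paper's argument in different coordinates: after first invoking Theorem~\ref{thm:g-set-isomorphism1} to pass to the coset model $\<G/H, G\>$, your maps $K \mapsto \theta_K$ and $\theta \mapsto K_\theta = \{g : (H, gH) \in \theta\}$ correspond exactly (via $xH \leftrightarrow x(a)$) to the paper's $\theta_H$ and $H_\theta = \{g : (g(a),a) \in \theta\}$, which the paper handles directly on the abstract $G$-set without the initial reduction. All the verifications (well-definedness, subgroup closure, the mutual-inverse check using transitivity) line up step for step with the paper's.
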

\begin{proof}
    For each $\theta  \in \Con \bA$, let $H_\theta =\{g\in G \mid (g(a),a) \in
    \theta\}$,  
    and for each $H \in [G_a, G]$, let 
    $(b, c) \in \theta_H$ mean there exist $g \in G$ and $h \in H$ 
    such that $gh(a) = b$ and $g(a) =c$. 
    If $g_1, g_2 \in H_\theta$, then 
    \[
    (g_2(a), a) \in \theta \quad \Rightarrow \quad (g_2^{-1} g_2(a), g_2^{-1}(a)) =
    (a, g_2^{-1}(a)) \in \theta,
    \]
    so $(g_2^{-1}(a), a) \in \theta$, by symmetry.  Therefore,
    $(g_1g_2^{-1} (a), g_1 (a)) \in \theta$, so 
    $(g_1g_2^{-1} (a), (a)) \in \theta$, by transitivity. Thus $H_\theta$ is a
    subgroup of 
    $G$, and clearly  $G_a \leq H_\theta$. 
    It is also easy to see that $\theta_H$ is a congruence of $\bA$.
    The equality $H_{\theta_H}=H$ trivially follows from the definitions. 
    On the other hand $(b, c) \in \theta_{H_\theta}$ if and only if there exist 
    $g, h\in G$ for which $(h(a),a) \in \theta$ and
    $b = gh(a)$, and $c = g(a)$. Since $G$ is transitive, it is equivalent 
    to $(b, c) \in \theta$.
    Therefore, $\theta_{H_\theta} = \theta$. Finally, $H_\theta \leq H_\phi$ if
    and only if
    $\theta \leq \phi$, so $\theta \mapsto H_\theta$ is an isomorphism
    between $\Con \bA$ and $[G_a, G]$.
\end{proof}

Since the foregoing theorem is so central to our work, we provide an alternative 
statement of it. This is the version typically found in group theory 
textbooks (e.g., \cite{Dixon:1996}).  Keeping these two alternative perspectives in
mind can be useful.

  \begin{theorem}[2nd \Gset\ Isomorphism Theorem, version 2]
    Let $\bA = \<A, \barG\>$ be a transitive \Gset\
    and let $a \in A$. Let $\sB$ be the set of all blocks $B$ with $a\in B$.
    Let $[\stab{a},G] \subseteq \Sub(G)$ denote the set of all subgroups of 
    $G$ containing $\stab{a}$.  Then there is a
    bijection $\Psi :\sB \rightarrow [\stab{a},G]$ given by $\Psi(B)= G(B)$,
    with inverse mapping $\Phi: [\stab{a},G] \rightarrow \sB $ 
    given by $\Phi(H) = \barH a = \{\barh a  \mid  h\in H\}$. 
    The mapping $\Psi$ is order-preserving in the sense
    that if $B_1, B_2 \in  \sB$ then 
    $B_1\subseteq B_2 \Leftrightarrow \Psi(B_1) \leq \Psi(B_2)$.
  \end{theorem}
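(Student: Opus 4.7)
The plan is to verify three things: that $\Psi$ and $\Phi$ land in the claimed codomains, that they are mutually inverse, and that both are monotone with respect to inclusion; together these yield the biconditional in the statement.

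For well-definedness, given a block $B\ni a$, the setwise stabilizer $G(B)=\{g\in G : \barg(B)=B\}$ is clearly a subgroup, and for any $g\in \stab{a}$ the point $a$ lies in $B\cap \barg(B)$, so the block property forces $\barg(B)=B$; hence $\stab{a}\leq G(B)$ and $\Psi(B)\in [\stab{a},G]$. For $H\in [\stab{a},G]$, the set $\barH a$ visibly contains $a=\overline{1}a$, and I would check it is a block as follows: if $\barg(\barH a)\cap \barH a\neq \emptyset$ for some $g\in G$, then $\barg\barh a=\overline{h'}a$ for some $h,h'\in H$, so $(h')^{-1}gh\in \stab{a}\subseteq H$, which gives $g\in H$ and therefore $\barg(\barH a)=\barH a$.

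Next I would verify that the compositions are identities. For $\Psi\circ\Phi=\id$, the inclusion $H\leq G(\barH a)$ is immediate, while conversely $g\in G(\barH a)$ gives $\barg a=\barh a$ for some $h\in H$, whence $h^{-1}g\in \stab{a}\subseteq H$ and $g\in H$. For $\Phi\circ\Psi=\id$, the inclusion $\overline{G(B)}\,a\subseteq B$ is clear since each $g\in G(B)$ sends $a\in B$ into $B$; for the reverse, given any $b\in B$, transitivity of $\barG$ on $A$ gives $b=\barg a$ for some $g\in G$, and since $a,b$ both lie in $B\cap \barg(B)$ the block property gives $\barg(B)=B$, so $g\in G(B)$ and $b\in \overline{G(B)}\,a$.

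Finally, for the order-preserving statement, I would show both $\Psi$ and $\Phi$ are monotone. The map $\Phi$ is trivially monotone: $H_1\leq H_2$ implies $\overline{H_1}a\subseteq \overline{H_2}a$. For $\Psi$, if $B_1\subseteq B_2$ and $g\in G(B_1)$ then $\barg a\in B_1\subseteq B_2$ while $a\in B_2$, so $\barg(B_2)\cap B_2\neq \emptyset$ and the block property yields $g\in G(B_2)$. Combined with the fact that $\Psi$ and $\Phi$ are inverse bijections, these two monotonicity statements give the biconditional $B_1\subseteq B_2 \Leftrightarrow \Psi(B_1)\leq \Psi(B_2)$. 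The proof is essentially bookkeeping; the one step that requires care is the block-verification for $\overline{H}a$, where the hypothesis $\stab{a}\subseteq H$ must be invoked at just the right moment to promote $(h')^{-1}gh\in \stab{a}$ to $g\in H$.
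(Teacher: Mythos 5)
Your proof is correct. It is worth pointing out that the paper never actually proves this version of the theorem: it is stated as the ``textbook'' reformulation of Theorem~\ref{thm:g-set-isomorphism2}, whose proof works with congruences rather than blocks, setting up the correspondence $\theta \mapsto H_\theta = \{g\in G \mid (\barg a, a)\in\theta\}$ and $H \mapsto \theta_H$ and checking these are inverse order isomorphisms between $\Con\bA$ and $[\stab{a},G]$. The block version then follows by composing with the standard bijection $\theta \mapsto a/\theta$ between congruences of a transitive \Gset\ and blocks containing $a$. Your argument is a self-contained direct proof of the block version: you verify that $\barH a$ is a block (the one place where $\stab{a}\leq H$ is genuinely needed, as you correctly flag), that the setwise stabilizer $G(B)$ contains $\stab{a}$, that $\Psi$ and $\Phi$ are mutually inverse, and that both are monotone, which together give the biconditional. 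This buys a proof that never mentions congruences, at the cost of re-deriving facts that the congruence formulation packages once and reuses (the paper needs the lattice version anyway for $\Con\<G/H,G\>\cong[H,G]$). One cosmetic slip: in verifying $\Phi\circ\Psi=\id$ you assert that $a$ and $b$ both lie in $B\cap\barg(B)$; only $b=\barg a$ is guaranteed to lie there (since $a\in B$ gives $\barg a\in\barg(B)$, and $b\in B$ by hypothesis), but that single element already makes the intersection nonempty, which is all the block property requires, so the argument stands.
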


  Briefly, the poset $\<\sB, \subseteq\>$ is order-isomorphic to the 
  poset $\<[\stab{a},G], \leq\>$. 

  \begin{corollary}
    Let $G$ act transitively on a set with at least two
    points. 
    Then $G$ is primitive if and only if each stabilizer $\stab{a}$ is a
    maximal subgroup of $G$.
  \end{corollary}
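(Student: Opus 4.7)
The plan is to apply the second version of the 2nd $\Gset$ Isomorphism Theorem stated immediately above the corollary. That result provides an order-preserving bijection $\Psi : \sB \to [\stab{a}, G]$ between the blocks of the action containing $a$ and the subgroups of $G$ lying in the interval $[\stab{a}, G]$. First I would observe that, because the action is transitive, any two point stabilizers $\stab{a}$ and $\stab{b}$ are conjugate in $G$ (via any $g$ with $\barg a = b$), so one of them is a maximal subgroup iff all of them are. Hence it suffices to establish the biconditional for a single, arbitrarily chosen $a \in A$.

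Next I would unpack each side of the equivalence. By the standard characterization of primitivity (which, in the context of a specified transitive action, is the reading of ``$G$ is primitive'' at work here), the action is primitive precisely when the only blocks are singletons and $A$ itself; equivalently, the set $\sB$ of blocks containing $a$ equals $\{\{a\}, A\}$. The key check is that the bijection $\Psi$ sends these two ``trivial'' blocks to the endpoints of the interval: the full set $A$ is stabilized setwise by all of $G$, so $\Psi(A) = G$, while the singleton $\{a\}$ is stabilized setwise by exactly those $\barg$ fixing $a$, so $\Psi(\{a\}) = \stab{a}$. Since $\Psi$ is an order-isomorphism, the condition $\sB = \{\{a\}, A\}$ translates to $[\stab{a}, G] = \{\stab{a}, G\}$, which is precisely the definition of $\stab{a}$ being a maximal subgroup of $G$. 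The hypothesis $|A| \geq 2$ enters only to guarantee $\stab{a} \neq G$, so that ``maximal'' is a non-vacuous condition.

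The only real obstacle is a conceptual, rather than computational, one: confirming that the endpoints of the order-isomorphism $\Psi$ correspond to the two trivial blocks. Once that identification is in place, the equivalence follows immediately from the fact that $\Psi$ carries the poset of blocks through $a$ onto the interval $[\stab{a}, G]$.
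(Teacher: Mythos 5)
Your proof is correct and follows essentially the same route the paper intends: the corollary is presented as an immediate consequence of the order isomorphism $\Psi$ between the poset of blocks through $a$ and the interval $[\stab{a},G]$, with the trivial blocks $\{a\}$ and $A$ mapping to the endpoints $\stab{a}$ and $G$, and the paper's own follow-up remark about conjugate stabilizers is exactly your opening observation. No gaps.
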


  Since the point stabilizers of a transitive group are all conjugate, 
  one stabilizer is maximal only when all of the stabilizers are maximal. 
  In particular, a regular permutation group is primitive if and only if it has
  prime degree. 

Next we describe (up to equivalence) all transitive permutation
representations of a given group $G$.  
We call two representations (or actions) 
\index{equivalent representations}%
\emph{equivalent}
provided the associated $G$-sets are isomorphic. 
The foregoing implies that every transitive permutation representation of $G$ is
equivalent to $\hlambda_H$ for some subgroup $H \leq G$.  The following
lemma\footnote{Lemma 1.6B of \cite{Dixon:1996}.} 
shows that we need only consider a single representative $H$ from each of the
conjugacy classes of subgroups.  

\begin{lemma}
  Suppose $G$ acts transitively on two sets,
  $A$ and $B$.  Fix $a\in A$ and let $G_a$ be the stabilizer of $a$ (under the first
  action).  Then the two actions are equivalent
  if and only if the subgroup $G_a$ is also a stabilizer under the second action
  of some point $b\in B$. 
\end{lemma}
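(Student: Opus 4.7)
The plan is to prove both directions using the First $G$-set Isomorphism Theorem (Theorem~\ref{thm:g-set-isomorphism1}), along with the obvious equivariance properties of any $G$-set isomorphism.

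For the forward direction, I would start by assuming $\varphi : A \to B$ is an isomorphism of \Gsets, and set $b = \varphi(a)$. The claim is that $G_a = G_b$. If $g \in G_a$, then equivariance gives $g \cdot b = g \cdot \varphi(a) = \varphi(g \cdot a) = \varphi(a) = b$, so $g \in G_b$. The reverse inclusion uses the same computation together with the injectivity of $\varphi$: from $g \cdot b = b$ we get $\varphi(g \cdot a) = \varphi(a)$, and injectivity forces $g \cdot a = a$. Hence $G_a$ occurs as the stabilizer of the point $b \in B$.

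For the converse, I would invoke Theorem~\ref{thm:g-set-isomorphism1} twice. Given $b \in B$ with $G_b = G_a$, the transitive \Gset\ $\bA = \<A, \barG\>$ is isomorphic to $\<G/G_a, \{\hlambda_g\}\>$, and the transitive \Gset\ $\bB = \<B, \barG\>$ is isomorphic to $\<G/G_b, \{\hlambda_g\}\>$. Since $G_a = G_b$ the two target \Gsets\ are literally the same, so composing one isomorphism with the inverse of the other produces the desired equivalence $\bA \cong \bB$. (Alternatively, one can give the isomorphism concretely as $\varphi(g \cdot a) = g \cdot b$, where well-definedness and injectivity both reduce to the equality $g_1 \cdot a = g_2 \cdot a \Leftrightarrow g_2^{-1} g_1 \in G_a = G_b \Leftrightarrow g_1 \cdot b = g_2 \cdot b$, surjectivity is immediate from transitivity on $B$, and equivariance is automatic.)

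There is no real obstacle here; the only mild subtlety is being careful that the map $\varphi(g \cdot a) = g \cdot b$ is well-defined, and this is exactly where the hypothesis $G_a = G_b$ is used. Everything else is a direct unpacking of definitions or a citation of the First $G$-set Isomorphism Theorem.
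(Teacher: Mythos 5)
Your proof is correct. Note, however, that the paper does not actually prove this lemma --- it is stated without proof and attributed in a footnote to Lemma~1.6B of Dixon and Mortimer --- so there is no in-paper argument to compare against; your argument is the standard one, and the converse direction sits naturally on top of Theorem~\ref{thm:g-set-isomorphism1}, which the paper does prove, with the forward direction being the routine equivariance-plus-injectivity computation.
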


The point stabilizers of the action $\hlambda_H$ described above are the
conjugates of $H$ in $G$.  Therefore, the lemma implies that, for any two
subgroups $H, K \leq G$, the representations $\hlambda_H$ and $\hlambda_K$ are
equivalent precisely when $K = x Hx^{-1}$ for some $x\in G$. 
Hence, the transitive permutation representations of $G$ are given, up to
equivalence, by $\hlambda_{K_i}$ as $K_i$ runs over a set of representatives of
conjugacy classes of subgroups of $G$.

  \subsection{An \Mset\ isomorphism theorem}
  It is natural to ask whether the two theorems of the previous subsection hold
  more generally for a unary algebra $\<X, M\>$, where $M$ is a monoid (rather
  than a permutation group).  We call such an algebra $\<X, M\>$ an \Mset, and
  although we will see that there is no analogue to the 2nd \Gset\ Isomorphism
  Theorem, we do have 
  \begin{theorem}[1st \Mset\ Isomorphism Theorem]
    If $\<X, M\>$ is a transitive \Mset, then for any fixed
    $x\in X$, the map $\varphi_x : M \rightarrow X$ defined by $\varphi_x(m) = mx$
    is an \Mset\ epimorphism. 
    Moreover, the (transitive) \Mset\ $\<M/\ker \varphi_x, M\>$ is isomorphic to 
    $\<X, M\>$.
  \end{theorem}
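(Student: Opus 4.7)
The plan is to mimic the proof of the 1st \Gset\ Isomorphism Theorem almost verbatim, replacing the group $G$ by the monoid $M$ and its left-regular action on itself. The only places where groupness of $G$ played a role in that earlier proof were (i) in the identification of $\ker \varphi_a$ with the coset partition $G/\stab{a}$ (which needs inverses) and (ii) in the version of transitivity used; the surjective homomorphism piece itself never used inverses, so it carries over cleanly.

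First, I would fix $x \in X$ and form the \Mset\ $\bM_\Lambda = \langle M, \{\lambda_n : n \in M\}\rangle$, where $\lambda_n(m) = nm$ is left multiplication, together with $\langle X, M\rangle$ itself, viewed as a unary algebra with one operation for each $n \in M$. Then the proposed map $\varphi_x : M \to X$, $\varphi_x(m) = mx$, respects operations by associativity of the action:
\[
\varphi_x(\lambda_n(m)) \;=\; \varphi_x(nm) \;=\; (nm)x \;=\; n(mx) \;=\; n\,\varphi_x(m).
\]
Hence $\varphi_x$ is an \Mset\ homomorphism from $\bM_\Lambda$ into $\langle X, M\rangle$. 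Surjectivity of $\varphi_x$ is exactly the statement that $X = Mx$, which is the (strong) transitivity hypothesis applied at the base point $x$; this gives that $\varphi_x$ is an epimorphism.

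Next, I would invoke the general Homomorphism Theorem for algebras: since $\theta := \ker \varphi_x$ is by construction a congruence of $\bM_\Lambda$, and $\varphi_x$ is surjective onto $\langle X, M\rangle$, the induced map $\bar\varphi_x : \bM_\Lambda/\theta \to \langle X, M\rangle$, $\bar\varphi_x(m/\theta) = mx$, is a well-defined bijection that still respects each $\lambda_n$. Unpacking, the operations on $M/\theta$ are $\lambda_n(m/\theta) = (nm)/\theta$, and
\[
\bar\varphi_x(\lambda_n(m/\theta)) \;=\; (nm)x \;=\; n(mx) \;=\; n\,\bar\varphi_x(m/\theta),
\]
so $\bar\varphi_x$ is an \Mset\ isomorphism from $\langle M/\theta, M\rangle$ onto $\langle X, M\rangle$, and transitivity of the quotient is inherited from transitivity of $\langle X, M\rangle$ (or directly from $X = Mx$).

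The main delicate point, which I expect to be the only subtle one, is pinning down the right notion of transitivity for an \Mset: one needs $X = Mx$ for the chosen base point $x$ (and ideally for every $x \in X$, so that ``any fixed $x$'' in the statement is truly unrestricted). Once that is agreed upon, the argument is purely formal and there is no obstruction analogous to the group case; what one loses, of course, is the tidy description of $\ker \varphi_x$ as the coset equivalence of a stabilizer submonoid, which is exactly why there is no \Mset\ analogue of the 2nd \Gset\ Isomorphism Theorem.
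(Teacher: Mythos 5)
Your proposal is correct and takes essentially the same approach as the paper: both establish that $\varphi_x$ is a surjective homomorphism from the left-regular \Mset\ $\<M,M\>$ onto $\<X,M\>$ via the computation $\varphi_x(nm)=n\varphi_x(m)$ and transitivity, then invoke the standard isomorphism theorem, and both note that transitivity of the quotient is forced by the isomorphism (the paper merely adds an explicit ``just to be sure'' verification of this last point).
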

  \begin{proof}
    By transitivity, for each $y\in X$, there is an $m\in M$ such
    that $\varphi_x(m) = mx = y$, so $\varphi_x$ is onto.  Also, $\varphi_x$ is a
    homomorphism of the \Mset\ $\<M, M\>$ onto
    the \Mset\ $\<X, M\>$, since for all $m, m_1\in M$,
    \[
    \varphi_x(m\circ m_1) = m(m_1 x) = m \varphi_x(m_1).
    \]
    By the usual isomorphism theorem,
    \begin{equation}
      \label{eq:msetcong}
      \<M/\ker \varphi_x, M\> \cong \<X, M\>
    \end{equation}
    where 
    \[
    \ker \varphi_x = \{(m_1, m_2) \in M^2 \mid \varphi_x(m_1) = \varphi_x(m_2)\} =
    \{(m_1, m_2) \in M^2 \mid m_1 x = m_2 x \}.
    \]
    Note that, since $\<X, M\>$ is a transitive \Mset, the \Mset\ 
    $\<M/\ker \varphi_x, M\>$ must also be transitive, otherwise~(\ref{eq:msetcong}) would
    fail. 

    Just to be sure, let's verify that $\<M/\ker \varphi_x, M\>$ is indeed transitive.
    Let $m_1/\ker\varphi_x$, $m_2/\ker\varphi_x$ be any two $\ker\varphi_x$-classes
    of $M$.  We must show there exists $m_3\in M$ such that 
    $m_3[m_1/\ker\varphi_x]= m_2/\ker\varphi_x$.
    Let $\varphi_x(m_1) = y_1$ and 
    $\varphi_x(m_2) = y_2$.  Let $m_3\in M$ be a map which takes  $y_1$ to $y_2$,
    (guaranteed to exist by transitivity of $\<X, M\>$).  Then for all 
    $m\in m_1/\ker\varphi_x$, we have 
    $m_3mx = m_3y_1 = y_2$, so $m_3m \in m_2/\ker\varphi_x$.  Therefore, 
    \[
    m_3[m_1/\ker\varphi_x]\subseteq  m_2/\ker\varphi_x.
    \]
    By the same argument, there is $m_3'\in M$ such that 
    \[
    m_3'[m_2/\ker\varphi_x]\subseteq  m_1/\ker\varphi_x.
    \]
    By cardinality, $m_3[m_1/\ker\varphi_x]= m_2/\ker\varphi_x$.
  \end{proof}

  An analogue to the 2nd \Gset\ Isomorphism Theorem for monoids would be that
  $[M_x, M] \cong \Con\<X, M\>$ should hold for a transitive \Mset\ $\<X, M\>$.
  By the following counter-example, we see that this is false:
  Consider the monoid $M$ consisting of the identity and constant maps.  Of
  course, $\<X, M\>$ is a transitive \Mset, and $\Con\<X, M\> =
  \Eq(X)$. However, for $x\in X$, the stabilizer is  
  $M_x = \{m \in M: mx = x\}$ which is the set containing the identity map on $X$
  and the constant function that maps all points to $x$.
  So the lattice $[M_x, M]$ of submonoids of $M$ above $M_x$ is just the lattice
  of subsets of $M$ which contain the identity and the constant map $x$.  This is a
  distributive lattice, so it cannot be isomorphic to $\Con\<X, M\> = \Eq(X)$.

\section{Intransitive \Gsets}
\label{sec:congr-latt-intr}
The problem of characterizing congruence lattices of 
intransitive \Gsets\ seems open. 
In this section we prove a couple of results
which help determine the shape of 
congruence lattices of intransitive $G$-sets. 
In~\cite{gsets} we use these and other
results to show that for many lattices a minimal representation as the
congruence lattice of an intransitive \Gset\ is not possible.\footnote{In other
  words, if there exists a representation of such a lattice as the congruence
  lattice of an algebra (of minimal cardinality), then the
  algebra must be a \emph{transitive} \Gset.}

In the previous section we considered transitive, or one-generated, \Gsets.
In Theorem~\ref{thm:g-set-isomorphism1}, we presented the well known result that
a transitive \Gset\ $\<\Omega, G\>$, with universe $\Omega$, is isomorphic to the
\Gset\ $\<G/H, G\>$, where the universe is now the collection of cosets of
a subgroup $H= G_\omega$ -- the stabilizer of a point $\omega\in \Omega$.
Then, Theorem~\ref{thm:g-set-isomorphism2} gave us a precise 
description of the shape of the congruence lattice: $\Con\<G/H, G\> \cong [H,G]$. 
It is natural to ask whether results analogous to these 
hold for intransitive \Gsets.

In this section, we first prove 
that an arbitrary (intransitive)
\Gset\ $\<\Omega, G\>$ is isomorphic to a \Gset\ of the form
$\<G_1/H_1 \cup \cdots \cup G_r/H_r, G\>$, where
$H_i \leq G_i\cong G$. 
This result is well known, and appears as
Theorem 3.4 in~\cite{alvi:1987}.  Nonetheless we present a short proof and
describe the \Gset\ isomorphism explicitly.\footnote{Such an explicit
  description is useful when we are working with such algebras on the computer,
  using the Universal Algebra Calculator or \GAP, for example.}
Thereafter, we prove lemma which, along
with the first, gives a characterization of the congruence lattice of an
arbitrary \Gset.   
It is almost certain that this simple result is also well known,
but to my knowledge it does not appear in print elsewhere.\footnote{I thank Alexander
Hulpke for alerting me to the special case, described below, of the second lemma.}

Throughout this section, we adhere to the convention that \emph{groups act on the
  left}, so we will denote the action of $g\in G$ on an element 
$\omega\in \Omega$ 
by $g: \omega \mapsto g \omega$, and we use $G\omega$ to denote the orbit of
$\omega$ under this action, that is, $G\omega  = \{g\omega  \mid g\in G\}$.
Finally, we remind the reader that
\text{\emph{all groups under consideration are finite}.}

Our first lemma shows that, even in the intransitive case, we can take the
universe of an arbitrary \Gset\ to be a collection cosets of the group $G$.
\begin{lemma}
\label{lem:intransitive-gsets}
Every \Gset\ $\la \Omega,G\ra$ is isomorphic to a $G$-set on a universe of
the form $G_1/H_1 \cup \cdots \cup G_r/H_r$, where
$H_i \leq G_i\cong G$ and $G_i/H_i$ is the set of left cosets of $H_i$ in $G_i$,
for each $1\leq i\leq r$, 
\end{lemma}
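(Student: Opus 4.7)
The plan is to decompose the $G$-set $\langle \Omega, G\rangle$ into its orbits and then apply the 1st $G$-set Isomorphism Theorem (Theorem~\ref{thm:g-set-isomorphism1}) orbit by orbit. Since the orbits partition $\Omega$ and each orbit is $G$-invariant, the universe $\Omega$ is a disjoint union $\Omega_1 \cup \cdots \cup \Omega_r$ of transitive $G$-sets $\langle \Omega_i, G\rangle$, and the full algebra $\langle \Omega, G\rangle$ is determined by the disjoint union of these sub-$G$-sets (the action of any $g \in G$ on $\omega \in \Omega_i$ is just its action inside $\Omega_i$).

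First I would fix, for each orbit $\Omega_i$, a representative $\omega_i \in \Omega_i$ and set $H_i = G_{\omega_i}$, the stabilizer of $\omega_i$ in $G$. By Theorem~\ref{thm:g-set-isomorphism1}, the map $\varphi_i : G/H_i \to \Omega_i$ defined by $g H_i \mapsto g\omega_i$ is a $G$-set isomorphism from $\langle G/H_i, G\rangle$ onto $\langle \Omega_i, G\rangle$. Assembling these, the map $\varphi = \bigcup_i \varphi_i$ is a bijection from $\bigcup_i G/H_i$ onto $\Omega$ that respects the $G$-action on each piece, hence is a $G$-set isomorphism.

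The one bookkeeping obstacle — and the reason the statement is phrased with $H_i \leq G_i \cong G$ rather than simply $H_i \leq G$ — is that the sets $G/H_i$ and $G/H_j$ for distinct $i,j$ need not be disjoint (e.g.\ if $H_i = H_j$, or more generally whenever cosets of different subgroups coincide as subsets of $G$). To handle this cleanly I would take $r$ formally disjoint isomorphic copies $G_1, \ldots, G_r$ of $G$, fix isomorphisms $\eta_i : G \to G_i$, let $H_i = \eta_i(G_{\omega_i}) \leq G_i$, and have $G$ act on each $G_i/H_i$ via $g \cdot (x H_i) = \eta_i(g) x H_i$. The disjoint union $G_1/H_1 \cup \cdots \cup G_r/H_r$ then has formally disjoint pieces, and the composite isomorphism $g H_i \mapsto g\omega_i$ (pre-composed with $\eta_i^{-1}$ on cosets) gives the required $G$-set isomorphism with $\langle \Omega, G\rangle$.

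The argument is essentially routine once the orbit decomposition is in place and Theorem~\ref{thm:g-set-isomorphism1} is applied; the only subtlety worth spelling out is the disjointness trick, since that is precisely what motivates the somewhat unusual notation $H_i \leq G_i \cong G$ appearing in the statement of the lemma.
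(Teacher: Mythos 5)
Your proposal is correct and follows essentially the same route as the paper: decompose $\Omega$ into orbits, identify each orbit $\Omega_i$ with cosets of the stabilizer via the transitive case, and resolve the overlap problem by passing to formally disjoint copies $G_i \cong G$ (your $\eta_i$ is just the inverse of the paper's isomorphism $\phi_i : G_i \to G$, and your $H_i$ and induced action coincide with theirs). The only cosmetic difference is that you cite Theorem~\ref{thm:g-set-isomorphism1} for the orbit-wise isomorphism where the paper verifies the map $\psi(xH_i) = \phi_i(x)\omega_i$ directly, which amounts to the same computation.
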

\begin{proof}
Suppose $\alg \Omega = \la \Omega, G\ra$ is an arbitrary $G$-set, and let $\<\Omega_i, G\>$, 
$1 \leq i \leq r$,
be the minimal subalgebras of $\alg \Omega$.  That is, each $\Omega_i$ is an
orbit, say, $\Omega_i = G\omega_i$,  
and $\Omega = G\omega_1 \cup \cdots \cup G\omega_r$ is a disjoint union.
For each $1\leq i \leq r$, let $G_i$ be an isomorphic copy of $G$, with, say,
$\phi_i: G_i\cong G$ as the isomorphism. Clearly,
\[
H_i := \{x \in G_i \mid  \phi_i(x)\omega_i = \omega_i\} \cong \{g\in G \mid
g \omega_i = \omega_i\} = G_{\omega_i}.
\]
Note that $\<G_i/H_i, G\> \cong \<G \omega_i, G\>$, where $G$ acts on $G_i/H_i$ as one
expects: for $g\in G$ and $xH_i\in G_i/H_i$, the action is
$g: xH_i \mapsto \phi_i^{-1}(g)x H_i$. 

Define 
$\psi: G_1/H_1 \cup \cdots \cup G_r/H_r \rightarrow \Omega$
by $\psi(xH_i)  = \phi_i(x)\omega_i$.  This map is well-defined.  For, if 
$xH_i = x'H_j$, then $i=j$ and $x^{-1}x'\in H_i$, and 
it is easy to verify that $x^{-1}x'\in H_i$ holds if and only if $\phi_i(x')\omega_i = \phi_i(x) \omega_i$.
Thus, $\psi(xH_i) = \psi(x'H_j)$.

Now consider the \Gset\ 
$\<G_1/H_1 \cup \cdots \cup G_r/H_r, G\>$ with the same action as above:
$g(xH_i) =\phi_i^{-1}(g)(x H_i)$.   We claim that $\psi$ is a \Gset\ isomorphism of
$\<G_1/H_1 \cup \cdots \cup G_r/H_r, G\>$ onto
$\<\Omega, G\>$.  It is clearly a bijection.\footnote{Define
$\zeta: \Omega \rightarrow 
G_1/H_1 \cup \cdots \cup G_r/H_r$ by 
$\zeta(g\omega_i) = \phi_i^{-1}(g)H_i$, 
check that this map is well-defined, and
note that $\psi \zeta  = \id_\Omega$, and 
$\zeta \psi$ is the identity on $G_1/H_1 \cup \cdots \cup G_r/H_r$.}
We check that $\psi$ respects the
interpretation of the action of $G$:  Fix $g \in G$ and
$x\in G_i$.  Then, since $\phi_i$ is a homomorphism,
\[
\psi (\phi_i^{-1}(g)(x H_i)) = 
\phi_i (\phi_i^{-1}(g)x) \omega_i
= \phi_i (\phi_i^{-1}(g)) \phi_i (x) \omega_i 
= g \psi (x H_i).
\]
\end{proof}
The foregoing lemma shows that we can always take the universe of an
intransitive \Gset\ to be a disjoint union of sets of cosets of stabilizer
subgroups.  We now use this fact to describe the structure of the congruence
lattice of an arbitrary  \Gset.

As above, let $\alg \Omega = \la \Omega, G\ra$ be a $G$-set with universe
$\Omega = G\omega_1 \cup \cdots \cup G\omega_r$, where each 
$\<G\omega_i, G\>$ is a minimal subalgebra.
Consider the partition $\tau \in \Eq(\Omega)$, given by 
$\tau = |G\omega_1| G\omega_2|\cdots| G\omega_r|$.
Clearly, this is a congruence relation, since the action of every $g\in G$ fixes
each block.  We call $\tau$ the 
\defn{intransitivity congruence}.
It's clear that we can join two or more blocks of $\tau$
and the new larger block will still be preserved by every $g\in G$.
Thus, the interval above $\tau$ in the congruence lattice $\alg \Omega$ is
isomorphic to the lattice of partitions of a set of size $r$.  That is, 
\begin{equation}
  \label{eq:8}
[\tau, 1_\Omega] := \{\theta \in \Con \bOmega \mid \tau \leq \theta \leq 1_\Omega\} \cong \Eq(r).
\end{equation}
Another obvious fact is that the interval below $\tau$ in $\Con \bOmega$ is 
\begin{equation}
\label{eq:7}
[0_\Omega, \tau] \cong \prod_{i=1}^{r} \Con(\<G\omega_i, G\>).
\end{equation}
Since each minimal algebra $\<G\omega_i, G\>\cong
\<G_i/H_i, G\>$ is transitive, we have
$\Con(\<G\omega_i, G\>) \cong [H_i, G_i]$.
Thus, the structure of that part of $\Con\bOmega$ that is comparable with the 
intransitivity congruence is explicitly described by~(\ref{eq:8}) and~(\ref{eq:7}).  

Our next result describes the
congruences that are incomparable with the intransitivity congruence.
The description is in terms of the blocks of congruences below the intransitivity
congruence. Thus, the lemma does not give a nice
abstract characterization of the shape of the $\ConO$ in terms of the
shape of $\Sub(G)$, as we had in the transitive case.
However, besides being useful for computing the congruences, this result
can be used in certain situations to draw conclusions about the general shape of
$\ConO$, based on the subgroup structure of $G$ (for example, using combinatorial
arguments involving the index of subgroups of $G$).  We will say more about
this below.

Though the proof of Lemma~\ref{lemma-intransGsets} is elementary, it gets a bit 
complicated when presented in full generality.  Therefore, we begin by discussing the
simplest special case of an intransitive \Gset, that is, one which has just two
minimal subalgebras.  
Suppose $\bOmega = \<\Omega, G\>  = \<\Omega_1 \cup \Omega_2, G\>$ is a $G$-set with 
$\Omega_i = G\omega_i$ for some $\omega_i\in \Omega_i$, $i=1, 2$.
For each subset $\Lambda\subseteq \Omega$, for each $g\in G$, let 
$g\Lambda:=\{g\omega \mid \omega \in \Lambda\}$, and define the 
\defn{set-wise stabilizer} 
of $\Lambda$ in $G$ to be the subgroup
\[
\Stab_G(\Lambda) := \{ g\in G \mid g\omega \in \Lambda \text{ for all } \omega\in \Lambda\}.
\]
As above, 
we call the congruence $\tau = |\Omega_1 | \Omega_2|$ the intransitivity
congruence. 
Fix a congruence $\tau_0$ strictly below $\tau$, and for each $i=1,2$ 
let $\Lambda_i = \omega_i/\tau_0$ denote the block of $\tau_0$ containing
$\omega_i$.
Then there is a congruence $\theta$ above $\tau_0$ with a block $\Lambda_1 \cup
\Lambda_2$ if and only if $\Stab_G(\Lambda_1) = \Stab_G(\Lambda_2)$.
(We will verify this claim below when we prove it more generally in
Lemma~\ref{lemma-intransGsets}.)
This characterizes all congruences in $\ConO$ that are incomparable with
the intransitivity congruence, $\tau$, in terms of the congruences below
$\tau$. 

Let $\bOmega = \<\Omega_1 \cup \cdots \cup \Omega_r, G\>$ be a $G$-set with
minimal subalgebras $\Omega_i = G\omega_i$, for some $\omega_i \in \Omega_i$,
$1\leq i \leq r$.
Let $\tau = |\Omega_1 | \Omega_2 | \cdots | \Omega_r|$ be the intransitivity
congruence and fix $\tau_0 < \tau$ in $\Con \bOmega$.  For each $1\leq i \leq
r$, let $\Lambda_i = \omega_i/\tau_0$ denote the block of $\tau_0$ containing $\omega_i$,
and let $T_i = \{g_{i,0}{=}1, g_{i,1}, \dots, g_{i,n_i}\}$ be a transversal
of $G/\Stab_G(\Lambda_i)$.\footnote{Here $G/\Stab_G(\Lambda_i)$ denotes the set
  of right cosets of $\Stab_G(\Lambda_i)$ in $G$, and a \defn{transversal} is a
  set containing one element from each coset.}

It is important to note that the blocks of $\tau_0$ are $g_{i,k}\Lambda_i $, where 
$1\leq i \leq r$ and $0\leq k \leq n_i$.
This is illustrated in the following diagram, where the blocks of $\tau_0$
appear below the blocks of $\tau$ to which they belong.  

\begin{center}
    \begin{tikzpicture}[scale=.7]
      \draw (1.4,2) node {$\tau = $};
      \draw[thick] (2.5,1.6) -- (2.5,2.4);
      \draw (4.8,2) node {$\Omega_1$};
      \draw[thick] (7,1.6) -- (7,2.4);
      \draw (9.2,2) node {$\Omega_2$};
       \draw[thick] (11.4,1.6) -- (11.4,2.4);
       \draw (12.2,2) node {$\cdots$};
       \draw[thick] (12.9,1.6) -- (12.9,2.4);
      \draw (15,2) node {$\Omega_r$};
      \draw[thick] (17,1.6) -- (17,2.4);
      \draw[semithick, dotted] (2.4,1.4) -- (1.2,.5);
      \draw[semithick,dotted] (6.5,.6) -- (6.9,1.4);
      \draw[semithick,dotted] (11.8,.6) -- (11.5,1.4);
      \draw[semithick,dotted] (12.9,.6) -- (12.9,1.4);
      \draw[semithick,dotted] (18.2,.5) -- (17,1.4);
      \draw (-.20,0) node {$\tau_0 = $};
      \draw[thick] (1,-.4) -- (1,.4);
      \draw (3.7,0) node {$\Lambda_1| g_{1,1} \Lambda_1 | \cdots | g_{1,n_1} \Lambda_1 $};
       \draw[thick] (6.4,-.4) -- (6.4,.4);
       \draw (9.15,0) node {$\Lambda_2| g_{2,1}\Lambda_2 | \cdots | g_{2,n_2}\Lambda_2 $};
       \draw[thick] (11.9,-.4) -- (11.9,.4);
       \draw (12.45,0) node {$\cdots$};
       \draw[thick] (12.9,-.4) -- (12.9,.4);
      \draw (15.6,0) node {$\Lambda_r|  g_{r,1} \Lambda_r | \cdots |g_{r,n_r}  \Lambda_r $};
      \draw[thick] (18.3,-.4) -- (18.3,.4);

    \end{tikzpicture}

\end{center}

It should be obvious that the blocks of $\tau_0$ are as given above, but since this plays
such an important role in the lemma below, we check it explicitly:
If $\Lambda_i\subseteq \Omega_i$ is a block of $\tau_0$, then so is $g \Lambda_i$ for all $g \in G$, and either
$g\Lambda_i \cap \Lambda_i = \emptyset$ or $g\Lambda_i = \Lambda_i$.  If $\Lambda'
\subseteq \Omega_i$ is also a  block of $\tau_0$, then $\Lambda' = g' \Lambda_i$ for some
$g'\in G = \Stab_G(\Lambda_i) \cup g_{i,1} \Stab_G(\Lambda_i) \cup g_{i,n_i} \Stab_G(\Lambda_i)$, say $g'\in g_{i,j}
\Stab_G(\Lambda_i)$.  Then, $g_{i,j}^{-1}g'\in 
\Stab_G(\Lambda_i)$, so $g_{i,j}^{-1}g'\Lambda_i=\Lambda_i$.  Therefore, 
$g'\Lambda_i = g_{i,j}\Lambda_i$.

Another obvious but important consequence:
If $T_1 = \{g_{1,0}{=}1, g_{1,1}, \dots, g_{1,n_1}\}$ is a transversal of 
$G/\Stab(\Lambda_1)$, and if $\Stab(\Lambda_1) = \Stab(\Lambda_j)$, then 
$T_1$ is also a transversal of $G/\Stab(\Lambda_j)$, so the blocks of $\tau_0$ in $\Omega_j$
may be written as $g_{1,k}\Lambda_j$, where $0\leq k \leq n_1$.

\begin{lemma}
\label{lemma-intransGsets}
Given a subset $\{i_1, \dots, i_m\} \subseteq \{1,\dots, r\}$, 
there exists $\theta \in \ConO$ with block $\Lambda_{i_1} \cup \dots \cup \Lambda_{i_m}$ if and only if
$\Stab_G(\Lambda_{i_1}) = \cdots = \Stab_G(\Lambda_{i_m})$.  For example,
\begin{equation}
  \label{eq:th}
\theta = \tau_0 \cup \bigcup_{k=0}^{n_{i_1}} 
\left(g_{{i_1}k}\Lambda_{i_1} \cup \dots \cup g_{{i_1}k} \Lambda_{i_m}\right)^2. 
\end{equation}
\end{lemma}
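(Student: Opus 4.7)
The statement is an ``if and only if,'' so I will handle the two directions separately. For the \emph{necessity} direction, suppose $\theta \in \ConO$ has $B := \Lambda_{i_1} \cup \cdots \cup \Lambda_{i_m}$ as a $\theta$-class. Because $\theta$ is a congruence, $gB$ is also a $\theta$-class for every $g \in G$. Fix an index $j$ and take any $g \in \Stab_G(\Lambda_{i_1})$. Then $gB \cap B \supseteq g\Lambda_{i_1} = \Lambda_{i_1} \ne \varnothing$, so $gB = B$. Since each $\Omega_i$ is $G$-invariant and the $\Omega_i$ are disjoint, intersecting $gB = g\Lambda_{i_1} \cup \cdots \cup g\Lambda_{i_m}$ with $\Omega_{i_j}$ forces $g\Lambda_{i_j} = \Lambda_{i_j}$, i.e.\ $g \in \Stab_G(\Lambda_{i_j})$. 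Symmetry of the argument yields $\Stab_G(\Lambda_{i_1}) = \cdots = \Stab_G(\Lambda_{i_m})$.

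\textbf{Sufficiency.} Assume $H := \Stab_G(\Lambda_{i_1}) = \cdots = \Stab_G(\Lambda_{i_m})$ and define $\theta$ by the displayed formula~(\ref{eq:th}). As noted just before the statement of the lemma, because $H$ is the common stabilizer, the transversal $T_{i_1}$ of $G/H$ serves simultaneously as a transversal of $G/\Stab_G(\Lambda_{i_j})$ for every $j$; consequently the $\tau_0$-blocks inside $\Omega_{i_j}$ are exactly $\{g_{i_1,k}\Lambda_{i_j} : 0 \le k \le n_{i_1}\}$. Set
\[
S_k := g_{i_1,k}\Lambda_{i_1} \cup \cdots \cup g_{i_1,k}\Lambda_{i_m}, \qquad 0 \le k \le n_{i_1}.
\]
I first check that $\theta$ is an equivalence relation. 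For each fixed $j$, the sets $\{g_{i_1,k}\Lambda_{i_j}\}_k$ are pairwise disjoint $\tau_0$-blocks in $\Omega_{i_j}$, so the $S_k$'s are pairwise disjoint subsets of $\Omega_{i_1} \cup \cdots \cup \Omega_{i_m}$, each being a union of complete $\tau_0$-blocks. Thus $\theta$ is the equivalence relation whose nontrivial classes are the $S_k$'s together with all remaining $\tau_0$-classes (those lying in $\Omega_i$ for $i \notin \{i_1,\dots,i_m\}$).

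Next I verify that $\theta$ is $G$-invariant. Let $g \in G$. Since $T_{i_1}$ is a transversal of $G/H$, there is a unique $k'$ and an $h \in H$ with $g \cdot g_{i_1,k} = g_{i_1,k'}\, h$. Because $h \in H = \Stab_G(\Lambda_{i_j})$ for every $j$,
\[
g \cdot g_{i_1,k}\Lambda_{i_j} = g_{i_1,k'} h \Lambda_{i_j} = g_{i_1,k'}\Lambda_{i_j},
\]
so $g S_k = S_{k'}$. Thus $g$ permutes the $S_k$'s; combined with the fact that $\tau_0$ is already a congruence, this yields $g\theta \subseteq \theta$, so $\theta \in \ConO$. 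Taking $k=0$ gives $S_0 = \Lambda_{i_1} \cup \cdots \cup \Lambda_{i_m}$, which is the required block.

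\textbf{Anticipated obstacle.} The only real subtlety is bookkeeping in the sufficiency part: one must see that the hypothesis of coinciding stabilizers is precisely what makes a \emph{single} transversal $T_{i_1}$ simultaneously parametrize the $\tau_0$-blocks in every $\Omega_{i_j}$, so that the $S_k$'s are both well-defined and disjoint, and then that this same fact makes them permuted (rather than shuffled into each other) by every $g \in G$. Once this alignment is pinned down, both the equivalence-relation axioms and $G$-invariance fall out immediately.
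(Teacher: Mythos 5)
Your proof is correct and follows essentially the same route as the paper's: the sufficiency direction uses the identical transversal/common-stabilizer computation to show that each $g\in G$ sends the merged block $g_{i_1k}\Lambda_{i_1}\cup\cdots\cup g_{i_1k}\Lambda_{i_m}$ onto a single block of the same form. Your necessity argument is packaged a little more cleanly (using that $g$ permutes $\theta$-classes and intersecting with the $G$-invariant orbits $\Omega_{i_j}$, rather than the paper's element-level contradiction), but it rests on the same observation.
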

\begin{remarks}
  The index set $\{i_1, \dots, i_m\}$ identifies the subalgebras from which to
  choose blocks that will be joined in the new congruence $\theta$.  
  The number of blocks of $\tau_0$ which
  intersect the subalgebra $\Omega_{i_j}$ is $n_{i_j}$, which is the length of the
  transversal of $G/\Stab_G(\Lambda_{i_j})$. Therefore, $n_{i_j} = |G:\Stab_G(\Lambda_{i_j})|$.  

  As noted above, if
  $\Stab_G(\Lambda_{i_1}) = \Stab_G(\Lambda_{i_m})$, then we can assume the transversals
  $T_1 = \{g_{i_11}, \dots, g_{i_1n_{i_1}}\}$ and  
  $T_m = \{g_{i_m1}, \dots,g_{i_mn_{i_m}}\}$ are the same. 
In the proof below, we will use $T$ to denote this common transversal.
\end{remarks}
\begin{proof}
$(\Rightarrow)$  Assume there is a congruence $\theta \in \ConO$ with block 
$\Lambda_{i_1} \cup \dots  \cup \Lambda_{i_m}$.  
Suppose there exists $1\leq j < k \leq m$ such that 
$\Stab_G(\Lambda_{i_j}) \neq \Stab_G(\Lambda_{i_k})$.  Without loss of generality, assume 
$g\in \Stab_G(\Lambda_{i_j}) \setminus \Stab_G(\Lambda_{i_k})$, so $g \Lambda_{i_j} = \Lambda_{i_j}$ and
there is an $x\in \Lambda_{i_k}$ such that $g x \notin \Lambda_{i_k}$.  Of course, 
$g \Omega_{i_k} = \Omega_{i_k}$, so we must have
$g x \notin \Lambda_{i_1} \cup \dots  \cup \Lambda_{i_m}$.  Thus, choosing any $y\in
\Lambda_{i_j}$, we have $(x,y)\in \theta$ while 
$(g x, g y)\notin \theta$, contradicting $\theta \in \ConO$.  Therefore, it must be
the case that 
$\Stab_G(\Lambda_{i_1}) = \cdots = \Stab_G(\Lambda_{i_m})$.

\medskip

\noindent $(\Leftarrow)$  
Suppose $\Stab_G(\Lambda_{i_1}) = \cdots = \Stab_G(\Lambda_{i_m})$.
Let $\theta$ be the relation defined in~(\ref{eq:th}).  We will prove $\theta \in
\ConO$.  It is easy to see that $\theta$ is an equivalence relation, so we
just need to check $g \theta \subseteq \theta$; that is, we prove
$(\forall \, (x,y)\in \theta)\, (\forall \, g\in G) \,
(gx ,gy)\in \theta$.

Fix $(x,y)\in \theta$, say, 
$x\in g_{i_1 k}\Lambda_{i_j}$ and 
$y\in g_{i_1 k}\Lambda_{i_\ell}$, 
for some $0 \leq k \leq n_{i_1}$,
$1\leq j < \ell \leq m$.  
For each $g\in G$ we have 
$g\, g_{i_1 k}\Lambda_{i_j} = g_{i_1 s}\Lambda_{i_j}$
for some $g_{i_1 s}\in T$.
Thus, $g_{i_1 s}^{-1} \, g\, g_{i_1 k} \in \Stab_G(\Lambda_{i_j})$.
Similarly, $g\, g_{i_1 k}\Lambda_{i_\ell} = g_{i_1 t}\Lambda_{i_\ell}$
for some $g_{i_1 t}\in T$, so
$g_{i_1 t}^{-1}\, g \, g_{i_1 k} \in
\Stab_G(\Lambda_{i_\ell})$.
This and the hypothesis $\Stab_G(\Lambda_{i_j}) = \Stab_G(\Lambda_{i_\ell})$ together imply
$g_{i_1 s}\Stab_G(\Lambda_{i_j}) = g_{i_1 t}\Stab_G(\Lambda_{i_j})$, 
so $g_{i_1 s} = g_{i_1 t}$, since they are both elements of the transversal of
$\Stab_G(\Lambda_{i_j})$.   We have thus shown that the action of $g\in G$ 
maps pairs of blocks with equal stabilizers to the same block of $\theta$; that is, 
$g \, g_{i_1 k}\Lambda_{i_j} = g_{i_1 s}\Lambda_{i_j}  \; \theta \;
g_{i_1 t} \Lambda_{i_\ell} = g \, g_{i_1 k}\Lambda_{i_\ell}$.
\end{proof}

\chapter{Interval Sublattice Enforceable Properties}
\label{cha:subl-interv-enforc}
\section{Introduction}
Given a finite lattice $L$, the
expression $L \cong [H, G]$ means ``there exist finite groups $H < G$ such that 
$L$ is isomorphic to the interval $\{K \mid H\leq K \leq G\}$ in
the subgroup lattice of $G$.''  
A group $G$ is called \emph{almost simple} if $G$ has a normal subgroup $S \subnormal
G$ which is nonabelian, simple, and has trivial centralizer, $C_G(S) = 1$.
If $H \leq G$, then the
\emph{core} of $H$ in $G$, denoted $\core_G(H)$, is the largest normal subgroup of $G$
contained in $H$; it is given by  $\core_G(H) = \bigcap\limits_{g\in G} gHg^{-1}$.
A subgroup $H\leq G$ for which $\core_G(H)=1$ is called \emph{core-free in $G$}.
If every finite lattice can be represented as the congruence lattice of a finite
algebra, we say that the \FLRP\ has a positive answer.

If we assume that the \FLRP\ has a positive answer, then for every finite
lattice $L$ there is a finite group $G$ having $L$ as an upper interval in
$\Sub(G)$. 
In this chapter we consider the following question: Given a finite lattice $L$,
what can we say about a finite group $G$ that has  $L$
as an upper interval in its subgroup lattice?
Taking this a step further, we consider certain finite collections of finite 
lattices ask what sort of properties we can prove about a
group $G$ if we assume it has all of these lattices as 
upper intervals in its subgroup lattice.  In this and the next section, we
address these questions somewhat informally in order to motivate this
approach. In Section~\ref{sec:isle-prop-groups} we introduce a 
new formalism for \emph{interval sublattice enforceable} properties of
groups. 

One easy consequence that comes out of this investigation is the following observation:
\begin{prop}
\label{prop:parachute}
  Let $\sL$ be a finite collection of finite lattices.
  If the \FLRP\ has a positive answer, then there exists a finite group $G$ such
  that each lattice $L_i \in \sL$ is an upper interval $L_i\cong [H_i, G] \leq
  \Sub(G)$, with $H_i$ core-free in $G$. 
\end{prop}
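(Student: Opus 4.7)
My plan is to apply P\'alfy--Pudl\'ak to each lattice individually, then glue the resulting groups into one. By the \FLRP\ hypothesis and Theorem~\ref{thm:IntroP5}, each $L_i$ is isomorphic to some upper interval $[H_i^{(0)}, G_i]$ in the subgroup lattice of a finite group $G_i$. Replacing each pair $(G_i, H_i^{(0)})$ by $(G_i/\core_{G_i}(H_i^{(0)}),\; H_i^{(0)}/\core_{G_i}(H_i^{(0)}))$, I may assume each $H_i^{(0)}$ is core-free in $G_i$: this quotient affects only subgroups lying below $H_i^{(0)}$, so it leaves the upper interval unchanged.

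For the main step, I would combine the $L_i$'s into a single finite lattice $\hat L$ in which each $L_i$ appears as an upper interval. A natural choice is to take the disjoint union $L_1 \sqcup \cdots \sqcup L_n$, identify all $n$ tops into a common maximum $\hat 1$, and adjoin a new minimum $\hat 0$ beneath the mutually incomparable bottoms $0_{L_1},\dots,0_{L_n}$. Meets and joins between elements of distinct $L_i$'s are $\hat 0$ and $\hat 1$ respectively, so $\hat L$ is a finite lattice and $L_i = [0_{L_i}, \hat 1]$ is an upper interval of $\hat L$ for each $i$. Since \FLRP\ is assumed, $\hat L \in \sL_3 = \sL_4$, so by Theorem~\ref{thm:IntroP5} there exist a finite group $G$ and a core-free subgroup $K \le G$ with $\hat L \cong [K, G]$. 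Letting $H_i \le G$ be the subgroup corresponding to $0_{L_i}$, we obtain $[H_i, G] \cong L_i$ for each $i$.

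The remaining task, and the main obstacle, is to verify that each $H_i$ is core-free in $G$. If $N \le G$ is nontrivial and normal with $N \le H_i$, then $NK$ is a subgroup satisfying $K \le NK \le H_i$; under the isomorphism $\hat L \cong [K,G]$, the interval $[K, H_i]$ corresponds to the two-element chain $\{\hat 0, 0_{L_i}\}$, so $NK = K$ or $NK = H_i$. The first case forces $N \le \core_G(K) = 1$, a contradiction. The second case must be excluded by refining the construction of $\hat L$---for example, by inserting a large Boolean block or antichain strictly between $\hat 0$ and each $0_{L_i}$ so that any decomposition $H_i = NK$ with $N$ a nontrivial normal subgroup would place $NK$ at a lattice position incompatible with its normal provenance, or by replacing $G$ with a suitably chosen wreath product. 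The hard part is coordinating this refinement simultaneously for all $i$ without disturbing the upper intervals above each $0_{L_i}$.
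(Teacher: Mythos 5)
Your lattice $\hat L$ is exactly the paper's ``parachute'' $\sP(L_1,\dots,L_n)$, and your reduction of the problem to the single case $NK=H_i$ is correct. But at that point you stop and propose to ``refine the construction'' of $\hat L$; this is where the gap lies, and the refinement is unnecessary. The case $NK=H_i$ is killed by using a \emph{second} leg of the parachute, which is the whole point of gluing the lattices together. Suppose $1\neq N\subnormal G$ with $N\leq H_i$ and $NK=H_i$. Pick $j\neq i$. Since $H_i\wedge H_j=K$ in $[K,G]\cong \hat L$ and $K$ is core-free, $N\nleq H_j$; hence $NH_j$ contains both $H_i=NK$ and $H_j$, so $NH_j=\langle H_i,H_j\rangle=G$. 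The standard isomorphism theorem gives $H_j/(N\cap H_j)\cong NH_j/N=G/N$, and the induced lattice isomorphism $[N\cap H_j,H_j]\to[N,G]$, $Y\mapsto NY$, sends $K$ to $NK=H_i$. Therefore $[K,H_j]\cong[H_i,G]\cong L_i$. But $H_j$ covers $K$ (the element $0_{L_j}$ is an atom of $\hat L$), so $[K,H_j]\cong\two$, a contradiction as long as $L_i\ncong\two$. This is precisely the argument in the proof of Lemma~\ref{lemma-wjd-3} (see Figure~\ref{fig:isomorphism}), and it also follows from Lemma~\ref{lemma-wjd-5}(i).

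Two small bookkeeping points you would need to add: the argument requires at least two legs (if $\sL$ has one element, adjoin a dummy lattice), and it requires each $L_i\ncong\two$ (the two-element lattice can be discarded from $\sL$ at the outset, since $\two\cong[H,G]$ with $H$ core-free for any primitive group, or absorbed as an interval inside one of the other legs). Your opening paragraph, passing to $G_i/\core_{G_i}(H_i^{(0)})$, is correct but does no work, since those groups are discarded once you invoke the \FLRP\ on $\hat L$ itself.
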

By the ``parachute'' construction described in the next section,
we will see that the only non-trivial part of this proposition is the conclusion
that all the $H_i$ be core-free in $G$.  However, this will follow easily from
Lemma~\ref{lemma-wjd-3} below.  

Before proceeding, it might be worth pausing to consider what seems like a
striking consequence of the proposition above: 
If the \FLRP\ has a positive answer, then no matter 
what we take as our finite collection $\sL$ -- for example, we
might take $\sL$ to be \emph{all} finite lattices with
at most $N$ elements for some large $N< \omega$ -- we can always find a \emph{single}
finite group $G$ such that every lattice in $\sL$ is an upper interval in
$\Sub(G)$; moreover, (by Lemma~\ref{lemma-wjd-3}) we can assume the subgroup
$H_i$ at the bottom of each interval is core-free.  As a result, the single
finite group $G$ must have so many faithful representations,  $G\hookrightarrow \Sym(G/H_i)$
with  $\Con\<G/H_i, G\> \cong L_i$,  one such representation for each distinct $L_i\in \sL$. 

\section{Parachute lattices}
\label{sec:parachute-lattices}
\index{P\'alfy, P\'eter}%
\index{Pudl\'ak, Pavel}%
As mentioned above, in 1980 \Palfy\ and \Pudlak\ published the following
striking result:
\begin{theorem}[\Palfy-\Pudlak~\cite{Palfy:1980}]
\label{thm:P5}
The following statements are equivalent:
\begin{enumerate}[(A)]
\item Every finite lattice is isomorphic to
  the congruence lattice of a finite algebra.
\item Every finite lattice is isomorphic to
  an interval in the subgroup lattice of a finite group.
\end{enumerate}
\end{theorem}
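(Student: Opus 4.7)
The direction (B)$\Rightarrow$(A) is immediate from Theorem~\ref{thm:g-set-isomorphism2}: if $L \cong [H, G]$ for a finite group $G$ and subgroup $H$, then $L \cong \Con\<G/H, G\>$, and $\<G/H, G\>$ is a finite unary algebra. All of the substance lies in (A)$\Rightarrow$(B), and that is where I focus.

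For (A)$\Rightarrow$(B), the plan is to reduce the representability of a finite lattice $L$ by an \emph{arbitrary} finite algebra (hypothesis (A)) to representability by a \emph{transitive $G$-set}. Once I have a finite transitive $G$-set $\bA$ with $\Con\bA \cong L$, Theorem~\ref{thm:g-set-isomorphism2} immediately yields $L \cong [G_a, G]$ for any point $a$, which is exactly (B). So the whole argument comes down to promoting a general finite-algebra representation to a transitive $G$-set representation.

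The method is to fix $L$ and build an augmented finite lattice $\hat{L}$ that contains $L$ as a designated interval and whose shape is rigid enough that any finite algebra $\bB$ with $\Con\bB \cong \hat{L}$ is forced to be (essentially) a transitive $G$-set on a single orbit realizing the interval $L$. The construction proceeds in the following order. First, I augment $L$ to $\hat{L}$ with two kinds of extra structure: below $L$, I glue in a family of pairwise complementary atoms forming $M_n$-like configurations; alongside or above, I add enough symmetric extra structure to pin down transitivity. Second, I apply hypothesis (A) to $\hat{L}$, obtaining a finite algebra $\bB$ with $\Con\bB \cong \hat{L}$, and reduce to a unary algebra $\bB = \<B, F\>$ via Lemma~\ref{sec:unarycongruences}. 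Third, I exploit the rigidity of $\hat{L}$: the pairwise complementary atoms below $L$ prevent any non-injective unary polynomial of $\bB$ from respecting all of them (by the sort of non-density argument used in Corollary~\ref{Concrete-cor-nondensity-1}), which forces the unary polynomial monoid on each minimal subalgebra to consist of permutations, hence to be a group $G$. Fourth, using the extra structure to collapse the intransitivity congruence described in Section~\ref{sec:congr-latt-intr}, I isolate a single orbit $A$ on which $G$ acts transitively and whose congruence lattice is precisely the copy of $L$ sitting inside $\hat{L}$. Then $\<A, G\>$ is the desired transitive $G$-set, and Theorem~\ref{thm:g-set-isomorphism2} converts this to $L \cong [G_a, G]$.

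The main obstacle, and the heart of the P\'alfy--Pudl\'ak argument, is step one together with the verification in step three: designing the augmentation $\hat{L}$ carefully enough that every representing algebra is actually compelled to have its unary polynomial monoid collapse to a transitive permutation group, rather than remain merely a transformation monoid. Everything else is essentially bookkeeping with Lemma~\ref{sec:unarycongruences} and the two $G$-set isomorphism theorems (Theorems~\ref{thm:g-set-isomorphism1} and~\ref{thm:g-set-isomorphism2}).
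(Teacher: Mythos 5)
First, a point of reference: the dissertation does not actually prove Theorem~\ref{thm:P5}; it states it as a cited result of \Palfy\ and \Pudlak~\cite{Palfy:1980}. So your proposal can only be measured against the known argument. Your overall architecture is the right one --- (B)$\Rightarrow$(A) via Theorem~\ref{thm:g-set-isomorphism2}, and (A)$\Rightarrow$(B) by applying (A) to an auxiliary lattice $\hat{L}$ containing $L$ as an interval and forcing the representing algebra to be a transitive $G$-set --- but the proposal has a genuine gap exactly where you yourself locate ``the heart'' of the argument, and the mechanism you gesture at for closing it would not work.

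The missing ingredient is a \emph{minimality} hypothesis and the restriction epimorphism that exploits it. Hypothesis (A) hands you \emph{some} finite algebra $\bB$ with $\Con\bB\cong\hat{L}$; with no further constraint, no shape of $\hat{L}$ can force the unary polynomials of $\bB$ to be permutations, because one can always pad a representing algebra with elements retracted onto a proper subset (any such retraction is a non-injective, non-constant unary polynomial respecting every congruence). The actual \PP\ argument takes $\bB$ of \emph{minimal cardinality} among algebras representing $\hat{L}$, and then uses the restriction epimorphism $\Con\bB \twoheadrightarrow \Con\<e(B),\dots\>$ (Lemma~1 of~\cite{Palfy:1980}, the same map analyzed in Section~\ref{sec:residuation-lemma}) applied to an idempotent power $e=f^k$ of a hypothetical non-constant, non-injective polynomial $f$: the auxiliary lattice is designed precisely so that this epimorphism must be an isomorphism, contradicting minimality of $|B|$, and so that the resulting permutation group must act transitively. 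Your substitute --- ``pairwise complementary atoms prevent any non-injective unary polynomial from respecting all of them, by the sort of non-density argument of Corollary~\ref{Concrete-cor-nondensity-1}'' --- does not do this job for two reasons. First, the density/non-density results concern a \emph{chosen concrete} embedding $L_0\leq\bEqX$ and whether $\lambda(L_0)$ contains non-trivial maps; here the embedding of $\hat{L}$ into $\Eq(B)$ is dictated by whatever algebra (A) produces, so you have no control over it. Second, Corollary~\ref{Concrete-cor-nondensity-1} points in the opposite direction: it \emph{guarantees} the existence of non-trivial operations respecting the lattice, rather than excluding non-injective ones. Finally, the step ``collapse the intransitivity congruence and isolate a single orbit whose congruence lattice is the copy of $L$'' is unnecessary and unjustified as stated; once $\bB$ is a transitive $G$-set one has $\hat{L}\cong[G_b,G]$, and $L$, being an interval $[H_1,H_2]$ inside that interval, is itself an interval in $\Sub(H_2)$ --- that is how $L$ should be recovered. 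Until the auxiliary lattice is specified and the minimality/restriction argument is supplied, the proof is a plausible outline rather than a proof.
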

\noindent Also noted in~\cite{Palfy:1980} is the important fact that (B) is equivalent to:
\\[4pt]
{\it (B') Every finite lattice is isomorphic to
  the congruence lattice of a finite transitive G-set.}

\vskip3mm

There are a number of examples in the literature of the following situation: a specific
finite lattice is considered, and it is shown that if
such a lattice is an interval in the subgroup lattice of a finite group, then this
group must be of a certain form or have certain properties.
As the number of such results grows, it becomes increasingly useful to keep in
mind the following simple observation: 
\begin{lemma}
\label{lemma-wjd-1}
Let $\sG_1, \dots, \sG_n$ be classes of groups and  
suppose that for each $i\in \{1, \dots, n\}$ there exists a finite lattice $L_i$
such that
$L_i \cong [H, G]$ only if $G\in \sG_i$.
Then (B) is equivalent to
\begin{enumerate}[(C)]
\item For each finite lattice $L$, there is a finite group $G \in
  \bigcap\limits_{i=1}^n \sG_i$ such that $L \cong [H,G]$.
\end{enumerate}
\end{lemma}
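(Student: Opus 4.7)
The direction (C) $\Rightarrow$ (B) is immediate: if for every finite lattice $L$ we can find a finite group $G \in \bigcap_{i=1}^n \sG_i$ with $L \cong [H,G]$, then in particular $L$ is an interval in the subgroup lattice of a finite group. So the content of the lemma lies in the implication (B) $\Rightarrow$ (C).

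The plan is to use a parachute-style construction. Given any finite lattice $L$, I would form a new finite lattice $M$ that contains each of $L, L_1, L_2, \dots, L_n$ as upper intervals sharing a common top. Concretely, take the disjoint union of the underlying sets of $L, L_1, \dots, L_n$, identify all of their top elements into a single element $1_M$, and adjoin a new bottom $0_M$ below everything else. One verifies routinely that this is a lattice: inside each summand the order is unchanged, the join of two elements lying in different summands is $1_M$, and their meet is $0_M$. In this lattice $L$ and each $L_i$ appear as upper intervals of the form $[b, 1_M]$, where $b$ is the image of the bottom element of the corresponding summand.

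Now apply hypothesis (B) to $M$: there exist finite groups $H_0 < G$ with $M \cong [H_0, G]$. Transporting the interval structure of $M$ into $\Sub(G)$, each upper interval of $M$ becomes an upper interval of $\Sub(G)$ of the form $[K, G]$ with $H_0 \leq K$. In particular, for each $i$ there is $H_i \geq H_0$ with $L_i \cong [H_i, G]$, and there is $K \geq H_0$ with $L \cong [K, G]$. By hypothesis, the fact that $L_i \cong [H_i, G]$ forces $G \in \sG_i$. Since this holds for every $i$, we conclude $G \in \bigcap_{i=1}^n \sG_i$ and $L \cong [K, G]$, which is precisely the statement (C).

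The only subtlety in the argument is the verification that the parachute $M$ is genuinely a lattice in which each summand sits as an upper interval; this is straightforward but worth spelling out carefully since the lemma is used in the sequel. Beyond that, no real obstacle arises, and the argument is uniform in $n$, which is what makes the framework attractive: any finite list of ``enforceable'' class-memberships can be imposed simultaneously on the representing group.
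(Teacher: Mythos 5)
Your proof is correct and follows essentially the same route as the paper: the paper's argument is exactly the ``parachute'' construction $\sP(L, L_1, \dots, L_n)$ (disjoint union with tops identified and a new bottom adjoined), applied to (B) so that each $L_i$ becomes an upper interval $[K_i, G]$ forcing $G \in \sG_i$. No substantive differences.
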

\begin{proof}
Obviously, (C) implies (B).  Assume (B) holds and let $L$ be any finite lattice.  Suppose 
$\sG_1, \dots, \sG_n$ and $L_1, \dots, L_n$ satisfy the hypothesis of the lemma.
Construct a new lattice $\sP = \sP(L, L_1, \dots, L_n)$ as shown in
Figure~\ref{fig:parachute} (a).
By (B), there exist finite groups $H \leq G$ with $\sP \cong [H,G]$.  
Let $K, K_1, \dots, K_n$ be the subgroups of $G$ 
which cover $H$ and satisfy $L \cong [K, G]$, and 
$L_i \cong [K_i, G],\; i=1, \dots, n$ (Figure~\ref{fig:parachute} (b)).
Thus, $L$ is an interval in the subgroup lattice of $G$, and,
since $L_i \cong [K_i, G]$, we must have $G\in \sG_i$, by hypothesis.  This is true
for all $1\leq i \leq n$, so $G \in \bigcap\limits_{i=1}^n \sG_i$, which proves that
(B) implies (C).
\end{proof}
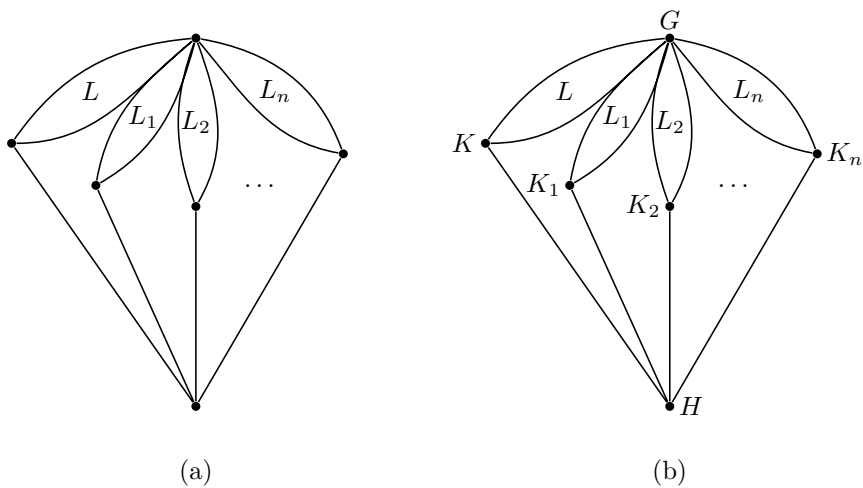
\begin{figure}[centering]
  \caption{The parachute construction.}
  \label{fig:parachute}
\begin{center}
\begin{tikzpicture}[scale=0.7]

  \node (G) at (-8,0) [fill,circle,inner sep=1.2pt] {};
  \node (K) at (-11.5,-2) [fill,circle,inner sep=1.2pt] {};
  \node (K1) at (-9.9,-2.8) [fill,circle,inner sep=1.2pt] {};
  \node (K2) at (-8,-3.2) [fill,circle,inner sep=1.2pt] {};
  \node (Kn) at (-5.2,-2.2) [fill,circle,inner sep=1.2pt] {};
  \node (H) at (-8,-7) [fill,circle,inner sep=1.2pt] {};

\draw (-10,-1) node {$L$};
\draw (-9,-1.5) node {$L_1$};
\draw (-8,-1.6) node {$L_2$};
\draw (-6.5,-1) node {$L_n$};
\draw (-6.75,-2.8) node {$\dots$};

\draw (-8,-8.25) node {(a)};

\draw[semithick] 
   (K) to (H) to (K1)
   (K2) to (H) to (Kn);

\draw [semithick]  
   (G) to [out=-140,in=0] (K)
   (K)  to [out=55,in=185] (G)
   (G) to [out=-105,in=30] (K1)
   (K1) to [out=80,in=-140] (G)
   (G) to [out=-70,in=60] (K2)
   (K2)  to [out=110,in=-110] (G)
   (G) to [out=-10,in=110] (Kn)
   (Kn)  to [out=170,in=-50] (G);


  \node (Gr) at (1,0) [fill,circle,inner sep=1.2pt] {};
  \node (Kr) at (-2.5,-2) [fill,circle,inner sep=1.2pt] {};
  \node (K1r) at (-0.9,-2.8) [fill,circle,inner sep=1.2pt] {};
  \node (K2r) at (1,-3.2) [fill,circle,inner sep=1.2pt] {};
  \node (Knr) at (3.8,-2.2) [fill,circle,inner sep=1.2pt] {};
  \node (Hr) at (1,-7) [fill,circle,inner sep=1.2pt] {};

\draw (-1,-1) node {$L$};
\draw (0,-1.5) node {$L_1$};
\draw (1,-1.6) node {$L_2$};
\draw (2.5,-1) node {$L_n$};
\draw (2.25,-2.8) node {$\dots$};
\draw (1,-8.25) node {(b)};

\draw (Gr) node [above] {$G$}
    (Kr) node [left] {$K$}
    (K1r) node [left] {$K_1$}
    (K2r) node [left] {$K_2$}
    (Knr) node [right] {$K_n$}
    (Hr) node [right] {$H$};

\draw[semithick] 
   (Kr) to (Hr) to (K1r)
   (K2r) to (Hr) to (Knr);

\draw [semithick]  
   (Gr) to [out=-140,in=0] (Kr)
   (Kr)  to [out=55,in=185] (Gr)
   (Gr) to [out=-105,in=30] (K1r)
   (K1r) to [out=80,in=-140] (Gr)
   (Gr) to [out=-70,in=60] (K2r)
   (K2r)  to [out=110,in=-110] (Gr)
   (Gr) to [out=-10,in=110] (Knr)
   (Knr)  to [out=170,in=-50] (Gr);
\end{tikzpicture}
\end{center}
\end{figure}

\noindent {\bf Examples.}
As usual, we let $A_n$ and $S_n$ denote the alternating and symmetric groups on
$n$ letters.  In addition, the following notation will be useful:
\begin{itemize}
\item $\G = $ the class of all finite groups;
\item $\solvable = $ the class of all finite solvable groups;
\item $\giant = \bigcup\limits_{n<\omega} \{A_n, S_n\} = $ the alternating or symmetric groups, 
also known as the ``giant'' groups.
\end{itemize}

It is easy to find a lattice $L$ with the property that
$L \cong [H, G]$ implies $G\notin \solvable$.
We will see an example of such a lattice in
Section~\ref{sec:except-seven-elem}.  (For another example, see~\cite{Palfy:1995}.)
\index{Basile, Alberto}%
In his thesis~\cite{Basile:2001}, Alberto Basile proves a result 
which implies that\footnote{Recall, $M_n$ denotes the $(n+2)$-element lattice with $n$ atoms.}%
$M_6 \cong [H, G]$ only if 
$G\notin \giant$. 
Given these examples and Lemma~\ref{lemma-wjd-1}, it is clear that
(B) holds if and only if for each finite lattice $L$ there 
exist finite groups $H \leq G$ such that $L\cong [H,G]$ and $G$ 
is not solvable, not alternating, and not symmetric.

Now, if our goal is to solve the finite lattice
representation problem, Lemma~\ref{lemma-wjd-1} suggests the following path to a
negative solution:
Find examples of lattices $L_i$ which place restrictions on the $G$ for which $L_i
\cong [H,G]$ can hold, say $G\in \sG_i$, and eventually reach $\bigcap_i
\sG_i = \emptyset$ (at which point we are done).  

We would like to generalize Lemma~\ref{lemma-wjd-1} because it is much easier and more
common to find a class of groups ${\sG_i}$ and a lattice $L_i$ with the following
property: 
\[
\text{If $L_i\cong [H,G]$ \emph{with $H$ core-free in $G$}, then $G\in
  {\sG_i}$}. \qquad (\star)
\]
This leads naturally to the following
question: Given a class of groups $\sG$ and a finite lattice $L$ satisfying ($\star$),   
when can we safely drop the caveat ``with $H$ core-free in $G$'' and get back to the
hypothesis of Lemma~\ref{lemma-wjd-1}?
There is a very simple sufficient condition involving
the class $\sG^c := \{ G \in \G \mid G\notin \sG\}$.  (Recall, if 
$\sK$ is a class of algebras, then $\bH(\sK)$ is the class of homomorphic images of members of $\sK$.)
\begin{lemma}
  \label{lemma-wjd-2}
Let $\sG$ be a class of groups and $L$ a finite lattice such that
\begin{equation}
  \label{eq:100}
L \cong [H,G] \text{ with $H$ core-free} \quad \Rightarrow \quad G\in \sG,
\end{equation}
and suppose $\bH(\sG^c) = \sG^c$. Then,  
\begin{equation}
  \label{eq:200}
L \cong [H,G] \quad \Rightarrow \quad G\in \sG.
\end{equation}
\end{lemma}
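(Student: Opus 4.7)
The plan is to pass from the original interval $[H,G]$ to a quotient of $G$ in which the bottom subgroup becomes core-free, then apply the hypotheses.

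Suppose $L\cong [H,G]$ for some finite groups $H\leq G$. First, I would set $N = \core_G(H) = \bigcap_{g\in G} gHg^{-1}$, which is the largest normal subgroup of $G$ contained in $H$. By the standard correspondence theorem for subgroups containing a normal subgroup, the map $K\mapsto K/N$ is a lattice isomorphism from $[N, G]$ onto $\Sub(G/N)$, and in particular restricts to an isomorphism from $[H, G]$ onto $[H/N, G/N]$. Hence $L\cong [H/N,\, G/N]$.

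Next I would verify that $H/N$ is core-free in $G/N$. By the correspondence theorem, any normal subgroup of $G/N$ contained in $H/N$ is of the form $M/N$ for some $M\subnormal G$ with $N\leq M\leq H$; but then $M$ is a normal subgroup of $G$ contained in $H$, so by maximality $M\leq N$, i.e.\ $M=N$ and $M/N$ is trivial. Thus $\core_{G/N}(H/N)=1$. Applying hypothesis~(\ref{eq:100}) to the pair $(H/N, G/N)$, we conclude $G/N\in \sG$.

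The last step is where the closure hypothesis $\bH(\sG^c)=\sG^c$ enters. Assume, toward a contradiction, that $G\notin \sG$, i.e.\ $G\in \sG^c$. Since $G/N$ is a homomorphic image of $G$, the closure assumption forces $G/N\in \bH(\sG^c)=\sG^c$, contradicting $G/N\in \sG$ established above. Therefore $G\in \sG$, which is exactly~(\ref{eq:200}).

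There is no real obstacle here; the only substantive ingredients are the subgroup correspondence theorem (to produce a core-free quotient realizing the same interval $L$) and the stipulated closure of $\sG^c$ under homomorphic images (to transfer membership back from $G/N$ to $G$). The short proof makes plain why the hypothesis $\bH(\sG^c)=\sG^c$ is the right one: it is precisely what is needed to pull the conclusion back along the quotient map $G\twoheadrightarrow G/N$.
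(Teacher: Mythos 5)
Your proof is correct and is essentially the same argument the paper gives: pass to $G/N$ with $N=\core_G(H)$, note $L\cong[H/N,G/N]$ with $H/N$ core-free, apply~(\ref{eq:100}) to get $G/N\in\sG$, and use closure of $\sG^c$ under homomorphic images to rule out $G\in\sG^c$. You simply spell out the correspondence-theorem details that the paper leaves implicit.
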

\begin{proof}
Suppose $L$ satisfies~(\ref{eq:100}) and $\bH(\sG^c) = \sG^c$, that is, $\sG^c$ is closed under homomorphic
images. (For groups this means if $G \in \sG^c$ and $N\subnormal G$, then
$G/N\in \sG^c$.) 
If~(\ref{eq:200}) fails, then there is a
finite group $G\in \sG^c$ with $L\cong [H,G]$.  Let $N = \core_G(H)$.  Then $L \cong
[H/N,G/N]$ and $H/N$ is core-free in $G/N$ so, by hypothesis~(\ref{eq:100}), $G/N \in \sG$.  But
$G/N \in \sG^c$, since $\sG^c$ is closed under homomorphic images.
\end{proof}
~\\[-6pt]
\noindent{\bf Examples.}  
As mentioned above, there is a lattice $L$ with the property that
$L \cong [H, G]$ implies $G$ is not solvable,
so let $\sG = \solvable^c$.  Then $\sG^c = \solvable$ 
is closed under homomorphic images.  
For the second example above, we have $\sG = \giant^c$,
so $\sG^c = \bigcup_{n<\omega}\{A_n, S_n\}$.
This class is also closed under homomorphic images.  It follows from
Lemma~\ref{lemma-wjd-2} that these examples do not require
the core-free hypothesis.   
In contrast, consider the following result of 
\index{K\"ohler}%
K\"ohler~\cite{Kohler:1983}: If 
$n-1$ is not a power of a prime, then\footnote{Recall, for groups, subdirectly
  irreducible is equivalent to having a unique minimal normal subgroup.} 
\[
M_n \cong [H, G] \text{ with $H$ core-free} \quad \Rightarrow \quad G \text{ is
  subdirectly irreducible.}
\]
Lemma~\ref{lemma-wjd-2} does not apply in this case since $\sG^c$, the class of
subdirectly \emph{reducible} groups, is obviously not closed under homomorphic 
images.\footnote{Every algebra, and in particular every group $G$, has a subdirect
  decomposition into subdirectly irreducibles, $G\leq G/N_1 \times \cdots\times
  G/N_n$.  Thus, there will always be homomorphic images, $G/N_i$, which are
  subdirectly irreducible.} 

Though Lemma~\ref{lemma-wjd-2} seems like a useful observation, the last example
above shows that a 
generalized version of Lemma~\ref{lemma-wjd-1} -- a version based on hypothesis ($\star$) --
would be more powerful, as it would allow us to impose greater restrictions on $G$,
such as those implied by the results of K\"ohler and others.  
Fortunately, the ``parachute''
construction used in the proof of Lemma~\ref{lemma-wjd-1} works in the more general case,
with only a trivial modification to the hypotheses -- namely, the lattices $L_i$
should not be two-element chains (which almost goes without saying in the present context).  
(Recall, $\two$ denotes the two-element chain.)
\begin{lemma}
\label{lemma-wjd-3}
Let $\sG_1, \dots, \sG_n$ be classes of groups and  
suppose that for each $i\in \{1, \dots, n\}$ there is a finite lattice $L_i\ncong \two$
which satisfies the following:  
\[
\text{If $L_i\cong [H,G]$ and $H$ is core-free in $G$, then $G\in
  {\sG_i}$}. \qquad (\star)
\]
Then (B) is equivalent to
  \begin{enumerate}
  \item[(C)] For every finite lattice $L$, there is a finite group $G \in
    \bigcap\limits_{i=1}^n \sG_i$ such that $L \cong [H,G]$.
  \end{enumerate}
\end{lemma}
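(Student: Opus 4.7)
\emph{Plan.} I would prove the nontrivial implication (B) $\Rightarrow$ (C) by adapting the parachute construction from the proof of Lemma~\ref{lemma-wjd-1}. Given a finite lattice $L$, my plan is to form the parachute $\sP = \sP(L, L_1, \ldots, L_n)$, apply (B) to obtain finite groups $H \leq G$ with $\sP \cong [H, G]$, and label the subgroups covering $H$ that correspond to the atoms of $\sP$ as $K, K_1, \ldots, K_n$, so that $[K, G] \cong L$ and $[K_i, G] \cong L_i$ for each $i$. Because the hypothesis $(\star)$ is conditional on core-freeness of the bottom subgroup, I would first replace $(H, G)$ by $(H/\core_G(H),\, G/\core_G(H))$; this preserves $[H, G] \cong \sP$ and arranges for $H$ to be core-free in $G$.

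The decisive step is to show each $K_i$ is also core-free in $G$. Writing $N := \core_G(K_i)$ and supposing $N > 1$, the parachute relation $[H, K_i] = \{H, K_i\}$ combined with $H$ core-free forces $HN = K_i$. To reach a contradiction I would invoke the hypothesis $L_i \ncong \two$, which guarantees an intermediate subgroup $M_i$ with $K_i < M_i < G$, and then exploit the parachute identities $K_i \cap K_j = H$ and $K_i \vee K_j = G$ for $j \neq i$ (and the analogous $M_i \cap K_j = H$). Analyzing the subgroups $NK_j$ and $N \cap K_j$, and their behavior under conjugation by elements of $G$, the aim is to force $N$ into $H$, so that $N \leq \core_G(H) = 1$, contradicting $N > 1$. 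Once each $K_i$ is known to be core-free, $(\star)$ gives $G \in \sG_i$ for every $i$, so $G \in \bigcap_{i=1}^n \sG_i$, and the interval $[K, G] \cong L$ establishes (C).

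The hard part will be precisely this core-freeness step. The plain parachute may not by itself force $\core_G(K_i) = 1$, in which case it will be necessary to enrich $\sP$ slightly -- for example, by attaching short chains or extra atoms beneath each $K_i$ -- so that any nontrivial normal subgroup of $G$ contained in $K_i$ is compelled to descend into $H$. The nontriviality assumption $L_i \ncong \two$ is exactly what leaves room for such an enrichment while preserving the isomorphism $[K_i, G] \cong L_i$, and I expect it to play the same essential role in the argument that it plays in the hypothesis $(\star)$ itself.
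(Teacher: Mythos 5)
Your overall skeleton matches the paper's: form the parachute $\sP(L, L_1, \dots, L_n)$, apply (B), pass to $G/\core_G(H)$ so that $H$ is core-free, and reduce everything to showing each $K_i$ is core-free. You also correctly derive that if $N := \core_G(K_i) \neq 1$ then $NH = K_i$. But the decisive step is missing, and the route you sketch for it cannot work: you propose to ``force $N$ into $H$'' so that $N \leq \core_G(H) = 1$, yet you have already established $NH = K_i > H$, which says precisely that $N \nleq H$. The contradiction must come from elsewhere. In the paper it comes from the isomorphism theorem: since $N \nleq H$, one gets $NK_j = G$ for any $j \neq i$, hence $K_j/(N\cap K_j) \cong NK_j/N = G/N$, and the induced lattice isomorphism $[N\cap K_j, K_j] \cong [N, G]$ carries $H$ to $NH = K_i$, giving $[H, K_j] \cong [K_i, G]$. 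The left side is $\two$ (because $K_j$ covers $H$ in the parachute) while the right side is $L_i \ncong \two$ --- contradiction. This is where the hypothesis $L_i \ncong \two$ is actually used; it is not, as you suggest, ``room for enrichment'' of $\sP$. No enrichment of the parachute is needed, and your proposal to attach extra atoms or chains beneath each $K_i$ is unsubstantiated: you give no mechanism by which it would compel a nontrivial normal subgroup of $G$ inside $K_i$ to descend into $H$, and such a descent is impossible in any case once $N \neq 1$ and $H$ is core-free.

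A further small point: the argument requires a second panel $K_j$ with $j \neq i$, so the case $n = 1$ must be handled separately; the paper does this by adjoining one of the known examples (a lattice forcing, say, non-solvability) to make $n \geq 2$ and discarding the extra class at the end. Your write-up does not address this.
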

\begin{proof}
Obviously, (C) implies (B).  Assume (B) and let $L$ be any finite lattice.  Suppose 
$\sG_1, \dots, \sG_n$ and $L_1, \dots, L_n$ satisfy ($\star$) and 
$L_i \ncong \two$ for all $i$.
Note that there is no loss of generality in assuming that $n\geq 2$. 
For if $n=1$, just throw in one of the examples above to make $n=2$.
Call this additional class of groups $\sG_2$.  Then, at the end of the argument, we'll
have $G\in \sG_1\cap \sG_2$, and therefore, $G\in \sG_1$, which is the stated
conclusion of the theorem in case $n=1$.

Construct the lattice $\sP = \sP(L, L_1, \dots, L_n)$ as in the proof of
Lemma~\ref{lemma-wjd-1}.  By (B) there exist finite groups $H \leq G$ with $\sP \cong
[H,G]$, and we can assume without loss of generality that $H$ is
core-free\footnote{This is standard.  For, if $\sP \cong [H,G]$ with
  $N:=\core_G(H)\neq 1$, then $\sP \cong [H/N, G/N]$.} in $G$.
Let $K, K_1, \dots, K_n$ be the subgroups of $G$ which cover $H$ and satisfy $L \cong
[K, G]$, and $L_i \cong [K_i, G], \; 1\leq i \leq n$, as in
Figure~\ref{fig:parachute} (b).
Thus, $L$ is an upper interval in the subgroup lattice of $G$, and it remains to show
that $G\in \bigcap\limits_{i=1}^n \sG_i$.  
This will follow from ($\star$) once we prove that 
each $K_i$ is core-free in $G$.
We now give an easy direct proof this fact,
but we note that it also follows from Lemma~\ref{lemma-wjd-5} below, as well as from a more
general result about \emph{L-P lattices}. (See, e.g., B\"orner~\cite{Borner:1999}.)

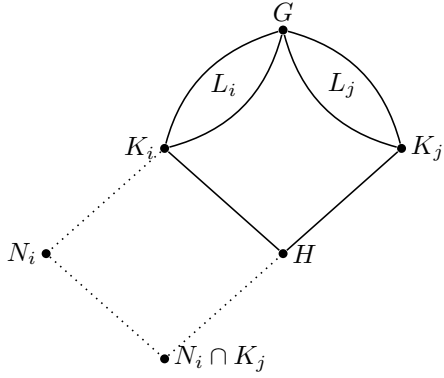
\begin{figure}[!h]
  \centering
\label{fig:isomorphism}  
\begin{tikzpicture}[scale=0.7]
  \node (G) at (0,6.25) [fill,circle,inner sep=1.2pt] {};
  \node (K1) at (-2.25,4) [fill,circle,inner sep=1.2pt] {};
  \node (K2) at (2.25,4) [fill,circle,inner sep=1.2pt] {};
  \node (H) at (0,2) [fill,circle,inner sep=1.2pt] {};
  \node (N1) at (-4.5,2) [fill,circle,inner sep=1.2pt] {};
  \node (N1K2) at (-2.25,0) [fill,circle,inner sep=1.2pt] {};

\draw (-1.125,5.2) node {$L_i$};
\draw (1.125,5.2) node {$L_j$};

 \draw (G) node [above] {$G$}
    (K1) node [left] {$K_i$}
    (K2) node [right] {$K_j$}
    (H) node [right] {$H$}
    (N1) node [left] {$N_i$}
    (N1K2) node [right] {$N_i\cap K_j$};

\draw[semithick, dotted] 
   (K1) to (N1) to (N1K2) to (H);

\draw[semithick] 
   (K1) to (H) to (K2)
   (G) to [out=197,in=75] (K1) 
   (K1) to [out=15,in=-105] (G)
   (G) to [out=-15,in=105] (K2) 
   (K2) to [out=165,in=-75] (G);

\end{tikzpicture}
\caption{The impossibility of a non-trivial core, $N_i = \core_G(K_i)$, in a parachute lattice.}
\end{figure}

For each $i\in\{1, \dots, n\}$, let $N_i = \core_G(K_i)$.  We prove that
$N_i=1$ for all $i$.  Suppose, on the contrary, that $N_i \neq 1$ for some
$i$, and consider any $K_j$ with $j\neq i$.\footnote{This is where we use $n\geq
  2$; though, if $n=1$, we could have used $K$ instead of $K_j$, but then we
  would need to assume $L\ncong \two$.}
A sketch of the part of the subgroup lattice under consideration is shown in
Figure~\ref{fig:isomorphism}. 
Notice that $N_i K_j = G$. For, $N_i$ is not below $H$, since
$H$ is core-free, so $N_i H = K_i$, so
$N_i K_j$ is above both $K_i$ and $K_j$.
Now, clearly,
$N_i\cap K_j\subnormal K_j$, and 
the standard isomorphism theorem implies
\[
K_j/(N_i\cap K_j)
\cong 
N_i K_j/N_i =
G/N_i.
\]
In particular, under this correspondence we have, 
\[
[N_i\cap K_j, K_j] \ni H \mapsto N_i H = K_i \in [N_i, G],
\]
and it follows that the intervals $[K_i, G]$ and $[H, K_j]$ must be isomorphic
as lattices.  However, by construction, 
$H$ is a maximal subgroup of $K_j$, so we have
$[H, K_j]\cong \two \ncong L_i  \cong [K_i, G]$.
This contradiction proves that $\core_G(K_i) = 1$ for all $1\leq i\leq n$, as claimed.
\end{proof}

\section{ISLE properties of groups}
\label{sec:isle-prop-groups}
The previous section motivates the study of what we call
\emph{interval sublattice enforceable} (\ISLE) properties of groups.  In this section we
formalize this concept, as well as some of the concepts introduced above, and we summarize
what we have proved about them.  We conclude with some conjectures that 
will provide the basis for future research.

By a \defn{group theoretical class}, or \defn{class of groups}, we mean a
collection $\sG$ of groups that is closed under isomorphism:
if $G_0\in \sG$ and  $G_1\cong G_0$, then $G_1\in \sG$.
A \defn{group theoretical property}, or simply \defn{property of groups},
is a property $\cP$ such that if a group $G_0$ has property $\cP$ and
$G_1\cong G_0$, then $G_1$ has property $\cP$.\footnote{It seems there
  is no single standard definition of \emph{group theoretical class}.
  While some authors (e.g.,~\cite{Doerk:1992}, \cite{BBE:2006}) use the definition given here,
  others (e.g.~\cite{Robinson:1996}, \cite{Rose:1978}) require that a group
theoretical class contain groups of order~1.}  
Thus if $\sG_{\cP}$ denotes the collection of groups with group theoretical
property $\cP$, then  $\sG_{\cP}$  is a class of groups, and belonging to a
class of groups is a group theoretical property.  Therefore, we need not
distinguish between a property of groups and the class of groups which possess
that property.
A group in the class $\sG$ is called a 
\index{$\mathscr{G}$-group}%
\emph{$\mathscr{G}$-group},
and a group with property $\cP$ is called a 
\index{$\mathscr{P}$-group}%
\emph{$\mathcal{P}$-group}.  Occasionally we write $G \vDash \cP$ to indicate
that $G$ is a $\cP$-group. 

We say that a group theoretical property (or class) $\cP$ is
\index{ISLE}
\emph{interval sublattice enforceable} (\ISLE) 
\index{interval sublattice enforceable (ISLE)}
if there exists a
lattice $L$ such that $L\cong [H,G]$ implies $G$ is a $\cP$-group.
(By the convention agreed upon at the outset of
this chapter, it is implicit in the notation $L \cong [H,G]$ that $G$ is a finite  
group; thus the class $\G$ of all finite groups is trivially an \ISLE\ class.)
We say that the property (or class) $\cP$ is 
\index{cf-ISLE}
\emph{core-free interval sublattice enforceable} (cf-\ISLE)
if there exists a lattice $L$ such that if $L\cong [H,G]$ with $H$ core-free in
$G$, then $G$ is a $\cP$-group.  

Clearly, if $\cP$ is \ISLE, then it is also
cf-\ISLE, and Lemma~\ref{lemma-wjd-2} above gives a sufficient condition for
the converse to hold.  We restate this formally as follows:

\vskip2mm

\noindent {\bf Lemma~\ref{lemma-wjd-2}${}'$.}
If $\cP$ is cf-\ISLE\ and if $\sG_{\cP}^c = \{G\in \G \mid G\nvDash \cP\}$
is closed under homomorphic images, $\bH(\sG_{\cP}^c) = \sG_{\cP}^c$, then $\cP$ is \ISLE.

\vskip2mm

As we noted in the previous section, two examples of \ISLE\ classes are
\begin{itemize}
\item $\sG_0 = \mathfrak{S}^c = $ the finite non-solvable groups;
\item $\sG_1 =(\mathfrak{Gi})^c = $ the finite non-giant groups, 
$\{G\in \G \mid (\forall n<\omega) \; (G \neq A_n \text{ and }  G\neq S_n) \}$;
\end{itemize}
The following classes are at least cf-\ISLE:\footnote{The symbols we use to denote these classes are not standard.}
\begin{itemize}
\item $\sG_2 = $ the finite subdirectly irreducible groups;
\item $\sG_3 = $ the finite groups having no nontrivial abelian normal subgroups.
\item $\sG_4 = \{G\in \G \mid C_G(M) = 1 \text{ for a minimal normal
  subgroup } M\subnormal G\}$
\end{itemize}
Note that $\sG_4 \subset \sG_2\cap \sG_3 \subset \sG_0$.

Given two (group theoretical) properties $\cP_1, \cP_2$, we write
$\cP_1 \rightarrow \cP_2$ to denote that property 
$\cP_1$ implies property $\cP_2$. In other words,
$G\vDash \cP_1$ only if $G\vDash \cP_2$.
Thus $\rightarrow$ provides a natural partial order on any given set of 
properties, as follows:
\[
\cP_1 \leq \cP_2  \iff \cP_1 \rightarrow \cP_2 \iff \sG_{\cP_1}\subseteq
\sG_{\cP_2},
\]
where $\sG_{\cP_i} = \{G\in \G \mid G\vDash \cP_i\}$.
The following is an obvious corollary of the parachute construction. 
\begin{corollary}
\label{cor:isle-prop-groups-1}
  If $P = \{\cP_i \mid i\in \sI\}$ is a collection of (cf-)\ISLE\ properties,
  then $\Meet P$ is (cf-)\ISLE.
\end{corollary}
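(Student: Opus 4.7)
The plan is to mimic the parachute construction that was used in the proof of Lemma~\ref{lemma-wjd-3}. I treat the indexing set $\sI$ as finite (which is the case of interest), say $\sI = \{1, \dots, n\}$; if $n = 1$ there is nothing to prove, so assume $n \geq 2$. For each $i$, use the hypothesis to fix a finite lattice $L_i$ witnessing that $\cP_i$ is (cf-)\ISLE, that is, $L_i \cong [H,G]$ (with $H$ core-free in $G$, for the cf-\ISLE\ case) implies $G \vDash \cP_i$. I will assume without loss of generality that each $L_i \ncong \two$; any witness $L_i \cong \two$ can be replaced by a nontrivial \ISLE\ witness of a property implying $\cP_i$, using one of the examples from Section~\ref{sec:isle-prop-groups} (e.g., the non-solvable or non-giant witnesses).

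Next I form the parachute lattice $\sP$ built from $L_1, \dots, L_n$ along a common bottom $\hat 0$ and common top $\hat 1$, so that the elements of $\sP$ covering $\hat 0$ are exactly the bottoms of the $L_i$ and the intervals above these atoms are the lattices $L_i$ themselves, all meeting in $\hat 1$ (as in Figure~\ref{fig:parachute}, omitting the additional ``spine'' $L$ or, equivalently, taking it to be one of the $L_i$). I claim that $\sP$ witnesses that $\Meet P$ is (cf-)\ISLE. Suppose $\sP \cong [H, G]$, with $H$ core-free in $G$ for the cf-\ISLE\ case. Let $K_1, \dots, K_n$ be the covers of $H$ corresponding under the isomorphism to the atoms of $\sP$, so that $L_i \cong [K_i, G]$ for each $i$.

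For the \ISLE\ version of the corollary, this already suffices: $[K_i, G] \cong L_i$ forces $G \vDash \cP_i$ for every $i$, so $G \vDash \Meet P$. For the cf-\ISLE\ version I must additionally verify that each $K_i$ is core-free in $G$, and here I invoke verbatim the argument given in the proof of Lemma~\ref{lemma-wjd-3}. Briefly, if $N_i := \core_G(K_i) \neq 1$ for some $i$, fix any $j \neq i$ (possible since $n \geq 2$); core-freeness of $H$ implies $N_i \not\leq H$, hence $N_i H = K_i$ and $N_i K_j = G$, so the standard isomorphism theorem yields
\[
[H, K_j] \;\cong\; [N_i \cap K_j, K_j] \;\cong\; [N_i, G] \;\ni\; K_i,
\]
and therefore $[H, K_j] \cong [K_i, G] \cong L_i \ncong \two$, contradicting that $H$ is a maximal subgroup of $K_j$ in $\sP$. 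Hence each $K_i$ is core-free in $G$, and the witnessing property of $L_i$ gives $G \vDash \cP_i$ for all $i$, i.e., $G \vDash \Meet P$.

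The only delicate point—and the main obstacle that forces the workaround in the first paragraph—is the edge case in which some witness $L_i$ is isomorphic to $\two$; the parachute argument genuinely requires $L_i \ncong \two$ so that $[H, K_j]$ being a two-element interval contradicts $[H, K_j] \cong L_i$. All other ingredients are straightforward consequences of the parachute construction and the (cf-)\ISLE\ definitions, so once the $\two$-case is disposed of, the proof reduces to a direct invocation of Lemma~\ref{lemma-wjd-3}'s core-freeness lemma.
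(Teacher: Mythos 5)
Your proposal unwinds the corollary in exactly the way the paper intends: the text gives no separate proof, saying only that this is ``an obvious corollary of the parachute construction,'' and your argument --- form the parachute of the witnesses $L_1,\dots,L_n$ and run the core-freeness argument from the proof of Lemma~\ref{lemma-wjd-3} --- is precisely that. The main line is correct, including the Dedekind/isomorphism-theorem computation showing each $K_i$ is core-free, the reduction to $n\geq 2$, and the observation that the plain \ISLE\ case needs no core-freeness at all.

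The one genuine soft spot is your disposal of the edge case $L_i\cong\two$ in the cf-\ISLE\ version. A property $\cP_i$ that is cf-\ISLE-witnessed by $\two$ need only contain the class of groups possessing a core-free maximal subgroup (the primitive groups, in the paper's terminology), and that class is itself a cf-\ISLE\ property witnessed by $\two$; so in the worst case $\sG_{\cP_i}$ is exactly the primitive groups. Your proposed replacements --- the non-solvable or non-giant witnesses --- do not witness properties implying this: a non-solvable (and non-giant) group need not be primitive (e.g., $C_2\times A_5$ has no core-free maximal subgroup), so the substitution loses the implication $G\vDash\cP_i$. The repair is cheap and stays inside the paper's toolkit: by Lemma~\ref{lemma-wjd-5}(i) and the remark following it, any $G$ with $\sP\cong[H,G]$, $H$ core-free, and $\sP$ a nontrivial parachute satisfies $\core_G(X)=1$ for every $H\leq X<G$; in particular the coatoms of $[H,G]$ are core-free maximal subgroups, so $G$ is primitive and automatically satisfies every property that $\two$ cf-\ISLE-witnesses. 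Hence the $\two$-witnessed $\cP_i$ should simply be dropped from the parachute rather than replaced (and in the plain \ISLE\ version even that is unnecessary, since $[K_i,G]\cong\two$ already yields $G\vDash\cP_i$). Finally, your restriction to finite $\sI$ is forced --- a parachute over infinitely many panels is not a finite lattice --- and the corollary must be read with that restriction, though the paper does not say so.
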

Note: the conjunction $\Meet \cP$ corresponds to the class $\{G \in \G \mid (\forall i \in \sI) \; G\vDash \cP_i \}$.

It is clear from the foregoing that if solvability were an \ISLE\ property then
we would have a solution to the \FLRP.  But solvability is obviously not \ISLE.
For, if $L\cong [H, G]$ then for any non-solvable group $K$ we have $L\cong
[H\times K, G\times K]$, and of course $G\times K$ is not solvable.  
Notice, however, that $H\times K$ is not core-free, so a more interesting
question to ask might be whether solvability is a cf-\ISLE\ property. 
The following lemma proves that this is not the case.  
\begin{lemma}
  \label{lem:ISLE-must-have-wreaths}
Let $\cP$ be a cf-\ISLE\ property, and let $L$ be a finite lattice such that 
$L\cong [H,G]$ with $H$ core-free implies $G\vDash \cP$.  Also, suppose there
exists a group $G$ witnessing this; that is, $G$ has a core-free subgroup
$H$ with $L\cong [H,G]$.   
Then, for any finite nonabelian simple group $S$, there exists a wreath product group
of the form $W = S\wr \bar{U}$ that is also a $\cP$-group.
\end{lemma}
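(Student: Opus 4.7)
The plan is to construct a wreath product $W = S\wr \bar{U}$ containing a core-free subgroup $K$ with $[K,W] \cong L$; the cf-\ISLE\ hypothesis on $\cP$ will then force $W \vDash \cP$. I would take $\bar{U} = G$ (the given witness group) acting on $X = G/H$ by the coset action, and form $W = S \wr_X G = S^X \rtimes G$. For the subgroup $K$ I would take $K = D \rtimes H$, where $D = \{(s,s,\ldots,s)\mid s\in S\}$ is the diagonal of $S^X$ and $H$ sits inside the top copy of $G$ in $W$; since $G$ permutes coordinates of $S^X$ via its action on $X$ and fixes $D$ setwise, $H$ normalizes $D$ and $K$ is indeed a subgroup of $W$.

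Next I would verify that $K$ is core-free in $W$. If $N = \core_W(K)$, then $N \cap S^X$ is a $W$-normal subgroup contained in $D$. Since $S$ is nonabelian simple and $G$ acts transitively on $X$, any minimal $W$-normal subgroup of $S^X$ is a direct product of $G$-conjugate simple factors, hence all of $S^X$; in particular the only $W$-normal subgroups of $S^X$ are $1$ and $S^X$, and the strict inclusion $D \lneq S^X$ forces $N \cap S^X = 1$. The projection $\pi\colon W \twoheadrightarrow W/S^X \cong G$ then sends $N$ isomorphically onto a normal subgroup of $G$ contained in $\pi(K) = H$, hence inside $\core_G(H) = 1$; so $N = 1$.

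The main obstacle is establishing $[K,W] \cong [H,G]$. The map $M' \mapsto D \rtimes M'$ is an evident order-preserving injection $[H,G]\hookrightarrow [K,W]$, and the content is surjectivity. Given $M$ with $K \leq M \leq W$, put $M' = \pi(M)\in [H,G]$ and $N_M = M \cap S^X$, so $D \leq N_M \leq S^X$ and $N_M$ is $M'$-invariant; one must show $N_M = D$, yielding $M = D\rtimes M'$. By Scott's classical description, the subgroups of $S^X$ containing $D$ correspond to partitions of $X$, so the problem reduces to showing that every $M'$-invariant partition of $X$ compatible with $D$ is trivial. I expect this step to require a refinement of the naive construction, either by incorporating additional ``twists'' in $K$ (adjoining elements of $S^X$ whose $H$-orbits generate $S^X/D$) or by replacing the coset action of $G$ on $X$ by a more rigid faithful transitive action of $\bar{U}$; the cf-\ISLE\ freedom, via a parachute-style enlargement of the original witness, can be exploited to arrange this. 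Once $[K,W] \cong L$ is established with $K$ core-free, the cf-\ISLE\ hypothesis gives $W \vDash \cP$, completing the proof.
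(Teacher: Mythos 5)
There is a genuine gap, and it sits exactly where you flag it: for $K = D\rtimes H$ the interval $[K,W]$ is \emph{not} isomorphic to $L$, and no small repair of that choice will make it so. By the dual isomorphism $f\mapsto K_f$ of Lemma~\ref{lem:latt-duals}, the subgroups of $S^X$ containing $D$ correspond order-reversingly to the partitions of $X=G/H$, so $[D\rtimes H,\,S^X\rtimes G]$ already contains $K_\pi\rtimes M$ for every $M\in[H,G]$ and every $M$-invariant partition $\pi$ of $X$ --- in particular $S^X\rtimes H$ --- and your map $M\mapsto D\rtimes M$ is far from surjective. Moreover, even with the ``correct'' bottom subgroup $D\rtimes\bar G$ (all of $G$ on top, not just $H$), the interval one obtains is the \emph{dual}, $L'\cong[D\rtimes\bar G,\,S\wr\bar G]$, not $L$ itself, precisely because the correspondence between congruences of the $G$-set $G/H$ and the $\bar G$-invariant subgroups $K_f$ reverses order. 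So a single wreath product over $G$ cannot deliver $L$ as an upper interval by this route, and the ``refinements'' you gesture at (extra twists in $K$, a more rigid action, a parachute enlargement) are not worked out and do not obviously address the order reversal.

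The paper's proof resolves this by applying the Kurzweil construction \emph{twice}. First it forms $U = S\wr\bar G = S^n\rtimes\bar G$ with $n=|G:H|$ and shows that $H_1:=D\rtimes\bar G$ is core-free in $U$ with $[H_1,U]\cong L'$. It then repeats the construction with $(U,H_1)$ in place of $(G,H)$: setting $m=|U:H_1|$ and $W=S^m\rtimes\bar U=S\wr\bar U$, one gets $[D_1\rtimes\bar U,\,W]\cong L''=L$ with $D_1\rtimes\bar U$ core-free in $W$, and the cf-\ISLE\ hypothesis then forces $W\vDash\cP$. Your core-freeness argument for $D\rtimes H$ is sound as far as it goes (and is arguably cleaner than the paper's coordinate computation, which it would adapt to $D\rtimes\bar G$ with little change), but the lattice-theoretic heart of the lemma --- producing $L$ itself, rather than a much larger interval or its dual, above a core-free subgroup of a wreath product --- is exactly the part your proposal leaves open.
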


\begin{proof}
\index{Kurzweil, Hans}
  We apply the idea of Kurzweil twice
  (cf.~Theorem~\ref{thm:duals-interv-subl}).  Fix a finite nonabelian simple
  group $S$, and suppose the index of $H$ in $G$ is $|G:H| = n$.
  Then the action of $G$ on the cosets of $H$ induces an automorphism of the
  group $S^n$ by permutation of coordinates.  Denote this representation by
  $\phi: G \rightarrow \Aut(S^n)$, 
  and let the image of $G$ be $\phi(G) =
  \bar{G} \leq \Aut(S^n)$.  
\index{wreath product}%
  The semidirect product (or wreath product) under this action is the group
  \[
  U:= S\wr_\phi G = S^n \rtimes_\phi G = S^n \rtimes \bar{G} =  S\wr \bar{G},
  \]
  with multiplication given by
  \[
  (s_1, \dots, s_n, x) (t_1, \dots, t_n, y) = 
  (s_1 t_{x(1)}, \dots, s_nt_{x(n)}, x y),
  \]
  for $s_i, t_i \in S$ and $x, y \in \bar{G}$.
  An illustration of the subgroup lattice of such a wreath product appears in Figure~\ref{fig:kurzweil}.
  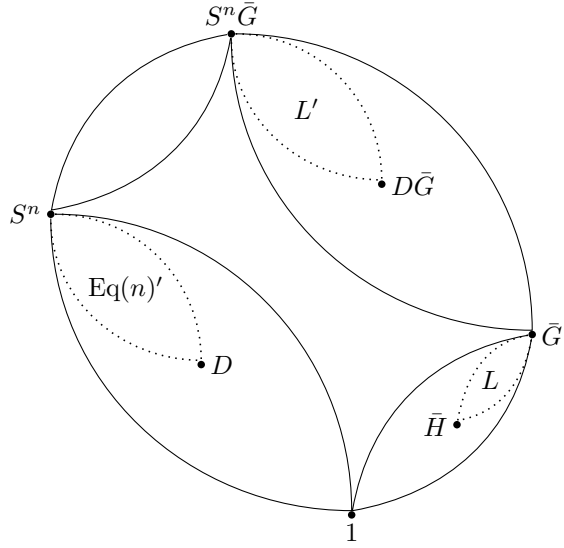
\begin{figure}[!h]
\begin{center}
  \begin{tikzpicture}[scale=.8]
    \node (G) at (3,3) [fill,circle,inner sep=1pt] {}; 
    \draw (G) node [right] {$\bar{G}$};
    \node (H) at (1.75,1.5) [fill,circle,inner sep=1pt] {}; 
    \draw (H) node [left] {$\bar{H}$};
    \node (Sn) at (-5,5) [fill,circle,inner sep=1pt] {}; 
    \draw (Sn) node [left] {$S^n$};
    \node (D) at (-2.5,2.5) [fill,circle,inner sep=1pt] {}; 
    \draw (D) node [right] {$D$};
    \node (DG) at (0.5,5.5) [fill,circle,inner sep=1pt] {}; 
    \draw (DG) node [right] {$D \bar{G}$};
    \node (1) at (0,0) [fill,circle,inner sep=1pt] {}; 
    \draw (1) node [below] {$1$};
    \node (SnG) at (-2,8) [fill,circle,inner sep=1pt] {}; 
    \draw (SnG) node [above] {$S^n \bar{G}$};
   \draw
    (G) to [out=190,in=80] (1) to [out=10,in=-100] (G)
    (Sn) to [out=0,in=90] (1) to [out=180,in=-90] (Sn)
    (SnG) to [out=190,in=80] (Sn) to [out=10,in=-100] (SnG)
    (SnG) to [out=0,in=90] (G) to [out=180,in=-90] (SnG);
    \draw[dotted, semithick]
    (G) to [out=190,in=80] (H) to [out=10,in=-100] (G)
    (SnG) to [out=0,in=90] (DG) to [out=180,in=-90] (SnG)
    (Sn) to [out=0,in=90] (D) to [out=180,in=-90] (Sn);
    \draw 
    (-3.75,3.75) node {$\Eq(n)'$}
    (-.75,6.75) node {$L'$}
    (2.3,2.25) node {$L$};
  \end{tikzpicture}
\end{center}
    \caption{Representation of the dual of a group representable lattice.}
    \label{fig:kurzweil}
  \end{figure}
  The dual lattice $L'$ is an upper interval in the subgroup lattice of this group, namely,
  $L'\cong [D\rtimes \bar{G}, U]$.
  (As usual, $D$ denotes the diagonal subgroup of $S^n$.)  It is important to
  note that if $H$ is core-free in $G$ -- equivalently, if $\ker \phi = 1$
  -- then the foregoing construction results in the subgroup $D\rtimes \bar{G}$ being
  core-free in $U$.   (We postpone the proof of this fact.)

  Now if we repeat the foregoing procedure, with $H_1 := D\rtimes \bar{G}$ denoting the
  (core-free) subgroup of $U$ such that $L' \cong [H_1, U]$, then we find that
  $L = L''\cong [D_1\rtimes \bar{U}, S^m\rtimes
    \bar{U}]$, where $m = |U:H_1|$.\footnote{Here we use $D_1$ to denote the
    diagonal subgroup of $S^m$ to distinguish it from $D$, the diagonal subgroup
    of $S^n$.} 
 Assuming $D_1\rtimes \bar{U}$ is core-free in $W = S^m\rtimes \bar{U}$, then, 
it follows by the original hypothesis that $W$ must be a $\cP$-group.

  To complete the proof, we check that starting with a core-free subgroup
  $H \leq G$ in the Kurzweil construction just described results in a
  core-free subgroup $D\rtimes \bar{G} \leq U$.   Let $N = \core_U(D\rtimes
  \bar{G})$.  Then, for all $n=(d,\dots, d, x) \in N$ and for all 
  $u = (t_1,\dots, t_n, g)\in U$, we have $unu^{-1}\in N$.  In particular, we
  are free to choose
  $t_1 = t_2$, all other $t_k$ distinct, and $g=1$.  Then 
  \[
  u n u^{-1} = (t_1,\dots, t_n, 1) (d, \dots, d, x) (t_1^{-1},\dots, t_n^{-1}, 1)
  =(t_1 d \,t_{x(1)}^{-1},\dots, t_nd \,t_{x(n)}^{-1}, 1) \in N.\]
  Therefore, $t_1 d\, t_{x(1)}^{-1} = \cdots = t_nd \,t_{x(n)}^{-1}$. With $t_1 = t_2$
  and all other $t_k$ distinct, it's clear that $x$ must stabilize the set $\{1,2\}$.
  Of course, the same argument applies in case $t_1 = t_3$ with all other $t_k$
  distinct,\footnote{Note that we can be sure $|G:H| = n > 2$, since $|G:H|=2$
    would imply $H\subnormal G$, which contradicts that $H$ is core-free in $G$.}
  so we conclude that $x$ stabilizes the set $\{1, 3\}$ as well.  Therefore, $x(i)
  = i$, for $i=1, 2, 3$.  Since the same argument works for all $i$, we see that
  $n=(d,\dots,d, x) \in N$ implies $x\in \ker \phi = 1$.  This puts $N$ below
  $D\times 1$, and the only normal subgroup of $U$ that lies 
  below $D\times 1$ is the trivial subgroup.
\end{proof}
The foregoing result enables us to conclude that any class of groups that does
not include wreath products of the form $S\wr G$ for all finite simple groups
$S$ cannot be a cf-\ISLE\ class. 

\vskip2mm

We conclude this section with the following two equivalent conjectures:
\begin{conjecture}
\label{conjecture:isle-prop}
  If $\cP$ is a (cf-)\ISLE\ property, then $\neg \cP$ is not a (cf-)\ISLE\ property.
\end{conjecture}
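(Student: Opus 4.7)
The plan is to attack the ISLE form of the conjecture directly via a direct-product construction, and to isolate precisely where the argument runs into trouble in the cf-ISLE form; this gives an unconditional proof for the ``non-vacuous'' ISLE case rather than a reduction to the \FLRP.

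Suppose for contradiction that $\cP$ and $\neg \cP$ are both ISLE, with witness lattices $L_1$ and $L_2$ respectively. Assume first that each witness is realized, so there exist finite groups $H_i \leq G_i$ with $L_i \cong [H_i, G_i]$ for $i = 1, 2$. I would consider the direct product $G := G_1 \times G_2$ and verify that $[H_1 \times G_2, G] \cong L_1$ and, symmetrically, $[G_1 \times H_2, G] \cong L_2$. The map $\phi : [H_1, G_1] \to [H_1 \times G_2, G]$ given by $\phi(K) = K \times G_2$ is clearly an order-preserving injection; for surjectivity, any subgroup $N$ with $H_1 \times G_2 \leq N \leq G$ contains the normal subgroup $1 \times G_2$ of $G$, so $(a,b) \in N$ implies $(a,c) = (a,b)(1, b^{-1}c) \in N$ for every $c \in G_2$, whence $N = \pi_1(N) \times G_2$ with $\pi_1(N) \in [H_1, G_1]$. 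Hence $G$ has both $L_1$ and $L_2$ as upper intervals and must therefore satisfy both $\cP$ and $\neg \cP$, a contradiction.

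Two obstacles remain. The first is the possibility of vacuous witnesses: if some ISLE witness $L_1$ happens never to be realized as an interval in any finite subgroup lattice, the defining implication for ISLE is vacuously true and the direct-product construction has nothing to start from. By Theorem~\ref{thm:IntroP5}, excluding this pathology unconditionally is equivalent to the \FLRP\ itself, so the vacuous case cannot be ruled out in the present state of knowledge. In practice, however, the ISLE properties under consideration come with explicit witnessing groups (as in the hypothesis of Lemma~\ref{lem:ISLE-must-have-wreaths}), and for these the argument above is an unconditional proof.

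The harder obstacle is the cf-ISLE version, where each witness $L_i \cong [H_i, G_i]$ has $H_i$ core-free in $G_i$, and one wants a single finite group $G$ with \emph{two} core-free subgroups realizing $L_1$ and $L_2$ as upper intervals. The naive direct product fails because $H_1 \times G_2$ is not core-free in $G_1 \times G_2$: an easy calculation shows its core is $1 \times G_2$. The natural repair is to iterate the Kurzweil wreath-product trick of Lemma~\ref{lem:ISLE-must-have-wreaths}, building a wreath product $S \wr U$ for a nonabelian simple $S$ and an appropriate $U$ derived from both $G_i$, in such a way that both intervals survive with core-free bottoms. The hard part is arranging simultaneous core-freeness of the two subgroups while preserving both interval structures; this appears to be the main technical obstacle and is where I would expect the proof to require genuinely new ideas.
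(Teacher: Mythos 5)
The first thing to be clear about is that the paper does not prove this statement: it is posed as a conjecture and left open. The only accompanying argument is the closing remark that a pair of lattices $L_0, L_1$ witnessing its failure would make the parachute lattice $\sP(L_0,L_1)$ a finite lattice that is not an interval in the subgroup lattice of any finite group, thereby solving the \FLRP. So there is no proof to measure yours against, and your proposal --- as you acknowledge --- does not close the gap either. What your analysis does establish is worth stating precisely, though. The direct-product computation is correct ($1\times G_2$ is normal in $G_1\times G_2$ and contained in every subgroup of the interval $[H_1\times G_2,\, G_1\times G_2]$, so that interval is isomorphic to $[H_1,G_1]$), and it shows that if both witness lattices are realized as intervals then a single finite group satisfies $\cP$ and $\neg\cP$, which is absurd. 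Combined with your observation about vacuous witnesses, this makes the \ISLE\ form of the conjecture \emph{equivalent} to a positive answer to the \FLRP: if some finite lattice $L$ is not an interval in any finite subgroup lattice, then $L$ vacuously witnesses that every property is \ISLE\ --- in particular the empty class $\neg\G$ --- and the conjecture fails; if every finite lattice is an interval, your argument proves the conjecture. That equivalence sharpens the paper's one-directional remark, but it also explains why no unconditional proof should be expected.

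For the cf-\ISLE\ case, the repair you are reaching for is already in the paper, and it is not the wreath-product construction of Lemma~\ref{lem:ISLE-must-have-wreaths} but the parachute. The proof of Lemma~\ref{lemma-wjd-3} shows that if $\sP(L_1,L_2)\cong[H,G]$ with $H$ core-free (which one gets for free by passing to $G/\core_G(H)$) and $|L_i|>2$, then the subgroups $K_i$ with $L_i\cong[K_i,G]$ are automatically core-free; so a single realization of the parachute already yields the contradiction in the cf-\ISLE\ setting, with no need to arrange simultaneous core-freeness by hand. The conditional character persists, of course: one must still assume the parachute is realized as an interval, which is again the \FLRP. The honest summary is that neither the paper nor your proposal proves the conjecture, and your own reduction shows why: the statement stands or falls with the \FLRP\ itself.
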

\begin{conjecture}
\label{conjecture:isle-prop2}
If $\sG$ is a (cf-)\ISLE\ class, then $\sG^c$ is not a (cf-)\ISLE\ class.
\end{conjecture}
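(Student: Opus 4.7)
The plan is to work with Conjecture~\ref{conjecture:isle-prop} (the class version Conjecture~\ref{conjecture:isle-prop2} being identical under $\cP \leftrightarrow \sG_{\cP}$) and to show that it is in fact equivalent to statement (B) of Theorem~\ref{thm:P5}, and hence to a positive answer to the \FLRP\ itself. This equivalence is what makes the conjectures plausible yet presently inaccessible.

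For the direction (B) $\Rightarrow$ Conjecture, I would suppose that $\cP$ and $\neg \cP$ are both cf-\ISLE\ with witnesses $L_{\cP}$ and $L_{\neg \cP}$. In the corner case where a witness is $\two$, that witness says every primitive group satisfies the corresponding property, so if both witnesses are $\two$ then every primitive group is simultaneously a $\cP$- and $\neg \cP$-group, contradicting the existence of primitive groups guaranteed by (B); the mixed case reduces to this after a minor technical enlargement of the $\two$ witness, which I would treat separately. Hence we may assume $L_{\cP}, L_{\neg \cP} \ncong \two$ and form the parachute lattice $\sP = \sP(L_{\cP}, L_{\neg \cP})$. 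By (B), $\sP \cong [H, G]$ for some finite $H \leq G$, and after replacing $G$ by $G/\core_G(H)$ we may assume $H$ is core-free. Then Lemma~\ref{lemma-wjd-3} forces $G \in \sG_{\cP} \cap \sG_{\neg \cP}$, which is empty by definition of $\neg \cP$, a contradiction.

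For the converse, suppose (B) fails: some finite lattice $L^{\ast}$ is not isomorphic to $[H, G]$ for any finite $H \leq G$. Then, vacuously, for \emph{every} group-theoretical property $\cP$,
\[
L^{\ast} \cong [H, G] \text{ with } H \text{ core-free in } G \quad \Longrightarrow \quad G \vDash \cP,
\]
so $L^{\ast}$ simultaneously witnesses that $\cP$ and $\neg \cP$ are both cf-\ISLE, in direct contradiction with the conjecture. The \ISLE\ version of the argument is identical.

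The main obstacle is therefore intrinsic: an unconditional proof of either conjecture is equivalent to settling the \FLRP\ in the affirmative, the central problem of this dissertation. Accordingly, any genuine progress must proceed indirectly — for example, by assembling concrete cf-\ISLE\ properties $\cP_1, \dots, \cP_n$ whose conjunction is either demonstrably non-empty (supporting the conjecture and pinning down candidate groups for hard small lattices) or demonstrably empty (refuting the \FLRP, and \emph{a fortiori} the conjecture, via the parachute construction). In this light the conjectures are best viewed not as isolated targets for direct proof but as compact reformulations of the \FLRP\ that organize a program for identifying and combining as many cf-\ISLE\ properties as can be found.
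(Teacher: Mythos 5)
You should be aware that the paper does not prove this statement: it is posed as an open conjecture, and the only thing the text records about it is the one-directional remark that a pair of lattices witnessing its failure would, via the parachute lattice $\sP(L_0,L_1)$ and Lemma~\ref{lemma-wjd-3}, yield a finite lattice that is not an interval in the subgroup lattice of any finite group, thereby refuting statement (B) of Theorem~\ref{thm:P5}. So there is no proof in the paper to compare yours against, and your proposal --- correctly --- does not claim to prove the conjecture either. What you actually establish is the sharper meta-statement that the conjecture is \emph{equivalent} to (B). Your forward direction is exactly the paper's remark read contrapositively, while your converse is a genuine addition the paper does not state: if some finite lattice $L^*$ is never of the form $[H,G]$, then $L^*$ vacuously witnesses that \emph{every} group-theoretical property, in particular both $\cP$ and $\neg\cP$, is (cf-)\ISLE, so the conjecture fails. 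That observation is correct and worth recording, since it shows the conjecture is not strictly weaker than the \FLRP\ but a reformulation of it.

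The one step I would not let stand as written is your treatment of the corner case in which exactly one witness is $\two$. There is no ``enlargement'' of a $\two$ witness available in general: knowing that every primitive group is a $\cP$-group does not produce a larger lattice $L'$ for which $L'\cong[H,G]$ with $H$ core-free forces $G$ to be primitive. The correct repair is to run the parachute on two copies of the non-$\two$ witness $L_1$ for $\neg\cP$: by (B) one obtains $\sP(L_1,L_1)\cong[H,G]$ with $H$ core-free, Lemma~\ref{lemma-wjd-3} gives $G\vDash\neg\cP$, and the remark following Lemma~\ref{lemma-wjd-5} shows $\core_G(X)=1$ for every $H\leq X<G$, so the coatoms of the interval are core-free maximal subgroups of $G$; hence $G$ is primitive and the $\two$ witness forces $G\vDash\cP$, the desired contradiction. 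With that substitution your equivalence argument is sound, and your closing assessment --- that the conjecture is best treated as a compact restatement of the \FLRP\ organizing the search for concrete (cf-)\ISLE\ properties, rather than as an independently attackable target --- is the honest conclusion.
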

\noindent A pair of lattices witnessing the failure of either of these conjectures 
would solve the \FLRP.  More precisely, if $\sG$ is a class and $L_0$ and $L_1$ are
lattices such that 
\[
L_0 \cong [H, G] \; \Rightarrow \; G\in \sG \quad \text{ and } \quad
L_1 \cong [H, G] \; \Rightarrow \; G\in \sG^c
\]
Then the parachute lattice $\sP(L_0, L_1)$ is not an interval in the subgroup
lattice of a finite group.

\section{Dedekind's rule}
\label{sec:dedekinds-rule}

We prove a few more lemmas which lead to additional constraints on any group which has a
non-trivial parachute lattice as an upper interval in its subgroup lattice.  
We will need the following standard theorem\footnote{See, for example, page~122
  of Rose, \emph{A Course on Group Theory}~\cite{Rose:1978}.} which
we refer to as
\defn{Dedekind's rule}:
\index{Dedekind's rule|(} 
\begin{theorem}[Dedekind's rule]
  \label{lemma-dedekind}
Let $G$ be a group and let $A, B$ and $C$ be subgroups of $G$ with $A\leq B$.  Then,
\begin{align}
\label{eq:dedekind1}
A(C\cap B) &= AC \cap B,\qquad \text{ and }\\
\label{eq:dedekind2}
(C\cap B)A &= CA \cap B.
\end{align}
\end{theorem}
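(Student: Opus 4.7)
The plan is to prove \eqref{eq:dedekind1} by double inclusion, and then note that \eqref{eq:dedekind2} follows from the same argument applied on the other side (or by taking inverses). Both sides of each equation are subsets of $G$, and I will treat them as such; no assumption about $AC$ being a subgroup is needed.

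For the forward inclusion $A(C\cap B)\subseteq AC\cap B$, I would take an arbitrary element $x = ac$ with $a\in A$ and $c\in C\cap B$. Clearly $x\in AC$. Moreover, since $A\leq B$ we have $a\in B$, and $c\in B$ by choice, so $x = ac\in B$. Hence $x\in AC\cap B$.

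For the reverse inclusion $AC\cap B \subseteq A(C\cap B)$, suppose $x\in AC\cap B$. Write $x = ac$ with $a\in A$, $c\in C$, and observe that $c = a^{-1}x$. Since $a\in A\leq B$, we have $a^{-1}\in B$, and since $x\in B$ by hypothesis, it follows that $c = a^{-1}x\in B$. Combined with $c\in C$ this gives $c\in C\cap B$, so $x = ac\in A(C\cap B)$. This establishes \eqref{eq:dedekind1}.

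Equation \eqref{eq:dedekind2} is proved by an entirely analogous argument, swapping the order of the factors throughout; alternatively, one may apply \eqref{eq:dedekind1} to the inverse subsets $A^{-1}=A$, $B^{-1}=B$, $C^{-1}=C$ and use $(XY)^{-1}=Y^{-1}X^{-1}$. There is no genuine obstacle in this proof: the only subtlety is the use of the hypothesis $A\leq B$, which is exactly what allows us to move the factor $a$ (or $a^{-1}$) between the two sides of the intersection with $B$ without leaving $B$. Without this hypothesis the inclusion $AC\cap B\subseteq A(C\cap B)$ can fail, so it is worth flagging where $A\leq B$ is invoked.
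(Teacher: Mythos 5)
Your proof is correct. The paper does not actually prove Dedekind's rule --- it states it as a standard fact and cites page~122 of Rose's \emph{A Course on Group Theory} --- and your double-inclusion argument (with the key observation that $A\leq B$ lets you move $a^{-1}$ across the intersection with $B$) is exactly the standard textbook proof, so there is nothing to compare and nothing missing.
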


Our next lemma (Lemma~\ref{lemma-wjd-4}) is a slight variation on a standard
result that we find very useful. 
The standard result is essentially part (ii) of Lemma~\ref{lemma-wjd-4}.
Surely part (i) of the lemma is also well known, though we have not seen it
elsewhere. We will see that the standard result is powerful enough to answer all
of our questions about parachute lattices, but later, in
Section~\ref{sec:except-seven-elem}, we make use of (i) in a situation where
(ii) does not apply.  

To state Lemma~\ref{lemma-wjd-4}, we need some new notation.  
Let $U$ and $H$ be subgroups of a group,
let $U_0 := U\cap H$, and consider the interval $[U_0, U]:=\{ V \mid U_0 \leq V
\leq U\}$.   
In general, when we write $UH$ we mean the \emph{set}
$\{ u h \mid u\in U, h\in H\}$, and we write $U \join V$ or $\<U,H\>$ to mean
the group generated by $U$ and $H$.  Clearly $UH \subseteq \<U,H\>$. 
Equality holds if and only if $U$ and $H$ permute, that is, $U H = H U$.
In any case, it is often helpful to visualize part of the subgroup lattice of
$\<U,H\>$, as shown below.

\begin{center}
  \begin{tikzpicture}[scale=.4]
    \node (H) at (4,4) [fill,circle,inner sep=1pt] {}; \draw (H) node [right] {$H$};
    \node (U) at (-4,4) [fill,circle,inner sep=1pt] {}; \draw (U) node [left] {$U$};
    \node (U0) at (0,0) [fill,circle,inner sep=1pt] {}; 
    \draw (1,-.5) node {$U_0 = U\cap H$};
    \node (UH) at (0,8) [fill,circle,inner sep=1pt] {}; \draw (UH) node [above] {$\<U,H\>$};
   \draw
    (H) to [out=190,in=80] (U0) to [out=10,in=-100] (H)
    (U) to [out=-10,in=100] (U0) to [out=170,in=-80] (U)
    (UH) to [out=190,in=80] (U) to [out=10,in=-100] (UH)
    (UH) to [out=-10,in=100] (H) to [out=170,in=-80] (UH);
  \end{tikzpicture}
\end{center}
Recall that the usual isomorphism theorem for groups implies
that 
if $H$ is a normal subgroup of $\<U, H\>$, 
then the interval
$[H, \<U, H\>]$ is isomorphic to the interval $[U\cap H, U]$.  The 
purpose of the next lemma is to relate these two
intervals in cases where we drop the assumption $H\subnormal \<U,H\>$
and add the assumption $UH = \<U,H\>$.

If the two subgroups $U$ and $H$ permute, then we define 
\begin{equation}
  \label{eq:dedekind-1}
[U_0, U]^H := \{ V\in [U_0,U] \mid VH = HV\},
\end{equation}
which consists of those subgroups $V$ in $[U_0, U]$ that permute with
$H$. 

If $H$ normalizes $U$ (which implies $UH=HU$), 
then we 
define
\begin{equation}
  \label{eq:dedekind-2}
[U_0, U]_H := \{ V\in [U_0,U] \mid H\leq N_G(V)\},
\end{equation}
where $G:=UH$.  This is the set consisting of those subgroups $V$ in $[U_0, U]$ that
are normalized by $H$.  The latter are sometimes called 
\index{invariant subgroup} 
\emph{$H$-invariant subgroups}.
Notice that to even
define $[U_0, U]_H$ we must have $H\leq N_G(U)$, and in this case, as we will
see below, the two sublattices coincide: 
$[U_0, U]_H  =  [U_0, U]^H$.  

We are finally ready to state the main result relating the sets defined
in~(\ref{eq:dedekind-1}) and (\ref{eq:dedekind-2}) (when they
exist) to the interval $[H, UH]$.
\begin{lemma}
  \label{lemma-wjd-4}
Suppose $U$ and $H$ are permuting subgroups of a group. 
Let $U_0 := U\cap H$.  Then
\begin{enumerate}[(i)]
\item $[H, UH]  \cong  [U_0, U]^H \leq [U_0, U]$.
\item If $U \subnormal UH$, then  $[U_0, U]_H  = [U_0, U]^H \leq [U_0, U]$.
\item If $H \subnormal UH$,  then  $[U_0, U]_H  = [U_0, U]^H = [U_0, U]$.
\end{enumerate}
\end{lemma}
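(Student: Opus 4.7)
The plan is to verify the three parts by constructing natural lattice maps and applying Dedekind's rule, with (iii) reducing back to (ii).

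For (i), I would introduce the order-preserving maps $\phi \colon [H, UH] \to [U_0, U]^H$ by $W \mapsto W \cap U$ and $\psi \colon [U_0, U]^H \to [H, UH]$ by $V \mapsto VH$, and show they are mutually inverse. To see that $\phi$ lands in $[U_0, U]^H$, I apply Dedekind (Theorem~\ref{lemma-dedekind}) with $H \leq W$ and $U$ to obtain $(W \cap U)H = W \cap UH = W$ (using $W \leq UH$), and dually $H(W \cap U) = W$, so $W \cap U$ permutes with $H$. The map $\psi$ is well defined because $VH = HV$ forces $VH$ to be a subgroup. The compositions then collapse: $\phi\psi(V) = VH \cap U = V(H \cap U) = V U_0 = V$ by Dedekind and $U_0 \leq V$, and $\psi\phi(W) = (W \cap U)H = W$ as already shown. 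Both maps are clearly order-preserving, so this is a lattice isomorphism.

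For (ii), assume $U \subnormal UH$. The inclusion $[U_0, U]_H \subseteq [U_0, U]^H$ is immediate, since $H$ normalizing $V$ forces $HV = VH$. For the reverse, take $V \in [U_0, U]^H$ and $h \in H$. Normality of $U$ in $UH$ gives $hVh^{-1} \leq hUh^{-1} = U$, while $h \in H \leq VH$ gives $hVh^{-1} \leq VH$. Combining, $hVh^{-1} \leq U \cap VH = V(H \cap U) = V$ by Dedekind and $U_0 \leq V$, and a cardinality comparison yields $hVh^{-1} = V$; thus $H$ normalizes $V$.

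For (iii), the notation $[U_0, U]_H$ requires $H \leq N_G(U)$, where $G = UH$. Combining this with $H \subnormal UH$, every element $uh \in UH$ satisfies $uh \cdot U \cdot (uh)^{-1} = u(hUh^{-1})u^{-1} = uUu^{-1} = U$, so $U \subnormal UH$ as well; hence (ii) applies and gives $[U_0, U]_H = [U_0, U]^H$. For the final equality, given any $V \in [U_0, U]$, normality of $H$ in $UH$ yields $vHv^{-1} = H$ for every $v \in V \leq U \leq UH$, so $VH = HV$ and $V \in [U_0, U]^H$. The main obstacle is the reverse inclusion in (ii): one must resist identifying $VH = HV$ with $H$-invariance of $V$, as these differ in general (for instance with $U = \langle (12) \rangle$ and $H = A_3$ inside $S_3$, where $U$ is not normal in $UH$). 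The hypothesis $U \subnormal UH$ is exactly what forces $hVh^{-1}$ to lie in both $U$ and $VH$, after which Dedekind's rule pins it down inside $V$.
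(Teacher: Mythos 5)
Your treatment of parts (ii) and (iii) is correct: (ii) runs essentially the same Dedekind computation as the paper, just packaged at the level of subgroups ($hVh^{-1}\leq U\cap VH=V$) rather than elementwise, and your reduction of (iii) to (ii) is sound given the standing convention that $[U_0,U]_H$ is only defined when $H\leq N_G(U)$. The mutually inverse maps $\phi$ and $\psi$ in (i) are also exactly the paper's, and your verifications of $\phi\psi=\mathrm{id}$ and $\psi\phi=\mathrm{id}$ are fine.

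However, there is a genuine gap in (i): you never prove the assertion $[U_0,U]^H\leq [U_0,U]$. In this paper $\leq$ between lattices means ``is a sublattice of,'' so one must show that $[U_0,U]^H$ is closed under the join and meet \emph{of} $[U_0,U]$. Exhibiting inverse order-isomorphisms with $[H,UH]$ only shows that $[U_0,U]^H$ is a lattice in its induced order; it does not show that the meet and join of two $H$-permuting subgroups, computed inside $[U_0,U]$, again permute with $H$. The join $\<V_1,V_2\>$ is easy, but the meet is not: one must argue that if $V_1H=HV_1$ and $V_2H=HV_2$ then $(V_1\cap V_2)H=H(V_1\cap V_2)$. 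The paper does this by writing $xh=h_1v_1=h_2v_2$ for $x\in V_1\cap V_2$, deducing $v_1\in V_1\cap HV_2$, and then using Dedekind's rule in the form $U\cap HV_2=V_2(H\cap U)=V_2$ to conclude $v_1\in V_1\cap V_2$, so that $xh\in H(V_1\cap V_2)$ (with the reverse inclusion by symmetry). Your proof needs this paragraph, or some equivalent of it, to establish the full statement of (i).
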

\begin{remarks}
Since $G=UH$ is a group, the hypothesis of (ii) is equivalent to
$H\leq N_G(U)$, and the hypothesis of (iii) is equivalent to $U\leq N_G(H)$.
Part (i) of the lemma says that when two subgroups permute, we can
identify the interval above either one of them with the sublattice of
subgroups below the other that permute with the first.
Part (ii) is similar except we identify the interval above $H$ with
the  sublattice of $H$-invariant subgroups below $U$.  Once we have proved (i), the
proof of (iii) follows trivially from the standard isomorphism theorem for
groups, so we omit the details.
\end{remarks}

\begin{proof}
To prove (i), we show that the following maps are inverse order isomorphisms:
\begin{align}
\label{eq:inverse-isos}
\phi: \;& [H, UH] \ni X \mapsto U\cap X \in [U_0, U]^H\\
\psi: \;& [U_0, U]^H \ni V \mapsto VH \in [H, UH].\nonumber
\end{align}
Then we show that $[U_0, U]^H$ is a sublattice of $[U_0,U]$, that is, 
$[U_0, U]^H\leq [U_0,U]$.

Fix $X\in [H, UH]$. We claim that $U\cap X \in [U_0, U]^H$. Indeed,
\[
    (U\cap X) H = UH \cap X=
               HU \cap X
                = H(U \cap X).
\]
The first equality holds by~(\ref{eq:dedekind2}) since $H\leq X$, the second holds
by assumption, and the third by~(\ref{eq:dedekind1}).
  This proves $U\cap X \in [U_0, U]^H$.  Moreover, by the first equality,
$\psi \circ \phi (X) = (U\cap X)H =UH \cap X = X$,
so $\psi \circ \phi$ is the identity on $[H, UH]$.

If $V\in [U_0, U]^H$, then $VH = HV$ implies $VH \in [H, UH]$. Also, $\phi \circ
\psi$ is the identity on $[U_0, U]^H$, since $\phi \circ \psi(V)= VH \cap U =
V(H\cap U)= VU_0 = V$, by~(\ref{eq:dedekind1}). 
This proves that $\phi$ and $\psi$ are inverses of each other on the sets indicated, and
it's easy to see that they are order preserving:
$X\leq Y$ implies $U\cap X \leq U\cap Y$, and $V\leq W$ implies $VH \leq WH$.
Therefore, $\phi$ and $\psi$ are inverse order isomorphisms.

To complete the proof of (i), we show that
$[U_0, U]^H$ is a sublattice of $[U_0, U]$.  Suppose $V_1$ and $V_2$ are
subgroups in $[U_0, U]$ which permute with $H$.  
It is easy to see that their join $V_1 \join V_2 = \<V_1, V_2\>$ also permutes
with $H$, so we just check that their intersection permutes with $H$.  Fix
$x \in V_1 \cap V_2$ and $h\in H$.  We show $xh = h'x'$ for some $h'\in H, \, x'
\in V_1\cap V_2$. Since $V_1$ and $V_2$ permute with $H$, we have $xh = h_1 v_1$
and $xh = h_2 v_2$ for some $h_1, h_2\in H, \, v_1 \in V_1, \, v_2 \in V_2$.
Therefore, $h_1 v_1 = h_2 v_2$, which implies $v_1 = h_1^{-1}h_2 v_2 \in HV_2$,
so $v_1$ belongs to $V_1 \cap HV_2$. Note that $V_1 \cap HV_2$ is below both $V_1$ and
$U\cap HV_2 = \phi \psi(V_2) = V_2$.  Therefore, $v_1 \in V_1 \cap HV_2 \leq V_1
\cap V_2$, and we have proved that $xh = h_1 v_1$ for $h_1\in H$ and $v_1 \in
V_1\cap V_2$, as desired. 

To prove (ii), assuming $U\subnormal G$, we show that if $U_0 \leq V \leq U$,
then $VH = HV$ if and only if $H\leq N_G(V)$.
If $H\leq N_G(V)$, then $VH = HV$ (even when $U \notsubnormal G$).
Suppose $VH = HV$.  We must show $(\forall v\in V)\, (\forall h\in H)\; hvh^{-1}\in
V$.  Fix $v\in V, \, h\in H$.  Then, $hv = v'h'$ for some $v'\in V,\, h'\in H$, since
$VH = HV$.  Therefore, $v' h' h^{-1} = hvh^{-1} = u$ for some $u\in U$, since
$H\leq N_G(U)$. This proves that $hvh^{-1}\in VH\cap U = V(H\cap U) = VU_0 = V$, as
desired.
\end{proof}

Next we prove that 
any group which has a nontrivial parachute lattice as an upper interval
in its subgroup lattice must have some rather special properties.  
\begin{lemma}
\label{lemma-wjd-5}
 Let $\sP = \sP(L_1, \dots, L_n)$ with $n\geq 2$ and $|L_i|>2$ for all
$i$, and suppose $\sP \cong [H, G]$, with $H$ core-free in $G$.  
\begin{enumerate}[(i)]
\item If $1\neq N \subnormal G$, then $NH = G$.
\item If $M$ is a minimal normal subgroup of $G$, then $C_G(M)=1$.
\item $G$ is subdirectly irreducible.
\item $G$ is not solvable.
\end{enumerate}
\end{lemma}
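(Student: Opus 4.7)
The plan is to prove (i)--(iv) in sequence, each part building on those before it; the core tool is Lemma~\ref{lemma-wjd-4}, together with the fact that each $K_i$ is core-free in $G$ (Lemma~\ref{lemma-wjd-3}).

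For (i), I argue by contradiction: suppose $1 \neq N \subnormal G$ but $NH \neq G$. Since $H$ is core-free and $N \neq 1$, we have $N \nleq H$, so $NH > H$, and therefore $NH \in L_i \setminus \{G\}$ for a unique index~$i$. For any $j \neq i$, the subgroup $NK_j$ contains both $K_j$ and $NH \geq K_i$, so $NK_j \geq K_i \vee K_j = G$ and thus $NK_j = G$. Because $N$ is normal in $G$, $NH$ and $K_j$ permute, and the meet $NH \cap K_j$ in $\sP$ must be $H$ (since $NH \in L_i$, $K_j \in L_j$, and $i \neq j$). Lemma~\ref{lemma-wjd-4}(i) then yields an embedding $L_j \cong [H, NH]^{K_j}$ as a sublattice of $[H, NH]$ with the same bottom and top. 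Now $[H, NH] \subseteq \{H\} \cup L_i$ has $K_i$ as its unique element covering $H$, so any two distinct atoms of the embedded $L_j$ would both lie above $K_i$, forcing their meet (computed in the sublattice, hence in $\Sub(G)$) to be at least $K_i > H$---a contradiction with the fact that the meet of two distinct atoms of the embedded sublattice is $H$. Hence every $L_j$ with $j \neq i$ has a unique atom. The final step is to derive the contradiction from these embeddings taken together, using all $j \neq i$, the corresponding restrictions on $[K_i, NH] \subseteq L_i$, and, when $n \geq 3$, the forced coincidence of the indices $[G:K_j]$ for $j \neq i$.

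For (ii), assume $C := C_G(M) \neq 1$, where $M$ is a minimal normal subgroup of $G$. By (i), $MH = G$ and $CH = G$. Applying Lemma~\ref{lemma-wjd-4}(ii) with $U = M$ (valid because $M \subnormal G = MH$) gives $\sP = [H, G] = [H, MH] \cong [M \cap H, M]_H$, the lattice of $H$-invariant subgroups of $M$ containing $M \cap H$. Since $C$ centralizes $M$, every subgroup of $M$ is $C$-invariant; combined with $HC = G$, this makes $H$-invariance equivalent to $G$-invariance on subgroups of $M$. Thus $[M \cap H, M]_H$ consists exactly of the subgroups of $M$ that are normal in $G$ and contain $M \cap H$, and by minimality of $M$ this lattice has at most two elements, contradicting $|\sP| > 2$.

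Part (iii) is then immediate: if $M_1 \neq M_2$ were two distinct minimal normal subgroups, then $M_1 \cap M_2 = 1$ by minimality, so $[M_1, M_2] = 1$ and $M_2 \leq C_G(M_1) = 1$ by (ii), a contradiction. Part (iv) is likewise immediate from (ii): if $G$ were solvable then any minimal normal subgroup $M$ would be elementary abelian, so $M \leq C_G(M)$, once more contradicting (ii). I expect the principal obstacle in the whole argument to be completing (i) for small $n$---particularly $n = 2$, where a single sublattice embedding $L_j \hookrightarrow [H, NH]$ and the unique-atom conclusion for $L_j$ do not by themselves close the case, and one must squeeze further information out of the isomorphism (for instance, by dualizing at the top to control coatoms, or by combining with the identity $NK_i = NH$ to bound $[K_i, NH] \subseteq L_i$).
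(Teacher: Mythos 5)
Your parts (iii) and (iv) coincide with the paper's, and your part (ii) is a correct variant of the paper's argument: where the paper picks a single intermediate subgroup $K$ and shows $M\cap K$ would be a normal subgroup of $G=C_G(M)H$ strictly between $1$ and $M$, you collapse the entire interval by observing that $H$-invariance plus $C_G(M)$-invariance equals $G$-invariance, so $[H,G]\cong[M\cap H,M]_H$ (via Lemma~\ref{lemma-wjd-4} (i) and (ii) together) would consist only of normal subgroups of $G$ inside $M$ and hence have at most two elements. Both routes work and both rest on part (i).

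The genuine gap is in part (i), and it is the one you yourself flag. Your chain of deductions is sound as far as it goes --- $NH$ lies in a single panel $L_i$, $NK_j=G$ for $j\neq i$, and Lemma~\ref{lemma-wjd-4}(i) embeds $L_j\cong[H,NH]^{K_j}$ as a spanning sublattice of $[H,NH]$, whose unique atom $K_i$ forces $L_j$ to have a unique atom. But ``$L_j$ has a unique atom'' is not a contradiction: the hypothesis is only $|L_j|>2$, which a three-element chain satisfies while having a unique atom, so the argument does not close, and no amount of comparing indices $[G:K_j]$ obviously rescues it (certainly not for $n=2$). The paper finishes (i) by a much more direct computation that never passes through the sublattice embedding: since $|L_i|>2$ for the panel $i\neq j$ not containing $NH$, one can choose $K_i\leq Y< Z< G$; the parachute structure gives $(NH)\cap Z=H$ and $(NH)\join Y=G$, and normality of $N$ makes $(NH)Y=NY$ a group, hence equal to the join $G$. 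Dedekind's rule (Theorem~\ref{lemma-dedekind}, with $Y\leq Z$) then yields
\[
Y=HY=\bigl((NH)\cap Z\bigr)Y=(NH)Y\cap Z=G\cap Z=Z,
\]
contradicting $Y<Z$. Note that this is exactly where the hypothesis $|L_i|>2$ earns its keep --- it supplies the strict chain $Y<Z$ inside a second panel --- whereas in your approach that hypothesis is never brought to bear against the unique-atom conclusion. If you want to salvage your framework rather than switch to the paper's, the missing ingredient is precisely this Dedekind computation applied to a two-step chain in a panel disjoint from $NH$.
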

\begin{remark}
If a subgroup $M\leq G$ is abelian, then $M \leq C_G(M)$, so (ii) implies
that a minimal normal subgroup (hence, every normal subgroup) of $G$ must be
nonabelian.  
\end{remark}
\begin{proof}
(i) Let $1\neq N \subnormal G$.  Then $N \nleq H$, since $H$ is core-free in $G$.
Therefore, $H < NH$.   As in Section~\ref{sec:parachute-lattices}, we let $K_i$
denote the subgroups of $G$ 
corresponding to the atoms of $\sP$.  
Then $H$ is covered by each $K_i$, so $K_j\leq NH$ for some $1\leq j\leq n$.  
Suppose, by way of contradiction, that $NH < G$.  
By assumption, $n\geq 2$ and $|L_i|>2$.  Thus for any $i\neq j$ we have
$K_i\leq Y < Z < G$ for some subgroups $Y$ and $Z$ which satisfy
$(NH)\cap Z = H$ and $(NH)\join Y = G$.  Also, $(NH)Y = NY$ is a group, so
$(NH)Y=NH\join Y = G$.  But then, by Dedekind's rule, we have
\[
Y = HY = ((NH)\cap Z) Y = (NH)Y \cap Z = G\cap Z = Z,
\]
contrary to $Y<Z$.  This contradiction proves that $NH = G$.
\\[8pt]
\index{Dedekind's rule|)}%
(ii) If $C_G(M)\neq 1$, then (i) implies $C_G(M)H = G$,
since $C_G(M) \subnormal N_G(M) =G$. 
Consider any $H< K < G$. Then $1 < M\cap K < M$ (strictly, by
Lemma~\ref{lemma-wjd-4}). Now $M\cap K$ is normalized by $H$ and centralized
(hence normalized) by $C_G(M)$.  (Indeed, $C_G(M)$ centralizes every subgroup of
$M$.) Therefore, $M\cap K \subnormal C_G(M)H = G$, contradicting the minimality of
$M$. 
\\[8pt]
(iii) We prove that $G$ has a \emph{unique} minimal normal subgroup.  Let $M$ be a
minimal\footnote{If $G$ is simple, then $M = G$; ``minimal'' assumes
  nontrivial.} normal subgroup of $G$ and let $N \subnormal G$ be any normal subgroup not 
containing $M$.  We show that $N = 1$.  Since both subgroups
are normal, the \emph{commutator}\footnote{The \emph{commutator of $M$ and $N$} is the subgroup
generated by the set $\{mnm^{-1}n^{-1} \mid m\in M, n\in N\}.$
The \emph{commutator of $M$} is the subgroup generated by
 $\{a b a^{-1} b^{-1}: a, b \in M\}$.  The \emph{$n^{th}$ degree
commutator of $M$}, denoted $M^{(n)}$, is defined recursively as the commutator of
$M^{(n-1)}$. A group $M$ is \emph{solvable} if $M^{(n)} = 1$ for some $n \in \N$.}
 of $M$ and $N$ 
lies in the intersection $M\cap N$, which is trivial by the minimality of $M$.   
Thus, $M$ and $N$ centralize each other.  In particular,
$N \leq C_G(M) = 1$, by (ii).
\\[8pt]
(iv) Let $M'$ denote the commutator of $M$.   As remarked above, $M$ is
nonabelian, so $M' \neq 1$.  Also, $M' \subnormal M
\subnormal G$, and $M'$ is a \emph{characteristic} subgroup of $M$ (i.e.,
$M'$ invariant under $\Aut(M)$).  Therefore, $M'\subnormal G$, and, 
as $M$ is a \emph{minimal} normal subgroup of $G$, we have $M' = M$.  Thus, $M$ is
not solvable, so $G$ is not solvable.
\end{proof}
\begin{remark}
It follows from (i) that, if $\sP$ is a nontrivial parachute lattice
with $\sP \cong [H, G]$, where $H$ is core-free, then $\core_G(X) = 1$ for every $H
\leq X < G$.  This gives a second way to complete the proof of Lemma~\ref{lemma-wjd-3}.
\end{remark}

To summarize what we have thus far, the lemmas above imply that (B) holds if and only if
every finite lattice is an interval $[H, G]$, with $H$ core-free in $G$, where
\begin{enumerate}[(i)]
\item $G$ is not solvable, not alternating, and not symmetric;
\item $G$ has a unique minimal normal subgroup $M$ which satisfies $MH = G$
and $C_G(M) = 1$; in particular,
$M$ is nonabelian and $\core_G(X) = 1$ for all $H\leq X < G$.
\end{enumerate}

Finally, we note that Theorem 4.3.A of Dixon and Mortimer~\cite{Dixon:1996}
describes the structure of the unique minimal normal subgroup as follows:
\begin{enumerate}[(i)]
\item[(iii)] $M = T_0\times \cdots \times T_{r-1}$, where $T_i$ are simple minimal normal subgroups of
  $M$ which are conjugate (under conjugation by elements of $G$). Thus, $M$ is a
  direct power of a simple group $T$.
\end{enumerate}
  In fact, when $C_G(M)=1$, as in our application,
we can specify these conjugates more precisely. 
Let $T$ be any minimal normal subgroup of $M$. Note that $T$ is simple.
Let $N = N_H(T) = \{h\in H \mid T^h = T\}$ be the normalizer of $T$ in
$H$.  Then the proof of the following lemma is routine, so we omit it.
\begin{lemma}
If $H/N = \{N, h_1N, \dots, h_{k-1}N\}$ is a full set of left cosets of $N$
in $H$, then $k=r$ and $M = T_0\times \cdots \times T_{r-1} = T \times
T^{h_1} \times T^{h_{r-1}}$. 
\end{lemma}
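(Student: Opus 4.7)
The plan is to exploit the fact that the $T_i$ form a single $G$-conjugacy class and that, by Lemma~\ref{lemma-wjd-5}(i), $G=MH$. The key observation is that each $T_i$, being a direct factor of $M=T_0\times\cdots\times T_{r-1}$, is already normal in $M$. Hence $M\leq N_G(T_i)$ for every $i$, and the conjugation action of $G$ on the set $\{T_0,\dots,T_{r-1}\}$ factors through $G/M$. Since $G=MH$, the natural map $H\to G/M$ is surjective, so the induced action of $H$ on $\{T_0,\dots,T_{r-1}\}$ is still transitive.

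Next I would compute the stabilizer of $T$ in $H$ under this action. By definition the stabilizer is $\{h\in H\mid T^{h}=T\}=N_H(T)=N$. The orbit-stabilizer theorem then gives $r=|H:N|=k$, which establishes the first half of the conclusion. For the second half, I would choose the left transversal $\{1,h_1,\dots,h_{k-1}\}$ of $N$ in $H$ and observe that elements in distinct cosets of $N=N_H(T)$ produce distinct conjugates of $T$; since there are exactly $r$ conjugates in total, the set $\{T,T^{h_1},\dots,T^{h_{r-1}}\}$ is precisely $\{T_0,\dots,T_{r-1}\}$. Substituting this into the decomposition of $M$ yields $M=T\times T^{h_1}\times\cdots\times T^{h_{r-1}}$.

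The only step that requires real care is verifying that $H$ (as opposed to merely $G$) acts transitively on the set of factors. This rests on two ingredients: that each factor $T_i$ is normal in $M$ (routine, from the direct product structure), and that $MH=G$ (given by Lemma~\ref{lemma-wjd-5}(i), which is where the parachute hypothesis enters). Together these imply that every $G$-conjugate of $T$ is already an $H$-conjugate, because any $g=mh\in G$ satisfies $T^{g}=T^{mh}=(T^{m})^{h}=T^{h}$. Once this is established, the remaining verifications — identifying the stabilizer as $N_H(T)$ and translating orbit-stabilizer into the displayed decomposition — are immediate.
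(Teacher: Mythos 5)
Your proof is correct: the paper omits the argument as ``routine,'' and the orbit--stabilizer computation you give---using the normality of each $T_i$ in $M$ together with $MH=G$ from Lemma~\ref{lemma-wjd-5}(i) to transfer transitivity of the conjugation action from $G$ to $H$, then identifying the stabilizer as $N=N_H(T)$---is exactly the intended argument.
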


\index{B\"order, Ferdinand}%
\index{Baddeley, Robert}%
\index{Lucchini, Adrea}%
We conclude this chapter by noting that other researchers, such as Baddeley,
B\"orner, and Lucchini, have proved similar results 
for the more general case of \emph{quasiprimitive permutation groups}. 
In particular, our proof of Lemma~\ref{lemma-wjd-5} (i) uses the same argument
as the one in \cite{Borner:1999}, where it is used to prove Lemma 2.4: if $L
\cong [H, G]$ is an \defn{LP-lattice},\footnote{An LP-lattice is one in which
  every element except $0$ and $1$ is a non-modular element.}
then $G$ must be a quasiprimitive permutation 
group.  
We remark that parachute lattices, in which each panel
 $L_i$ has $|L_i|>2$, are LP-lattices, so
Lemma~\ref{lemma-wjd-5} follows from
theorems of Baddeley, B\"orner, Lucchini, et al. 
(cf.~\cite{Lucchini:1997}, ~\cite{Borner:1999}).

However, the main purpose of the parachute construction, besides providing
a quick route to Lemma~\ref{lemma-wjd-5},
is to demonstrate a natural way to insert arbitrary finite lattices $L_i$ as
upper intervals $[K_i, G]$ in $\Sub[G]$, {\it with $K_i$ core-free in $G$}.
Then, once we prove special properties of
groups $G$ for which $L_i = [K_i, G]$ ($K_i$ core-free), 
it follows that \emph{every} finite lattice $L$ must be an upper
interval $L = [K, G]$ for some $G$ satisfying all of these properties,
assuming the \FLRP\ has a positive answer.  This forms the basis and motivation 
for the idea of (cf-)\ISLE\ properties, as discussed in
Section~\ref{sec:isle-prop-groups}. 

\chapter{Lattices with at Most Seven Elements}
\label{cha:lattices-with-at}
\section{Introduction}
In the spring of 2011, our research seminar was fortunate enough to have 
as a visitor 
\index{Jipsen, Peter}%
Peter Jipsen, who initiated the project of cataloging
every small finite lattice $L$ for which there is a known finite algebra 
$\bA$ with $\Con\bA\cong L$. 
It is well known that all lattices with at most six elements are representable.
In fact, these can be found as intervals in subgroup lattices of finite groups,
but this fact was not known until recently.  

\index{Watatani, Yasuo}%
By 1996, Yasuo Watatani had found each six-element lattice, except for the
two lattices appearing below, as intervals in subgroup lattices of finite
groups.  
See~\cite{Watatani:1996}.
\begin{multicols}{2}
\hskip4cm
  \begin{tikzpicture}[scale=.5]
          \node (41) at (3.25,1)  [draw, circle, inner sep=\dotsize] {};
      \node (45) at (3.25,5)  [draw, circle, inner sep=\dotsize] {};
      \node (42) at (4,2)  [draw, circle, inner sep=\dotsize] {};
      \node (43) at (4,3)  [draw, circle, inner sep=\dotsize] {};
      \node (44) at (4,4)  [draw, circle, inner sep=\dotsize] {};
      \node (33) at (2.25,3)  [draw, circle, inner sep=\dotsize] {};
      \draw[semithick] (41) to (42) to (43) to (44) to (45) to (33) to (41);

  \end{tikzpicture}
  \par \vfill \columnbreak
  \begin{tikzpicture}[scale=.6]
    
      \node (52) at (6.5,1)   [draw, circle, inner sep=\dotsize] {};
      \node (72) at (8,1)   [draw, circle, inner sep=\dotsize] {};
      \node (53) at (6.5,2)   [draw, circle, inner sep=\dotsize] {};
      \node (73) at (8,2)    [draw, circle, inner sep=\dotsize] {};
      \node (61) at (7.25,0)    [draw, circle, inner sep=\dotsize] {};
      \node (64) at (7.25,3)    [draw, circle, inner sep=\dotsize] {};
      \draw[semithick] (61) to (52) to (53) to (64) to (73) to (72) to (61);

  \end{tikzpicture}
\end{multicols}
\index{Aschbacher, Michael}%
Then, in 2008, Michael Aschbacher
showed in~\cite{Aschbacher:2008} how to construct some (very large) twisted wreath product 
groups that have the lattices above as intervals in their subgroup lattices.
Note that, although it was apparently quite difficult to find \emph{group}
representations of the lattices shown above, it is quite easy to
represent them concretely as the lattices of congruences of very small finite
algebras.  Take, for example, the set $X=\{0,1,\dots, 6\}$ and consider the
lattice $L\leq \Eq(X)$ generated by the partitions
  \[
|0,3,4|1,6|2,5| \; \text{ and } \;  |0,6|1,5|2|3|4|  \; \leq \; |0,6|1,4,5|2|3|\; \leq \;|0,6|1,4,5|2,3|.
  \]
This concrete representation of the lattice on the left above happens to be closed:
$\rho \lambda(L) = L$, so it is equal to the congruence lattice $\Con\<X, \lambda(L)\>$.  

We prove two main results in this chapter.  The first is
\begin{theorem}
\label{thm:sevenelementlattices}
  Every finite lattice with at most seven elements, with one possible exception,
  is representable as the congruence lattice of a finite algebra.
\end{theorem}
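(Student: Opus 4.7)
The plan is to proceed by exhaustive case analysis on the finite list of lattices with at most seven elements. There is only one lattice each of orders one, two, and three; two of order four; five of order five; fifteen of order six; and (up to isomorphism) fifty-three of order seven. As noted in the introduction to this chapter, every lattice with at most six elements is already known to be representable, and indeed lies in $\sL_4$ by the work of Watatani and Aschbacher; the two ``hard'' six-element lattices also admit small concrete representations via the closure method. Thus the real work lies in processing the fifty-three lattices of order seven, and the strategy will be to classify them according to which known tool resolves them.

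First I would sweep up the easy cases using the abstract results already in hand. Any seven-element lattice $L$ such that $L$ and its congruence lattice have the same number of join-irreducibles is representable by the \PhP\ theorem (Theorem~\ref{thm:fermentable}); this dispatches a large initial batch. Next I would exploit closure properties of $\sL_3$: the class is closed under finite direct products (\Tuma), ordinal and adjoined ordinal sums (Theorem~\ref{thm:ordinal-sums}), parallel sums (Snow), unions of a principal filter with a principal ideal inside a larger representable lattice (Lemma~\ref{lemma:union-filter-ideal}), and lattice duals (Kurzweil--Netter, Theorem~\ref{thm:duals-interv-subl}). Whenever a seven-element lattice visibly decomposes in one of these ways, or is dual to an already-handled lattice, it is added to the ``resolved'' pile.

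Next I would turn to explicit representations. For each remaining seven-element lattice $L$, I would use \GAP\ (with \XGAP\ for visualization) to search systematically through small finite groups $G$ and subgroups $H\leq G$, testing whether $[H,G]\cong L$; by Theorem~\ref{thm:g-set-isomorphism2} any such interval yields the congruence lattice of the transitive \Gset\ $\<G/H, G\>$, placing $L$ in $\sL_4\subseteq \sL_3$. For any lattice that evades this group-theoretic search within a feasible order bound, I would fall back to the closure method implemented in \uacalc: embed $L$ as $L\cong L_0 \leq \bEqX$ for a suitably chosen small finite set $X$, compute $\lambda(L_0)$ and then $\rho\lambda(L_0)$, and check whether the embedding is closed. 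If any such embedding is closed, then $L_0 = \Con\<X, \lambda(L_0)\>$ and $L$ is representable.

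The main obstacle will be to show that this battery of techniques leaves \emph{at most one} seven-element lattice unrepresented, and to identify it explicitly. I expect a single exceptional lattice $L^\ast$ to emerge which simultaneously (i) does not decompose as a product, ordinal sum, parallel sum, or filter-plus-ideal union of proper sublattices already in $\sL_3$, (ii) is not the Kurzweil--Netter dual of a resolved lattice, (iii) fails to appear as an upper interval $[H,G]$ in any \GAP\ sweep up to a feasible order, and (iv) admits no closed concrete representation in $\bEqX$ for small $X$. Pinning down this lattice and verifying that it is the \emph{only} holdout -- rather than merely \emph{a} holdout -- is the delicate step; the structural obstructions developed in the previous chapter (Lemmas~\ref{lemma-wjd-3}--\ref{lemma-wjd-5} on parachute/ISLE-style constraints) would be invoked to corroborate that $L^\ast$ cannot be an interval in any subgroup lattice of reasonable size, setting up the detailed analysis carried out in Theorem~\ref{thm:except-seven-elem}.
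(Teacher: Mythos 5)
Your overall strategy --- sweep up the easy cases with distributivity, closure properties, duals, and filter-plus-ideal unions, then attack the stragglers with \GAP\ interval searches and closed concrete representations --- is exactly the paper's strategy, and it does dispose of $L_{19}$, $L_{20}$, $L_{17}$, $L_{13}$, and $L_{11}$ in essentially the way you describe. But there is a concrete gap: your battery of techniques is missing the tool that the paper actually needs for $L_9$, the ``triple-wing pentagon.'' That lattice does not decompose as a product, ordinal/parallel sum, or filter-plus-ideal union of smaller representable lattices; it is not the dual of a resolved lattice; no closed concrete representation of it over a small set was found; and the smallest known group interval realizing it sits inside $\Sub(A_{10})$ above a subgroup of index $25{,}400$ --- far outside any ``feasible order bound'' for the naive \GAP\ sweep you propose. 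The paper handles $L_9$ by the overalgebra (expansion) construction of Chapter~\ref{cha:expans-finite-algebr}: starting from an algebra with congruence lattice $M_4$ and expanding the universe so that one atom is ``doubled,'' yielding a $16$-element algebra whose congruence lattice is $L_9$ (Theorem~\ref{OAthm2} and Example~\ref{ex:3.1}). Without that construction, or the $A_{10}$ interval, your case analysis terminates with \emph{two} unresolved seven-element lattices rather than one, and the theorem as stated is not established.

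A secondary point: the final step you describe --- invoking the parachute/\ISLE\ lemmas to ``corroborate that $L^\ast$ cannot be an interval in any subgroup lattice of reasonable size'' --- is not needed for this theorem and somewhat overstates what those lemmas give. The statement only asserts that $L_7$ is a \emph{possible} exception; one does not have to prove anything negative about it here. The structural restrictions on groups $G$ with $L_7\cong[H,G]$ belong to the separate Theorem~\ref{thm:except-seven-elem}, and even there they do not rule out representability. What you do need to be careful about, and what your proposal leaves implicit, is the bookkeeping showing that every one of the fifty-three lattices other than $L_7$ really lands in one of your resolved piles; that is a finite but nontrivial verification, and for $L_9$ it fails with the tools you have listed.
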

The second result concerns the one possible exception of this theorem, 
a seven element lattice, which we call $L_7$.  It is the focus of
Section~\ref{sec:except-seven-elem}.
As we explain below, 
if $L_7$ is representable as the congruence lattice of a finite
algebra, then it must appear as an interval in the subgroup lattice of a finite
group.\footnote{Note that the result of \Palfy\ and \Pudlak\ does \emph{not} say
  that every representable lattice is isomorphic to an interval in a subgroup
  lattice of a finite group.  Rather, it is a statement about the whole class of
  representable lattices.  However, for certain lattices, such as the one described
in Section~\ref{sec:except-seven-elem}, we can prove that it belongs to $\sL_3$ if and only if it belongs to $\L_4$.}
 Our main result, Theorem~\ref{thm:except-seven-elem}, places some fairly
strong restrictions on such a group.  Our motivation is to apply this
new theorem, along with some well known theorems classifying finite groups, to
eventually either find such a group or prove that none exists.  This application
will be the focus of future research.

\section{Seven element lattices}
\label{sec:seven-elem-latt}
In this section we show that, with one possible exception (discussed in the next
section), every lattice with at 
most seven elements is representable as a congruence lattice of a finite algebra.
There are 53 lattices with at most seven elements.\footnote{The Hasse diagrams
  of all lattices with at most seven elements are shown
  here~\url{http://db.tt/2qJUkoaG}
or alternatively here~\url{http://math.chapman.edu/~jipsen/mathposters/lattices7.pdf} (courtesy of 
Peter Jipsen).}
Representations for most of these lattices can be found 
quite easily by applying the methods described in previous chapters.
The easiest, of course, are the distributive lattices, which we know are
representable by Theorem~\ref{thm:distr-lattices}.
Some others are found to be representable by searching (with a computer) for closed concrete
representations $L \leq \Eq(X)$ over some small set $X$, say $|X|<8$.
Still others are found by checking that they are obtained by applying
operations under which $\sL_3$ is closed (\S~\ref{sec:clos-prop-class}).  For
\index{closure properties!applied}%
example, the lattice on the left in Figure~\ref{fig:ordinal-and-parallel-ex} is
\index{parallel sum!applied}%
\index{ordinal sum!applied}%
the ordinal sum of two copies of the distributive lattice $\2 \times \2$.  On
the right of the same figure is the parallel sum of the distributive lattices
$\2$ and $\3$. 
\begin{figure}[h!]
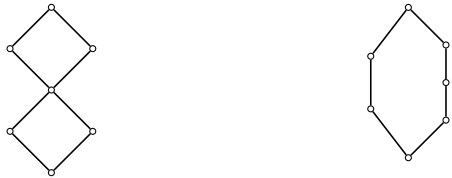

\begin{multicols}{2}
\hskip4cm
  \begin{tikzpicture}[scale=.55]
    \input{OrdinalSumExample.tex}
  \end{tikzpicture}
  \par \vfill \columnbreak
\hskip1cm
  \begin{tikzpicture}[scale=.5]
    \input{ParallelSumExample.tex}
  \end{tikzpicture}
\end{multicols}
  \caption{The ordinal sum of $\2\times \2$ with itself (left) and the parallel
    sum of $\2$ and $\3$ (right).}
  \label{fig:ordinal-and-parallel-ex}
\end{figure}

Using these methods, it was not hard to find, or at least prove the existence of,
congruence lattice representations of all seven element lattices except for the
seven lattices appearing in Figure~\ref{fig:Mysevens}, plus their duals.  Four of these
seven are self-dual, so there are ten lattices in total for which a
representation is not relatively easy to find.\footnote{The names of these lattices
  do not conform to any well established naming convention.} 

\begin{figure}[h!]
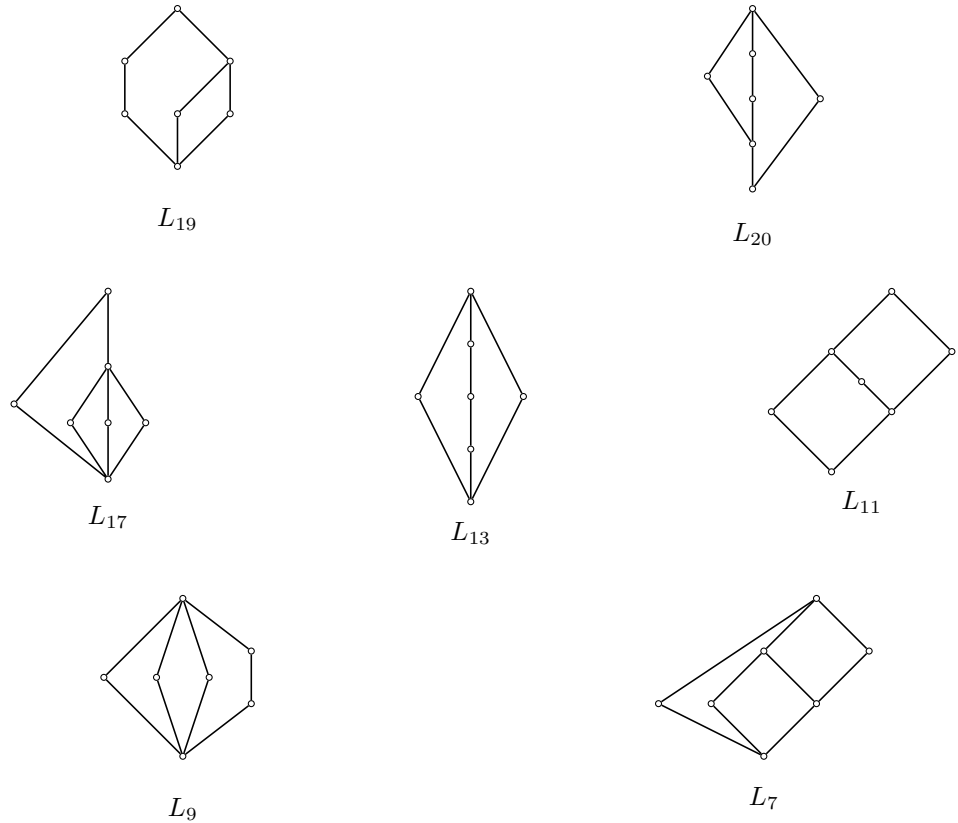

  \begin{multicols}{2}
  \begin{center}
    \begin{tikzpicture}[scale=.7]
      \input{L19.tex}
      \draw (0,-1) node {$L_{19}$};
    \end{tikzpicture}
  \end{center}
  \par \vfill \columnbreak
  \begin{center}
    \begin{tikzpicture}[scale=.6]
      \input{L20.tex}
      \draw (0,-1) node {$L_{20}$};
    \end{tikzpicture}
  \end{center}
\end{multicols}
\begin{multicols}{3}
  \begin{center}
    \begin{tikzpicture}[scale=.5]
      \input{L17.tex}
      \draw (0,-1) node {$L_{17}$};
    \end{tikzpicture}
  \end{center}
  \par \vfill \columnbreak
      \begin{center}
        \begin{tikzpicture}[scale=.7]
          \input{L13.tex}
          \draw (0,-.6) node {$L_{13}$};
        \end{tikzpicture}
      \end{center}
  \par \vfill \columnbreak
      \begin{center}
        \begin{tikzpicture}[scale=.4]
          \input{L3.tex}  
          \draw (1,-1) node {$L_{11}$};
        \end{tikzpicture}
      \end{center}
\end{multicols}
\begin{multicols}{2}
      \begin{center}
        \begin{tikzpicture}[scale=.7]
          \input{L9.tex}
          \draw (6.5,-1) node {$L_{9}$};
        \end{tikzpicture}
      \end{center}
  \par \vfill \columnbreak
      \begin{center}
        \begin{tikzpicture}[scale=.7]
          \input{L10.tex}
          \draw (1,-.8) node {$L_{7}$};
        \end{tikzpicture}
      \end{center}
\end{multicols}
  \caption{Seven element lattices with no obvious congruence lattice representation.}
  \label{fig:Mysevens}
\end{figure}

We now prove the existence of congruence lattice representations for all but the
last of these.  The first two, $L_{19}$ and $L_{20}$ were found using the
closure method with the help of Sage by searching for closed concrete
representations in the partition lattice $\Eq(8)$.
As for $L_{17}$, recall that the lattice $\Sub(A_4)$ of
subgroups of the group $A_4$ (the group of all even permutations of a four
element set) is the lattice shown below. 
        \begin{center}
          \begin{tikzpicture}[scale=.6]
                    \node (bottom) at (0,0)  [draw, circle, inner sep=\dotsize] {};
        \node (top) at (0,5)  [draw, circle, inner sep=\dotsize] {};
        \node (n115) at (-1,1.5)  [draw, circle, inner sep=\dotsize] {};
        \node (015) at (0,1.5)  [draw, circle, inner sep=\dotsize] {};
        \node (115) at (1,1.5)  [draw, circle, inner sep=\dotsize] {};
        \node (n252) at (-2.5,2)  [draw, circle, inner sep=\dotsize] {};
        \node (252) at (2.5,2)  [draw, circle, inner sep=\dotsize] {};
        \node (n42) at (-4,2)  [draw, circle, inner sep=\dotsize] {};
        \node (42) at (4,2)  [draw, circle, inner sep=\dotsize] {};
        \node (03) at (0,3)  [draw, circle, inner sep=\dotsize] {};
        \draw[semithick] 
        (bottom) to (42) to (top) to (n42) to 
        (bottom) to (n252) to (top) to (252) to 
        (bottom) to (n115) to (03) to (115) to
        (bottom) to (015) to (03) to (top);

            \draw[font=\small] (0,5.6) node {$A_4$};
            \draw[font=\small] (.5,3.2) node {$V_4$};
            \draw[font=\small] (-4.5,2.2) node {$P$};
          \end{tikzpicture}
        \end{center}
Here $V_4$ denotes the Klein four subgroup and $P$ marks one of the four Sylow 3
subgroups of $A_4$.
Of course, $\Sub(A_4)$ is the congruence lattice of the permutational algebra
consisting of $A_4$ acting regularly on itself by multiplication.
Now note that $L_{17} \cong P^\uparrow \cup V_4^\downarrow$, the union of a
filter and ideal of a representable lattice. Therefore, $L_{17}$ is representable.

The question of whether the existence of such a ``filter-idea
representation'' implies that the lattice in question is also an interval in a subgroup
lattice seems open.  Although, in the present case, we have found that $L_{17}$
has a group representation.  Indeed, the group 
$G = (A_4 \times A_4) \rtimes C_2$ 
has a subgroup $H\cong S_3$ such that $[H,G]\cong L_{17}$.

\index{dual}%
\index{Kurzweil, Hans}%
\index{Netter, Raimund}%
Now, by the Kurzweil-Netter result, the dual of $L_{17}$ is also representable.
Explicitly, since $L_{17}$ is representable on a 12-element set (the elements of
$A_4$) via the filter-ideal method,\footnote{Note that the filter plus
  ideal method only adds operations to the algebra of which the original lattice
  was the congruence lattice, leaving the universe fixed.  Thus, the
  filter-ideal sublattice is the congruence lattice of an algebra with the same
  number of elements as the original algebra.} 
the dual of $L_{17}$ can be embedded above diagonal subgroup of the 12-th power
of a simple group: $L_{17}' \hookrightarrow [D,S^{12}] \cong (\Eq(12))'$.
Then, adding the operations from the original representation of $L_{17}$ as
described in the proof of Theorem~\ref{thm:duals-interv-subl}, we have an
algebra with universe $S^{12}/D$ and congruence lattice isomorphic to
$L_{17}'$.\footnote{Incidentally, since $L_{17}$ is also representable as an
  interval above a subgroup (of index 48), we could apply the Kurzweil-Netter
  method using this representation instead.  Then we would obtain a
  \emph{group} representation of the dual (namely, an upper interval in a
  group of the form $S^{48} \rtimes G$, where $G = (A_4 \times A_4) \rtimes C_2$).}

The lattice  $L_{13}$ is an interval in a subgroup lattice.  Specifically, 
a \GAP\ search reveals that the group\footnote{In \GAP\ this is {\tt SmallGroup(960,11358)}.}
$G = (C_2 \times C_2 \times C_2 \times C_2) \rtimes A_5$
has a subgroup $H\cong A_4$ such that $[H,G]\cong L_{13}$.
The index is $|G:H|=80$, so the action of $G$ on the cosets $G/H$ is an
algebra on an 80 element universe.

Though we have not found $L_{11}$ as an interval in a subgroup lattice, we
have found that the pentagon $N_5$ is an upper interval 
in the subgroup lattice of the groups
$G = ((C_3 \times C_3) \rtimes Q_8) \rtimes C_3$ and
$G=(A_4 \times A_4) \rtimes C_2$.\footnote{$Q_8$ denotes the eight element quaternion group.}
In each of these groups, there exists a subgroup $H < G$ (of index 36) with
$[H, G] \cong N_5$.  
Let $[H, G] = \{H, \alpha, \beta, \gamma, G\} \cong N_5$.  (See Figure~\ref{fig:L11}.)
Of course, $\Sub(G)$ is a congruence lattice, so
if there exists a subgroup $K \succ 1$, below $\beta$ and not below $\gamma$, 
then $L_{11} \cong K^\downarrow \cup H^\uparrow$.  Indeed, there is such a
subgroup $K$.
 
\begin{figure}
\begin{center}
  {\scalefont{1}
  \begin{tikzpicture}[scale=.6]
    \input{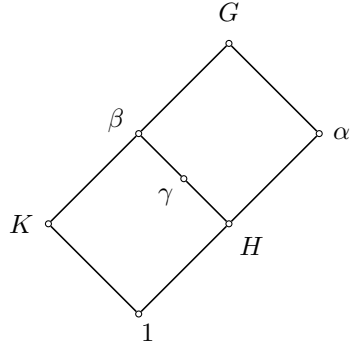};
    \draw
    (2,6.7) node {$G$} 
    (2.5,1.5) node {$H$} 
    (.2,-.4) node {$1$}
    (4.5,4) node {$\alpha$} 
    (-.5,4.3) node {$\beta$} (.6,2.6) node {$\gamma$};
    \draw  (-2.6,2) node {$K$};
  \end{tikzpicture}
}
\end{center}
  \caption{The lattice $L_{11}$ represented as the union of a filter and ideal in the
subgroup lattice of the group  $G$.  Two choices for $G$ that work are
{\tt SmallGroup(216,153)} $=((C_3 \times C_3) \rtimes Q_8) \rtimes C_3$ and
{\tt SmallGroup(288,1025)} $=(A_4 \times A_4) \rtimes C_2$. }
  \label{fig:L11}
\end{figure}

Apart from the easy cases, which we only briefly covered at the start of this
section, there remain just two seven element lattices for which we have not yet
described a representation.  These are the lattices at the bottom of
Figure~\ref{fig:Mysevens}.
Finding a representation of $L_9$, dubbed the ``triple-wing pentagon,'' was
quite challenging.  It sparked the idea of expanding finite algebras, which 
we describe at length in the next chapter (Ch.~\ref{cha:expans-finite-algebr}).
Here we only mention the basic idea as it applies to this particular lattice.  
As the goal is to find an algebra with congruence lattice $L_9$, we start with an
algebra having an $M_4$ congruence lattice -- that is, a six element lattice of
height two with four atoms (which are also coatoms).  Then we expand the algebra
by adding elements to the universe and adding certain operations so that the
newly expanded algebra has almost the same congruence lattice as the original, except
one of the atoms has been doubled.  That is, the resulting congruence lattice is
isomorphic to $L_9$.  This example and the powerful techniques that grew out of it
are described in Chapter~\ref{cha:expans-finite-algebr}.

It is still unknown whether the final lattice appearing in
Figure~\ref{fig:Mysevens} is representable as the congruence lattice of a finite
algebra.  Thus, $L_7$ is the unique smallest lattice for which there is no known
representation.  It is the subject of the next section.

\section{The exceptional seven element lattice}
\label{sec:except-seven-elem}
In this section we consider $L_7$, the last seven element lattice 
appearing in Figure~\ref{fig:Mysevens}.  As yet, we are unable to find a finite
algebra which has a congruence lattice isomorphic to $L_7$, and this is the
smallest lattice for which we have not found such a representation.

Suppose $\bA$ is a finite algebra with $\Con \bA \cong L_7$, and
suppose $\bA$ is of minimal cardinality among those algebras having
a congruence lattice isomorphic to $L_7$.  Then 
$\bA$ must be isomorphic to a transitive \Gset.
(This fact is proved in a forthcoming article,~\cite{gsets}.)  
Therefore, if $L_7$ is representable, we can assume there is a finite group $G$ with a 
core-free\footnote{Recall that the
core of a subgroup $X$ in $G$ is the largest normal subgroup of $G$ contained in
$X$.  This is denoted by $\core_G(X)$.  We say the $X$ is \defn{core-free} in 
$G$ provided $\core_G(X) = 1$.} 
subgroup $H<G$ such that $L_7$ is isomorphic to the interval
sublattice $[H,G] \leq \Sub(G)$.  In this section we present some restrictions
on the possible groups for which this can occur.  

The first restriction, which is
the easiest to observe, is that $G$ must act primitively on the cosets of one of its
maximal subgroups.  This suggests the possibility of describing $G$ in terms of
the
\index{O'Nan-Scott Theorem}%
\index{Aschbacher-O'Nan-Scott Theorem}%
O'Nan-Scott Theorem which characterizes primitive
permutation groups. The goal is to eventually find enough
restrictions on $G$ so as to rule out all finite groups.  As yet, we have not
achieved this goal.  However, the new results in this section reduce the
possibilities to very special subclasses of the O'Nan-Scott classification
theorem.  This paves the way for future studies to focus on these subclasses
when searching for a group representation of $L_7$, or proving that none exists.

The main result of this section is the following:
\begin{theorem}
\label{thm:except-seven-elem}
Suppose $H<G$ are finite groups with $\core_G(H) = 1$ and suppose
$L_7 \cong [H,G]$.  Then the following hold.
\begin{enumerate}[(i)]
\item $G$ is a primitive permutation group.
\item If $N\ssubnormal G$, then $C_G(N) = 1$.
\item $G$ contains no non-trivial abelian normal subgroup.
\item $G$ is not solvable.
\item $G$ is subdirectly irreducible.
\item With the possible exception of at most one maximal subgroup,
  all proper subgroups in the interval $[H,G]$ are core-free. 

\end{enumerate}
\end{theorem}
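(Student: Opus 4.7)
The plan is to prove (vi) first, from which (i) follows immediately, and then to establish (ii)--(v) by adapting the parachute arguments of Lemma~\ref{lemma-wjd-5}. Label the elements of $L_7$ by $\{H, a_1, a_2, c, b_1, b_2, G\}$: the atoms are $a_1, a_2, c$; the coatoms are $b_1, b_2, c$, so $c$ is doubly irreducible with $[H, c] = [c, G] = \two$; the remaining covers are $a_1 \prec b_1$, $a_2 \prec b_1$, and $a_2 \prec b_2$. Note that $\{H, a_1, b_1, c, G\}$ is a non-modular pentagon $N_5$ embedded in $L_7$ (with $c$ incomparable to $a_1, b_1$, and $c \vee a_1 = G$, $c \wedge b_1 = H$).

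For (vi), first observe that it suffices to show that at most one proper subgroup $K \in [H, G]$ has $\core_G(K) \neq 1$: since $K_1 \leq K_2$ implies $\core_G(K_1) \leq \core_G(K_2)$, a nontrivial core at an atom $a_i$ would propagate upward to every coatom above $a_i$, yielding at least two exceptions and in particular forcing any single exception to sit at a maximal position. So suppose, for contradiction, that distinct proper $K, K' \in [H, G]$ have nontrivial cores $N := \core_G(K)$ and $N' := \core_G(K')$. Since $\core_G(H) = 1$, we have $H < NH \leq K$ and $H < N'H \leq K'$. The main tool is Lemma~\ref{lemma-wjd-4}(ii), applied to $N \subnormal NH$: it yields $[H, NH] \cong [N \cap H, N]_H$ (and similarly for $N'$). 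A recurring reduction used throughout the case analysis on the positions of $K, K'$: when $K, K'$ are incomparable with $K \wedge K' = H$, the subgroup $N \cap N'$ is normal in $G$ and contained in $H$, hence trivial, so $[N, N'] = 1$; then $NN'$ is normal in $G$ and $NN'H$ sits at the lattice-join $K \vee K'$ in $L_7$, and a Dedekind-rule argument in the pentagon $\{H, a_1, b_1, c, G\}$ produces a contradiction. The cases involving $c$ (which has the degenerate $[H, c] = \two$) are handled by a separate calculation using the maximality of $c$ and the $H$-invariant subgroup structure of $N$ inside $[N \cap H, N]_H$.

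Part (i) follows at once from (vi): the maximal subgroups of $G$ in $[H, G]$ are the three coatoms $b_1, b_2, c$; by (vi), at least two of them are core-free, so $G$ has a core-free maximal subgroup and is therefore primitive. For (ii)--(v), I would imitate the proof of Lemma~\ref{lemma-wjd-5}. Using the pentagon $\{H, a_1, b_1, c, G\}$ and Dedekind's rule, I would first establish that $MH = G$ for every nontrivial normal $M \subnormal G$ whose product $MH$ is not the single possible exceptional maximal subgroup from (vi). Combined with (vi), this forces $G$ to have a unique minimal normal subgroup $M_0$ satisfying $M_0 H = G$ and $C_G(M_0) = 1$ by the centralizer argument of Lemma~\ref{lemma-wjd-5}(ii), yielding (v), (ii), and (iii). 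Part (iv) follows from the standard commutator computation $M_0' = M_0$, which shows $M_0$, and hence $G$, is not solvable.

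The main obstacle will be Step~1. The parachute argument in Lemma~\ref{lemma-wjd-5}(i) crucially required every panel above an atom to contain at least three elements, and the $[c, G] = \two$ panel of $L_7$ violates this. So when we try to rule out simultaneous nontrivial cores at, say, $b_1$ and $b_2$, the parachute move through $c$ provides no room, and one must instead exploit the non-modular pentagon $\{H, a_1, b_1, c, G\}$ together with the $H$-invariant subgroup structure from Lemma~\ref{lemma-wjd-4}(ii). The delicate point is to show that a single maximal exception is genuinely the strongest possible conclusion, and that no configuration of two or more nontrivial cores among the six proper subgroups of $[H, G]$ can coexist.
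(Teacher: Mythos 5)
Your outline correctly identifies the obstruction (the two-element panel $[c,G]$ ruins the parachute argument) and the right tools (the spanning $N_5$, Dedekind's rule, Lemma~\ref{lemma-wjd-4}), but the heart of the proof is missing in two places. First, part (vi). Your reduction to ``at most one proper subgroup of $[H,G]$ has a nontrivial core'' is fine as far as the literal statement of (vi) goes, but the case analysis that is supposed to rule out two simultaneous nontrivial cores is only gestured at (``a Dedekind-rule argument \dots produces a contradiction,'' ``handled by a separate calculation''), and you concede at the end that this remains the delicate point. The paper does not argue by contradiction on pairs of subgroups; it proves directly that each of $c=K$, $a_1=J_1$, $a_2=J_2$ is core-free. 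For $K$ it uses exactly the pentagon and Dedekind computation you describe. For $J_1,J_2$ the step you are missing is this: if $1\neq N\subnormal G$ lies in $J_i$, then $NH=J_i$ (since $H\prec J_i$ and $H$ is core-free), so $J_i$ \emph{permutes} with $K$, and Lemma~\ref{lemma-wjd-4}(i) gives $[J_i,G]\cong [H,K]^{J_i}\leq [H,K]\cong\two$, contradicting $[J_1,G]\cong\three$ and $[J_2,G]\cong\two\times\two$. Note also that your version of (vi) would permit the single exception to be $c=K$ itself; the stronger conclusion that $K$, $J_1$, $J_2$ are always core-free is what is actually needed downstream, since it is what confines $C_G(N)H$ to the set $\{G,M_1,M_2\}$ in the proof of (ii).

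Second, parts (ii)--(v). Your claim that the argument ``forces $G$ to have a unique minimal normal subgroup $M_0$ satisfying $M_0H=G$'' is false: the theorem deliberately allows one maximal subgroup, say $M_2$, to have a nontrivial core, and then a minimal normal subgroup $N\leq\core_G(M_2)$ satisfies $NH=M_2<G$. In that situation the centralizer argument of Lemma~\ref{lemma-wjd-5}(ii), which requires $C_G(M)H=G$, does not transfer, and the bulk of the paper's proof is precisely the residual case analysis: assuming $C_G(N)\neq 1$, one shows $C_G(N)H\in\{G,M_1,M_2\}$ and eliminates each case by a Dedekind computation (for instance, $N\cap J_1$ or $N\cap M_1$ becomes a nontrivial normal subgroup of $G$ sitting inside a core-free subgroup, or $N\cap J_2=1$ contradicts the strict chain $N\cap H<N\cap J_2<N\cap M_2$). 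None of this appears in your proposal, and it is not a routine adaptation of the parachute lemma; it is the part of the proof where the specific shape of $L_7$ does the work.
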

\begin{remark}
  It is obvious that (ii) $\Rightarrow$ (iii) $\Rightarrow$ (iv), and  (ii) $\Rightarrow$
  (v), but we include these easy consequences in the statement of the result for
  emphasis; for, although the hard work will be in proving (ii) and (vi), our
  main goal is the pair of restrictions (iii) and (v), which allow us to rule
  out a number of the O'Nan-Scott types describing primitive permutation
  groups.  (Section~\ref{sec:onan-scott-theorem} includes a detailed description
  of these types.) 
\end{remark}

Assume the hypotheses of the theorem above.  In particular, throughout this
section \emph{all groups are finite, $H$ is a core-free subgroup of $G$, and $[H,G] \cong
  L_7$}. Label the seven subgroups of $G$ in
the interval $[H,G]$ as in the following diagram:

\begin{center}
  {\scalefont{.8}

    \begin{tikzpicture}[scale=1]
      \node (J1) at (0,1)  [draw, circle, inner sep=1pt] {};
      \draw (.36, 1) node {$J_1$};
      \node (H) at (1,0)  [draw, circle, inner sep=1pt] {};
      \draw (1.16, -.2) node {$H$};
      \node (M2) at (1,2)  [draw, circle, inner sep=1pt] {};
      \draw (1.4, 2) node {$M_2$};
      \node (J2) at (2,1)  [draw, circle, inner sep=1pt] {};
      \draw (2.2, .8) node {$J_2$};
      \node (G) at (2,3)  [draw, circle, inner sep=1pt] {};
      \draw (2.3, 3.1) node {$G$};
      \node (M1) at (3,2)  [draw, circle, inner sep=1pt] {};
      \draw (3.35, 2) node {$M_1$};
      \node (K) at (-1,1.2)  [draw, circle, inner sep=1pt] {};
      \draw (-1.28, 1.15) node {$K$};

      \draw[semithick] (H) to (J1) to (M2) to (G) to (M1) to (J2) to (H) (J2) to (M2);
      \draw[semithick] (H) to (K) to (G);

    \end{tikzpicture}
  }
\end{center}
The labels are chosen with the intention of helping us remember to which
subgroups they refer:
the maximal subgroup $M_2$ covers two subgroups in the interval
$[H,G]$, while $J_2$ is covered by two subgroups of $G$.

We now prove the foregoing theorem through a series of claims.
The first thing to notice about the interval $[H,G]$ is that
$K$ is a \defn{non-modular element} of the interval.  This means that
there is a spanning pentagonal ($N_5$) sublattice of the interval with $K$ as the
incomparable proper element. (See the diagram below, for example.)
\begin{center}
  {\scalefont{.8}
    \begin{tikzpicture}[scale=.8]
      \node (H) at (0,0)  [draw, circle, inner sep=.9pt] {};
      \draw (.3, -.2) node {$H$};
      \node (K) at (-1,1.5)  [draw, circle, inner sep=.9pt] {};
      \draw (-1.4, 1.4) node {$K$};
      \node (11) at (1,1)  [draw, circle, inner sep=.9pt] {};
      \draw (1.34, .9) node {$J_2$};
      \node (12) at (1,2)  [draw, circle, inner sep=.9pt] {};
      \draw (1.4, 2) node {$M_1$};
      \node (G) at (0,3)  [draw, circle, inner sep=.9pt] {};
      \draw (.3, 3.2) node {$G$};
      \draw[semithick] (H) to (11) to (12) to (G) to (K) to (H);
    \end{tikzpicture}
}
\end{center}
Using this non-modularity property of $K,$ it is easy to 
prove the following
\begin{claim}
\label{claim:K1corefree}
$K$ is a core-free subgroup of $G$.
\end{claim}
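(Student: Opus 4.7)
The plan is to prove the claim by contradiction: let $N := \core_G(K)$, assume $N \neq 1$, and derive an impossible equality of subgroup indices. The main tool will be Dedekind's rule (Theorem~\ref{lemma-dedekind}) applied to the spanning pentagon $\{H, K, J_2, M_1, G\}$ of $[H,G]$ in which $K$ is the non-modular element; this pentagon supplies the key identities $K \cap J_2 = K \cap M_1 = H$ and $K \join J_2 = K \join M_1 = G$ inside the subgroup lattice of $G$.

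First I would show that $NH = K$. Since $N \leq K$, the subgroup $NH$ lies in the two-element interval $[H, K] = \{H, K\}$. If $NH = H$ then $N \leq H$, contradicting the core-freeness of $H$ in $G$; hence $NH = K$.

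Next, applying Dedekind's rule to $N \leq K$ and $J_2$ yields
\[ NJ_2 \cap K \;=\; N(J_2 \cap K) \;=\; NH \;=\; K, \]
so $K \leq NJ_2$ and therefore $NJ_2 \geq K \join J_2 = G$, forcing $NJ_2 = G$. (Note that $NJ_2$ is indeed a subgroup because $N \subnormal G$ normalizes every subgroup.) The identical argument with $M_1$ in place of $J_2$ gives $NM_1 = G$.

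The payoff will then be immediate from the product formula: from $NJ_2 = G$ and $NM_1 = G$ we obtain
\[ [G : J_2] \;=\; [N : N \cap J_2] \qquad \text{and} \qquad [G : M_1] \;=\; [N : N \cap M_1]. \]
Since $N \leq K$ while $J_2 \cap K = H = M_1 \cap K$, we have $N \cap J_2 = N \cap H = N \cap M_1$, so the two indices coincide. But $J_2$ is properly contained in $M_1$, whence $[G:J_2] > [G:M_1]$, a contradiction. I do not foresee any serious obstacle; the whole argument is driven by the pentagon relations together with Dedekind's rule, and the final index computation is essentially a formality.
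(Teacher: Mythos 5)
Your proof is correct and takes essentially the same route as the paper's: both exploit the spanning pentagon $\{H, K, J_2, M_1, G\}$ with $K$ as the non-modular element, use Dedekind's rule to force $NJ_2 = G$ (and $NM_1 = G$), and then derive a contradiction from $J_2 \lneq M_1$. The only cosmetic difference is in the last step, where the paper applies the single Dedekind identity $J_2 = J_2(N\cap M_1) = NJ_2 \cap M_1 = G \cap M_1 = M_1$ while you run the equivalent index computation $[G:J_2] = [N:N\cap H] = [G:M_1]$.
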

\begin{proof}
  Let $N := \core_G(K)$.  If $N \leq X$ for some $X \in \{M_1, M_2, J_1, J_2\}$, 
then $N < X\cap K = H$, so $N = 1$ (since $H$ is core-free).  If
  $N\nleq X$ for all $X \in \{M_1, M_2, J_1, J_2\}$, then $NJ_2 = G$.  But then
Dedekind's rule leads to the following contradiction:
\[
J_2 \leq M_1 \quad \Rightarrow \quad J_2 = J_2(N\cap M_1) = J_2 N \cap M_1 =
G\cap M_1 = M_1.
\]
Therefore, $N = 1$.
\end{proof}
Note that (i) of the theorem follows from Claim~\ref{claim:K1corefree}.  Since
$K$ is core-free, $G$ acts faithfully on the
cosets $G/K$ by right multiplication.  Since $K$ is a maximal subgroup, the
action is primitive.

The next claim is only slightly harder than the previous one as it requires the
more general consequence of Dedekind's rule that we established above in
Lemma~\ref{lemma-wjd-4} (i). 
\begin{claim}
$J_1$ and $J_2$ are core-free subgroups of $G$.
\end{claim}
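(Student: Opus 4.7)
The plan is to derive a contradiction by supposing $N := \core_G(J_2) \neq 1$ and then invoking Lemma~\ref{lemma-wjd-4}(i) to produce an order isomorphism whose existence is incompatible with the shape of $L_7$. The companion case for $J_1$ will be handled by exactly the same argument, with the roles of $J_2$ and $M_1$ played by $J_1$ and $M_2$.

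First I will assemble the required containments. Since $H$ is core-free and $N \subnormal G$ sits inside $J_2$, we have $N \nleq H$ (else $N \leq \core_G(H) = 1$); since $\core_G(K) = 1$ by Claim~\ref{claim:K1corefree}, we also have $N \nleq K$. Because $N$ is normal it permutes with every subgroup of $G$, so $KN$ is a subgroup of $G$ strictly containing $K$; maximality of $K$ therefore forces $KN = G$.

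Next I would apply Lemma~\ref{lemma-wjd-4}(i) with $U = K$ and the lemma's $H$ taken to be $N$. Normality of $N$ gives $[K \cap N, K]^N = [K \cap N, K]$, so the lemma supplies an order isomorphism
\[
\phi : [N, G] \;\longrightarrow\; [K \cap N, K], \qquad \phi(X) = K \cap X.
\]
Since $N \leq J_2 \leq M_1$, both $J_2$ and $M_1$ lie in the domain of $\phi$. I then compute each image: the intersection $K \cap J_2$ contains $H$ (as $H \leq K$ and $H \leq J_2$), so it lies in $[H, G] = L_7$ and is below both $K$ and $J_2$; inspection of $L_7$ shows $H$ is the unique such element, so $K \cap J_2 = H$. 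The same reasoning gives $K \cap M_1 = H$. Hence $\phi(J_2) = H = \phi(M_1)$, and injectivity of $\phi$ forces $J_2 = M_1$, the desired contradiction.

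The argument for $J_1$ is identical in form: set $N_1 := \core_G(J_1)$, observe $N_1 K = G$ by the same reasoning, and apply the same isomorphism. The distinct pair to feed into $\phi$ this time is $J_1, M_2 \in [N_1, G]$, and in $L_7$ the only element of $[H,G]$ lying below both $K$ and $M_2$ is $H$, so $\phi$ again collapses two distinct subgroups to $H$. The one step I expect to want to state carefully is that $K \cap N \leq H$ so the collapse actually lives inside $[K \cap N, K]$; but this is automatic, since $K \cap N \leq K \cap J_2 = H$. Beyond that technicality, the proof is a direct application of Lemma~\ref{lemma-wjd-4}(i) combined with the specific incidence geometry of $L_7$ — namely, that $J_2$ has \emph{two} distinct covers ($M_1$ and $M_2$), each of which meets $K$ trivially inside the interval.
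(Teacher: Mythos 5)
Your proof is correct, and it rests on the same key tool as the paper's --- Lemma~\ref{lemma-wjd-4}(i) applied with $K$ as one of the two permuting subgroups --- but you feed the lemma a slightly different pair. The paper first observes that $NH = J_i$ (since $N \nleq H$ by core-freeness of $H$, and $J_i$ covers $H$), so that $J_i$ itself permutes with $K$; the contradiction is then read off from the shapes of the intervals: $[J_i,G] \cong [H,K]^{J_i} \leq [H,K] \cong \two$, whereas $[J_1,G]\cong\three$ and $[J_2,G]\cong\two\times\two$. You instead pair $K$ with the normal subgroup $N$ directly (using maximality of $K$ together with Claim~\ref{claim:K1corefree} to get $KN=G$) and obtain the contradiction from injectivity of $X\mapsto K\cap X$ on $[N,G]$, since $J_i$ and a maximal subgroup above it are distinct but both meet $K$ in $H$. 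The two contradictions are the same phenomenon --- the interval above the cored subgroup is too large to fit inside $[H,K]\cong\two$ --- but your version has the minor advantage of not needing to identify $NH$ with $J_i$, at the cost of invoking the earlier core-freeness of $K$ (which the paper's version of this step does not use; note you could also avoid it, since $N\leq K$ would force $N\leq K\cap J_2=H$). All the individual steps you state, including $K\cap J_2=K\cap M_1=H$ and $K\cap N\leq H$, check out against the incidence structure of $L_7$.
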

\begin{proof}
First note that if $N\subnormal G$ then the subgroup $NH$
permutes\footnote{Recall, for subgroups $X$ and $Y$ of a group $G$, we define
  the \emph{sets} $XY = \{xy\mid x\in X, y \in Y\}$, and
  $YX = \{yx\mid x\in X, y \in Y\}$, and 
we say that $X$ and $Y$ are \defn{permuting subgroups} (or that $X$ and $Y$
permute, or that $X$ permutes with $Y$)
 provided the two sets $XY$ and $YX$ coincide, in which case the set forms a group:
$XY = \<X,Y\> = YX$.}
with any subgroup
containing $H$.  To see this, let $H \leq X \leq G$ and note that
\[
  NHX = NX = XN= XHN = XNH,
\]
since $H \leq X$ and $N\subnormal G$.

Suppose $1\neq N\leq J_1$ for some $N \ssubnormal G$. Then $NH = J_1$, so $J_1$ and $K$ are
permuting subgroups.
Since $J_1K = G$ and $J_1\cap K = H$,
Lemma~\ref{lemma-wjd-4} yields
\[
[J_1, G] \cong [H, K]^{J_1} := \{X \in[H, K] \mid J_1X=XJ_1 \}.
\]
But this is impossible since $[H, K]^{J_1} \leq [H,  K] \cong \2$, while $[J_1, G]\cong \3$.
This proves that $\core_G(J_1) = 1$.
The intervals involved in the argument are drawn with bold lines in the
following diagram.
\begin{center}
  {\scalefont{.78}
   \begin{tikzpicture}[scale=1]
      \node (N) at (-.6,0)  [draw, circle, inner sep=1pt] {};
      \draw (-.9, 0) node {$N$};
      \node (NcapH) at (.2,-1)  [draw, circle, inner sep=1pt] {};
      \node (J1) at (0.2,1)  [draw, circle, inner sep=1pt] {};
      \draw (-.15, 1.05) node {$J_1$};
      \node (H) at (1,0)  [draw, circle, inner sep=1pt] {};
      \draw (1.16, -.2) node {$H$};
      \node (M2) at (1,2)  [draw, circle, inner sep=1pt] {};
      \draw (.6, 2.1) node {$M_2$};
      \node (J2) at (1.8,1)  [draw, circle, inner sep=1pt] {};
      \draw (1.35, 1.) node {$J_2$};
      \node (G) at (1.8,3)  [draw, circle, inner sep=1pt] {};
      \draw (2.1, 3.1) node {$G$};
      \node (M1) at (2.6,2)  [draw, circle, inner sep=1pt] {};
      \draw (2.1, 2) node {$M_1$};
      \node (K) at (3.5,1.8)  [draw, circle, inner sep=1pt] {};
      \draw (3.85, 1.8) node {$K$};
      \draw[semithick,dotted] (J1) to (N) to (NcapH) to (H);
      \draw[very thick] (J1) to (M2) to (G) (H) to (K);
      \draw[semithick] (H) to (J1) to (M2) to (G) to (M1) to (J2) to (H) to (K)
      to (G) (J2) to (M2);
    \end{tikzpicture}
 }
\end{center}

The proof 
that $J_2$ is core-free is similar.  Suppose
$1\neq N\leq J_2$ where $N \ssubnormal G$. Then $NH = J_2$ and the subgroups $J_2$ and $K$
permute.
Therefore, $[H, K]^{J_2} \cong [J_2, G]$, 
by Lemma~\ref{lemma-wjd-4},
which is a contradiction since
$[H, K]^{J_2} \leq [H,  K] \cong \2$, while $[J_2, G]\cong \two \times \two$.
\end{proof}

Now that we know $K, J_1, J_2$ are each core-free in $G$, we use this
information to prove that at least one of the other maximal subgroups, 
$M_1$ or $M_2$, is core-free in $G$, thereby establishing (vi) of the theorem.  
We will also see that $G$ is subdirectly irreducible, proving (v).  The proof of
(ii) will then follow from the same argument used to prove 
Lemma~\ref{lemma-wjd-4} (ii), which we repeat below.

\begin{claim}
  Either $M_1$ or $M_2$ is core-free in $G$.  If $M_2$ has non-trivial core
  and $N\ssubnormal G$ is contained in $M_2$, then
  $C_G(N)=1$ and $G$ is subdirectly irreducible.
\end{claim}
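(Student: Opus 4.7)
The plan is to prove the two parts of the claim sequentially. For the first part, I argue by contradiction: suppose both $N_1:=\core_G(M_1)\neq 1$ and $N_2:=\core_G(M_2)\neq 1$. Core-freeness of $J_2$ (from the preceding claim) forces $N_1\cap N_2=1$, since $N_1\cap N_2\triangleleft G$ lies in $M_1\cap M_2=J_2$; hence $[N_1,N_2]=1$. Core-freeness of $J_1,J_2$ forces $N_iH=M_i$, since the only non-core-free element of $[H,M_i]$ is $M_i$ itself. Finally, by Claim~\ref{claim:K1corefree} the action of $G$ on $G/K$ is faithful and primitive; $N_1$ and $N_2$ are nontrivial normal subgroups that centralize each other, and the classical fact that a transitive normal subgroup centralized by another transitive normal subgroup acts regularly forces $N_i\cap K=1$ and $|N_i|=[G:K]$.

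Next, pick a minimal normal subgroup $N\leq N_1$ of $G$ and let $C:=C_G(N)\supseteq N_2\neq 1$. Core-freeness of $H,J_1,J_2,K$ excludes $CH$ from $\{H,J_1,J_2,K\}$, and $CH=M_1$ would give $N_2\leq C\leq \core_G(M_1)=N_1$, contradicting $N_1\cap N_2=1$; hence $CH\in\{M_2,G\}$. If $CH=G$, the argument of Lemma~\ref{lemma-wjd-5}(ii) applies: for $H<X<G$ with $N\nleq X$, the subgroup $N\cap X$ is normalized by $H$ (since $H\leq X$) and centralized by $C$, so is normal in $CH=G$; minimality of $N$ forces $N\cap X=1$. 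Taking $X=K$ yields $N\cap H\leq N\cap K=1$, so $|M_1|=|NH|=|N||H|$; taking $X=J_2$ yields $NJ_2=M_1$ (since $NJ_2\in[J_2,M_1]=\{J_2,M_1\}$ and $N\nleq J_2$) and $|M_1|=|N||J_2|$. Dividing gives $|J_2|=|H|$, so $J_2=H$, a contradiction. The case $CH=M_2$ forces $C\leq \core_G(M_2)=N_2$, so $C=N_2$; a symmetric analysis on $N_2$ either produces the same contradiction or leaves the residual ``dual-centralizer'' configuration $C_G(N)=N_2$, $C_G(N')=N_1$ for minimal normals $N\leq N_1$, $N'\leq N_2$. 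Closing out this residual configuration---by analyzing the diagonal subgroup $NN'\cap K$, which is the graph of a $K$-equivariant isomorphism $N\cong N'$, against the $L_7$ lattice structure---is the main obstacle for part one.

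For the second part, assume $\core_G(M_2)\neq 1$ and let $N\ssubnormal G$ with $N\leq M_2$. Replacing $N$ by a $G$-minimal normal subgroup inside it, I may assume $N$ is $G$-minimal (inclusion reversal of centralizers propagates the conclusion). Suppose $C:=C_G(N)\neq 1$. The first part forces $\core_G(M_1)=1$, so $CH=M_1$ is ruled out, and core-freeness excludes $CH\in\{H,J_1,J_2,K\}$; hence $CH\in\{M_2,G\}$. The case $CH=G$ is handled exactly as above with $X=J_1$ in place of $X=J_2$, yielding $J_1=H$, a contradiction. The case $CH=M_2$ gives $C\leq \core_G(M_2)$; taking a minimal normal $M\leq C$ of $G$, either $M\neq N$ (and a symmetric argument starting from $M$ produces the contradiction) or $M=N$, so $N$ is abelian. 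The abelian subcase is the secondary obstacle; the planned resolution is to compare the $H$-invariant submodule lattice of $N$---which by Lemma~\ref{lemma-wjd-4} applied to $N\triangleleft NH=M_2$ is isomorphic to $[H,M_2]\cong\two\times\two$---with the $K$-invariant submodule lattice of $N$---which by the same lemma applied to $N\triangleleft NK=G$ is isomorphic to $[K,G]\cong\two$---exploiting $H\leq K$ to derive a contradiction.

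Finally, subdirect irreducibility of $G$ follows as in Lemma~\ref{lemma-wjd-5}(iii): choose any minimal normal subgroup $N_0$ of $G$ contained in $\core_G(M_2)$ (which exists by hypothesis); any other minimal normal subgroup $N'$ of $G$ has $N'\cap N_0=1$, hence $[N_0,N']=1$, so $N'\leq C_G(N_0)=1$ by the second part, a contradiction.
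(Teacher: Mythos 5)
Your proposal contains two explicitly acknowledged gaps, and both occur precisely where the paper's argument has a clean resolution that your case analysis misses. The key move you are missing is this: when $C:=C_G(N)\neq 1$ and $CH$ equals one of $M_1$, $M_2$, or $G$, the paper does not analyze $C$ itself but instead looks at an intersection $N\cap X$ for a well-chosen $X$ in the interval. Since $C$ centralizes (hence normalizes) \emph{every} subgroup of $N$, and $H$ normalizes $N\cap X$ whenever $H\leq X$, the subgroup $N\cap X$ is normalized by $CH$; combining this with normalization by $X$ itself and the join relation $\langle M_1,M_2\rangle=G$ (or $\langle M_1,J_1\rangle=G$) shows $N\cap X\ssubnormal G$. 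Minimality of $N$ then forces $N\cap X\in\{1,N\}$, and both alternatives die against the strict chain $N\cap H<N\cap J_2<N\cap M_2$ (from Dedekind/Lemma~\ref{lemma-wjd-4}, since $NH=M_2$) or against $\core_G(J_2)=1$. Concretely, in your ``abelian subcase'' ($CH=M_2$), the paper takes $X=M_1$: then $N\cap M_1\ssubnormal G$, and $N\cap M_1=1$ forces $N\cap J_2=1$ (contradicting strictness) while $N\cap M_1=N$ forces $N\leq J_2$ (contradicting core-freeness of $J_2$). No module-theoretic analysis of an abelian $N$ is needed, and your planned comparison of $H$-invariant and $K$-invariant submodule lattices is not carried out.

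The second structural problem is the order of your two parts. You attempt to prove ``either $M_1$ or $M_2$ is core-free'' first, by assuming both cores are nontrivial, and this is what produces the ``dual-centralizer configuration'' you cannot close out. The paper proves the second assertion first --- if $\core_G(M_2)\neq 1$ and $N$ is a minimal normal subgroup of $G$ inside $M_2$, then $C_G(N)=1$ --- and the first assertion then falls out as a corollary: $C_G(N)=1$ makes $N$ the unique minimal normal subgroup (minimal normals centralize each other), so $\core_G(M_1)\neq 1$ would force $N\leq M_1\cap M_2=J_2$, contradicting $\core_G(J_2)=1$. Note also that your part-two reduction relies on part one to exclude $CH=M_1$, so the gap in part one propagates; the paper excludes that case directly (its Case 2, via $X=J_1$ and $\langle M_1,J_1\rangle=G$) with no circularity. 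Your counting argument for the case $CH=G$ (yielding $|J_1|=|H|$) is a valid variant of the paper's Case 1, but as it stands the proposal does not establish the claim.
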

\begin{proof}
  Suppose $M_2$ has non-trivial core.  Then there is 
a minimal normal subgroup $1\neq N \ssubnormal G$ 
  contained in $M_2$. 
  Since $H, J_1, J_2$ are core-free, $NH=M_2$.  Consider the centralizer,
  $C_G(N)$, of $N$ in $G$.  Of course, this is a normal subgroup 
  of $G$.\footnote{The centralizer of a normal subgroup $N\subnormal G$ is itself
    normal in $G$.  For, it is the kernel of the conjugation action of $G$ on
    $N$. Thus, $C_G(N) \subnormal N_G(N) = G$.} 
If $C_G(N) = 1$, then, since minimal normal subgroups
  centralize each other, $N$ must be the unique minimal normal subgroup of $G$.
  Furthermore, $M_1$ must be core-free in this case.  Otherwise 
  $N\leq M_1 \cap M_2 = J_2$, contradicting $\core_G(J_2)=1$. 
  Therefore, in case $C_G(N) = 1$ we 
  conclude that $G$ is subdirectly irreducible and $M_1$ is core-free.

  We now prove that the alternative, $C_G(N) \neq 1$, does not occur.
  This case is a bit more challenging and must be split up into further subcases,
  each of which leads to a contradiction.
  Throughout, the assumption $1\neq N \leq M_2$ is in force, and it helps to
  keep in mind the diagram in Figure~\ref{fig:M2-not-core-free}.
\begin{center}
  \begin{figure}
  {\scalefont{.8}
\begin{center}
   \begin{tikzpicture}[scale=1]
      \node (J1) at (0.2,1)  [draw, circle, inner sep=1pt] {};
      \draw (-.05, .85) node {$J_1$};
      \node (H) at (1,0)  [draw, circle, inner sep=1pt] {};
      \draw (1.16, -.2) node {$H$};
      \node (M2) at (1,2)  [draw, circle, inner sep=1pt] {};
      \draw (.6, 2.1) node {$M_2$};
      \node (J2) at (1.8,1)  [draw, circle, inner sep=1pt] {};
      \draw (1.35, 1.) node {$J_2$};
      \node (G) at (1.8,3)  [draw, circle, inner sep=1pt] {};
      \draw (2.1, 3.1) node {$G$};
      \node (M1) at (2.6,2)  [draw, circle, inner sep=1pt] {};
      \draw (2.1, 2) node {$M_1$};
      \node (K) at (3.7,1.8)  [draw, circle, inner sep=1pt] {};
      \draw (4, 1.8) node {$K$};
      \node (NcapH) at (-.65,-1.05)  [draw, circle, inner sep=1pt] {};
      \draw (-.5, -1.36) node {$N\cap H$};
      \node (N) at (-1.5,.3)  [draw, circle, inner sep=1pt] {};
      \draw (-1.8, .35) node {$N$};
      \draw[semithick, dotted]  (H) to (NcapH) (N) to (M2);
      \draw[semithick, dotted]
      (NcapH) to (N);
      \draw[semithick] 
      (H) to (J1) to (M2) to (G) to (M1) to (J2) to (H) to (K) to (G) 
      (J2) to (M2);
    \end{tikzpicture}
\end{center}
  }
    \caption{Hasse diagram illustrating the cases in which $M_2$ has
      non-trivial core: $1\neq N \leq M_2$ for some $N\ssubnormal G$.}
    \label{fig:M2-not-core-free}
  \end{figure}
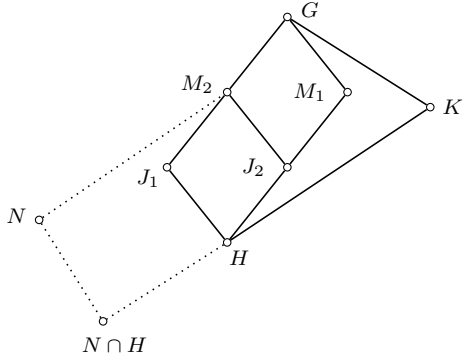
\end{center}

Suppose $C_G(N) \neq 1$.  Then, since $C_G(N)\subnormal G$,
and since $H, J_1, J_2, K$ are core-free, it's clear that $C_G(N)H \in
\{G, M_1, M_2\}$.  We consider each case separately.

\begin{enumerate}
\item[{\it Case 1:}] 
Suppose $C_G(N)H = G$.
Note that $N\cap H < N \cap J_1 < N$ (strictly). The
subgroup $N\cap J_1$ is normalized by $J_1$ and by $C_G(N)$, and so it is normal in
$C_G(N)J_1 \geq C_G(N)H = G$, contradicting the minimality of $N$.  Thus, the case
$C_G(N)H = G$ does not occur.

\item[{\it Case 2:}]
Suppose $C_G(N)H =M_1$.  The subgroup $N\cap J_1$ is
normalized by both $H$ and $C_G(N)$. For, $C_G(N)$ centralizes, hence
normalizes, every subgroup of $N$.  Therefore, $N\cap J_1$ is normalized by 
$C_G(N)H =M_1$.  Of course, it's also normalized by $J_1$, so
$N\cap J_1$ is normalized by the set $M_1J_1$, so it's normalized by the group
generated by that set, which is $\<M_1, J_1\> = G$.\footnote{Actually, the set is
already a group in this case since $M_1J_1 = C_G(N)H J_1 = J_1 C_G(N)H = J_1 M_1$.}
The conclusion is that $N\cap J_1\ssubnormal G$.  
Since $J_1$ is core-free, $N\cap J_1 = 1$.
But this contradicts the (by now familiar) consequence of
Dedekind's rule:  
\[
H < J_1 < M_2  \quad \Rightarrow \quad N\cap H < N\cap J_1 < N\cap M_2.
\]
 Therefore, $C_G(N)H =M_1$ does not occur.

\item[{\it Case 3:}]
Suppose $C_G(N)H = M_2$.
The subgroup $N\cap M_1$ is
normalized by both $H$ and $C_G(N)$.  Therefore, $N\cap M_1$ is normalized by 
$C_G(N)H =M_2$.  Of course, it's also normalized by $M_1$, so
$N\cap M_1$ is normalized by $\<M_1, M_2\> = G$.
The conclusion is that $N\cap M_1\ssubnormal G$.  By 
minimality of the normal subgroup $N$, we must have either $N\cap M_1 = 1$ or 
$N\cap M_1 = N$.  The former equality implies $N\cap J_2=1$, which contradicts 
the strict inequalities of Dedekind's rule,
\begin{equation}
  \label{eq:6}
H < J_2 < M_2  \quad \Rightarrow \quad N\cap H < N\cap J_2 < N\cap M_2,
\end{equation}
while the latter equality ($N\cap M_1 = N$) implies that $N \leq M_1 \cap M_2 = J_2$ which
contradicts 
$\core_G(J_2)=1$. 
\end{enumerate}
\end{proof}

We have proved that either $M_1$ or $M_2$ is core-free in $G$, and
we have shown that, if $M_2$ has non-trivial core, then $G$ is subdirectly
irreducible.  In fact, we proved that $C_G(N)=1$ for the unique minimal normal subgroup
$N$ in this case.  It remains to prove that $G$ is subdirectly irreducible in
case $M_1$ has non-trivial core. The argument is similar to the foregoing, and
we omit some of the details that can be checked exactly as above.

\begin{claim}
If $M_1$ has non-trivial core
  and $N\ssubnormal G$ is contained in $M_1$, then
  $C_G(N)=1$ and $G$ is subdirectly irreducible.
\end{claim}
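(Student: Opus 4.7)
The plan is to mirror the argument of the previous claim: assume $C_G(N) \neq 1$ and derive a contradiction by ruling out each possible value of the subgroup $C_G(N)H$. Since $C_G(N)$ is normal in $G$ and each of $H$, $J_1$, $J_2$, $K$ is core-free, the subgroup $C_G(N)H$ cannot be contained in any of these; hence $C_G(N)H \in \{M_1, M_2, G\}$. Note also that, by the previous claim, the hypothesis that $M_1$ has non-trivial core forces $M_2$ to be core-free in $G$, a fact I will exploit below.

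The case $C_G(N)H = G$ can be handled exactly as in Case~1 of the previous claim, with the roles of $M_1$ and $M_2$ interchanged: I would intersect $N$ with $J_2$, noting that the chain $H < J_2 < M_1$ together with $N \leq M_1$ yields, via Dedekind's rule, the strict chain $N \cap H < N \cap J_2 < N$. Since $N \cap J_2$ is normalized by $J_2$ and by $C_G(N)$, hence by $C_G(N)J_2 \geq C_G(N)H = G$, minimality of $N$ forces $N \cap J_2 \in \{1, N\}$, a contradiction. The case $C_G(N)H = M_2$ is immediate: it gives $C_G(N) \leq M_2$, hence $C_G(N) \leq \core_G(M_2) = 1$ by the core-freeness of $M_2$ noted above, contradicting $C_G(N) \neq 1$.

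The main obstacle, and the only case requiring a genuinely new argument, is $C_G(N)H = M_1$, since the corresponding Case~2 of the previous claim relied on core-freeness of $M_1$, which is not available here. My plan is to intersect $N$ with $M_2$ rather than $J_2$. First, observe that $C_G(N) \not\leq M_2$, for otherwise $C_G(N) \leq M_1 \cap M_2 = J_2$ would force $C_G(N) \leq \core_G(J_2) = 1$. Combined with the normality of $C_G(N)$ in $G$ and the maximality of $M_2$, this yields $C_G(N)M_2 = G$. Then $N \cap M_2$ is normalized by $M_2$ and centralized by $C_G(N)$, hence normal in $G$, and by minimality $N \cap M_2 \in \{1, N\}$. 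If $N \cap M_2 = N$ then $N \leq M_1 \cap M_2 = J_2$, contradicting $\core_G(J_2) = 1$; if $N \cap M_2 = 1$ then $N \cap J_2 = 1$, contradicting the strict chain $N \cap H < N \cap J_2 < N$ obtained from $H < J_2 < M_1$ and $N \leq M_1$ by Dedekind's rule.

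With $C_G(N) = 1$ established, subdirect irreducibility of $G$ follows exactly as in the previous claim: any additional minimal normal subgroup $M$ would satisfy $M \cap N = 1$ and hence $[M, N] = 1$, placing $M \leq C_G(N) = 1$, impossible. So $N$ is the unique minimal normal subgroup of $G$, completing the argument.
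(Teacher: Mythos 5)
Your proof is correct and follows essentially the same route as the paper's: reduce to the cases $C_G(N)H \in \{G, M_1\}$ (the case $C_G(N)H = M_2$ being killed by core-freeness of $M_2$), rule out the first by showing $N\cap J_2 \ssubnormal G$, and rule out the second by showing $N\cap M_2 \ssubnormal G$ and contradicting the strict Dedekind chain $N\cap H < N\cap J_2 < N$. The only cosmetic difference is that in the $C_G(N)H = M_1$ case you obtain $N\cap M_2 \ssubnormal G$ from $C_G(N)M_2 = G$, whereas the paper normalizes by $\langle M_1, M_2\rangle = G$; both are valid and equivalent in substance.
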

\begin{proof}
If $M_1$ has non-trivial core, then there is a minimal normal
subgroup $N\ssubnormal G$ contained in $M_1$.  We proved above that
$M_2$ must be core-free in this case, so either $C_G(N)H  = G$, 
$C_G(N)H  = M_1$, 
or $C_G(N)=1$.  The first case is easily ruled out exactly as in Case 1 above. 
The second case is handled by the argument we used in Case 3.  Indeed, if we suppose 
$C_G(N)H = M_1$, 
then 
$N\cap M_2$ 
is normalized by both $H$ and $C_G(N)$, hence by
$M_1$.  It is also normalized by $M_2$, so 
$N\cap M_2\ssubnormal G$.  Thus, by minimality of $N$, 
and since $M_2$ is core-free,
$N\cap M_2 = 1$.  But then $N\cap J_2 = 1$,
leading to a contradiction similar to~(\ref{eq:6}) but with $M_1$ replacing $M_2$.  
Therefore, the case 
$C_G(N)H = M_1$ does not occur, and we have proved $C_G(N)=1$. 
\end{proof}

So far we have proved that all intermediate proper subgroups in the interval $[H, G]$
are core-free except possibly at most one of $M_1$ or $M_2$.  Moreover, we
proved that if one of the maximal subgroups has non-trivial core, then there is
a unique minimal normal subgroup $N\ssubnormal G$ with trivial centralizer,
$C_G(N) = 1$.  As explained above, $G$ is subdirectly irreducible in this case,
since minimal normal subgroups centralize each other.

In order to prove (ii), there remains only one case left to check, and the
argument is by now very familiar.
\begin{claim}
  If each $H\leq X < G$ is core-free and $N$ is a minimal normal subgroup of
  $G$, then $C_G(N) = 1$.
\end{claim}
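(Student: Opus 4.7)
The plan is to argue by contradiction, mirroring Cases~1--3 of the previous claims. Suppose $C_G(N) \neq 1$. Since $N \subnormal G$, the centralizer $C_G(N)$ is normal in $G$, so $C_G(N)H$ is a subgroup of $G$ lying in the interval $[H,G]$. If $C_G(N)H$ is a proper subgroup of $G$, then $C_G(N)$ sits inside one of $H, J_1, J_2, K, M_1, M_2$, each of which is core-free by hypothesis; combined with $C_G(N) \subnormal G$, this forces $C_G(N) = 1$, a contradiction. Thus I may assume $C_G(N)H = G$, which is the analogue of Case~1.

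The heart of the argument is to apply Lemma~\ref{lemma-wjd-4}~(i) with $U = N$ (which permutes with $H$ because $N$ is normal in $G$) and $UH = NH = G$. This yields a lattice isomorphism
\[
[H, G] \;\cong\; [N \cap H,\, N]^{H},
\]
so that the seven elements of $L_7$ embed as strictly ordered subgroups of $N$ with $H \mapsto N \cap H$ at the bottom and $G \mapsto N$ at the top. In particular, $N \cap J_1$ sits strictly between $N \cap H$ and $N \cap M_2$, and $N \cap M_2 < N$ because $M_2$ is core-free; hence $N \cap H < N \cap J_1 < N$, and $1 < N \cap J_1$ (if $N \cap J_1 = 1$ then $N \cap H = 1$ as well, collapsing the lower strict inequality).

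To finish, I would observe that $N \cap J_1$ is normalized by $J_1$ trivially, and by $C_G(N)$ because $C_G(N)$ centralizes (hence normalizes) every subgroup of $N$. Therefore $N \cap J_1$ is normalized by $\langle C_G(N), J_1 \rangle$, which contains $C_G(N)H = G$. So $N \cap J_1 \subnormal G$ is a proper nontrivial normal subgroup of $G$ strictly contained in $N$, contradicting the minimality of $N$. This contradiction yields $C_G(N) = 1$, and since (ii) implies (iii)--(v), this completes the proof of the theorem.

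The only point requiring care is the strict chain $N \cap H < N \cap J_1 < N \cap M_2 < N$, which rests on the injectivity of the map $X \mapsto N \cap X$ in Lemma~\ref{lemma-wjd-4}~(i) together with the core-freeness of $M_2$; once this is in hand, the rest is essentially a transcription of the Case~1 template used in the earlier claims.
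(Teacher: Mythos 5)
Your proof is correct and follows essentially the same route as the paper's: show $C_G(N)H = G$ from the core-free hypothesis, invoke Lemma~\ref{lemma-wjd-4}(i) with $U=N$ to get the strict chain $N\cap H < N\cap X < N$, and observe that $N\cap X$ is normalized by both $C_G(N)$ and a subgroup containing $H$, hence normal in $G$, contradicting the minimality of $N$. The only cosmetic differences are that the paper uses an arbitrary $H < X < G$ where you use $J_1$, and that the equality $NH = G$ (which you assert when applying the lemma) itself needs the same one-line core-free argument you give for $C_G(N)H = G$.
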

\begin{proof}
  Let $N$ be a minimal normal subgroup of $G$. Then, by the core-free hypothesis
  we have $NH = G$. Fix a subgroup $H< X < G$.  Then $N\cap H < N\cap X < N$.
  The subgroup $N\cap X$ is normalized by $H$ and by 
  $C_G(N)$.  If $C_G(N) \neq 1$, then $C_G(N)H = G$, by the core-free
  hypothesis, so $N\cap X\ssubnormal G$, contradicting the minimality of $N$.  
  Therefore, $C_G(N) \neq 1$.
\end{proof}
Finally, we note that the claims above taken together prove (ii), and thereby
complete the proof of the theorem.  For if $G$ is subdirectly irreducible with
unique minimal normal subgroup $N$, and if $C_G(N) = 1$, then all normal
subgroups (which necessarily lie above $N$) must have trivial centralizers. 

\section{Conclusion}
We conclude this chapter with a final observation which helps us describe the
O'Nan-Scott type of a group which has $L_7$ as an interval in its subgroup
lattice.  We end with a conjecture that should be the subject of future research.

By what we have proved above, $G$ acts
primitively on the cosets of $K$, and it also acts primitively on the cosets of
at least one of $M_1$ or $M_2$.  Suppose $M_1$ is core-free so that 
$G$ is a primitive permutation group in its action on cosets of $M_1$ and let 
$N$ be the minimal normal subgroup of $G$.  As we have seen, $N$ has trivial
centralizer, so it is nonabelian and is the unique minimal normal subgroup of
$G$.
Now, we have seen that $NH \geq M_2$ in this case, so $H < J_2 < NH$ implies
that $N\cap M_1 \neq 1$.
Similarly, if we had started out by assuming that $M_2$ is core-free, then $NH
\geq M_1$, and $H < J_2 < NH$ would imply
that $N\cap M_2 \neq 1$.  

By the following elementary result (see,
e.g.,~\cite{Isaacs:2008}) we see that the action of $N$ on the cosets of the
core-free maximal subgroup $M_i$ is not regular.\footnote{Recall, 
a transitive permutation group $N$ is
\emph{acts regularly} on a set $\Omega$ provided the stabilizer subgroup of $N$
is trivial.  Equivalently, every non-identity element of $N$ is
fixed-point-free. Equivalently,
$N$ is regular on $\Omega$ if and only if for each $\omega_1, \omega_2 \in
\Omega$ there is a unique $n\in N$ such that $n\omega_1 = \omega_2$.  
In particular, $|N| = |\Omega|$.}
Consequently, $G$ is characterized by case 2 of the version of the O'Nan-Scott
Theorem given in the appendix, Section~\ref{sec:class-perm-groups}. 
\begin{lemma}
If $G$ acts transitively on a set $\Omega$ with stabilizer $G_\omega$, then 
a subgroup $N\leq G$ acts transitively on $\Omega$ if and only if 
$NG_\omega = G$. Also, $N$ is regular if and only if in addition $N \cap
G_\omega = 1$. 
\end{lemma}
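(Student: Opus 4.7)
The plan is to prove the two parts in sequence, both reducing to the basic orbit--stabilizer calculation, together with the observation that the stabilizer of $\omega$ inside $N$ is just $N\cap G_\omega$.

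For the first equivalence, I would first note that transitivity of $G$ gives $\Omega = \{g\omega \mid g\in G\}$, so checking whether $N$ is transitive on $\Omega$ reduces to checking whether every point of the form $g\omega$ lies in the $N$-orbit of $\omega$. In the forward direction, suppose $N$ is transitive. For an arbitrary $g\in G$, the point $g\omega$ lies in $N\omega$, so there exists $n\in N$ with $n\omega = g\omega$; equivalently $n^{-1}g\in G_\omega$, whence $g\in NG_\omega$. This gives $G=NG_\omega$. In the reverse direction, suppose $NG_\omega = G$. For any $g\in G$ write $g=nh$ with $n\in N$ and $h\in G_\omega$; then $g\omega = nh\omega = n\omega$, so every point of $\Omega$ lies in $N\omega$. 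Since $\omega_1, \omega_2$ arbitrary may be joined through $\omega$ in the $N$-orbit, $N$ is transitive on $\Omega$.

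For the second part, I would simply observe that the stabilizer of $\omega$ under the restricted action of $N$ on $\Omega$ is exactly
\[
N_\omega \;=\; \{n\in N \mid n\omega = \omega\} \;=\; N\cap G_\omega.
\]
By definition, a transitive action is regular precisely when every point stabilizer is trivial, and under a transitive action one stabilizer is trivial iff all are (they are conjugate). Combining this with the first part yields: $N$ is regular on $\Omega$ iff $N$ is transitive on $\Omega$ and $N\cap G_\omega = 1$, iff $NG_\omega = G$ and $N\cap G_\omega = 1$.

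There is no real obstacle here; the lemma is a direct consequence of the orbit--stabilizer theorem applied to the subgroup $N$, and the only care needed is to observe that the $N$-stabilizer of $\omega$ is literally the intersection $N\cap G_\omega$, so that regularity of $N$ translates cleanly into the stated pair of conditions. The entire argument should take only a few lines once written out in full.
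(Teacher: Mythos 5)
Your proof is correct and is the standard orbit--stabilizer argument; the paper itself omits a proof entirely, citing this as an elementary result from Isaacs. Both directions of the first equivalence and the identification of the $N$-stabilizer of $\omega$ with $N\cap G_\omega$ are exactly right, so nothing further is needed.
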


\chapter{Expansions of Finite Algebras}
\label{cha:expans-finite-algebr}
\section{Background and motivation}
In this chapter  we present a novel approach to the construction of new finite
algebras and describe the congruence lattices of these algebras. Given a finite
algebra $\<B, \dots\>$, let $B_1, B_2, \dots, B_K$ be sets which intersect $B$
at specific points. We construct an  {\it overalgebra} $\<A, F_A\>$, by which we
mean an expansion of $\<B, \dots\>$ with universe  $A := B \cup B_1 \cup \cdots
\cup B_K$,  and a certain set $F_A$ of unary operations which include idempotent
mappings $e$ and $e_i$ satisfying $e(A) = B$ and $e_i(A) = B_i$.  We explore a
number of such constructions and prove results about the shape of the new
congruence lattices $\Con \<A, F_A\>$ that result. Thus, descriptions of some
new classes of finitely representable lattices is one of our primary
contributions. 
Another, perhaps more significant contribution is the announcement of a
novel approach to the discovery of new classes of representable lattices.

Our main contribution is the description and analysis of a
new procedure for generating finite lattices which are, by
construction, finitely representable.  
Roughly speaking, we start with an arbitrary finite algebra $\bB := \<B,
\dots\>$, with known congruence lattice $\Con\bB$, and we let $B_1, B_2, \dots,
B_K$ be sets which intersect $B$ at certain points.  The choice of intersection
points plays an important r\^ole which we describe in detail later.  We then
construct an  {\it overalgebra} $\bA:=\<A, F_A\>$, by which we mean an expansion of $\bB$
with universe $A = B \cup B_1 \cup \cdots \cup B_K$,  and a certain set $F_A$ of
unary operations which include idempotent mappings $e$ and $e_i$
satisfying $e(A) = B$ and $e_i(A) = B_i$.  

Given our interest in the problem mentioned above, the important consequence of
this procedure is the new (finitely representable) lattice $\Con\bA$ that
it produces.  The shape of this lattice is, of course, determined by
the shape of $\Con\bB$, the choice of intersection points of the $B_i$, and the
unary operations chosen for inclusion in $F_A$.  In this chapter, we
describe a number of constructions of this type and prove some results 
about the shape of the congruence lattices of the resulting overalgebras.  

Before giving an overview of this chapter, we give a bit of background about the
original example which provided the impetus for this work.  In the spring of
2011, our research seminar was fortunate enough to have 
as a visitor 
\index{Jipsen, Peter}%
Peter Jipsen, who initiated the ambitious project of
cataloging every small finite lattice $L$ for which there is a known finite algebra
$\bA$ with $\Con\bA\cong L$.  Before long, we had identified such finite
representations for all lattices of order seven or less, except for the two
lattices appearing in Figure~\ref{fig:sevens}.
(Section~\ref{sec:seven-elem-latt} describes some of the methods we used to find
representations of the other seven-element lattices.)
\begin{figure}[h!]
  \centering
  
  \begin{tikzpicture}[scale=.7]

    \node (01) at (0,1)  [draw, circle, inner sep=1pt] {};
    \foreach \j in {0,2} 
             { \node (1\j) at (1,\j)  [draw, circle, inner sep=1pt] {};}

             \foreach \j in {1,3} 
                      { \node (2\j) at (2,\j)  [draw, circle, inner sep=1pt] {};}
                      { \node (32) at (3,2)  [draw, circle, inner sep=1pt] {};}
                      \draw[semithick] (10) to (01) to (12) to (23) to (32) to (21) to (10) (21) to (12);
                           { \node (m11) at (-1,1)  [draw, circle, inner sep=1pt] {};}
                           \draw[semithick] (10) to (m11) to (23);


                           \foreach \j in {0,3} 
                                    { \node (7\j) at (6.5,\j)  [draw, circle, inner sep=1pt] {};}
                                    \node (71) at (7,1.5)  [draw, circle, inner sep=1pt] {};
                                    \node (61) at (6,1.5)  [draw, circle, inner sep=1pt] {};
                                    \node (51) at (5,1.5)  [draw, circle, inner sep=1pt] {};
                                    \foreach \j in {1,2} 
                                             { \node (8\j) at (7.8,\j)  [draw, circle, inner sep=1pt] {};}
                                             \draw[semithick] (70) to (51) to (73) to (61) to (70) to (71) to (73) to
                                             (82) to (81) to (70);
                                             
  \end{tikzpicture}

  \caption{Lattices of order 7 with no obvious finite algebraic representation.}
  \label{fig:sevens}
\end{figure}
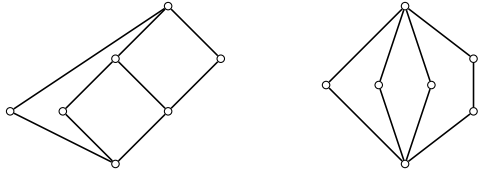
\index{Freese, Ralph}%
\noindent Ralph Freese then discovered a way to construct an algebra 
which has the second of these as its congruence lattice. The idea 
is to start with an algebra $\bB = \<B, \dots\>$ having congruence lattice $\Con \bB
\cong M_4$, expand the universe to the larger set $A = B\cup B_1 \cup B_2$, and
then define the right set $F_A$ of operations on $A$ so that the congruence lattice
of $\bA = \< A, F_F\>$ will be an $M_4$ with one atom ``doubled'' -- that is,
$\Con\bA$ will be the second lattice in figure~\ref{fig:sevens}.

In this chapter we formalize this approach and extend it in four ways.  The first
is a straight-forward generalization of the original overalgebra construction,
and the second is a further expansion of these overalgebras.
The third is a construction based on one suggested by 
\index{Lampe, William}%
Bill Lampe which addresses a basic
limitation of the original procedure.  Finally, we give a generalization of the third
construction.  For each of these constructions we prove
results which allow us to describe the congruence lattices of the resulting
overalgebras. 

Here is a brief outline of the remaining sections of this chapter:
In Section~\ref{sec:residuation-lemma} we prove a lemma which greatly simplifies the
analysis of the structure of the newly enlarged congruence lattice and
its relation to the original congruence lattice.
In Section~\ref{sec:overalgebras} we define {\it overalgebra} and in 
Section~\ref{sec:overalgebras-i} we give a formal description of
the original construction mentioned above.  We then describe the
original example in detail before proving some general results about
the congruence lattices of such overalgebras.
At the end of Section~\ref{sec:overalgebras-i} we describe a further expansion
of the set of operations defined in the first construction, and we conclude the
section with an example demonstrating the utility of these additional operations.
Section~\ref{sec:overalgebras-ii}
presents a second overalgebra construction which overcomes a basic limitation 
of the first.  We then
prove a result about the structure of the congruence lattices of these overalgebras,
and close the section with some further examples which illustrate the procedure and
demonstrate its utility.  In Section~\ref{sec:overalgebras-iii}
we describe a construction that further generalizes the one in
Section~\ref{sec:overalgebras-ii}. 
The last section discusses the impact that our results
have on the main problem -- the finite congruence lattice representation problem
-- as well as the inherent limitations of this approach, and concludes with some
open questions and suggestions for further research.  

\section{A residuation lemma}
\label{sec:residuation-lemma}

Let $e^2 = e \in \Pol_1(\bA)$ be an idempotent unary polynomial, define
$B:=e(A)$ and
$F_B := \{ef\resB \mid f\in \Pol_1(\bA)\}$, and consider the 
unary\footnote{In the definition of  $F_B$, we could have used 
  $\Pol(\bA)$ instead of $\Pol_1(\bA)$, and then our discussion would not be
  limited to \emph{unary} algebras.  However, as we are mainly concerned with
  congruence lattices, we lose nothing by restricting the scope in this way.  Also, 
  later sections of this chapter will be solely concerned with unary algebras, so
  for consistency we define $\bB$ to be unary in this section as well.} 
algebra $\bB:= \<B, F_B\>$.  \PAP\
prove in Lemma 1 of~\cite{Palfy:1980} that
the restriction mapping $\resB$, defined on $\Con\bA$ by 
$\alpha\resB = \alpha \cap B^2$, is a lattice epimorphism of $\Con\bA$ onto $\Con\bB$.
In~\cite{McKenzie:1983}, McKenzie, taking Lemma 1 as a starting point,
develops the foundations of what would become tame congruence theory.  
In reproving the \PP\ congruence lattice epimorphism lemma,
\index{McKenzie, Ralph}%
McKenzie introduces the mapping $\hatmap$ defined on $\Con\bB$ by 
\[
\widehat{\beta} = \{(x,y) \in A^2 \mid (ef(x), ef(y))\in \beta\, \text{ for all }\, f\in \Pol_1(\bA) \}.
\]
It is not hard to see that $\hatmap$ maps $\Con\bB$ into $\Con\bA$.  For
example, if $(x,y) \in \widehat{\beta}$ and $g\in \Pol_1(\bA)$, then for all $f\in \Pol_1(\bA)$ we
have $(efg(x),efg(y)) \in \beta$, so $(g(x),g(y))\in \widehat{\beta}$.

For each $\beta \in \Con\bB$, let $\beta^* = \Cg^\bA(\beta)$.  That is,
$^*: \Con\bB \rightarrow \Con\bA$ is
the congruence generation operator restricted to the set $\Con\bB$.
The following lemma concerns the three mappings, $\resB$, $\hatmap$, and $^*$.
The third statement of the lemma, which follows from the first two, 
will be useful in the later sections of this chapter.

\begin{lemma}
\label{lem:residuation-lemma}
  ~
  \begin{enumerate}[(i)]
  \item $^*: \Con\bB \rightarrow \Con\bA$ is a residuated mapping with
    residual $\resB$.
  \item $\resB : \Con\bA \rightarrow \Con\bB$ is a residuated mapping with
    residual $\hatmap$.
  \item For all $\alpha \in \Con\bA, \, \beta \in \Con\bB$,
    \[
    \beta = \alpha\resB \quad \Leftrightarrow  \quad 
    \beta^* \leq \alpha \leq \widehat{\beta}.
    \]
    In particular, 
    $\beta^*\resB = \beta = \widehat{\beta}\resB$.
  \end{enumerate}
\end{lemma}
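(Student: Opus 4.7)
The plan is to establish (i) and (ii) as two independent Galois adjunctions, and then derive (iii) formally from them. Recall that a monotone map $f$ is residuated with residual $g$ precisely when $f(p)\leq q \Leftrightarrow p \leq g(q)$; thus (i) amounts to the biconditional $\beta^* \leq \alpha \Leftrightarrow \beta \leq \alpha\resB$, and (ii) amounts to $\alpha\resB \leq \beta \Leftrightarrow \alpha \leq \widehat\beta$. Monotonicity of each map is transparent, so the content lies entirely in the adjunction identities.

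For (i), both directions follow directly from the definition of $\beta^* = \Cg^\bA(\beta)$ as the smallest congruence of $\bA$ containing $\beta$: if $\beta^* \leq \alpha$ then $\beta \subseteq \beta^* \cap B^2 \subseteq \alpha\resB$, and conversely if $\beta \leq \alpha\resB$ then $\alpha$ is itself a congruence of $\bA$ containing $\beta$, forcing $\beta^* \leq \alpha$ by minimality. For (ii), the forward direction exploits the fact that $ef \in \Pol_1(\bA)$ preserves every $\alpha \in \Con\bA$, so $(x,y) \in \alpha$ forces $(ef(x),ef(y)) \in \alpha \cap B^2 = \alpha\resB \leq \beta$, placing $(x,y)$ in $\widehat\beta$. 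The backward direction is where the idempotency of $e$ is essential: for $(x,y) \in \alpha \cap B^2$ we have $e(x)=x$ and $e(y)=y$, so instantiating the defining condition of $\widehat\beta$ at $f=\id_A$ immediately yields $(x,y) \in \beta$.

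Statement (iii) then drops out by combining the two adjunctions: $\beta \leq \alpha\resB$ is one direction of (i), and $\alpha\resB \leq \beta$ is one direction of (ii). For the ``in particular'' equalities, it suffices to check $\beta^* \leq \widehat\beta$, since then both specializations $\alpha = \beta^*$ and $\alpha = \widehat\beta$ lie in the interval $[\beta^*, \widehat\beta]$, and the biconditional of (iii) delivers the result. The inequality $\beta^* \leq \widehat\beta$ follows from the minimality of $\Cg^\bA(\beta)$ together with two easy verifications: $\widehat\beta$ is a congruence of $\bA$ (noted in the excerpt), and $\beta \subseteq \widehat\beta$ (because for $(x,y) \in \beta$ and any $f \in \Pol_1(\bA)$, the restriction $ef\resB$ belongs to $F_B \subseteq \Pol_1(\bB)$, hence preserves $\beta$). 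The main obstacle is mostly bookkeeping: keeping the direction of each adjunction straight and not conflating the congruence-generation operator ${}^*$ on $\bA$ with the quite different closure operator $\hatmap$; no deep structural argument is required.
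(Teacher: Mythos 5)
Your proposal is correct and follows essentially the same route as the paper's proof: both reduce (i) and (ii) to the adjunction biconditionals $\beta^* \leq \alpha \Leftrightarrow \beta \leq \alpha\resB$ and $\alpha\resB \leq \beta \Leftrightarrow \alpha \leq \widehat{\beta}$, verify them by minimality of $\Cg^{\bA}(\beta)$ and by instantiating the definition of $\widehat{\beta}$ at $f = \id_A$ using idempotency of $e$, and then obtain (iii) by combining the two. Your extra verification that $\beta \subseteq \widehat{\beta}$ (hence $\beta^* \leq \widehat{\beta}$) is a small but welcome addition that the paper leaves implicit when asserting the ``in particular'' equalities.
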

\begin{proof}
  We first recall the definition of {\it residuated mapping}.  If $X$ and $Y$
  are partially ordered sets, and if 
  $f: X \rightarrow Y$ and 
  $g: Y \rightarrow X$ are order preserving maps, then the following are
  equivalent:
  \begin{enumerate}[(a)]
  \item $f: X \rightarrow Y$ is a {\it residuated mapping} with {\it residual}
    $g: Y \rightarrow X$;
  \item for all $x\in X,\, y\in Y$,  $f(x) \leq y$ iff $x \leq g(y)$;
  \item $g\circ f \geq \id_X$ and $f\circ g \leq \id_Y$.
  \end{enumerate}
  The definition says that for each $y\in Y$ there is a unique
  $x\in X$ that is maximal with respect to the property $f(x) \leq y$, and the
  maximum $x$ is given by $g(y)$.
  Thus, {\it (i)} is equivalent to 
  \begin{equation}
    \label{eq:OAi}
    \beta^* \leq \alpha \quad \Leftrightarrow \quad \beta \leq \alpha\resB
    \quad (\forall \, \alpha \in \Con\bA,\; \forall \, \beta \in \Con\bB).
  \end{equation}
  This is easily verified, as follows:  If 
  $\beta^* \leq \alpha$ and $(x,y)\in \beta$, then
  $(x,y) \in \beta^* \leq \alpha$ 
  and $(x,y) \in B^2$, so $(x,y)\in
  \alpha\resB$.  If $\beta \leq \alpha\resB$ then 
  $\beta^* \leq (\alpha\resB)^* \leq \Cg^\bA(\alpha) = \alpha$.

  Statement {\it (ii)} is equivalent to 
  \begin{equation}
    \label{eq:OAii}
    \alpha\resB\leq \beta 
    \quad \Leftrightarrow \quad 
    \alpha \leq \widehat{\beta}
    \quad (\forall \, \alpha \in \Con\bA,\; \forall \, \beta \in \Con\bB).
  \end{equation}
  This is also easy to check.  For, suppose
  $\alpha\resB\leq \beta$ and $(x,y)\in \alpha$. Then $(ef(x), ef(y)) \in \alpha$
  for all $f \in \Pol_1(\bA)$ and $(ef(x), ef(y)) \in B^2$, therefore, 
  $(ef(x), ef(y)) \in \alpha\resB \leq \beta$, so $(x,y) \in \widehat{\beta}$.
  Suppose $\alpha \leq \widehat{\beta}$ and $(x,y) \in \alpha\resB$. 
  Then $(x,y) \in \alpha \leq  \widehat{\beta}$, so 
  $(ef(x), ef(y)) \in \beta$ for all $f\in \Pol_1(\bA)$, including $f=\id_A$, so 
  $(e(x), e(y)) \in \beta$. But $(x, y) \in B^2$, so $(x, y) = (e(x), e(y)) \in
  \beta$.

  Combining~(\ref{eq:OAi}) and~(\ref{eq:OAii}), we obtain statement {\it (iii)} of the lemma.
\end{proof}

The lemma above was inspired by the two approaches to proving Lemma 1
of~\cite{Palfy:1980}.  In the original paper $^*$ is used, while McKenzie uses
the $\hatmap$ operator.  Both $\beta^*$ and
$\widehat{\beta}$ are mapped onto $\beta$ by the restriction map $\resB$, so
the restriction map is indeed onto $\Con\bB$.
However, our lemma emphasizes the fact that the interval 
\[
  [\beta^*, \widehat{\beta}] = 
  \{
  \alpha \in \Con\bA \mid \beta^* \leq \alpha \leq \widehat{\beta}
  \}
\]
is precisely the set of congruences for
which $\alpha\resB = \beta$.  In other words, the
inverse image of $\beta$ under $\resB$ is
$\beta \resB^{-1} = [\beta^*, \widehat{\beta}]$.
This fact plays a central r\^{o}le in the
theory developed below.
Nonetheless, for the sake of completeness, we conclude this section by
verifying that Lemma 1 of~\cite{Palfy:1980} can be obtained from the lemma above.
\begin{corollary}
  $\resB : \Con\bA \rightarrow \Con\bB$ is onto and preserves meets and joins.
\end{corollary}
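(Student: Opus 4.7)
The plan is to extract all three conclusions directly from parts (i)--(iii) of Lemma~\ref{lem:residuation-lemma} using standard facts about residuated (Galois-adjoint) mappings between posets, so there is essentially no extra computation to do.

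First I would dispense with surjectivity. By part (iii) of the lemma, for any $\beta \in \Con\bB$ we have $\beta^*\res_B = \beta$, which already exhibits a preimage of $\beta$ under the restriction map. Hence $\res_B$ is onto. This also shows incidentally that both $^*$ and $\hatmap$ are sections of $\res_B$, so $\res_B \circ {}^* = \id_{\Con\bB} = \res_B \circ \hatmap$.

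Next I would invoke the basic duality for residuated maps between complete lattices: if $f \colon X \to Y$ is residuated with residual $g \colon Y \to X$, then $f$ preserves all joins that exist in $X$ and $g$ preserves all meets that exist in $Y$. Applying this to part (ii) of the lemma, where $\res_B$ itself is residuated (with residual $\hatmap$), yields that $\res_B$ preserves arbitrary joins. Applying the same principle to part (i), where $^*$ is residuated with residual $\res_B$, yields that $\res_B$ preserves arbitrary meets. Together with surjectivity, this is exactly the statement of the corollary.

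The only step requiring any care is making sure the reader sees why the adjoint-preservation facts apply, since $\Con\bA$ and $\Con\bB$ are complete lattices and the relevant joins and meets (finite ones suffice for the statement) certainly exist. Thus I would simply quote the two directions of the residuation principle and cite parts (i) and (ii), yielding the conclusion in a few lines without any direct manipulation of congruences. There is no real obstacle here; the substance of the corollary has already been absorbed into the lemma.
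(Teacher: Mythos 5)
Your proof is correct, and it rests on the same foundation as the paper's: everything is pulled out of Lemma~\ref{lem:residuation-lemma}. The only real difference is one of packaging. For joins, the paper unwinds the adjunction by hand — it shows $(\eta\join\theta)\resB \leq \eta\resB\join\theta\resB$ by passing through $\widehat{\eta\resB\join\theta\resB}$ twice — which is exactly the specialization of the general principle you quote, that a residuated map preserves joins. For meets, the paper does \emph{not} use the lemma at all; it simply observes that meet-preservation is obvious, since $\alpha\resB = \alpha\cap B^2$ and intersection with a fixed set commutes with intersections. Your route via part (i) of the lemma (the residual of a residuated map preserves meets) is equally valid and arguably more uniform, though slightly less elementary than the one-line set-theoretic observation. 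Both approaches establish surjectivity identically, from $\beta^*\resB = \beta$.
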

\begin{proof}
  Given $\beta\in \Con\bB$, each $\theta\in \Con\bA$ in the interval $[\beta^*,
    \widehat{\beta}]$ is mapped to $\theta\resB = \beta$, so $\resB$ is clearly
  onto.  That $\resB$ preserves meets is obvious.  To see that $\resB$ is
  join preserving, note that for all $\eta, \theta \in \Con\bA$, we have
  \[
  \eta\resB \join \theta\resB \leq (\eta \join \theta)\resB
  \]
  since $\resB$ is order preserving.  The opposite inequality follows
  from~(\ref{eq:OAii}) above. For,
  \[
  (\eta \join \theta)\resB \leq \eta\resB \join \theta\resB
  \quad \Leftrightarrow \quad 
  \eta \join \theta \leq \widehat{\eta\resB \join \theta\resB},
  \]
  and the second inequality holds since, by~(\ref{eq:OAii}) again,
  \[
  \eta \leq \widehat{\eta\resB \join \theta\resB}
  \quad \Leftrightarrow \quad 
  \eta\resB \leq \eta\resB \join \theta\resB
  \]
  and 
  \[
  \theta \leq \widehat{\eta\resB \join \theta\resB}
  \quad \Leftrightarrow \quad 
  \theta\resB \leq \eta\resB \join \theta\resB.
  \]
\end{proof}
\begin{remark}
  This approach to proving Lemma 1 of \cite{Palfy:1980}, which is similar to the
  proof given in \cite{McKenzie:1983}, does not reveal any information about
  the permutability of the congruences of $\bA$, unlike the more direct proof
  given in~\cite{Palfy:1980}. 
\end{remark}

\section{Overalgebras}
\label{sec:overalgebras}
In the previous section, we started with an algebra $\bA$ and
considered a subreduct $\bB$ with universe $B = e(A)$, the image of an
idempotent unary polynomial of $\bA$.  In this section, we start with a
fixed finite algebra $\bB = \<B, \dots \>$ and consider various ways to
construct an \emph{overalgebra}, that is, an algebra $\bA= \<A, F_A\>$ having 
$\bB$ as a subreduct where $B = e(A)$ for some idempotent $e \in F_A$.  
Beginning with a specific finite algebra $\bB$, our goal is to understand what
(finitely representable) congruence lattices $\Con\bA$ can be built up from
$\Con\bB$ by expanding the algebra $\bB$ in this way.

\subsection{Overalgebras I}
\label{sec:overalgebras-i}
Let $B$ be a finite set, say, $B = \{b_1, b_2\dots, b_n\}$, let $F\subseteq B^B$
be a set of unary maps taking $B$ into itself, and consider the unary algebra
$\bB = \<B, F\>$, with universe $B$ and basic operations $F$. 
When clarity demands it, we call this collection of operations $F_B$.
Let $B_1, B_2, \dots, B_{K}$ be sets of
the same cardinality as $B$, which intersect $B$ at exactly one point, as follows:
\begin{align}
  \label{eq:OABB}
  B &= \{b_1, b_2, b_3, \dots, b_{n}\}\nonumber\\
  B_1 &= \{b_1, b^1_2,  b^1_3, \dots, b^1_{n}\}\nonumber\\
  B_2 &=  \{b^2_1, b_2,  b^2_3, \dots, b^2_{n}\}\nonumber\\
  B_3 &=  \{b^3_1, b^3_2,  b_3, \dots, b^3_{n}\}\nonumber\\
  & \vdots\\
  B_{K} &= \{b^K_1, \dots, b^K_{K-1},b_K, b^K_{K+1},\dots, b^K_{n}\}.\nonumber
\end{align}
That is, for all $1 \leq i < j \leq K$, we have
\[
|B_i| = n \geq K, \qquad B\cap B_i = \{b_i\}, \quad \text{ and } \quad B_i \cap B_j = \emptyset.
\]
Sometimes it is notationally convenient to use the label $B_0:=B$. 

Let $\pi_i: B\rightarrow B_{i}$ be given by $\pi_i(b_j) = b_j^{i}$, for
$i=0, 1, 2, \dots, n$ and $j=1, 2, \dots, K$.  (It is
convenient to include $i=0$ in this definition, in which case we let 
$\pi_0(b_j) = b_j^{0} := b_j$.) 
The map $\pi_i$ and the operations $F$ induce a set $F_{i}$ of
unary operations on $B_i$, as follows:
to each $f\in F$ corresponds the operation $f^{\pi_i} : B_i \rightarrow B_i$
defined by $f^{\pi_i} = \pi_i f \pi_i^{-1}$.
Thus, for each $i$, $\bB_i := \<B_i, F_i\>$ 
and $\bB = \<B, F\>$ are isomorphic  algebras.
That is, for all $i=1,\dots, K$, we have 
\begin{align*}
  \pi_i :   \<B, F\> &\cong \< B_i, F_i\>\\
  B\ni b & \mapsto  b\supi \in B_i\\
  F\ni f &\mapsto f^{\pi_i} \in F_i
\end{align*}
To say that $\pi_i$ is an isomorphism of two non-indexed algebras
is to say that $\pi_i$ is a bijection of the universes which respects the
interpretation of the basic operations; that is, 
$\pi_if(b)= f^{\pi_i}(\pi_i b)$.  In the present case, this holds by
construction:\footnote{
  This generalizes to $k$-ary operations if we 
  adopt the following convention: $f^{\pi_i}(a_1,\dots, a_k) =
  \pi_if(\pi_i^{-1}(a_1), \dots, \pi_i^{-1}(a_k))$.}
$\pi_if(b) = \pi_i f(\pi_i^{-1}\pi_i b) = f^{\pi_i}(\pi_i b)$.

Let $A = \bigcup_{i=0}^K B_i$ and define the following unary maps on $A$:
\begin{itemize}
\item  $e_k: A\rightarrow A$ is $e_k(b_i^j)
  = b_i^k \quad (1 \leq i \leq n;\, 0\leq j, k \leq K)$;
\item $s:A\rightarrow A$ is 
  \[
  s(x) = 
  \begin{cases}
    x, & \text{ if $x\in B_0$,}\\
    b_i, & \text{ if $x\in B_i$.}
  \end{cases}
  \]
\end{itemize}
Let 
\[
F_A := \{f e_0 : f\in F\} \cup \{e_k : 0\leq k \leq K\} \cup \{s\},
\] 
and define the unary algebra $\bA := \< A, F_A\>$.  

Throughout, the map $\hatmap$ is defined
in essentially the same way as it is in McKenzie's paper~\cite{McKenzie:1983}.
That is, given two algebras $\bA = \< A, \dots\>$ 
and $\bB = \< B, \dots\>$ with $B = e(A)$ for some idempotent
$e \in \Pol_1(\bA)$,  we define
$\hatmap : \Con\bB \rightarrow \Con\bA$ by
\[
\widehat{\beta} = \{(x,y) \in A^2 \mid (ef(x), ef(y))\in \beta, \; \forall\,
f\in \Pol_1(\bA) \} \quad (\beta \in \Con\bB).
\]

\begin{example}
  \label{ex:3.1}  
  Before proving some results about the basic structure of the
  congruence lattice of an overalgebra, we 
  present the original example, discovered by 
\index{Freese, Ralph}%
Ralph Freese, of a finite algebra with
  a congruence lattice isomorphic to the second lattice in 
  Figure~\ref{fig:sevens}.
  Consider a finite permutational algebra $\bB = \<B, F\>$
  with congruence lattice $\Con\bB \cong M_4$. (Figure~\ref{fig:ConS3})
  There are only a few small algebras to choose 
  from.\footnote{In fact, there are
    infinitely many, but apart from those involving 
    $S_3$, $C_3 \times C_3$, and $(C_3 \times C_3) \rtimes C_3$, they are quite
    large.  The next smallest G-set with $M_4$
    congruence lattice that we know of comes from the group 
    $G = 
    [ 
      ( (C_3 \times C_3) \rtimes C_2 ) 
      \times 
      ( (C_3 \times C_3) \rtimes C_2 )
    ] \rtimes C_2$
    acting on right cosets of $H = D_8$.  
    The index in this case is $|G:H| = 81$.
    (In \GAP, {\tt G:=SmallGroup(648,725)}, 
    and $H$ is 
    found to be the fourth maximal subgroup class representative 
    of the fourth maximal subgroup class representative of $G$.)}
  We consider the right regular $S_3$-set -- i.e.~the algebra $S_3$ acting on
  itself by right multiplication.  In \GAP,\footnote{
    All of the computational experiments we describe in this chapter rely on
    two open source programs, \GAP~\cite{GAP4} and the Universal Algebra
    Calculator~\cite{uacalc} (\uacalc).  To conduct our experiments,
    we have written a small collection of \GAP\ functions; these are
    available at \url{http://math.hawaii.edu/~williamdemeo/Overalgebras.html}.
  }
  
  {\footnotesize
\begin{verbatim}
gap> G:=Group([(1,2), (1,2,3)]);; 
gap> G:=Action(G,G,OnRight);  
Group([ (1,5)(2,4)(3,6), (1,2,3)(4,5,6) ])
\end{verbatim}
  }

  We prefer to use ``0-offset'' notation, and 
  define the universe of the $S_3$-set described above to be $\{0, 1,\dots,
  5\}$ instead of $\{1, 2, \dots, 6\}$.  
  As such, the nontrivial congruence relations of this algebra are,

  {\footnotesize
\begin{verbatim}
gap> for b in AllBlocks(G) do Print(Orbit(G,b,OnSets)-1, "\n"); od;
[ [ 0, 1, 2 ], [ 3, 4, 5 ] ]
[ [ 0, 3 ], [ 2, 5 ], [ 1, 4 ] ]
[ [ 0, 4 ], [ 2, 3 ], [ 1, 5 ] ]
[ [ 0, 5 ], [ 2, 4 ], [ 1, 3 ] ]
\end{verbatim}
  }

  \noindent Next, we create an algebra
  in \uacalc\ format using the two generators of the
  group as basic operations.\footnote{The GAP
  routine {\tt gap2uacalc.g} is available at \url{www.uacalc.org}.}
  {\footnotesize
\begin{verbatim}
gap> Read("gap2uacalc.g");
gap> gset2uacalc([G,"S3action"]);
\end{verbatim}
}
  \noindent This creates a \uacalc\ file 
  specifying an algebra with universe $B = \{0, 1, \dots, 5\}$ and two
  basic unary operations $g_0 = (4\; 3\;  5\;  1\;  0\;  2)$ and $g_1 = (1\;  2 \; 0\;  4\;  5\;  3)$.
  These operations are the permutations $(0,4)(1,3)(2,5)$ and $(0,1,2)(3,4,5)$, which, in 
  ``1-offset'' notation, are the generators $(1,5)(2,4)(3,6)$ and $(1,2,3)(4,5,6)$
  of the $S_3$-set appearing in the \GAP\ output above.
  Figure~\ref{fig:ConS3} displays the congruence lattice of this algebra.

  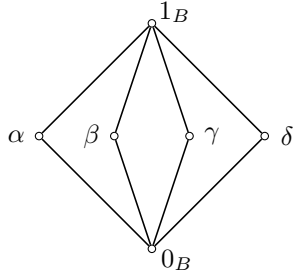
\begin{figure}[h!]
    \centering
    \begin{tikzpicture}[scale=1]
      \node (250) at (2.5,0)  [draw, circle, inner sep=1pt] {};
      \node (253) at (2.5,3)  [draw, circle, inner sep=1pt] {};
      \foreach \i in {1,2, 3, 4} 
               { 
                 \node (\i15) at (\i,1.5)  [draw, circle, inner sep=1pt] {};
                 \draw[semithick] (250) to (\i15) to (253);
               }
               \draw (0.7,1.5) node {$\alpha$};
               \draw (1.7,1.5) node {$\beta$};
               \draw (3.3,1.5) node {$\gamma$};
               \draw (4.3,1.5) node {$\delta$};
               \draw (2.8,3.15) node {$1_B$};
               \draw (2.83,-.15) node {$0_B$};
    \end{tikzpicture}
    \caption{Congruence lattice of the right regular $S_3$-set, where
      $\alpha = | 0, 1, 2 | 3, 4, 5|$,
      $\beta = | 0, 3 | 2, 5 | 1, 4 |$,
      $\gamma = | 0, 4| 2, 3| 1, 5|$, 
      $\delta = | 0, 5| 2, 4| 1, 3|$.
    }
    \label{fig:ConS3}
  \end{figure}

  We now construct an overalgebra which ``doubles'' the congruence $\alpha =
  \Cg^\bB(0,2) = | 0, 1, 2 | 3, 4, 5|$ by choosing intersection points 0 and 2.
  The \GAP\ function {\tt Overalgebra} carries out the construction, and is invoked
  as follows:\footnote{The \GAP\ file 
  {\tt Overalgebras.g} is available at \url{http://dl.dropbox.com/u/17739547/diss/Overalgebras.g}.}

  {\footnotesize
\begin{verbatim}
gap> Read("Overalgebras.g");
gap> Overalgebra([G, [0,2]]);
\end{verbatim}
  }

  \noindent This gives an
  overalgebra with universe $A = B_0 \cup B_1 \cup B_2 = \{ 0, 1, 2, 3, 4, 5\} \cup
  \{0, 6, 7, 8, 9, 10\} \cup\{ 11, 12, 2, 13, 14, 15\}$,
  and the following operations:
  \\[-5pt]
  \begin{center}
    {\small 
      \begin{tabular}{c|r|r|r|r|r|r|r|r|r|r|r|r|r|r|r|r}
        &0&1&2&3&4&5&6&7&8&9&10&11&12&13&14&15\\
        \hline
        $e_0$ & 0& 1& 2& 3& 4& 5& 1& 2& 3& 4& 5& 0& 1& 3& 4& 5\\
        $e_1$ & 0& 6& 7& 8& 9& 10& 6& 7& 8& 9& 10& 0& 6& 8& 9& 10\\
        $e_2$ &11& 12& 2& 13& 14& 15& 12& 2& 13& 14& 15& 11& 12& 13& 14& 15\\
        $s$  & 0& 1& 2& 3& 4& 5& 0& 0& 0& 0& 0& 2& 2& 2& 2& 2\\
        $g_0 e_0$ &4& 3& 5& 1& 0& 2& 3& 5& 1& 0& 2& 4& 3& 1& 0& 2\\
        $g_1 e_0$& 1& 2& 0& 4& 5& 3& 2& 0& 4& 5& 3& 1& 2& 4& 5& 3
    \end{tabular}}
  \end{center}
  ~\\[4pt]
  \noindent If 
  $F_A=\{e_0, e_1, e_2, s, g_0 e_0, g_1 e_0\}$, then the
  algebra $\<A, F_A\>$ has the congruence lattice shown in Figure~\ref{fig:OverAlgebra-S3-0-2}.
  \begin{figure}[h!]
    \centering
    \begin{tikzpicture}[scale=1]
      \node (70) at (6.5,0)  [draw, circle, inner sep=1pt] {};
      \node (71) at (7,1.5)  [draw, circle, inner sep=1pt] {};
      \node (73) at (6.5,3)  [draw, circle, inner sep=1pt] {};
      \node (61) at (6,1.5)  [draw, circle, inner sep=1pt] {};
      \node (51) at (5,1)  [draw, circle, inner sep=1pt] {};
      \node (52) at (5,2)  [draw, circle, inner sep=1pt] {};
      \node (81) at (8,1.5)  [draw, circle, inner sep=1pt] {};
      \draw[semithick] 
      (70) to (51) to (52) to (73)
      (70) to (61) to (73) 
      (70) to (71) to (73) 
      (70) to (81) to (73);
      \draw (4.7,1) node {$\alpha^*$};
      \draw (4.7,2) node {$\widehat{\alpha}$};
      \draw (5.7,1.5) node {$\beta^*$};
      \draw (7.3,1.5) node {$\gamma^*$};
      \draw (8.3,1.5) node {$\delta^*$};
      \draw (6.8,3.15) node {$1_A$};
      \draw (6.83,-.15) node {$0_A$};
    \end{tikzpicture}
    \caption{Congruence lattice of the overalgebra of the $S_3$-set with
      intersection points 0 and 2.}
    \label{fig:OverAlgebra-S3-0-2}
  \end{figure}
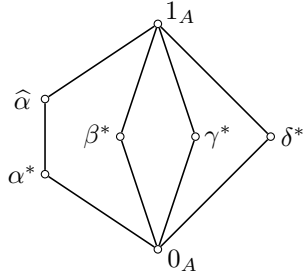

  The congruence relations in Figure~\ref{fig:OverAlgebra-S3-0-2} are as follows:
  \begin{align*}
    \widehat{\alpha} &=|0,1,2,6,7,11,12|3,4,5|8,9,10,13,14,15| \\
    \alpha^* &=|0,1,2,6,7,11,12|3,4,5|8,9,10|13,14,15| \\
    \beta^*&=|0,3,8|1,4|2,5,15|6,9|7,10|11,13|12,14| \\
    \gamma^*&=|0,4,9|1,5|2,3,13|6,10|7,8|11,14|12,15| \\
    \delta^*&=|0,5,10|1,3|2,4,14|6,8|7,9,11,15|12,13|.
  \end{align*}

  It is important to note that the resulting congruence lattice depends
  on our choice of which congruence to ``expand,'' which is controlled by 
  our specification of the intersection points of the overalgebra.
  For example, suppose we want one of the congruences having three
  blocks, say, $\beta = \Cg^\bB(0,3) =| 0, 3 | 2, 5 | 1, 4 |$, to have a non-trivial
  inverse image $\beta\resB^{-1} = 
  [\beta^*, \widehat{\beta}]$.  Then we would select the elements 0
  and 3, (or 2 and 5, or 1 and 4) as the intersection points of the overalgebra.
  To select 0 and 3, we invoke the command 
  {\footnotesize
\begin{verbatim}
gap> Overalgebra([G, [0,3]]);
\end{verbatim}
  }

  \noindent This produces an overalgebra with universe
  $A = B_0 \cup B_1 \cup B_2
  = \{0, 1,  2,  3,  4,  5\} \cup \{ 0, 6,  7,  8,  9, 10\} \cup
  \{11, 12, 13, 3, 14, 15\}$
  and congruence lattice shown in figure~\ref{fig:OverAlgebra-S3-0-3}.

  \begin{figure}[h!]
    \centering
    \begin{tikzpicture}[scale=1.4]
      \node (70) at (6.5,0)  [draw, circle, inner sep=1pt] {};
      \node (71) at (7,1.5)  [draw, circle, inner sep=1pt] {};
      \node (73) at (6.5,3)  [draw, circle, inner sep=1pt] {};
      \node (61) at (6.1,1)  [draw, circle, inner sep=1pt] {};
      \node (62) at (6.1,2)  [draw, circle, inner sep=1pt] {};
      \node (63) at (5.7,1.5)  [draw, circle, inner sep=1pt] {};
      \node (64) at (6.5,1.5)  [draw, circle, inner sep=1pt] {};
      \node (51) at (5,1.5)  [draw, circle, inner sep=1pt] {};
      \node (81) at (8,1.5)  [draw, circle, inner sep=1pt] {};
      \draw[semithick] 
      (70) to (51) to (73)
      (70) to (61) to (63) to (62) to (73) 
      (61) to (64) to (62)
      (70) to (71) to (73) 
      (70) to (81) to (73);
      \draw (4.8,1.5) node {$\alpha^*$};
      \draw (5.98,2.12) node {$\widehat{\beta}$};
      \draw (5.55,1.4) node {$\beta_\eps$};
      \draw (6.65,1.35) node {$\beta_{\eps'}$};
      \draw (6,.8) node {$\beta^*$};
      \draw (7.25,1.5) node {$\gamma^*$};
      \draw (8.2,1.5) node {$\delta^*$};
      \draw (6.7,3.15) node {$1_A$};
      \draw (6.73,-.15) node {$0_A$};
    \end{tikzpicture}
    \caption{Congruence lattice of the overalgebra of the $S_3$-set with
      intersection points 0 and 3.}
    \label{fig:OverAlgebra-S3-0-3}
  \end{figure}
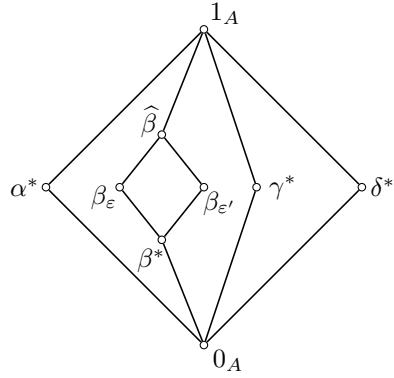
  where
  \begin{align*}
    \alpha^* &=|0,1,2,6,7|3,4,5,14,15|8,9,10|11,12,13| \\
    \widehat{\beta} &=|0,3,8,11|1,4|2,5|6,9,12,14|7,10,13,15| \\
    \beta_{\eps}&=|0,3,8,11|1,4|2,5|6,9,12,14|7,10|13,15| \\
    \beta_{\eps'}&=|0,3,8,11|1,4|2,5|6,9|7,10,13,15|12,14| \\
    \beta^*&=|0,3,8,11|1,4|2,5|6,9|7,10|12,14|13,15| \\
    \gamma^*&=|0,4,9|1,5|2,3,13|6,10|7,8|11,14|12,15| \\
    \delta^*&=|0,5,10|1,3,12|2,4|6,8|7,9|11,15|13,14|.
  \end{align*}
\end{example}

We now prove two theorems which describe the basic structure of the congruence
of an overalgebra constructed as described at the outset of this section.  In
particular, the theorems explain why the interval
$[\alpha^*, \widehat{\alpha}]\cong\two $ appears in the first example above,
while $[\beta^*, \widehat{\beta}]\cong\two\times \two $ appears
in the second.

Given a congruence relation $\beta \in \Con \bB$, let 
$\{b_{\beta(1)}, \dots, b_{\beta(m)}\}$ denote a \emph{transversal} of $\beta$;
i.e.~a full set of $\beta$-class representatives.  
Thus, as a partition of the set $B$, $\beta$ has $m$
classes, or blocks.  (Using the notation $\beta(r)$ for the indices of the
representatives helps us to remember that $b_{\beta(r)}$ is a representative of the
$r$-th block of the congruence $\beta$.)
By the isomorphisms $\pi_i$ defined above, to each
$\beta \in \Con \bB$  there corresponds a congruence relation 
$\beta^{\bBi} \in \Con \bB_i$, and if
$\{b_{\beta(1)}, \dots, b_{\beta(m)}\}$ is a transversal of $\beta$, then 
the map $\pi_i$ also gives a transversal of $\beta^{\bBi}$, namely
$\{\pi_i(b_{\beta(1)}), \dots, \pi_i(b_{\beta(m)})\}
=\{b^i_{\beta(1)}, \dots, b^i_{\beta(m)}\}$.  
Thus, the $r$-th block of $\beta^{\bBi}$ is $b^i_{\beta(r)}/\beta^{\bBi}$.

Let $T = \{b_1, b_2, \dots, b_K\}$ be the set of \emph{tie-points}, that is, the
points at which the sets $B_i\, (1\leq i\leq K)$ intersect the set $B$.
Let $T_r = \{b\in T \mid (b, b_{\beta(r)}) \in \beta\}$ be the set of those
tie-points that are in the $r$-th congruence class of $\beta$.
\begin{theorem}
  \label{OAthm1}
  For each $\beta \in \Con \bB$, 
  \begin{equation}
    \label{eq:OAstar}
    \Cg^\bA(\beta) = \bigcup_{k=0}^K \beta^{\bB_k} \cup \bigcup_{r=1}^m 
    \left(b_{\beta(r)}/\beta \cup \bigcup_{b_j\in T_r} b_j/\beta^{\bB_j}\right)^2.
  \end{equation}
\end{theorem}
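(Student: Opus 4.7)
The plan is to denote the right-hand side of~(\ref{eq:OAstar}) by $\theta$ and establish equality by proving the two inclusions $\Cg^\bA(\beta) \subseteq \theta$ and $\theta \subseteq \Cg^\bA(\beta)$ separately. The construction of $\bA$ has been arranged so that $\theta$ is exactly the transitive closure of $\bigcup_k \beta^{\bB_k}$ through tie-points, and both inclusions will reduce to tracking how each basic operation interacts with the intersection pattern $B_0 \cap B_j = \{b_j\}$, $B_j \cap B_{j'} = \emptyset$ for $1 \leq j \neq j'$.

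For the inclusion $\Cg^\bA(\beta) \subseteq \theta$, since $\beta = \beta^{\bB_0} \subseteq \theta$ it suffices to check that $\theta$ is a congruence of $\bA$. Transitivity is the only non-trivial closure property of the equivalence: any chain $(x,y),(y,z) \in \theta$ that crosses two different components of the defining union must have $y$ in an intersection of two of the sets $B_k$, and the only such intersections are the tie-points $\{b_j\}$; each $b_j$ lies in a unique block of $\beta$, say block $r$, so $b_j \in T_r$ and both $(x,y)$ and $(y,z)$ can be absorbed into the $r$-th glued-block square. Preservation by each $e_k$ follows from $e_k(b_i^j) = b_i^k$: a pair in $\beta^{\bB_j}$ is reindexed to a pair in $\beta^{\bB_k}$ via $\pi_k$, and a pair in the glued block for $r$ is sent into $(b_{\beta(r)}^k/\beta^{\bB_k})^2 \subseteq \beta^{\bB_k}$. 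Preservation by $s$ follows because $s$ fixes $B_0$ and collapses each $B_j$ ($j \geq 1$) to the tie-point $b_j$; hence pairs in $\beta^{\bB_k}$ with $k \geq 1$ are sent to $(b_k,b_k)$, and pairs in the glued block for $r$ are sent into $b_{\beta(r)}/\beta$, using that every $b_j \in T_r$ lies in $b_{\beta(r)}/\beta$. Finally, for $fe_0$, the map $e_0$ sends the glued block for $r$ into $b_{\beta(r)}/\beta$ and sends a $\beta^{\bB_k}$-pair to the corresponding $\beta$-pair, after which $f \in F$ respects $\beta$ because $\beta \in \Con\bB$.

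For the reverse inclusion, each $\beta^{\bB_k}$ with $k \geq 1$ is contained in $\Cg^\bA(\beta)$ because applying the basic operation $e_k$ to any $(b_i,b_{i'}) \in \beta$ produces $(b_i^k,b_{i'}^k) \in \beta^{\bB_k}$. The glued-block squares rest on the tie-point bridging principle: if $b_j \in T_r$ then $b_j = b_j^j$, so $e_j$ fixes $b_j$; consequently, whenever $b_{i'} \in b_{\beta(r)}/\beta$ we have $(b_{i'},b_j) \in \beta \subseteq \Cg^\bA(\beta)$, and applying $e_j$ yields $(b_{i'}^j,b_j) \in \Cg^\bA(\beta)$. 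Transitivity through the tie-point then connects $b_j/\beta^{\bB_j}$ to $b_{\beta(r)}/\beta$, and iterating over all $b_j \in T_r$ produces every pair in the glued-block square for $r$.

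The only real obstacle is careful bookkeeping: the union defining $\theta$ has many overlapping pieces, and the case analysis for transitivity and for preservation under each operation in $F_A$ has to be carried out piece by piece. Once the role of each tie-point $b_j$ as a bridge between $\beta$ on $B$ and $\beta^{\bB_j}$ on $B_j$ is identified, and once the almost-disjoint intersection structure of the sets $B_k$ is used to eliminate spurious cases, the verification is conceptually routine.
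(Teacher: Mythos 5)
Your proposal is correct and follows essentially the same route as the paper's proof: both show the right-hand side is a congruence containing $\beta$ (via the same case analysis over $e_k$, $s$, and $fe_0$, with the tie-points carrying the transitivity), and both establish minimality by applying $e_k$ to $\beta$-pairs and bridging the glued blocks through the tie-points. The only cosmetic difference is that you phrase the first half as an inclusion $\Cg^\bA(\beta)\subseteq\theta$ rather than directly as "the RHS is a congruence above $\beta$," which is the same argument.
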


\begin{remark}
  Before proceeding to the proof, we advise the reader to consider the small
  example illustrated in Figures~\ref{fig:overalgebra}
  and~\ref{fig:overalgebra1}.  Identifying the objects on the right of
  equation~(\ref{eq:OAstar}) in these figures will make the proof of the theorem
  easier to follow.
  In particular, as the figures make clear, transitivity requires that
  $\beta^{\bB_j}$ classes which are linked together by tie-points must end up in
  the same class of $\Cg^\bA(\beta)$.  This is the purpose of the
  $\bigcup\limits_{r=1}^m (\cdot)^2$ term. 
\end{remark}

\begin{proof} 
  Let $\beta^*$ denote the right-hand side of~(\ref{eq:OAstar}).  
  We first check that $\beta^*\in \Con\bA$. 
  It is easy to see that $\beta^*$ is an equivalence relation, so we need only
  show $f(\beta^*) \subseteq \beta^*$ for all\footnote{Note that $\beta^{\bB_0} = \beta$.} 
  $f\in F_A$, where 
  \[
  F_A := \{f e_0 : f\in F\} \cup \{e_k : 0\leq k \leq K\} \cup \{s\}.
  \] 
  In other words, we prove: if 
  $(x,y)\in \beta^*$ and $f\in F_A$, then $(f(x), f(y))\in \beta^*$.\\[6pt]
  \underline{Case 1}:  $(x,y)\in \beta^{\bB_k}$ for some $0\leq k\leq K$. \\[4pt] Then,
  $(e_i(x),e_i(y))\in \beta^{\bB_i} \subseteq \beta^*$ for all $0\leq i\leq K$,
  and $(f e_0(x),f e_0(y))\in \beta \subseteq \beta^*$ for all $f \in F_B$. Also,
  \[
  (s(x),s(y))= 
  \begin{cases}
    (x,y), & \text{ if $k=0$}\\
    (b_k,b_k), & \text{ if $k\neq 0$}
  \end{cases} 
  \]
  belongs to $\beta^*$.  Thus, $(f(x), f(y))\in \beta^*$ for all $f\in F_A$.\\[6pt]
  \underline{Case 2}: 
  $(x,y) \in \left(b_{\beta(r)}/\beta \cup \bigcup_{b_j\in T_r}
  b_j/\beta^{\bB_j}\right)^2$ for some $1\leq r\leq m$.\\[4pt]
  Assume $x\in b_j/\beta^{\bB_j}$ and $y\in b_k/\beta^{\bB_k}$ for some $b_j, b_k
  \in T_r$.  Then $(e_0(x), b_j) \in \beta$, 
  $(e_0(y), b_k) \in \beta$, and and $b_j\;\beta\; b_{\beta(r)}\; \beta\; b_k$ so 
  \begin{equation}
    \label{eq:OAcase2}
    (e_0(x), e_0(y)) \in \beta.
  \end{equation}
  Thus, for all $0\leq \ell \leq K$ we have 
  $(e_\ell e_0(x), e_\ell e_0(y)) \in \beta^{\bB_\ell}$.  But note that $e_\ell
  e_0 = e_\ell$.  It also follows from~(\ref{eq:OAcase2}) that $(f e_0(x), f
  e_0(y))\in \beta$ for all $f\in F_B$.  Finally, 
  $(s(x),s(y))=(b_j, b_k) \in \beta$.

  The only remaining possibility for case 2 is $x\in b_{\beta(r)}/\beta$ and $y\in
  b_j/\beta^{\bB_j}$ for some $b_j\in T_r$.  Since $b_j\in T_r$, we have
  $(b_j,b_{\beta(r)})\in \beta$, so 
  $(e_0(y), b_j) \in \beta$, so 
  $(e_0(y), b_{\beta(r)}) \in \beta$, so 
  $(e_0(x), x) = (e_0(y), e_0(x)) \in \beta$.
  Therefore, $(e_\ell(y), e_\ell(x)) \in \beta^{\bB_\ell}$ for all $0\leq \ell \leq K$ and 
  $(f e_0(y), f e_0(x)) \in \beta$ for all $f\in F_B$. Finally, $s(x) = x \;\beta\;
  b_{\beta(r)}\; \beta \;b_j = s(y)$, so $(s(x),s(y)) \in \beta$.

  We have established that $f(\beta^*)\subseteq \beta^*$ for all $f\in F_A$.  To
  complete the proof of Theorem~\ref{OAthm1}, we must show that
  $\beta \subseteq \eta \in \Con \bA$ implies
  $\beta^*\leq \eta$.
  If $\beta \subseteq \eta\in \Con \bA$, then $\bigcup \beta^{\bB_k} \subseteq
  \eta$, since $(x,y)\in \beta$ implies $(e_k(x), e_k(y))\in \beta^{\bB_k}$ for all
  $0\leq k\leq K$.  To see that the second term of~(\ref{eq:OAstar}) belongs to
  $\eta$, let $(x,y)$ be an arbitrary element of that term, say, $(x, b_i) \in
  \beta^{\bB_i}$ and $(y, b_j)\in \beta^{\bB_j}$.  As we just observed, $\beta,\,
  \beta^{\bB_i}$, and $\beta^{\bB_j}$ are subsets of $\eta$, and $(b_i, b_j) \in
  \beta$, so $x \; \beta^{\bB_i}\; b_i \; \beta \; b_j \;\beta^{\bB_j} \; y$, so 
  $(x,y)\in \eta$.

\end{proof}

As above, for a given $\beta \in \Con \bB$ with transversal
$\{b_{\beta(1)}, \dots, b_{\beta(m)}\}$, we denote
the set of tie-points contained in the $r$-th block of $\beta$ by $T_r$; that is,
\[
T_r = \{b\in T \mid (b, b_{\beta(r)}) \in \beta\}
=  \bigcup_{k=1}^K B_k \cap b_{\beta(r)}/\beta.
\]
Suppose this set 
is $T_r = \{b_{i_1}, b_{i_2}, \dots, b_{i_{|T_r|}}\}$ and let 
$\sI_r = \{i_1, i_2, \dots, i_{|T_r|}\}$ be the indices of these tie-points.   
Also, we define $\beta^*=\Cg^\bA(\beta)$, for $\beta\in\Con \bB$.

Figures~\ref{fig:overalgebra} and~\ref{fig:overalgebra1} illustrate these
objects for a simple example in which $B_0 = \{b_0, b_1, \dots, b_8\}$, $\beta =
|b_0, b_1, b_2 \,|\,b_3, b_4, b_5\,|\,b_6, b_7, b_8|$, and two blocks of $\beta$ contain
two tie-points each.  In particular,  the set of tie-points in the first block
of $\beta$ is $T_1 = \{b_0, b_2\}$. For the second and third blocks,
$T_2 = \emptyset$ and $T_3 = \{b_6, b_8\}$.

\begin{figure}[h!]
  \centering
      {\scalefont{.8}
        \begin{tikzpicture}[scale=.7]
          \draw[rounded corners] (-1.5,-1.5) rectangle (1.5,1.5);
          \draw[rounded corners] (.5,.5) rectangle (3.5,3.5);
          \draw[rounded corners] (.5,-3.5) rectangle (3.5,-.5);
          \draw[rounded corners] (-3.5,-3.5) rectangle (-.5,-.5); 
          \draw[rounded corners] (-3.5,.5) rectangle (-.5,3.5);
          \draw[rounded corners, dotted] (-1.35,.65) rectangle (1.35,1.35);
          \draw[rounded corners, dotted] (-1.35,-.35) rectangle (1.35,.35);
          \draw[rounded corners, dotted] (-1.35,-1.35) rectangle (1.35,-.65);

          \draw (-2.2, 0) node {$B_0 \rightarrow $};
          \draw (-2, 4) node {$B_1$};
          \draw (-1, 1) node {$b_0$};
          \foreach \i in {0,1,2} {
            \foreach \j in {1,2} {
              \pgfmathtruncatemacro{\x}{3*\i+\j}
              \draw (-\j - 1, \i + 1) node {$b^1_\x$};
            }
          }
          \foreach \i in {1,2} {
            \foreach \j in {0} {
              \pgfmathtruncatemacro{\x}{3*\i+\j}
              \draw (-\j - 1, \i + 1) node {$b^1_\x$};
            }
          }
          \draw (0, 1) node {$b_1$};

          \draw (2, 4) node {$B_2$};
          \draw (1, 1) node {$b_2$};
          \foreach \i in {0,1,2} {
            \foreach \j in {1,2} {
              \pgfmathtruncatemacro{\x}{3*\i+(2-\j)}
              \draw (\j + 1, \i + 1) node {$b^2_\x$};
            }
          }
          \foreach \i in {1,2} {
            \foreach \j in {0} {
              \pgfmathtruncatemacro{\x}{3*\i+(2-\j)}
              \draw (\j + 1, \i + 1) node {$b^2_\x$};
            }
          }
          \foreach \j in {3,4,5} {
            \draw (\j -4, 0) node {$b_\j$};
          }

          \foreach \j in {6,7,8} {
            \draw (\j -7, -1) node {$b_\j$};
          }

          \draw (-2, -4) node {$B_3$};

          \foreach \i in {0,1,2} {
            \foreach \j in {1,2} {
              \pgfmathtruncatemacro{\x}{3*(2-\i)+\j}
              \draw (-\j - 1, -\i - 1) node {$b^3_\x$};
            }
          }
          \foreach \i in {1,2} {
            \foreach \j in {0} {
              \pgfmathtruncatemacro{\x}{3*(2-\i)+\j}
              \draw (-\j - 1, -\i - 1) node {$b^3_\x$};
            }
          }

          \draw (2, -4) node {$B_4$};
          \foreach \i in {1,2} {
            \foreach \j in {0,1,2} {
              \pgfmathtruncatemacro{\x}{3*(2-\i)+\j}
              \draw (3-\j , -\i - 1) node {$b^3_\x$};
            }
          }
          \foreach \i in {0} {
            \foreach \j in {0,1} {
              \pgfmathtruncatemacro{\x}{3*(2-\i)+\j}
              \draw (3-\j, -\i - 1) node {$b^3_\x$};
            }
          }

        \end{tikzpicture}
      }
      \caption{The universe $A = B_0 \cup \cdots \cup B_4$ for a simple example; dotted
        lines surround each congruence class of $\beta$.} 
      \label{fig:overalgebra}
\end{figure}
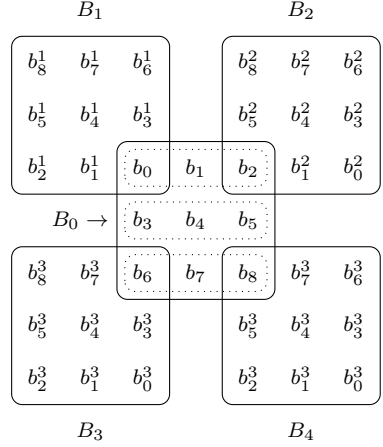

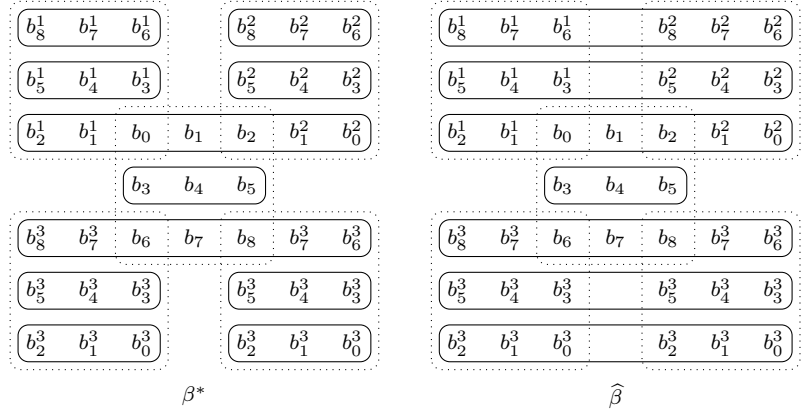
\begin{figure}[h!]
  \centering
      {\scalefont{.8}
        \begin{tikzpicture}[scale=.7]
          \draw[rounded corners,dotted] (-1.5,-1.5) rectangle (1.5,1.5);
          \draw[rounded corners,dotted] (.5,.5) rectangle (3.5,3.5);
          \draw[rounded corners,dotted] (.5,-3.5) rectangle (3.5,-.5);
          \draw[rounded corners,dotted] (-3.5,-3.5) rectangle (-.5,-.5); 
          \draw[rounded corners,dotted] (-3.5,.5) rectangle (-.5,3.5);

          \draw[rounded corners, dotted] (6.5,-1.5) rectangle (9.5,1.5);
          \draw[rounded corners, dotted] (8.5,.5) rectangle (11.5,3.5);
          \draw[rounded corners, dotted] (8.5,-3.5) rectangle (11.5,-.5);
          \draw[rounded corners, dotted] (4.5,-3.5) rectangle (7.5,-.5); 
          \draw[rounded corners, dotted] (4.5,.5) rectangle (7.5,3.5);

          \draw (0, -4) node {$\beta^*$};
          \draw[rounded corners] (-1.35,-.35) rectangle (1.35,.35);
          \draw[rounded corners] (-3.35,.65) rectangle (3.35,1.35);
          \draw[rounded corners] (-3.35,-.65) rectangle (3.35,-1.35);
          \draw[rounded corners] (-3.35,1.65) rectangle (-.65,2.35);
          \draw[rounded corners] (-3.35,2.65) rectangle (-.65,3.35);
          \draw[rounded corners] (-3.35,-1.65) rectangle (-.65,-2.35);
          \draw[rounded corners] (-3.35,-2.65) rectangle (-.65,-3.35);
          \draw[rounded corners] (4-3.35,1.65) rectangle (4-.65,2.35);
          \draw[rounded corners] (4-3.35,2.65) rectangle (4-.65,3.35);
          \draw[rounded corners] (4-3.35,-1.65) rectangle (4-.65,-2.35);
          \draw[rounded corners] (4-3.35,-2.65) rectangle (4-.65,-3.35);

          \draw (8, -4) node {$\hbeta$};
          \draw[rounded corners] (8-1.35,-.35) rectangle (8+1.35,.35); 
          \draw[rounded corners] (8-3.35,.65) rectangle (11.35,1.35); 
          \draw[rounded corners] (8-3.35,-.65) rectangle (11.35,-1.35);
          \draw[rounded corners] (8-3.35,1.65) rectangle (4+8-.65,2.35);
          \draw[rounded corners] (8-3.35,2.65) rectangle (4+8-.65,3.35);
          \draw[rounded corners] (8-3.35,-1.65) rectangle (4+8-.65,-2.35);
          \draw[rounded corners] (8-3.35,-2.65) rectangle (4+8-.65,-3.35);

          \draw (-1, 1) node {$b_0$};
          \foreach \i in {0,1,2} {
            \foreach \j in {1,2} {
              \pgfmathtruncatemacro{\x}{3*\i+\j}
              \draw (-\j - 1, \i + 1) node {$b^1_\x$};
            }
          }
          \foreach \i in {1,2} {
            \foreach \j in {0} {
              \pgfmathtruncatemacro{\x}{3*\i+\j}
              \draw (-\j - 1, \i + 1) node {$b^1_\x$};
            }
          }
          \draw (0, 1) node {$b_1$};

          \draw (1, 1) node {$b_2$};
          \foreach \i in {0,1,2} {
            \foreach \j in {1,2} {
              \pgfmathtruncatemacro{\x}{3*\i+(2-\j)}
              \draw (\j + 1, \i + 1) node {$b^2_\x$};
            }
          }
          \foreach \i in {1,2} {
            \foreach \j in {0} {
              \pgfmathtruncatemacro{\x}{3*\i+(2-\j)}
              \draw (\j + 1, \i + 1) node {$b^2_\x$};
            }
          }
          \foreach \j in {3,4,5} {
            \draw (\j -4, 0) node {$b_\j$};
          }

          \foreach \j in {6,7,8} {
            \draw (\j -7, -1) node {$b_\j$};
          }


          \foreach \i in {0,1,2} {
            \foreach \j in {1,2} {
              \pgfmathtruncatemacro{\x}{3*(2-\i)+\j}
              \draw (-\j - 1, -\i - 1) node {$b^3_\x$};
            }
          }
          \foreach \i in {1,2} {
            \foreach \j in {0} {
              \pgfmathtruncatemacro{\x}{3*(2-\i)+\j}
              \draw (-\j - 1, -\i - 1) node {$b^3_\x$};
            }
          }

          \foreach \i in {1,2} {
            \foreach \j in {0,1,2} {
              \pgfmathtruncatemacro{\x}{3*(2-\i)+\j}
              \draw (3-\j , -\i - 1) node {$b^3_\x$};
            }
          }
          \foreach \i in {0} {
            \foreach \j in {0,1} {
              \pgfmathtruncatemacro{\x}{3*(2-\i)+\j}
              \draw (3-\j, -\i - 1) node {$b^3_\x$};
            }
          }

          \draw (7, 1) node {$b_0$};
          \foreach \i in {0,1,2} {
            \foreach \j in {1,2} {
              \pgfmathtruncatemacro{\x}{3*\i+\j}
              \draw (8-\j - 1, \i + 1) node {$b^1_\x$};
            }
          }
          \foreach \i in {1,2} {
            \foreach \j in {0} {
              \pgfmathtruncatemacro{\x}{3*\i+\j}
              \draw (8-\j - 1, \i + 1) node {$b^1_\x$};
            }
          }
          \draw (8, 1) node {$b_1$};

          \draw (9, 1) node {$b_2$};
          \foreach \i in {0,1,2} {
            \foreach \j in {1,2} {
              \pgfmathtruncatemacro{\x}{3*\i+(2-\j)}
              \draw (8+\j + 1, \i + 1) node {$b^2_\x$};
            }
          }
          \foreach \i in {1,2} {
            \foreach \j in {0} {
              \pgfmathtruncatemacro{\x}{3*\i+(2-\j)}
              \draw (8+\j + 1, \i + 1) node {$b^2_\x$};
            }
          }
          \foreach \j in {3,4,5} {
            \draw (8+\j -4, 0) node {$b_\j$};
          }

          \foreach \j in {6,7,8} {
            \draw (8+\j -7, -1) node {$b_\j$};
          }


          \foreach \i in {0,1,2} {
            \foreach \j in {1,2} {
              \pgfmathtruncatemacro{\x}{3*(2-\i)+\j}
              \draw (8-\j - 1, -\i - 1) node {$b^3_\x$};
            }
          }
          \foreach \i in {1,2} {
            \foreach \j in {0} {
              \pgfmathtruncatemacro{\x}{3*(2-\i)+\j}
              \draw (8-\j - 1, -\i - 1) node {$b^3_\x$};
            }
          }

          \foreach \i in {1,2} {
            \foreach \j in {0,1,2} {
              \pgfmathtruncatemacro{\x}{3*(2-\i)+\j}
              \draw (11-\j , -\i - 1) node {$b^3_\x$};
            }
          }
          \foreach \i in {0} {
            \foreach \j in {0,1} {
              \pgfmathtruncatemacro{\x}{3*(2-\i)+\j}
              \draw (11-\j, -\i - 1) node {$b^3_\x$};
            }
          }
        \end{tikzpicture}
      }
      \caption{Solid lines show the congruence classes of $\beta^*$ (left) and 
        $\hbeta$ (right); dotted lines delineate the sets $B_i$.}
      \label{fig:overalgebra1}
\end{figure}

\begin{theorem} 
  \label{OAthm2}
  For each $\beta \in \Con \bB$, 
  \begin{equation}
    \label{eq:OAbetahat}
    \widehat{\beta} = 
    \beta^* \cup 
    \bigcup_{r=1}^m
    \bigcup^m_{\stackrel{\ell=1}{\ell \neq r}}
    \bigcup_{(j,k) \in \sI_r^2}
    \left(b^j_{\beta(\ell)}/\beta^{\bB_j} \cup b^k_{\beta(\ell)}/\beta^{\bB_k}\right)^2.
  \end{equation}
  Moreover, the interval $[\beta^*, \widehat{\beta}]$ of $\Con\bA$ 
  contains every equivalence relation of $A$ between $\beta^*$ and $\hbeta$, and
  is isomorphic to $\prod (\Eq |T_r|)^{m-1}$; that is,
  \begin{equation}
    \label{eq:OAprop2}
          [\beta^*, \widehat{\beta}] 
          = 
          \{\theta \in \Eq(A) \mid \beta^* \subseteq \theta \subseteq \widehat{\beta} \}
          \cong \prod_{r=1}^m (\Eq |T_r|)^{m-1}.
  \end{equation}
\end{theorem}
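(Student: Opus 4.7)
The plan is to obtain a closed form for $\widehat{\beta}$ by reducing the universal quantifier over $\Pol_1(\bA)$ in its definition to a finite check. I would first establish that $(x,y)\in\widehat{\beta}$ if and only if both $(e_0(x),e_0(y))\in\beta$ and $(s(x),s(y))\in\beta$. The necessity is immediate on specializing to $f=\id$ and $f=s$. For sufficiency, I would show by induction on word length that every $g$ in the monoid generated by $F_A$ either yields a constant map $e_0 g$, or satisfies $e_0 g = g'\,e_0$ or $e_0 g = g'\,s$ for some $g'\in\langle F_B\rangle$ acting on $B$. The relevant normalizing identities are $e_j e_k = e_j$, $e_0\,e_k = e_0$, $e_0\,s = s$, $e_0\,(f' e_0) = f' e_0$, and $e_0\,s\,e_k$ being the constant map $b_k$ for $k\neq 0$. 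Since $\beta\in\Con\bB$ is preserved by every $g'\in\langle F_B\rangle$, the two conditions on $e_0$ and $s$ then suffice.

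With this characterization, I would compute $\widehat{\beta}$ block by block. The $(e_0\times e_0)^{-1}(\beta)$-class containing $b_{\beta(\ell)}$ consists of all $b^k_{i'}$ with $i'\in b_{\beta(\ell)}/\beta$ and $0\leq k\leq K$, while the $(s\times s)^{-1}(\beta)$-class corresponding to the $r$-th $\beta$-block is $b_{\beta(r)}/\beta\cup\bigcup_{b_j\in T_r} B_j$. Intersecting these, the $(\ell,r)$-block of $\widehat{\beta}$ equals $\delta_{\ell,r}\,b_{\beta(r)}/\beta\,\cup\,\bigcup_{b_j\in T_r} b^j_{\beta(\ell)}/\beta^{\bB_j}$. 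The diagonal terms $\ell=r$ reproduce exactly the big blocks of $\beta^*$ from Theorem~\ref{OAthm1}, while the off-diagonal terms $\ell\neq r$ give the extra gluing described in~\eqref{eq:OAbetahat}.

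To establish~\eqref{eq:OAprop2}, I would verify that every equivalence relation $\theta$ with $\beta^*\subseteq\theta\subseteq\widehat{\beta}$ is automatically a congruence of $\bA$. The key is that $f(\widehat{\beta})\subseteq\beta^*$ for every basic $f\in F_A$: if $f=f' e_0$, then $(f(x),f(y))=(f'(e_0(x)),f'(e_0(y)))\in\beta$ since $\beta\in\Con\bB$; if $f=s$, then $(s(x),s(y))\in\beta$ by the $\widehat{\beta}$-hypothesis; and if $f=e_k$, then $(e_k(x),e_k(y))\in\beta^{\bB_k}$ because $\pi_k$ carries $\beta$ isomorphically to $\beta^{\bB_k}$. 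In each case the image lies in $\beta^*$ by~\eqref{eq:OAstar}, so $f(\theta)\subseteq\beta^*\subseteq\theta$.

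Finally, the product decomposition follows because an equivalence $\theta$ in $[\beta^*,\widehat{\beta}]$ is determined freely by the partition it induces, for each pair $(r,\ell)$ with $\ell\neq r$, on the $|T_r|$-element set $\{b^j_{\beta(\ell)}/\beta^{\bB_j} : j\in\sI_r\}$ of $\beta^*$-blocks that $\widehat{\beta}$ merges. Distinct index pairs $(r,\ell)$ involve disjoint $\beta^*$-blocks, so the choices are independent, yielding $\prod_{r=1}^m (\Eq|T_r|)^{m-1}$, with factors $\Eq(0)$ trivial when $T_r=\emptyset$. The main obstacle will be the first step: the normal-form argument must rule out subtle compositions like $e_0\,s\,e_j$ or $e_0\,(f'e_0)\,s$ as sources of hidden constraints. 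Once this is dispatched, the remaining steps reduce to straightforward bookkeeping against~\eqref{eq:OAstar}.
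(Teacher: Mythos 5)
Your proposal is correct, but it reaches $\widehat{\beta}$ by a genuinely different route than the paper. The paper names the right-hand side of~(\ref{eq:OAbetahat}) $\tbeta$, verifies $f(\tbeta)\subseteq\beta^*$ for the \emph{basic} operations only (so $\tbeta\in\Con\bA$ with $\tbeta\resB=\beta$), invokes the residuation lemma (Lemma~\ref{lem:residuation-lemma}) to conclude $\tbeta\leq\widehat{\beta}$, and then proves the reverse inclusion by exhibiting, for each $(x,y)\notin\tbeta$, a separating polynomial ($e_0 s$ or $e_0$). You instead compute $\widehat{\beta}$ directly from its definition by collapsing the quantifier over all of $\Pol_1(\bA)$ to the two conditions $(e_0(x),e_0(y))\in\beta$ and $(s(x),s(y))\in\beta$, and then read off the blocks as intersections of the two kernel pullbacks; this yields~(\ref{eq:OAbetahat}) in one pass and gives a cleaner conceptual description of $\widehat{\beta}$. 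The price is the normal-form lemma: note that your induction cannot proceed on $e_0 g$ alone, since prepending $s$ produces $e_0(sg_0)=sg_0$, which is not covered by the hypothesis on $e_0 g_0$; you need a simultaneous induction establishing the trichotomy (constant, $g'e_0$, or $g's$) for both $e_0 g$ and $s g$, using $s e_k = \text{const } b_k$ for $k\neq 0$, $se_0=e_0$, and $s^2=s$ alongside the identities you list. Once that is in place, your remaining steps --- $f(\widehat{\beta})\subseteq\beta^*$ for basic $f$, hence every equivalence relation in $[\beta^*,\widehat{\beta}]$ is a congruence, and the independence of the partition choices on the disjoint families $\{b^j_{\beta(\ell)}/\beta^{\bB_j} : j\in\sI_r\}$ for distinct pairs $(r,\ell)$ with $\ell\neq r$ --- coincide with the paper's proof of~(\ref{eq:OAprop2}).
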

\begin{remark}
  Blocks containing only one tie-point, i.e.~those for which $|T_r| = 1$,
  contribute nothing to the direct product in~(\ref{eq:OAprop2}). Also, for some
  $1\leq r \leq m$ we may have $T_r = \emptyset$, in which case we agree 
  to let $\Eq |T_r| = \Eq (0) := \one$.
\end{remark}

\begin{proof}
  Let $\tbeta$ denote the right-hand side of~(\ref{eq:OAbetahat}).  It is
  easy to see that $\tbeta$ is an equivalence relation on $A$.  To see that it
  is also a congruence relation, we will prove $f(\tbeta) \subseteq \tbeta$ for
  all $f\in F_A$.
  Fix $(x,y)\in \tbeta$.  If 
  $(x,y)\in \beta^*$, then $(f(x), f(y)) \in \beta^*$ holds for all $f\in F_A$, as
  in Theorem~\ref{OAthm1}. 
  Suppose $(x,y)\notin \beta^*$, say,  
  $x \in b^j_{\beta(\ell)}/\beta^{\bB_j}$ and 
  $y\in b^k_{\beta(\ell)}/\beta^{\bB_k}$ for some $j, k \in \sI_r$, $1\leq r
  \leq m$, and $\ell\neq r$.  
  Then $x$ and $y$ are in the $\ell$-th blocks of their respective subreduct
  universes, $B_j$ and $B_k$, so 
  for each $0\leq i \leq K$, 
  $(e_i(x), e_i(y)) \in \beta^{\bB_i}$.
  In particular, $(e_0(x), e_0(y)) \in \beta$, so 
  $(g e_0(x), g e_0(y)) \in \beta$ for all $g\in F_B$.
  Also, $(s(x), s(y)) = (b_j, b_k) \in T_r^2 \subseteq \beta$. 
  This proves that for each $f\in F_A$ we have $(f(x), f(y)) \in \tbeta$. (In
  fact, $(f(x), f(y))\in \beta^*$.)  Whence $\tbeta \in \Con\bA$.

  Now notice that $\tbeta\resB = \beta$.  Therefore, by the residuation lemma
  of Section~\ref{sec:residuation-lemma}, we have $\tbeta \leq \widehat{\beta}$.
  To prove the reverse inclusion, we suppose $(x,y)\notin \tbeta$ and show $(x,y) \notin \widehat{\beta}$.
  Without loss of generality, assume $x \in b^j_{\beta(p)}/\beta^{\bB_j}$ and 
  $y\in b^k_{\beta(q)}/\beta^{\bB_k}$, for some $1\leq p, q
  \leq m$ and $1\leq j, k \leq K+1$.  If $p=q$, then $(j,k)\notin \sI_r^2$ for all
  $1\leq r\leq m$ (otherwise $(x,y)\in \tbeta$), so
  $(e_0s(x), e_0s(y)) = (e_0(b_j), e_0(b_k)) = (b_j, b_k) \notin \beta$,
  so $(x,y) \notin \widehat{\beta}$.  If $p\neq q$, then 
  $e_0(x) \in b_{\beta(p)}/\beta$ and $e_0(y)\in b_{\beta(q)}/\beta$ -- distinct $\beta$ classes -- so 
  $(e_0(x),e_0(y))\notin \beta$, so 
  $(x,y) \notin \widehat{\beta}$.

  To prove~(\ref{eq:OAprop2}),
  we first note that every \emph{equivalence} relation $\theta$ on $A$ with
  $\beta^* \subseteq \theta \subseteq \widehat{\beta}$ satisfies 
  $f(\theta)\subseteq \theta$ for all $f\in F_A$, and is therefore a congruence of
  $\bA$. Indeed, in proving $\tbeta= \widehat{\beta}$ above,
  we saw that $f(\tbeta)\subseteq \beta^*$ for all $f\in F_A$, so,
  {\it a fortiori}, $f(\theta)\subseteq \beta^*$ for all equivalence relations  
  $\theta \subseteq \widehat{\beta}$. 
  Therefore, 
  \[
    [\beta^*, \widehat{\beta}] = 
    \{\theta \in \Eq(A) \mid \beta^* \subseteq \theta \subseteq \widehat{\beta} \}.
    \]
    To complete the proof, we must show that this interval is isomorphic to the lattice 
    $\prod_{r=1}^m (\Eq |T_r|)^{m-1}$.
    Consider,
    \[
    \widehat{\beta}/\beta^* = \{(x/\beta^*, y/\beta^*) \in (A/\beta^*)^2 \mid (x,y) \in \widehat{\beta}\}.
    \]
    Let $N$ be the number of blocks of $\widehat{\beta}/\beta^*$ (which, of course, is the
    same as the number of blocks of $\widehat{\beta}$). For $1\leq k \leq N$, let
    $x_k/\beta^*$ be a representative of the $k$-th block of $\widehat{\beta}/\beta^*$.  Let
    $\sB_k = (x_k/\beta^*)/(\widehat{\beta}/\beta^*)$ denote this block; that is,
    \[
    \sB_k = \{y/\beta^* \in A/\beta^* \mid (x_k/\beta^*, y/\beta^*) \in
    \widehat{\beta}/\beta^*\}.
    \]
    Then,
    \[
    \prod_{k=1}^N \Eq(\sB_k) \cong \{ \theta \in \Eq(A) \mid \beta^* \subseteq \theta
    \subseteq \widehat{\beta} \} = [\beta^*, \widehat{\beta}].
    \]
    The isomorphism is given by the maps,
    \begin{align*}
      \prod_{k=1}^N \Eq(\sB_k) \ni \; & \eta \mapsto \; \bigcup_{k=1}^N \eta_k \; \in [\beta^*, \widehat{\beta}]\\
           [\beta^*, \widehat{\beta}] \ni \;  & \theta \mapsto \prod_{k=1}^N \theta \cap \sB_k^2 \in \prod_{k=1}^N \Eq(\sB_k),
    \end{align*}
    where $\eta_k$ denotes the projection of $\eta$ onto its $k$-th coordinate.

    Now, the $r$-th $\beta$-class of $B_0$, denoted $b_{\beta(r)}/\beta$, has $|T_r|$
    tie-points, so there are $|T_r|$ sets, $B_{i_1}, B_{i_2}, \dots, B_{i_{|T_r|}}$,
    each of which intersects $B_0$ at a distinct tie-point in $b_{\beta(r)}/\beta$;
    that is,
    \[
    B_{i_{j}} \cap b_{\beta(r)}/\beta = \{b_{i_j}\} \qquad (b_{i_j} \in T_r).
    \]
    (See Figure \ref{fig:overalgebra1}.) 
    A block $\sB_k$ of $\widehat{\beta}/\beta^*$ has a single element when it contains 
    $b_{\beta(r)}/\beta$.  Otherwise, it has $|T_r|$ elements, namely,
    \[
    b^{i_1}_{\beta_{(\ell)}}/\beta^{\bB_{i_2}}, \,
    b^{i_2}_{\beta_{(\ell)}}/\beta^{\bB_{i_2}}, \dots, \, b^{i_{|T_r|}}_{\beta_{(\ell)}}/\beta^{\bB_{i_{|T_r|}}},
    \]
    for some $1\leq \ell \leq m; \, \ell \neq r$.  Thus, for each $1\leq r \leq m$, we
    have $m-1$ such $|T_r|$-element blocks, so
    \[
    \prod_{k=1}^N \Eq(\sB_k) \cong 
    \prod_{r=1}^m (\Eq |T_r|)^{m-1}.
    \]

\end{proof}

We now describe the situation in which the foregoing construction is most
useful.  Here and in the sequel, instead of $\Eq(2)$, we usually write
$\two$ to denote the two element lattice. 
Given a finite congruence lattice $\Con\bB$ and a pair $(x,y) \in B^2$,
let $\beta\in \Con\bB$ be the unique smallest congruence containing $(x,y)$.
Then $\beta = \Cg^\bB(x,y)$, and if we build an overalgebra as
described above using $\{x,y\}$ as tie-points, then, by
Theorem~\ref{OAthm2}, the interval of all
$\theta \in \Con\bA$ for which $\theta\resB = \beta$ will be 
$[\beta^*,\widehat{\beta}] \cong \Eq(2)^{m-1} = \two^{m-1}$, where $m$ is the
number of congruence classes in $\beta$.  Also, since $\beta$ is the smallest
congruence containing $(x,y)$ we can be sure that, for all $\theta \ngeq \beta$,
the interval $[\theta^*,\widehat{\theta}]$ is trivial; that is,
$\theta^*=\widehat{\theta}$. Finally, for each $\theta > \beta$, we will have 
$[\theta^*,\widehat{\theta}] \cong \two^{r-1}$, where $r$ is the number of
congruence classes of $\theta$.

\begin{example}
  With the theorems above, we can explain the shapes of the congruence
  lattices of Example~\ref{ex:3.1}.  Returning to that example, with base algebra $\bB$
  equal to the right regular $S_3$-set, we now show some other congruence lattices that 
  result by simply changing the set of tie-points, $T$.  
  Recall, the relations in $\Con\bB$ are $\alpha = | 0, 1, 2 | 3, 4, 5|$,
  $\beta = | 0, 3 | 2, 5 | 1, 4 |$,
  $\gamma = | 0, 4 | 2, 3 | 1, 5|$, and
  $\delta = | 0, 5| 2, 4 | 1, 3|$.

  As Theorems~\ref{OAthm1} and~\ref{OAthm2}
  make clear, choosing $T$ to be $\{0,1\}$, $\{0,1,2\}$, or $\{0, 2, 3\}$
  yields the congruence lattices appearing in Figure~\ref{fig:ConOverAlgebras}.
  Figure~\ref{fig:ConOverAlgebras2} shows the congruences lattices resulting
  from the choices $T = \{0,1,2,3\}$ and $T = \{0, 2, 3, 5\}$. 

  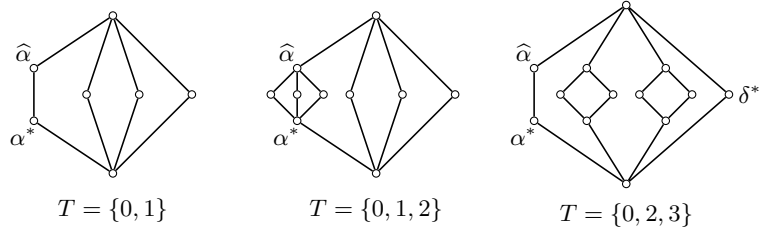
\begin{figure}[h!]
    \centering
    \begin{tikzpicture}[scale=.7]
      \node (150) at (1.5,0)  [draw, circle, inner sep=1.0pt] {};
      \node (01) at (0,1)  [draw, circle, inner sep=1.0pt] {};
      \node (02) at (0,2)  [draw, circle, inner sep=1.0pt] {};
      \node (115) at (1,1.5)  [draw, circle, inner sep=1.0pt] {};
      \node (215) at (2,1.5)  [draw, circle, inner sep=1.0pt] {};
      \node (315) at (3,1.5)  [draw, circle, inner sep=1.0pt] {};
      \node (153) at (1.5,3)  [draw, circle, inner sep=1.0pt] {};
      \draw[semithick] 
      (150) to (01) to (02) to (153) to (115) to (150) to (215) to (153) to (315) to (150);
      \draw[font=\small] (1.5,-.7) node {$T = \{0,1\}$};
      \draw[font=\small] (-.2,.7) node {$\alpha^*$};
      \draw[font=\small] (-.2,2.3) node {$\widehat{\alpha}$};

      \node (650) at (6.5,0)  [draw, circle, inner sep=1.0pt] {};
      \node (51) at (5,1)  [draw, circle, inner sep=1.0pt] {};
      \node (52) at (5,2)  [draw, circle, inner sep=1.0pt] {};
      \node (4515) at (4.5,1.5)  [draw, circle, inner sep=1.0pt] {};
      \node (515) at (5,1.5)  [draw, circle, inner sep=1.0pt] {};
      \node (5515) at (5.5,1.5)  [draw, circle, inner sep=1.0pt] {};
      \node (615) at (6,1.5)  [draw, circle, inner sep=1.0pt] {};
      \node (715) at (7,1.5)  [draw, circle, inner sep=1.0pt] {};
      \node (815) at (8,1.5)  [draw, circle, inner sep=1.0pt] {};
      \node (653) at (6.5,3)  [draw, circle, inner sep=1.0pt] {};
      \draw[semithick] 
      (650) to (51) to (515) to (52) to (653) to (615) to (650) to (715) to (653) to
      (815) to (650)
      (51) to (4515) to (52) to (5515) to (51);
      \draw[font=\small] (6.5,-.7) node {$T = \{0,1,2\}$};
      \draw[font=\small] (4.8,.7) node {$\alpha^*$};
      \draw[font=\small] (4.8,2.3) node {$\widehat{\alpha}$};

      \node (bot) at (11.25,-.2)  [draw, circle, inner sep=1.0pt] {};
      \node (top) at (11.25,3.2)  [draw, circle, inner sep=1.0pt] {};
      \node (a) at (9.5,1)  [draw, circle, inner sep=1.0pt] {};
      \node (A) at (9.5,2)  [draw, circle, inner sep=1.0pt] {};
      \draw[font=\small] (9.3,.7) node {$\alpha^*$};
      \draw[font=\small] (9.3,2.3) node {$\widehat{\alpha}$};

      \node (b) at (10.5,1)  [draw, circle, inner sep=1.0pt] {};
      \node (b1) at (10,1.5)  [draw, circle, inner sep=1.0pt] {};
      \node (b2) at (11,1.5)  [draw, circle, inner sep=1.0pt] {};
      \node (B) at (10.5,2)  [draw, circle, inner sep=1.0pt] {};

      \node (c) at (12,1)  [draw, circle, inner sep=1.0pt] {};
      \node (c1) at (11.5,1.5)  [draw, circle, inner sep=1.0pt] {};
      \node (c2) at (12.5,1.5)  [draw, circle, inner sep=1.0pt] {};
      \node (C) at (12,2)  [draw, circle, inner sep=1.0pt] {};

      \node (d) at (13.2,1.5)  [draw, circle, inner sep=1.0pt] {};
      \draw[font=\small] (13.6,1.5) node {$\delta^*$};
      \draw[semithick] 
      (bot) to (a) to (A) to (top) to (B) to (b1) to (b) to (b2) to (B)
      (b) to (bot) to (c) to (c1) to (C) to (c2) to (c)
      (C) to (top) to (d) to (bot);
      \draw[font=\small] (11.25,-.8) node {$T = \{0, 2, 3\}$};

    \end{tikzpicture}
    \caption{Congruence lattices of overalgebras of the $S_3$-set for various 
      choices of $T$, the set of tie-points.}
    \label{fig:ConOverAlgebras}
  \end{figure}

  \begin{figure}[h!]
    \centering
    \begin{tikzpicture}[scale=.7]
      \node (bot) at (3.25,0.5)  [draw, circle, inner sep=1.0pt] {};
      \node (top) at (3.25,4.5)  [draw, circle, inner sep=1.0pt] {};

      \node (a) at (1,2)  [draw, circle, inner sep=1.0pt] {};
      \node (a1) at (.5,2.5)  [draw, circle, inner sep=1.0pt] {};
      \node (a2) at (1,2.5)  [draw, circle, inner sep=1.0pt] {};
      \node (a3) at (1.5,2.5)  [draw, circle, inner sep=1.0pt] {};
      \node (A) at (1,3)  [draw, circle, inner sep=1.0pt] {};
      \draw[font=\small] (.75,1.7) node {$\alpha^*$};
      \draw[font=\small] (.75,3.3) node {$\widehat{\alpha}$};

      \node (b) at (2.5,2)  [draw, circle, inner sep=1.0pt] {};
      \node (b1) at (2,2.5)  [draw, circle, inner sep=1.0pt] {};
      \node (b2) at (3,2.5)  [draw, circle, inner sep=1.0pt] {};
      \node (B) at (2.5,3)  [draw, circle, inner sep=1.0pt] {};

      \node (c) at (4,2)  [draw, circle, inner sep=1.0pt] {};
      \node (c1) at (3.5,2.5)  [draw, circle, inner sep=1.0pt] {};
      \node (c2) at (4.5,2.5)  [draw, circle, inner sep=1.0pt] {};
      \node (C) at (4,3)  [draw, circle, inner sep=1.0pt] {};

      \node (d) at (5.5,2)  [draw, circle, inner sep=1.0pt] {};
      \node (d1) at (5,2.5)  [draw, circle, inner sep=1.0pt] {};
      \node (d2) at (6,2.5)  [draw, circle, inner sep=1.0pt] {};
      \node (D) at (5.5,3)  [draw, circle, inner sep=1.0pt] {};

      \draw[semithick] 
      (bot) to (a) to (a1) to (A) to (a2) to (a) to (a3) to (A) to (top) to 
      (B) to (b1) to (b) to (b2) to (B)
      (b) to (bot) to (c) to (c1) to (C) to (c2) to (c)
      (C) to (top) to (D) to (d1) to (d) to (d2) to (D)
      (d) to (bot);
      \draw[font=\small] (3.25,-.2) node {$T = \{0,1,2,3\}$};

      \node (Rbot) at (11.25,0.5)  [draw, circle, inner sep=1.0pt] {};
      \node (Rtop) at (11.25,4.5)  [draw, circle, inner sep=1.0pt] {};

      \node (Ra) at (9,2)  [draw, circle, inner sep=1.0pt] {};
      \node (Ra1) at (8.5,2.5)  [draw, circle, inner sep=1.0pt] {};
      \node (Ra2) at (9.5,2.5)  [draw, circle, inner sep=1.0pt] {};
      \node (RA) at (9,3)  [draw, circle, inner sep=1.0pt] {};

      \node (Rb) at (10.5,1.8)  [draw, circle, inner sep=1.0pt] {};
      \node (RB) at (10.5,3.2)  [draw, circle, inner sep=1.0pt] {};
      \draw[font=\small] (10.2,1.5) node {$\beta^*$};
      \draw[font=\small] (10.2,3.4) node {$\widehat{\beta}$};

      \node (Rc) at (12,2)  [draw, circle, inner sep=1.0pt] {};
      \node (Rc1) at (11.5,2.5)  [draw, circle, inner sep=1.0pt] {};
      \node (Rc2) at (12.5,2.5)  [draw, circle, inner sep=1.0pt] {};
      \node (RC) at (12,3)  [draw, circle, inner sep=1.0pt] {};

      \node (Rd) at (13.5,2)  [draw, circle, inner sep=1.0pt] {};
      \node (Rd1) at (13,2.5)  [draw, circle, inner sep=1.0pt] {};
      \node (Rd2) at (14,2.5)  [draw, circle, inner sep=1.0pt] {};
      \node (RD) at (13.5,3)  [draw, circle, inner sep=1.0pt] {};

      \draw[semithick] 
      (Rbot) to (Ra) to (Ra1) to (RA) to (Ra2) to (Ra) 
      (RA) to (Rtop) to (RB)
      (Rb) to (Rbot) to (Rc) to (Rc1) to (RC) to (Rc2) to (Rc)
      (RC) to (Rtop) to (RD) to (Rd1) to (Rd) to (Rd2) to (RD)
      (Rd) to (Rbot);
      \draw [semithick]  
      (Rb) to [out=140,in=-140] (RB)
      (RB) to [out=-40,in=40] (Rb);
      \draw[font=\small] (10.5,2.5) node {$L$};

      \draw[font=\small] (11.25,-.2) node {$T = \{0,2,3, 5\}$};

    \end{tikzpicture}
    \caption{Congruence lattices of overalgebras of the $S_3$-set for various 
      choices of $T$; $L\cong \two^2\times\two^2$.}
    \label{fig:ConOverAlgebras2}
  \end{figure}
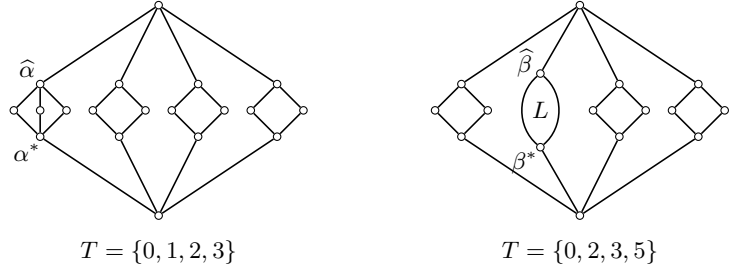

  Since $\beta = | 0, 3 | 2, 5 | 1, 4 |$, when $T = \{0,2,3, 5\}$, the interval
  $[\beta^*,\widehat{\beta}]$ is 
  $\two^2\times\two^2$.  
  In Figure~\ref{fig:ConOverAlgebras2}, we denote this abstractly by $L$,
  instead of drawing all 16 points of this interval.

\end{example}

Next, consider the situation depicted in the last congruence lattice of
Figure~\ref{fig:ConOverAlgebras2}, where 
$L \cong \two^2\times\two^2$, and
suppose we prefer that all the other $\resB$-inverse images be trivial:
$
[\beta^*,\widehat{\beta}]\cong \two^2\times\two^2; \,
\alpha^*=\widehat{\alpha}; \, 
\gamma^*=\widehat{\gamma};\,  
\delta^*=\widehat{\delta}.
$
In other words, we seek a finite algebraic
representation of the lattice in Figure~\ref{fig:ConOverAlgebras3}.
\begin{figure}[h!]
  \centering
  \begin{tikzpicture}[scale=.6]
    \node (Rbot) at (11.25,0.5)  [draw, circle, inner sep=1.0pt] {};
    \node (Rtop) at (11.25,4.5)  [draw, circle, inner sep=1.0pt] {};
    \node (Ra) at (9.25,2.5)  [draw, circle, inner sep=1.0pt] {};
    \node (Rb) at (10.75,1.8)  [draw, circle, inner sep=1.0pt] {};
    \node (RB) at (10.75,3.2)  [draw, circle, inner sep=1.0pt] {};
    \node (Rc) at (12,2.5)  [draw, circle, inner sep=1.0pt] {};
    \node (Rd) at (13.25,2.5)  [draw, circle, inner sep=1.0pt] {};
    \draw[semithick] 
    (Rbot) to (Ra) to (Rtop) to (RB)
    (Rb) to (Rbot) to (Rc) to (Rtop) to (Rd) to (Rbot);
    \draw [semithick]  
    (Rb) to [out=140,in=-140] (RB)
    (RB) to [out=-40,in=40] (Rb);
    \draw[font=\small] (10.75,2.5) node {$L$};
  \end{tikzpicture}
  \caption{A lattice which motivates further expansion of the set of basic 
    operations in the overalgebra.}
  \label{fig:ConOverAlgebras3}
\end{figure}
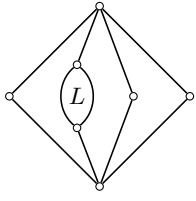
This is easy to achieve by adding more operations in the overalgebra
construction described above.
In fact, it is possible to introduce additional operations
so that, if $\beta = \Cg^\bB(x,y)$, then $\theta^* = \widehat{\theta}$ for all
$\theta \in \Con\bB$ with $\theta \ngeq \beta$.   We now describe these
operations and state this claim more formally as
Proposition~\ref{prop:expansion} below.

We start with the overalgebra construction described above.
Suppose $\beta = \Cg^\bB(x,y)$ has transversal 
$\{b_{\beta(1)}, \dots, b_{\beta(m)}\}$, and for each $1\leq r\leq m$, let 
\[
T_r = \{b\in T \mid (b, b_{\beta(r)}) \in \beta\} = 
\{b_{i_1}, b_{i_2}, \dots, b_{i_{|T_r|}}\}
\]
be the tie-points contained in the $r$-th block of $\beta$, as above. 
Let $\sI_r = \{i_1, i_2, \dots, i_{|T_r|}\}$ be the indices of these
tie-points.  Then $\{B_i : i \in \sI_r\}$ is the collection of subreduct
universes which intersect the $r$-th $\beta$ block of $B$.  
For each $1\leq r\leq m$, define the operation $s_r : A\rightarrow A$ as follows:
\[
s_r(x) =
\begin{cases}
  b_i & \text{ if $x \in B_i$ for some $i \in \sI_r$, }\\
  x & \text{otherwise}.
\end{cases}
\]
Define all other operations as above and let
\[
F_A := \{f e_0 : f\in F\} \cup \{e_k : 0\leq k \leq K\} \cup 
\{s_r : 0\leq r \leq m\},
\]
where $s_0 := s$ was defined earlier.  Finally, let 
$\bA:=\<A, F_A\>$, and define $\theta^*$ and $\widehat{\theta}$ as above.
\begin{prop}
  \label{prop:expansion}
  For each  $\theta \in \Con\bB$,
  \begin{enumerate}
  \item if $\theta \meet \beta = 0_B$, then $\theta^* = \widehat{\theta}$;
  \item if $\theta \geq \beta$, then $[\theta^*, \widehat{\theta}] \cong
    \prod_{r=1}^n (\Eq |T \cap b_{\theta(r)}/\theta|)^{n-1}$, where $n\leq m$  is
    the number of congruence classes of $\theta$.
  \end{enumerate}
\end{prop}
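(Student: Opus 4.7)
The plan is to use Lemma~\ref{lem:residuation-lemma} to reduce both claims to a description of $\widehat{\theta}$ in the enlarged algebra, and then to analyze how the new operations $s_1, \dots, s_m$ cut down the $\widehat{\theta}$ that would be obtained from Theorem~\ref{OAthm2} in their absence. I would first note that $\theta^{*} = \Cg^{\bA}(\theta)$ is still described by the formula of Theorem~\ref{OAthm1}: each $s_r$ either sends its input to a tie-point (already a point of $B$) or leaves it fixed, so any equivalence relation on $A$ that contains $\theta$ and is closed under the original set of operations is automatically closed under the new $s_r$. Consequently $\theta^{*}$ is unchanged by the expansion, and both claims reduce to determining which pairs of $A^{2}$ satisfy $(e_0 f(x), e_0 f(y)) \in \theta$ for every $f \in \Pol_1(\bA)$, with attention now paid to the new probe polynomials $e_0 s_r$.

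For part~(1), fix $(u,v) \in \widehat{\theta}$ and split on which subreducts contain $u$ and $v$. If $u, v$ share a common $\bB_j$, then the identity polynomial together with the isomorphism $\pi_j$ places $(u,v)$ in $\theta^{\bB_j} \subseteq \theta^{*}$. If $u \in B_j$ and $v \in B_k$ with $j \neq k$ both nonzero, I would apply $s_r$ for the $\beta$-block $r$ containing $b_j$: when $b_k$ also lies in that block the $\widehat{\theta}$-condition forces $(b_j, b_k) \in \theta \meet \beta = 0_B$, yielding the contradiction $j = k$; otherwise $b_j$ and $b_k$ lie in distinct $\beta$-blocks $r, r'$, and combining $e_0$, $e_0 s_r$, and $e_0 s_{r'}$ produces the three $\theta$-relations $(e_0(u), e_0(v))$, $(b_j, e_0(v))$, $(e_0(u), b_k)$, which collapse $b_j, b_k, e_0(u), e_0(v)$ into a single $\theta$-class and place $(u,v)$ in the $\theta^{*}$-block prescribed by Theorem~\ref{OAthm1}. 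The mixed case $u \in B_0$, $v \in B_k$ is handled analogously via $s = s_0$, while the reverse inclusion $\theta^{*} \subseteq \widehat{\theta}$ is automatic from Lemma~\ref{lem:residuation-lemma}.

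For part~(2), the plan is to show that the interval computation of Theorem~\ref{OAthm2}, applied now with $\theta$ in place of $\beta$, remains valid in the enlarged algebra. Since $\theta \geq \beta$, whenever tie-points $b_j, b_k$ sit in a common $\beta$-block we have $(b_j, b_k) \in \beta \subseteq \theta$, so the probe $e_0 s_r$ sending $(u,v) \mapsto (b_j, b_k)$ with $j, k \in \sI_r$ already lands in $\theta$; and if neither $j$ nor $k$ lies in $\sI_r$ the operation $s_r$ acts as the identity on $(u,v)$. In the setting motivating the construction---$\beta = \Cg^{\bB}(x,y)$ with $T = \{x,y\}$, so that every tie-point inhabits a single $\beta$-block---these two cases exhaust the linking pairs listed by Theorem~\ref{OAthm2}, the new $s_r$ impose no fresh constraint, and the identification $[\theta^{*}, \widehat{\theta}] \cong \prod_{r=1}^n (\Eq|T\cap b_{\theta(r)}/\theta|)^{n-1}$ transfers verbatim from Theorem~\ref{OAthm2}. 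The main obstacle I anticipate is the residual mixed subcase in which $b_j$ and $b_k$ lie in distinct $\beta$-blocks but in a common $\theta$-block: applying $s_r$ for the $\beta$-block of $b_j$ alone produces $(b_j, e_0(v))$, a pair that crosses $\theta$-blocks and genuinely shrinks $\widehat{\theta}$, so treating this subcase requires either the implicit single-$\beta$-block hypothesis on $T$ native to the motivating construction, or a refinement of the stated product that factors separately over the $\beta$-blocks within each $\theta$-block.
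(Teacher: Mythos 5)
Your treatment of part (1) is correct and is essentially the entire argument that the paper leaves unwritten (the paper's proof of this proposition consists only of the remarks that part (1) is ``easy to prove, given the additional operations $s_r$'' and that part (2) ``follows from Theorem~\ref{OAthm2}''). Your preliminary observation that $\theta^*$ is unaffected by the expansion is right: each $s_r$ either fixes its argument or sends it to a tie-point, so the congruence described by Theorem~\ref{OAthm1} remains closed under the new operations and is still $\Cg^{\bA}(\theta)$. The three probes $e_0$, $e_0 s_r$, $e_0 s_{r'}$ in the cross-subreduct case then correctly force every pair of $\widehat{\theta}$ into a $\theta^*$-block, and the hypothesis $\theta\meet\beta=0_B$ enters exactly where you use it, to rule out two distinct tie-points sharing both a $\beta$-block and a $\theta$-block.

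The obstacle you flag in part (2) is genuine, and the paper's one-line proof does not address it. If some $\theta>\beta$ has a block containing tie-points $b_j\in T_r$ and $b_k\in T_{r'}$ with $r\neq r'$, then for $u\in b^j_{\theta(\ell)}/\theta^{\bB_j}$ and $v\in b^k_{\theta(\ell)}/\theta^{\bB_k}$, with $\ell$ indexing a different $\theta$-class, the probe $e_0 s_r$ sends $(u,v)$ to $(b_j,e_0(v))$, a pair crossing $\theta$-blocks; such $(u,v)$ are counted by the stated product $\prod_{r=1}^n(\Eq|T\cap b_{\theta(r)}/\theta|)^{n-1}$ but are excluded from $\widehat{\theta}$, so that formula overstates the interval. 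Carrying your analysis through shows the correct general description is a product over the $\beta$-block groups, $\prod_{r=1}^m(\Eq|T_r|)^{n-1}$, which agrees with the stated formula exactly when no $\theta$-class contains tie-points from two distinct $\beta$-classes --- in particular for $\theta=\beta$, for $\theta=1_B$, and in all of the paper's applications, where $\Con\bB\cong M_4$ and there is no $\theta$ strictly between $\beta$ and $1_B$. So your refusal to claim part (2) in full generality is the right call: what you have isolated is a defect in the statement (and in the paper's appeal to Theorem~\ref{OAthm2}), not a missing step in your own argument.
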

The first part of the proposition is easy to prove, given the additional
operations 
$s_r$, $1\leq r \leq m$.
The second part follows from Theorem~\ref{OAthm2}.

Note that $T_r$ was defined above to be $T \cap b_{\beta(r)}/\beta$, so $T =
\bigcup_{r=1}^m T_r$ is a partition of the tie-points, and it is on this partition
that our definition of the additional operations $s_r$ is based.  A modified version
of the \GAP\ function used above to construct overalgebras allows the user to specify an
arbitrary partition of the tie-points, and the extra operations will be
defined accordingly.  For example, to base the selection and partition of the
tie-points on the congruence $\beta$ in the example above, we invoke the
following command: 

{\footnotesize
\begin{verbatim}
gap> OveralgebraXO([ G, [[0,3], [2,5]] ]);
\end{verbatim}
}

\noindent The resulting overalgebra has congruence lattice isomorphic to the lattice in
Figure~\ref{fig:ConOverAlgebras3}, with
$L \cong \two^2\times\two^2$.  
Similarly, 

{\footnotesize
\begin{verbatim}
gap> OveralgebraXO([ G, [[0,1,2], [3,4,5]] ]);
\end{verbatim}
}

\noindent produces an overalgebra with congruence lattice isomorphic to the one in
Figure~\ref{fig:ConOverAlgebras3}, but with
$L \cong \Eq(3) \times\Eq(3)$.

Incidentally, with the additional operations $s_r$, we
are not limited with respect to how many terms appear in the direct
product.  For example, 

{\footnotesize
\begin{verbatim}
gap> OveralgebraXO([ G, [[0,1,2], [0,1,2], [3,4,5]] ]);
\end{verbatim}
}

\noindent produces an overalgebra with a 130 element congruence lattice 
like the one in Figure~\ref{fig:ConOverAlgebras3}, with 
$L \cong \Eq(3)\times \Eq(3)\times \Eq(3)$, while

{\footnotesize
\begin{verbatim}
gap> OveralgebraXO([ G, [[0,3], [0,3], [0,3], [0,3]] ]);
\end{verbatim}
}

\noindent gives a 261 element congruence lattice 
with $L \cong \two^{16}$. 

We close this subsection with a result which describes one way to add even more
operations to the overalgebra in case we wish to eliminate some of the
congruences in $[\beta^*, \widehat{\beta}]$ without affecting congruences outside that
interval.  In the following claim we assume the base algebra $\bB = \<B, G\>$ is a
transitive $G$-set.
\begin{claim}
Consider the collection of maps $\widehat{g}:A\rightarrow A$ defined 
for each $g\in \Stab_GT: = \{g\in G \mid gb = b \; \forall b \in T\}$ by the rules
\[
\widehat{g}\resBi = e_{g(b_i)} g e_0 \quad (i=1, \dots, n).
\]
Then, for each $\theta \in \Con \bA$,
\begin{equation}
  \label{eq:OA100}
  \widehat{g}(\theta) \nsubseteq \theta \quad \text{ only if } \quad
  \beta^* <\theta <\widehat{\beta}.
\end{equation}
\end{claim}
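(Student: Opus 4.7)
The plan is to prove the contrapositive: every $\theta \in \Con \bA$ lying outside the open interval $(\beta^*, \widehat{\beta})$ is respected by $\widehat{g}$. First I would verify the definition of $\widehat{g}$ is consistent. Since $g \in \Stab_G T$ fixes each tie-point pointwise, $g(b_i) = b_i$, so the formula $\widehat{g}|_{B_i} = e_{g(b_i)} g e_0 = e_i g e_0$ agrees on the overlaps $\{b_i\} = B_0 \cap B_i$. One checks that $e_0 \circ \widehat{g} = g \circ e_0$ on all of $A$, so under the identification $\pi_i : \bB \to \bB_i$ we have $\widehat{g}|_{B_i} = \pi_i \circ g \circ \pi_i^{-1}$.

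Next I would reduce the problem to cross-satellite pairs. The induced algebra on $B_i = e_i(A)$ is, via $\pi_i$, isomorphic to $\bB$ together with a constant at $b_i$, so by Lemma~\ref{sec:unarycongruences} and Lemma~\ref{lem:residuation-lemma} we have $\theta|_{B_i} = {\beta'}^{\bB_i}$, where $\beta' := \theta|_B$. Since $g \in G$ is a basic operation of $\bB$ that preserves $\beta'$, $\widehat{g}|_{B_i}$ preserves $\theta|_{B_i}$. Therefore any violation $(\widehat{g}(x), \widehat{g}(y)) \notin \theta$ must come from a pair $(x,y) \in \theta$ with $x \in B_i$, $y \in B_j$, and $i \neq j$.

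For such a cross-satellite pair, the basic operation $s$ forces $(s(x), s(y)) = (b_i, b_j) \in \beta'$, so $b_i$ and $b_j$ share a common $\beta'$-class which $g$ fixes setwise (being fixed by both tie-points). I would then invoke Theorem~\ref{OAthm1} and Theorem~\ref{OAthm2} (applied with $\beta'$ in place of $\beta$ in the general case) to describe the cross-satellite pairs of $\theta$ via partitions $\pi'_{r',\ell'}$ of the tie-point-index sets $\sI'_{r'}$. The endpoints of $[\beta^*, \widehat{\beta}]$ are then straightforward: for $\theta = \beta^*$, the cross-satellite pairs lie exclusively inside the $\beta$-classes containing tie-points, and $g$ fixes these pointwise on the tie-points; for $\theta = \widehat{\beta}$, the cross-satellite pairs are closed under arbitrary $g$-permutation of $\beta$-classes, since $g$ preserves $\beta$.

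The main obstacle is the case $\theta|_B = \beta' \neq \beta$ with $\theta$ strictly inside $[{\beta'}^*, \widehat{\beta'}]$: here one must rule out that $g$'s permutation of non-tie-point $\beta'$-classes moves cross-satellite pairs out of $\theta$. The plan is to exploit the tie-point-fixing property of $g$ together with the constraint $(b_i, b_j) \in \beta'$ imposed by $s$, and argue that the partitions $\pi'_{r',\ell'}$ must be $g$-invariant as $\ell'$ ranges over the $g$-orbit of non-tie-point classes. I expect this subcase to be the most delicate; once it is established, $\widehat{g}(\theta) \subseteq \theta$ for every $\theta$ outside $(\beta^*, \widehat{\beta})$, and the claim follows.
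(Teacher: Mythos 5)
The paper states this claim without proof, so there is no argument of the author's to compare yours against; the proposal has to stand on its own. The parts you actually carry out are correct: $\widehat{g}$ is well defined because $g$ fixes each tie-point; $\widehat{g}\resBi = \pi_i g \pi_i^{-1}$; every $\theta\in\Con\bA$ satisfies $\theta\cap B_i^2 = {\beta'}^{\bB_i}$ for $\beta':=\theta\resB$ (by Lemma~\ref{lem:residuation-lemma} together with the fact that the formulas~(\ref{eq:OAstar}) and~(\ref{eq:OAbetahat}) give ${\beta'}^*$ and $\widehat{\beta'}$ the same restriction to each $B_i^2$), so any violation must occur on a pair straddling two of the sets $B_i$; and the endpoint verification is right --- indeed $\widehat{g}$ respects ${\beta'}^*$ and $\widehat{\beta'}$ for \emph{every} $\beta'\in\Con\bB$, since $g$ fixes setwise each $\beta'$-class containing a tie-point and merely permutes the remaining classes, under which both formulas are visibly invariant.

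The gap is exactly the case you defer. You propose to ``argue that the partitions $\pi'_{r',\ell'}$ must be $g$-invariant,'' but nothing forces this: Theorem~\ref{OAthm2} says that \emph{every} equivalence relation between ${\beta'}^*$ and $\widehat{\beta'}$ is a congruence of $\bA$, so every choice of partitions, $g$-invariant or not, yields a $\theta\in\Con\bA$ with $\theta\resB=\beta'$. Hence if some $g\in\Stab_GT$ permutes the non-tie-point classes of a congruence $\beta'>\beta$ nontrivially, a non-$g$-invariant choice produces a congruence outside $(\beta^*,\widehat{\beta})$ that $\widehat{g}$ violates --- and this really happens. Take $G=S_4$ acting on the twelve cosets of $H=\langle(3\,4)\rangle$ with tie-points $T=\{H,(1\,2)H\}$, so that $\beta$ corresponds to $V=\langle(1\,2),(3\,4)\rangle$ and $\Stab_GT=\langle(3\,4)\rangle$. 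The congruence $\theta'$ corresponding to the dihedral subgroup $D\geq V$ of order $8$ has three classes with both tie-points in the class $D/H$, so its fiber $[{\theta'}^*,\widehat{\theta'}]\cong\two^2$ has two intermediate congruences; and $g=(3\,4)$, which lies in $D$ but not in the normal $V_4$, swaps the two cosets $\sigma D\neq D$, so $\widehat{g}$ interchanges --- and therefore violates --- both intermediates, which restrict to $\theta'\neq\beta$ and so lie outside $(\beta^*,\widehat{\beta})$. What your analysis does prove is the weaker statement that a violation forces ${(\theta\resB)}^*<\theta<\widehat{\theta\resB}$; the claim as literally stated needs an additional hypothesis (for instance that $\beta$ is a coatom of $\Con\bB$, as in the $M_4$ examples of this chapter, or that $\Stab_GT$ fixes every block of every $\theta'>\beta$), and no proof along your lines can close this case without one.
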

Of course, these $\widehat{g}$ maps may not be the only
functions in $A^A$ which have the property stated in~(\ref{eq:OA100}).
Also, in general, even with the whole collection of maps $\widehat{g}$ defined
above, we may not be able to eliminate every  $\beta^* <\theta
<\widehat{\beta}$.  
In fact, it's easy to construct examples in which there exist
$\beta^* <\theta <\widehat{\beta}$ such that 
$g(\theta) \subseteq \theta$ for every every $g \in A^A$.

\subsection{Overalgebras II}
\label{sec:overalgebras-ii}
In the previous section we described a procedure for building an
overalgebra $\bA$ of $\bB$ such that for some 
\emph{principal} congruence $\beta\in \Con\bB$ and for all 
$\beta \leq \theta < 1_B$, the inverse image $\theta \resB^{-1} = [\theta^*,
  \widehat{\theta}] \leq \Con\bA$ is non-trivial.
In this section, we start with a non-principal congruence $\beta\in \Con\bB$ and
ask if it is possible to construct an overalgebra $\bA$ such that 
$\theta\resB^{-1}\leq \Con\bA$ is non-trivial if and only if 
$\beta \leq \theta < 1_B$.  
To answer this question, we now describe an overalgebra construction that is
based on a construction proposed by Bill Lampe.

Let $\bB = \<B; F\>$ be a finite algebra, and suppose 
\[
\beta = \Cg^{\bB}((a_1, b_1), \dots, (a_K,b_K))
\]
for some $a_1, \dots, a_K, b_1, \dots, b_K \in B$.
Let $B=B_0, B_1, B_2, \dots, B_{K+1}$ be sets of cardinality $|B| = n$
which intersect as follows: 
\begin{align*}
  B_0\cap B_1 &=\{a_1\}=\{a_1^{1}\},\\
  B_i \cap B_{i+1} &= \{b_i\supi\}=\{a^{i+1}_{i+1}\} \text{ for $1\leq i < K$,}\\
  B_K\cap B_{K+1}&=\{b^{K}_K\}=\{a_1^{K+1}\}.
\end{align*}
All other intersections are empty. (See Figure~\ref{fig:OveralgebrasII}.)

\begin{figure}[h!]
  \centering
      {\scalefont{.9}
        \begin{tikzpicture}[scale=.5]
          \draw (2.3, 3.8) node {$B$};
          \draw (2.3,3.8) ellipse (1cm and 2.2cm);

          \draw (4.5, 2) node {$B_1$};
          \draw (4.5,2) ellipse (2.2cm and 1cm);

          \draw (8, 2.5) node {$B_2$};
          \draw (8,2.5) ellipse (2.2cm and 1cm);

          \draw (11.5, 2) node {$B_3$};
          \draw (11.5,2) ellipse (2.2cm and 1cm);

          \draw[font=\LARGE] (14.8, 2) node {$\cdots$};

          \draw (18,2) node {$B_K$};
          \draw (18,2) ellipse (2.2cm and 1cm);

          \draw (20.2,3.8) node {$B_{K+1}$};
          \draw (20.2,3.8) ellipse (1cm and 2.2cm);

          \node (1) at (2.7,2.25) [fill,circle,inner sep=.8pt] {};
          \draw (1.6, .8) node {$a_1=a_1^1$};
          \draw[->] (1.9, 1.2) to  (2.6,2.1);

          \node (2) at (6.25,2.25) [fill,circle,inner sep=.8pt] {};
          \draw (7.7, .8) node {$b^1_1 = a^2_2$};
          \draw[->] (6.8, 1.2) to  (6.34,2.09);

          \node (3) at (9.75,2.25) [fill,circle,inner sep=.8pt] {};
          \draw (11.3, 3.6) node {$b^2_2 = a^3_3$};
          \draw[->] (10.4,3.15) to (9.84, 2.4);

          \node (4) at (16.25,2.2) [fill,circle,inner sep=.8pt] {};
          \draw (16, 3.5) node {$b^{K-1}_{K-1} = a^K_K$};
          \draw[->] (16, 3.05) to  (16.22,2.35);

          \node (5) at (19.8,2.25) [fill,circle,inner sep=.8pt] {};
          \draw (21.5, .8) node {$b^{K}_{K} = a^{K+1}_1$};
          \draw[->] (20.2, 1.2) to  (19.85,2.07);

        \end{tikzpicture}
      }
      \caption{The universe of the overalgebra.}
      \label{fig:OveralgebrasII}
\end{figure}
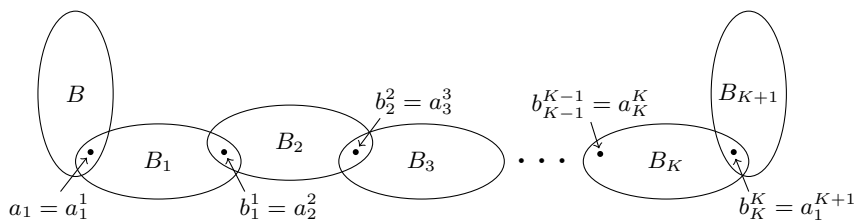

For $0\leq i, j \leq K+1$, let $S_{i,j}:B_i \rightarrow B_j $ be the 
bijection $S_{i,j}(x\supi)=x\supj$.
Put $A:=B_0\cup \dots\cup B_{K+1}$, and define the following functions in $A^A$:
\[
e_0(x)=
\begin{cases}
  x, & x\in B_0,\\
  a_1, &x\in B_j,\; 1\leq j \leq K,\\
  S_{K+1,0}(x), &x\in B_{K+1};\\
\end{cases}
\]

\[
e_i(x)=
\begin{cases}
  a_i^{i}, &x\in B_j,\; j<i,\\
  x, &x\in B_i,\\
  b_i^{i},&x\in B_j, \;j>i;
\end{cases} \qquad (1\leq i\leq K),
\]

\[
e_{K+1}(x)=
\begin{cases}
  S_{0,K+1}(x), &x\in B_0,\\
  a_{1}^{K+1}, &x\in B_j,\; 1\leq j \leq K,\\
  x, &x \in B_{K+1}.\\
\end{cases}
\]
Using these maps we define the set $F_A$ of operations on $A$ as follows: let
$q_{i,j}=S_{i,j}\circ e_i$ 
for $0\leq i, j\leq K+1$ and
define\footnote{If we were to include $q_{i,j}$
  for all $0\leq i, j\leq K+1$, the resulting overalgebra would have the same
  congruence lattice as $\<A, F_A\>$, but using a reduced set of
  operations simplifies our proofs.}  
\[
F_A := \{f e_0 : f\in F\}  \cup \{q_{i,0} : 0\leq i \leq K+1\}\cup \{q_{0,j} : 1\leq j \leq K+1\}.
\]
The overalgebra in this section is defined to be the unary algebra 
$\bA := \< A, F_A\>$.   

\begin{theorem}
  \label{OAthm3}
  Suppose $\bA = \< A, F_A\>$ is the overalgebra
  based on the congruence relation $\beta = \Cg^{\bB}((a_1, b_1), \dots, (a_K,b_K))$, as described above,
  and define
  \[ \beta^* = \bigcup_{j=0}^{K+1} \beta^{\bB_j} \cup 
  (a_1/\beta \cup a_1^1/\beta^{\bB_1} \cup a_2^2/\beta^{\bB_2}    \cup \cdots \cup a_K^K/\beta^{\bB_K}\cup a_1^{K+1}/\beta^{\bB_{K+1}})^2.
  \]
  Then, $\beta^* = \Cg^{\bA}(\beta)$.  

  If $\beta$ has transversal $\{a_1, c_1, c_2, \dots, c_{m-1}\}$, then
  \begin{equation}
    \label{eq:OA312}
    \widehat{\beta} = \beta^* \cup \bigcup_{i=1}^{m-1} (c_i/\beta \cup
    c^{K+1}_i/\beta^{\bB_{K+1}})^2.
  \end{equation}

  Moreover, $[\beta^*, \widehat{\beta}] \cong \two^{m-1}$.
\end{theorem}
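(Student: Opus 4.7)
The plan is to establish the three assertions in order, invoking Lemma~\ref{lem:residuation-lemma}(iii), which tells us that the interval $[\beta^*,\widehat{\beta}]$ in $\Con\bA$ consists of exactly those $\theta\in\Con\bA$ with $\theta\resB = \beta$, and that $\beta^*$ and $\widehat{\beta}$ are respectively its minimum and maximum. Throughout I would exploit three structural features of $\bA$: the maps $S_{i,j}$ are bijections carrying $\beta^{\bB_i}$ to $\beta^{\bB_j}$; the operations $q_{0,j}$ and $q_{i,0}$ serve as bridges between $B_0$ and $B_j$; and the map $e_0$ collapses each $B_j$ with $1\le j\le K$ to $\{a_1\}$ while identifying $B_{K+1}$ with $B_0$ via $S_{K+1,0}$.

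For the formula $\beta^* = \Cg^\bA(\beta)$, denote the displayed right-hand side by $\theta_0$. First I would verify that $\theta_0$ is a well-defined equivalence relation; the key point is that the large ``chain block'' is a single class because the tie-points $a_1 = a_1^1$, $b_i^i = a_{i+1}^{i+1}$ ($1\le i<K$), and $b_K^K = a_1^{K+1}$ glue consecutive $\beta^{\bB_i}$-classes together by transitivity. Next I would check $f(\theta_0)\subseteq\theta_0$ for each $f\in F_A$: for $fe_0$ the image lies in $B_0$ and, since $e_0$ sends each $B_j$-portion of the chain block into the $\beta$-class of $a_1$, the pair maps into $\beta$; for $q_{0,j}$ the claim reduces to the fact that $S_{0,j}$ carries $\beta$ to $\beta^{\bB_j}$; and for $q_{i,0}$ a straightforward case analysis shows each image lies in $\beta$ or on the diagonal. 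The reverse inclusion $\theta_0 \subseteq \Cg^\bA(\beta)$ follows because applying $q_{0,j}$ to $\beta$ generates all of $\beta^{\bB_j}$, and the tie-points then force the entire chain block into a single $\Cg^\bA(\beta)$-class.

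For the formula~(\ref{eq:OA312}), let $\widehat{\beta}_*$ denote the right-hand side; it is clearly an equivalence relation, and I would first check it is a congruence of $\bA$. The crucial observation is that $e_0$ restricted to $B_{K+1}$ equals $S_{K+1,0}$ and therefore sends $c_i^{K+1}/\beta^{\bB_{K+1}}$ to $c_i/\beta$, so no operation factoring through $e_0$ can separate the two halves of an added pair; the operations $q_{i,0}$ for $1\le i\le K$ collapse both $B_0$ and $B_{K+1}$ into a single $\beta$-class containing $\{a_i,b_i\}$; and $q_{0,j}$ moves the two halves to the same class of $\beta^{\bB_j}$. Since $\widehat{\beta}_*\resB = \beta$, Lemma~\ref{lem:residuation-lemma}(iii) gives $\widehat{\beta}_* \le \widehat{\beta}$. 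For the reverse containment I would show that, for every $(x,y)\notin\widehat{\beta}_*$, some basic operation $f\in F_A$ yields $(e_0 f(x), e_0 f(y))\notin \beta$, by a case split on where $x$ and $y$ live among the $B_k$: pairs inside a single $B_i$ are separated by $q_{i,0}$; pairs straddling $B_0$ and some $B_i$ ($1\le i\le K$) are separated either by $e_0$ (if the $B_0$-half is not in $a_1/\beta$) or by $q_{i,0}$; pairs straddling $B_i$ and $B_j$ for distinct $i,j\in\{1,\dots,K\}$ are separated by $q_{i,0}$ using that $q_{i,0}(B_j)=\{b_i\}$; and pairs in $B_0\times B_{K+1}$ not lying in any $c_i/\beta\times c_i^{K+1}/\beta^{\bB_{K+1}}$ are separated by $e_0$ alone. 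Finally, to prove $[\beta^*,\widehat{\beta}]\cong\two^{m-1}$, I would observe that $\widehat{\beta}/\beta^*$ consists of $m-1$ two-element classes (one for each pair $\{c_i/\beta,\,c_i^{K+1}/\beta^{\bB_{K+1}}\}$) together with singletons, so there are exactly $2^{m-1}$ equivalence relations between $\beta^*$ and $\widehat{\beta}$; each is automatically a congruence because, by the computations above, every $F_A$-image of an added pair already lands inside $\beta^*$.

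The hard part will be the reverse inclusion $\widehat{\beta}\subseteq\widehat{\beta}_*$, since it requires a careful case analysis to produce, for each type of ``bad'' pair, an explicit member of $F_A$ whose composition with $e_0$ exhibits the non-$\beta$-relatedness. The most delicate case is the one with $x\in B_0$ and $y\in B_{K+1}$: here $e_0$ alone decides matters via $S_{K+1,0}(y)$, so the pair $(x,y)$ is in $\widehat{\beta}$ precisely when $x$ and $S_{K+1,0}(y)$ are $\beta$-related, which is exactly the condition defining the extra pairs in $\widehat{\beta}_*$; the remaining cases use $q_{i,0}$ to project one coordinate genuinely through $B_i$ while the other collapses to a constant tie-point, and this asymmetry is what exposes the non-relation.
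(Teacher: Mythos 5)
Your proposal follows essentially the same route as the paper's proof: verify that each displayed relation is a congruence by checking closure under the operations of $F_A$ (with the same two-case split for $\beta^*$ and the same use of the tie-point chain), invoke the residuation lemma via $\widetilde{\beta}\res_B=\beta$ to get one inclusion for $\widehat{\beta}$, separate pairs outside $\widetilde{\beta}$ by composing with $e_0$ and the $q_{i,0}$, and conclude $[\beta^*,\widehat{\beta}]\cong\two^{m-1}$ by observing that every operation maps $\widehat{\beta}$ into $\beta^*$ so all intermediate equivalence relations are congruences. The only point to tighten is the case $x\in B_j$, $y\in B_k$ with $1\le j<k\le K$ and equal transversal index, where the paper argues by contradiction that at least one of $q_{j,0}$, $q_{k,0}$ must separate the pair (rather than a single fixed projection doing so), but this is a presentational detail, not a gap in the approach.
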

\begin{proof}
  It is clear that $\beta^*$ is an equivalence relation on $A$, so we first
  check that $f(\beta^*)\subseteq 
  \beta^*$ for all $f\in F_A$.  This will establish that $\beta^*\in \Con\bA$.
  Thereafter we show that $\beta \subseteq \eta \in \Con\bA$ implies 
  $\beta^*\leq \eta$, which will prove that $\beta^*$ is the smallest congruence
  of $\bA$ containing $\beta$, as claimed in the first part of the theorem.

  Fix $(x,y) \in \beta^*$.  To show $(f(x), f(y)) \in \beta^*$ we consider two
  possible cases.
  \\[6pt]
  \underline{Case 1}: $(x,y)\in \beta^{\bB_j}$ for some $0\leq j \leq K+1$.\\[4pt]
  In this case it is easy to verify that $(q_{i,0}(x), q_{i,0}(y)) \in \beta$ and 
  $(q_{0,i}(x), q_{0,i}(y)) \in \beta^{\bB_i}$  for all $0\leq i \leq
  K+1$.  For example, if $(x,y)\in \beta^{\bB_j}$ with $1\leq j \leq K$, 
  then $(q_{0,i}(x), q_{0,i}(y))  = (a_1^i, a_1^i)$ 
  and $(q_{i,0}(x), q_{i,0}(y))$ is either $(b_i, b_i)$ or $(a_i,
  a_i)$ depending on whether $i$ is below or above $j$, respectively. If $i=j$,
  then $(q_{i,0}(x), q_{i,0}(y))$ is the pair in $B^2$ corresponding to
  $(x,y)\in \beta^{\bB_j}$, so $(q_{i,0}(x), q_{i,0}(y))\in \beta$.
  A special case is $(q_{0,0}(x), q_{0,0}(y)) \in \beta$.  Now, since 
  $q_{0,0} = e_0$, we have $(f e_{0}(x), f e_{0}(y))\in \beta$
  for all $f\in F_B$.
  Altogether, the foregoing implies that $(f(x),f(y))\in \beta^*$
  for all $f\in F_A$.
  \\[6pt]
  \underline{Case 2}: $(x,y)\in \sB^2$ where 
  $\sB := a_1/\beta \cup a_1^1/\beta^{\bB_1} \cup \cdots \cup a_K^K/\beta^{\bB_K}\cup a_1^{K+1}/\beta^{\bB_{K+1}}$.
  \\[4pt]
  Note that  $e_0(\sB) = a_1/\beta$. Therefore, 
  $(e_0(x),e_0(y)) \in  \beta$, so 
  $(fe_0(x),fe_0(y)) \in  \beta$ for all $f\in F_B$.  Also,
  \[
  q_{0,k}(\sB) = S_{0,k} e_0(\sB) = S_{0,k}(a_1/\beta) = 
  a_1^{k}/\beta^{\bB_{k}},
  \]
  which is a single block of $\beta^*$.
  Similarly,
  $e_k(\sB) = a_k^k/\beta^{\bB_k}$, so 
  \[
  q_{k,0}(\sB) = S_{k,0} e_k(\sB) = S_{k,0}(a_k^k/\beta^{\bB_k}) = a_k/\beta.
  \]
  Whence, $(x,y)\in \sB^2$ implies $(f(x), f(y)) \in \beta^*$ for all $f\in F_A$.

  We have thus established that $\beta^*$ is a congruence of $\bA$ which
  contains $\beta$.  We now show that it is the smallest such congruence.  Indeed,
  suppose $\beta \subseteq \eta \in \Con\bA$, and fix $(x,y)\in \beta^*$.
  If $(x,y)\in \beta^{\bB_j}$ for some $0\leq j \leq K+1$, then 
  $(q_{j,0}(x), q_{j,0}(y))\in \beta \subseteq \eta$, so 
  $(x, y) = (q_{0,j}q_{j,0}(x), q_{0,j}q_{j,0}(y))\in \eta$.

  If, instead of $(x,y)\in \beta^{\bB_j}$, we have 
  $(x,y)\in \sB^2$, then without loss of generality $x\in a_i^i/\beta^{\bB_i}$ and 
  $y\in a_j^j/\beta^{\bB_j}$ for some $0\leq i < j \leq K+1$.
  We only discuss the case $1\leq i < j \leq K$, as the other cases can be
  handled similarly.
  Since $x\in a_i^i/\beta^{\bB_i} = b_i^i/\beta^{\bB_i}$, we have 
  $(q_{i,0}(x), b_i) \in \beta$. Similarly, 
  $(a_j, q_{j,0}(y)) \in \beta$.  Therefore, we obtain the following 
  diagram\footnote{
    The diagram illustrates the case $1 \leq i < j \leq K$ where $i+1 < j$.  In
    case $j=i+1$, the diagram is even simpler.  Also, the cases involving $i=0$
    and/or $j=K+1$ can be handled similarly.}
  \begin{center}
    
    \begin{tikzpicture}[scale=.7]
      \node (00) at (-.1,0) [fill,circle,inner sep=1pt] {};
      \node (10) at (1.1,0) [fill,circle,inner sep=1pt] {};
      \draw (-0.6,-.4) node {$q_{i,0}(x)$};
      \draw (.5,-.2) node {$\beta$};
      \draw (1.3,-.4) node {$b_i$};

      \node (30) at (2.9,0) [fill,circle,inner sep=1pt] {};
      \node (40) at (4.1,0) [fill,circle,inner sep=1pt] {};
      \draw (2.7,-.4) node {$a_{i+1}$};
      \draw (3.5,-.2) node {$\beta$};
      \draw (4.4,-.4) node {$b_{i+1}$};

      \node (60) at (5.9,0) [fill,circle,inner sep=1pt] {};
      \draw (5.9,-.4) node {$a_{i+2}$};

      \node (90) at (9.1,0) [fill,circle,inner sep=1pt] {};
      \draw (9.1,-.4) node {$b_{j-1}$};

      \node (110) at (10.9,0) [fill,circle,inner sep=1pt] {};
      \node (120) at (12.1,0) [fill,circle,inner sep=1pt] {};
      \draw (10.7,-.4) node {$a_j$};
      \draw (11.5,-.2) node {$\beta$};
      \draw (12.6,-.4) node {$q_{j,0}(y)$};

      \node (103) at (10,3) [fill,circle,inner sep=1pt] {};
      \node (133) at (13,3) [fill,circle,inner sep=1pt] {};
      \draw (10,3.4) node {$b_{j-1}^{j-1} = a_{j}^{j}$};

      \node (m13) at (-1,3) [fill,circle,inner sep=1pt] {};
      \node (23) at (2,3) [fill,circle,inner sep=1pt] {};
      \node (53) at (5,3) [fill,circle,inner sep=1pt] {};
      \draw (-1.2,3.4) node {$x$};
      \draw (2,3.4) node {$b_i^i = a_{i+1}^{i+1}$};
      \draw (5,3.4) node {$b_{i+1}^{i+1} = a_{i+2}^{i+2}$};
      \draw (13,3.4) node {$y$};

      \path[->] (00) edge (-.95,2.85);       \draw[font=\large] (.5,1.25) node {$q_{0,i}$};
      \path[->] (10) edge (1.95,2.85);       
      \path[->] (30) edge (2.05,2.85);       \draw[font=\large] (3.5,1.25) node {$q_{0,i+1}$};
      \path[->] (40) edge (4.95,2.85);
      \path[->] (60) edge (5.05,2.85);       \draw[font=\large] (6.3,1.25) node {$q_{0,i+2}$};

      \draw [font=\LARGE] (7.5,2.25) node {$\dots$};
      \draw [font=\LARGE] (7.5,0) node {$\dots$};

      \path[->] (90) edge (9.95,2.85);       \draw[font=\large] (8.8,1.25) node {$q_{0,j-1}$};
      \path[->] (110) edge (10.05,2.85);
      \path[->] (120) edge (12.95,2.85);       \draw[font=\large] (11.5,1.25) node {$q_{0,j}$};
      \draw[dashed, gray] 
      (00) to [out=30,in=150] (10)
      (30) to [out=30,in=150] (40)
      (110) to [out=30,in=150] (120);

    \end{tikzpicture}
  \end{center}
  Since $\beta \subseteq \eta \in \Con\bA$, and since $q_{0,k}\in F_A$ for each
  $k$, the diagram makes it clear that $(x,y)$ must belong to $\eta$.

  To prove~(\ref{eq:OA312}), let $\widetilde{\beta}$ denote the right-hand side.  That
  is,
  \[
  \widetilde{\beta}:= \beta^* \cup \bigcup_{i=1}^{m-1} 
  (c_i/\beta \cup c^{K+1}_i/\beta^{\bB_{K+1}})^2.
  \]
  It is clear that $\tbeta \in \Eq(A)$, so we verify $\widetilde{\beta} \in
  \Con \bA$ by proving that $f(\tbeta) \subseteq \tbeta$ for all $f\in F_A$.
  Fix $(x,y) \in \tbeta$.  If 
  $(x,y) \in \beta^*$, then 
  $(f(x),f(y))\in \beta^*$ for all $f\in F_A$, by the first part of the theorem.
  So suppose 
  $(x,y) \in (\CICK)^2$, 
  for some $1\leq i \leq m-1$.
  For ease of notation, define
  \[
  \cick := \CICK.
  \]
  Then, since
  $e_0(\cick) = c_i/\beta$, we have
  $(e_0(x), e_0(y))\in \beta$, so
  $(fe_0(x), fe_0(y))\in \beta$ for all $f\in F_B$.
  Also, for $0\leq k\leq K+1$, we 
  have\footnote{By $c_i^{0}/\beta^{\bB_{0}}$ we mean, of course, $c_i/\beta$.}
  \[
  q_{0,k}(\cick) = S_{0,k}(c_i/\beta) = 
  c_i^{k}/\beta^{\bB_{k}}.
  \]
  Therefore,
  $q_{0,k}(\cick)$ is in a single block of $\beta^*$, so 
  $(q_{0,k}(x), q_{0,k}(y)) \in \beta^*$.
  Also, for $1\leq k \leq K$, we have 
  $e_k(c_i/\beta) = \{a_k^k\}$ and 
  $e_k(c_i^{K+1}/\beta^{\bB_{K+1}})= \{b_k^k\}$, so 
  \[
  q_{k,0}(\cick) = S_{k,0}(\{a_k^k, b_k^k\}) = \{a_k, b_k\} \subseteq a_k/\beta,
  \]
  while, for $k=K+1$, we have 
  $e_{K+1}(\cick) = c_i^{K+1}/\beta^{\bB_{K+1}}$, so 
  \[
  q_{K+1,0}(\cick) = S_{K+1,0}(c_i^{K+1}/\beta^{\bB_{K+1}}) = c_i/\beta.
  \]
  Thus, for all $0\leq k \leq K+1$, we have 
  $(q_{k,0}(x), q_{k,0}(y)) \in \beta^*$.
  This proves that
  $(f(x),f(y))\in \beta^*\subseteq \tbeta$ holds for all $f\in F_A$, so $\tbeta
  \in \Con \bA$.

  Next, note that $\widetilde{\beta}\resB = \beta$, so
  by the residuation lemma of Section~\ref{sec:residuation-lemma}, 
  $\widetilde{\beta} \leq \widehat{\beta}$.  
  Thus, to 
  prove~(\ref{eq:OA312}), it suffices to show that 
  $(x,y)\notin \widetilde{\beta}$ implies 
  $(x,y)\notin \widehat{\beta}$.  This is straight-forward, and 
  similar to the argument we used to check the analogous fact in the proof of
  Theorem~\ref{OAthm2}.  Nonetheless, we verify most of the cases, 
  and omit only a few special cases which are easy to check.

  Suppose $(x,y)\notin \widetilde{\beta}$, and suppose
  $x\in c_p^j/\beta^{\bB_j}$ and
  $y\in c_q^k/\beta^{\bB_k}$ for some $0\leq j \leq k \leq K+1$ and $1\leq p, q
  \leq m-1$.  If $j=0$ and $k=K+1$, then $p\neq q$
  (otherwise, $(x,y) \in \tbeta$).  
  Therefore, $e_0(x) \in c_p/\beta$ and 
  $e_0(y) \in c_q/\beta$, so 
  $(e_{0}(x), e_{0}(y)) \notin \beta$, so
  $(x, y) \notin \widehat{\beta}$.
  If $p=q$, then $j\neq k$
  (otherwise, $(x,y) \in \tbeta$).  Thus,
  \begin{align*}
    (e_j(x), e_j(y)) &= (x, b^j_j) \quad \Rightarrow \quad (q_{j,0}(x), q_{j,0}(y))= (q_{j,0}(x), b_{j});\\
    (e_k(x), e_k(y)) &= (a^k_k,y) \quad \Rightarrow \quad (q_{k,0}(x), q_{k,0}(y))= (a_{k}, q_{k,0}(y)).
  \end{align*}
  One of the pairs on the right is not in $\beta$.  For if both are in $\beta$, then 
  \begin{align*}
    x = q_{0,j}q_{j,0}(x) 
    \; \beta^* \; &
    q_{0,j}(b_{j}) = b^{j}_{j} = a^{j+1}_{j+1} 
    \; \beta^* \; \cdots\\
    &\cdots \; \beta^* \;
    a^{k}_{k}  = q_{0,k}(a_{k})
    \; \beta^* \;
    q_{0,k}q_{k,0}(y) = y,
  \end{align*}
  which contradicts $(x,y)\notin \tbeta$,
  so we must have either $(q_{j,0}(x), q_{j,0}(y))\notin \beta$ or 
  $(q_{k,0}(x), q_{k,0}(y)) \notin \beta$.  Therefore,
  since $e_0 q_{i,0} = q_{i,0}$, we see that
  $(x,y)\notin \widehat{\beta}$.
  The other cases, e.g.~$x\in a_1/\beta$, 
  $y\in c_q^k/\beta^{\bB_k}$, can be checked similarly.

  It remains to prove that $[\beta^*, \widehat{\beta}] \cong \two^{m-1}$, but this
  follows easily from the first part of the proof, where we saw that $(f(x), f(y))\in
  \beta^*$ for all $f\in F_A$ and for all $(x,y)\in \widehat{\beta}$.
  This implies that all equivalence relations on $A$ that are above $\beta^*$ and below
  $\widehat{\beta}$ are, in fact, congruence relations of $\bA$.  The shape of
  this interval of equivalence relations is even simpler than the shape of the
  analogous interval we found in Theorem~\ref{OAthm2}.  In the present case, we
  have 
  \[
    [\beta^*, \widehat{\beta}] = \{\theta \in \Eq(A) \mid \beta^* \subseteq \theta \subseteq \widehat{\beta} \}
    \cong \two^{m-1}.
    \]
\end{proof}

Before stating the next result, we remind the reader that 
$\theta^* = \Cg^\bA(\theta)$ for each $\theta \in \Con\bB$. 
\begin{lemma}
  \label{lem3.1}
  If $\eta \in \Con\bA$ satisfies $\eta\resB =
  \theta$, and if $(x,y) \in \eta \setminus \theta^*$ for some 
  $x\in B_i, \, y\in B_j$, then $i=0, \, j=K+1$, and $\theta \geq \beta$.
\end{lemma}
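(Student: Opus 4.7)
The plan is to show, working by cases on $i$ and $j$, that every configuration other than $\{i,j\} = \{0,K+1\}$ forces $(x,y)$ into $\theta^*$, contradicting the hypothesis; once we are in the remaining case, we extract enough $\theta$-pairs to generate $\beta$. Throughout, I may assume WLOG that $i \leq j$, and I will repeatedly exploit the fact that, by hypothesis, $\eta \cap B^2 = \theta$, so any operation in $F_A$ applied to $(x,y)$ that lands in $B^2$ produces a pair in $\theta$.

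I would first dispose of the case $i = j$. Since $e_i$ is the identity on $B_i$, we have $q_{i,0}(x) = S_{i,0}(x)$ and $q_{i,0}(y) = S_{i,0}(y)$, both in $B$, so $(S_{i,0}(x), S_{i,0}(y)) \in \theta$. Applying $q_{0,i}$ (and using $q_{0,i}\circ q_{i,0} = \mathrm{id}_{B_i}$) then yields $(x,y) \in \theta^{\bB_i} \subseteq \theta^*$, a contradiction. So $i < j$. The next step, which is the core calculation driving the whole lemma, is to evaluate $q_{k,0}(x)$ and $q_{k,0}(y)$ for each $1 \leq k \leq K$. By the definition of $e_k$, one checks that $q_{k,0}(x) = a_k$ whenever $k > i$ (because then $e_k(x) = a_k^k$) and $q_{k,0}(y) = b_k$ whenever $k < j$ (because then $e_k(y) = b_k^k$). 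Consequently, for every $k$ in the range $i < k < j$ (and in the boundary ranges $1 \leq k < j$ if $i = 0$, and $i < k \leq K$ if $j = K+1$), we obtain $(a_k, b_k) \in \eta \cap B^2 = \theta$.

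Now I would handle the cases $(i,j) \neq (0,K+1)$ by constructing a $\theta^*$-chain linking $x$ to $y$. In the generic situation $1 \leq i < j \leq K$, applying $q_{i,0}$ gives $(S_{i,0}(x), b_i) \in \theta$, whence pulling back by $q_{0,i}$ yields $(x, b_i^i) \in \theta^{\bB_i} \subseteq \theta^*$; symmetrically $(a_j^j, y) \in \theta^{\bB_j} \subseteq \theta^*$. The tie-point identities $b_k^k = a_{k+1}^{k+1}$ together with $(a_k,b_k) \in \theta$ (for $i < k < j$, which was established above) let me splice in the intermediate links $(a_k^k, b_k^k) \in \theta^{\bB_k}$, producing a transitive $\theta^*$-chain
\[
x \; \theta^{\bB_i}\; b_i^i = a_{i+1}^{i+1} \; \theta^{\bB_{i+1}}\; b_{i+1}^{i+1} = a_{i+2}^{i+2} \; \theta^{\bB_{i+2}}\; \cdots \; \theta^{\bB_{j-1}}\; b_{j-1}^{j-1} = a_j^j \; \theta^{\bB_j}\; y,
\]
so $(x,y) \in \theta^*$, contradiction. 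The cases $(0, j)$ with $1 \leq j \leq K$ and $(i, K+1)$ with $1 \leq i \leq K$ are handled in exactly the same way, replacing the anchoring step at the boundary with the appropriate direct computation: for $i = 0$ one uses $(x, a_1) = (e_0(x), e_0(y)) \in \theta$, and for $j = K+1$ one uses $(a_1, S_{K+1,0}(y)) = (q_{K+1,0}(x), q_{K+1,0}(y)) \in \theta$, pulled back to $\theta^{\bB_{K+1}}$ by $q_{0,K+1}$.

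The only surviving case is $i = 0$ and $j = K+1$. Here the calculation of Step~2 applies for every $k$ with $1 \leq k \leq K$ (since $0 = i < k < j = K+1$), so $(a_k, b_k) \in \theta$ for each such $k$. Since $\theta$ is a congruence of $\bB$ and contains every generating pair of $\beta = \Cg^\bB((a_1,b_1),\dots,(a_K,b_K))$, we conclude $\theta \supseteq \beta$. The main obstacle in executing this plan is purely bookkeeping: one must verify the piecewise definitions of $e_k$, $q_{i,0}$, and $q_{0,j}$ on each $B_\ell$, at each boundary ($\ell = 0$, $0 < \ell < i$, $\ell = i$, $i < \ell < j$, $\ell = j$, $j < \ell \leq K$, $\ell = K+1$), but no case presents any conceptual difficulty once the two observations from Steps~1 and 2 are in hand.
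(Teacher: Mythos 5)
Your proof is correct and follows essentially the same route as the paper's: dispose of $i=j$ via $q_{i,0}$ and $q_{0,i}$, compute $(q_{k,0}(x),q_{k,0}(y))=(a_k,b_k)\in\theta$ for $i<k<j$, splice a $\theta^*$-chain through the tie-points to contradict $(x,y)\notin\theta^*$ in every case except $i=0$, $j=K+1$, and there conclude $\theta\geq\beta$. The only difference is that you spell out the boundary cases $(0,j)$ and $(i,K+1)$ that the paper explicitly omits as ``almost identical.''
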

In other words, unless $i=0$ and $j=K+1$, the congruence $\eta$ doesn't join blocks
of $B_i$ with blocks of $B_j$ (except for those already joined by
$\theta^*$).
\begin{proof}
  We rule out all $0\leq i \leq j \leq K+1$ except for $i=0$ and $j=K+1$ by
  showing that, in each of the following cases, we arrive at the contradiction
  $(x,y)\in \theta^*:= \Cg^\bA(\theta)$.\\[6pt]
  \underline{Case 1}: $i=j$.\\[4pt]
  If $(x,y)\in B_i^2$ for some $0\leq i \leq K+1$, then
  $(q_{i,0}(x),q_{i,0}(y))\in \eta\resB = \theta \leq \theta^*$, so 
  $(x,y) = (q_{0,i} q_{i,0}(x),q_{0,i} q_{i,0}(y))\in \theta^*$.
  \\[6pt]
  \underline{Case 2}: $1\leq i < j \leq K$.\\[4pt]
  In this case,
  \[ 
  (q_{i,0}(x),q_{i,0}(y)) = 
  (q_{i,0}(x), b_{i})\in \theta, \qquad
  (q_{j,0}(x),q_{j,0}(y))= (a_{j},q_{j,0}(y))\in\theta,
  \] 
  When $j= i+1$, we obtain
  \begin{equation}
    \label{eq:OA4}
    x = q_{0,i}q_{i,0}(x)\; \theta^* \; q_{0,i}(b_i) = b_i^i = a^j_j = q_{0,j}(a_j) \;
    \theta^* \; q_{0,j} q_{j,0}(y) = y,
  \end{equation}
  so $(x,y)\in \theta^*$.  
  This can be seen more transparently in a diagram.
  \begin{center}
    
    \begin{tikzpicture}[scale=.7]
      \node (00) at (-.1,0) [fill,circle,inner sep=1pt] {};
      \node (10) at (1.1,0) [fill,circle,inner sep=1pt] {};
      \node (30) at (2.9,0) [fill,circle,inner sep=1pt] {};
      \node (40) at (4.1,0) [fill,circle,inner sep=1pt] {};
      \draw (-0.6,-.4) node {$q_{i,0}(x)$};
      \draw (1.3,-.4) node {$b_i$};
      \draw (.5,-.2) node {$\theta$};
      \draw (2.7,-.4) node {$a_j$};
      \draw (3.5,-.2) node {$\theta$};
      \draw (4.7,-.4) node {$q_{j,0}(y)$};

      \node (m13) at (-1,3) [fill,circle,inner sep=1pt] {};
      \node (23) at (2,3) [fill,circle,inner sep=1pt] {};
      \node (53) at (5,3) [fill,circle,inner sep=1pt] {};
      \draw (-1.2,3.4) node {$x$};
      \draw (2,3.4) node {$b_i^i = a_j^j$};
      \draw (5,3.4) node {$y$};

      \path[->] (00) edge (-.95,2.85);       \draw[font=\large] (.5,1.25) node {$q_{0,i}$};
      \path[->] (10) edge (1.95,2.85);       
      \path[->] (30) edge (2.05,2.85);       \draw[font=\large] (3.5,1.25) node {$q_{0,j}$};
      \path[->] (40) edge (4.95,2.85);
      \draw[dashed, gray] 
      (00) to [out=30,in=150] (10)
      (30) to [out=30,in=150] (40);

    \end{tikzpicture}
  \end{center}
  If $j> i+1$, then
  $(q_{k,0}(x),q_{k,0}(y))= (a_k, b_k) \in \theta$ for all $i<k<j$, and we have
  the following diagram:
  \begin{center}
    
    \begin{tikzpicture}[scale=.7]
      \node (00) at (-.1,0) [fill,circle,inner sep=1pt] {};
      \node (10) at (1.1,0) [fill,circle,inner sep=1pt] {};
      \draw (-0.6,-.4) node {$q_{i,0}(x)$};
      \draw (.5,-.2) node {$\theta$};
      \draw (1.3,-.4) node {$b_i$};

      \node (30) at (2.9,0) [fill,circle,inner sep=1pt] {};
      \node (40) at (4.1,0) [fill,circle,inner sep=1pt] {};
      \draw (2.7,-.4) node {$a_{i+1}$};
      \draw (3.5,-.2) node {$\theta$};
      \draw (4.4,-.4) node {$b_{i+1}$};

      \node (60) at (5.9,0) [fill,circle,inner sep=1pt] {};
      \draw (5.9,-.4) node {$a_{i+2}$};

      \node (90) at (9.1,0) [fill,circle,inner sep=1pt] {};
      \draw (9.1,-.4) node {$b_{j-1}$};

      \node (110) at (10.9,0) [fill,circle,inner sep=1pt] {};
      \node (120) at (12.1,0) [fill,circle,inner sep=1pt] {};
      \draw (10.7,-.4) node {$a_j$};
      \draw (11.5,-.2) node {$\theta$};
      \draw (12.6,-.4) node {$q_{j,0}(y)$};

      \node (103) at (10,3) [fill,circle,inner sep=1pt] {};
      \node (133) at (13,3) [fill,circle,inner sep=1pt] {};
      \draw (10,3.4) node {$b_{j-1}^{j-1} = a_{j}^{j}$};

      \node (m13) at (-1,3) [fill,circle,inner sep=1pt] {};
      \node (23) at (2,3) [fill,circle,inner sep=1pt] {};
      \node (53) at (5,3) [fill,circle,inner sep=1pt] {};
      \draw (-1.2,3.4) node {$x$};
      \draw (2,3.4) node {$b_i^i = a_{i+1}^{i+1}$};
      \draw (5,3.4) node {$b_{i+1}^{i+1} = a_{i+2}^{i+2}$};
      \draw (13,3.4) node {$y$};

      \path[->] (00) edge (-.95,2.85);       \draw[font=\large] (.5,1.25) node {$q_{0,i}$};
      \path[->] (10) edge (1.95,2.85);       
      \path[->] (30) edge (2.05,2.85);       \draw[font=\large] (3.5,1.25) node {$q_{0,i+1}$};
      \path[->] (40) edge (4.95,2.85);
      \path[->] (60) edge (5.05,2.85);       \draw[font=\large] (6.3,1.25) node {$q_{0,i+2}$};

      \draw [font=\LARGE] (7.5,2.25) node {$\dots$};
      \draw [font=\LARGE] (7.5,0) node {$\dots$};

      \path[->] (90) edge (9.95,2.85);       \draw[font=\large] (8.8,1.25) node {$q_{0,j-1}$};
      \path[->] (110) edge (10.05,2.85);
      \path[->] (120) edge (12.95,2.85);       \draw[font=\large] (11.5,1.25) node {$q_{0,j}$};
      \draw[dashed, gray] 
      (00) to [out=30,in=150] (10)
      (30) to [out=30,in=150] (40)
      (110) to [out=30,in=150] (120);

    \end{tikzpicture}
  \end{center}
  Here too we could write out a line analogous to~(\ref{eq:OA4}), but it is
  obvious from the diagram that $(x,y)\in \theta^*$.

  The case $i=0;\; 1 \leq j \leq K$, as well as the case
  $1 \leq i \leq K;\; j = K+1$,
  can be handled with diagrams similar to
  those used above, and the proofs are almost identical, so we omit them.

  The only remaining possibility is $x\in B_0$ and $y\in B_{K+1}$.  In this case
  we have 
  $(q_{k,0}(x),q_{k,0}(y)) = (a_k, b_k) \in \theta$,
  for all $1\leq k \leq K$.
  Therefore, $\theta \geq \beta = \Cg^{\bA}((a_1, b_1), \dots, (a_K, b_K))$.
\end{proof}

\begin{theorem}
  \label{OAthm4}
  Suppose $\bA = \< A, F_A\>$ is the overalgebra
  based on the congruence relation $\beta = \Cg^{\bB}((a_1, b_1), \dots,
  (a_K,b_K))$, as described above. Then,
  $\theta^* < \widehat{\theta}$ if and only if
  $\beta\leq \theta < 1_B$, in which case $[\theta^*, \widehat{\theta}] \cong \two^{r-1}$, where $r$
  is the number of congruence classes of $\theta$. 
\end{theorem}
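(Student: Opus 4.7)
The plan is to split the analysis by cases based on the relationship between $\theta$ and $\beta$, using Lemma~\ref{lem3.1} to constrain where pairs of $\widehat{\theta} \setminus \theta^*$ can live, and then mimicking the calculation of Theorem~\ref{OAthm3} to describe these pairs explicitly.

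For the ``only if'' direction, suppose $\theta^* < \widehat{\theta}$ and fix $(x,y) \in \widehat{\theta} \setminus \theta^*$. Since $\widehat{\theta}\resB = \theta$ by Lemma~\ref{lem:residuation-lemma}(iii), Lemma~\ref{lem3.1} applied with $\eta = \widehat{\theta}$ immediately forces $\theta \geq \beta$ and pins the pair into $B_0 \times B_{K+1}$. To exclude $\theta = 1_B$, I would show directly that $1_B^* = \widehat{1_B} = 1_A$: since $e_0 \in F_A$ maps all of $A$ into $B$, every pair of $A^2$ belongs to $\widehat{1_B}$; meanwhile the operations $q_{0,j}$ translate $1_B$ onto each $B_j^2$, and the tie-point chain $a_1 = a_1^1,\, b_1^1 = a_2^2,\, \dots,\, b_K^K = a_1^{K+1}$ transitively fuses these blocks, forcing $1_B^* = 1_A$. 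Hence $\beta \leq \theta < 1_B$ is necessary.

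For the ``if'' direction, fix $\theta$ with $\beta \leq \theta < 1_B$ and let $\{a_1, c_1, \dots, c_{r-1}\}$ be a transversal of $\theta$ (with $a_1$ representing its own class). Define
\[
\widetilde{\theta} := \theta^* \cup \bigcup_{i=1}^{r-1} \bigl( c_i/\theta \cup c_i^{K+1}/\theta^{\bB_{K+1}} \bigr)^2.
\]
I would first verify that $\widetilde{\theta} \in \Con\bA$ by the now-familiar case analysis: for $(x,y)$ in one of the new blocks, $e_0$ sends the pair into $c_i/\theta$; each $q_{0,k}$ sends it into the single $\theta^{\bB_k}$-class $c_i^k/\theta^{\bB_k}$; for $1 \leq k \leq K$, the map $q_{k,0}$ sends the pair into $\{a_k, b_k\}^2 \subseteq (a_1/\theta)^2 \subseteq \theta^*$ (the assumption $\theta \geq \beta$ is essential here, guaranteeing that $a_k$ and $b_k$ lie in the chain block of $\theta^*$); and $q_{K+1,0}$ returns the pair to $(c_i/\theta)^2 \subseteq \theta$. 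Operations of the form $fe_0$ with $f \in F_B$ respect $\theta$ on $B$ and hence preserve $\widetilde{\theta}$. Since $\widetilde{\theta}\resB = \theta$, Lemma~\ref{lem:residuation-lemma}(iii) yields $\widetilde{\theta} \leq \widehat{\theta}$.

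The reverse inclusion $\widehat{\theta} \subseteq \widetilde{\theta}$ follows from Lemma~\ref{lem3.1}: any $(x,y) \in \widehat{\theta} \setminus \theta^*$ has $x \in B_0$ and $y \in B_{K+1}$, and the $\widehat{\theta}$-condition applied to the identity polynomial gives $(e_0(x), e_0(y)) = (x, S_{K+1,0}(y)) \in \theta$. If $x$ lay in $a_1/\theta$, then $S_{K+1,0}(y) \in a_1/\theta$ and hence $y \in a_1^{K+1}/\theta^{\bB_{K+1}}$; but then both $x$ and $y$ would sit inside the chain block of $\theta^*$, contradicting $(x,y) \notin \theta^*$. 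Thus $x \in c_i/\theta$ and $y \in c_i^{K+1}/\theta^{\bB_{K+1}}$ for some $1 \leq i \leq r-1$, so $(x,y) \in \widetilde{\theta}$. Finally, to obtain $[\theta^*, \widehat{\theta}] \cong \two^{r-1}$, note that the verification above actually shows $f(\widehat{\theta}) \subseteq \theta^*$ for every $f \in F_A$, so every \emph{equivalence} relation wedged between $\theta^*$ and $\widehat{\theta}$ is automatically a congruence. The quotient $\widehat{\theta}/\theta^*$ decomposes into $r-1$ independent two-element blocks, one for each pair $\{c_i/\theta,\, c_i^{K+1}/\theta^{\bB_{K+1}}\}$, yielding $\two^{r-1}$. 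I anticipate the main bookkeeping obstacle to be verifying $f(\widetilde{\theta}) \subseteq \theta^*$ cleanly, but this is essentially a replay of the parallel calculation carried out for $\beta$ in the proof of Theorem~\ref{OAthm3}.
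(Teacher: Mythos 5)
Your proof is correct and follows essentially the same route as the paper, whose own proof simply cites Lemma~\ref{lem3.1} for the necessity of $\beta\leq\theta<1_B$ and defers sufficiency to ``the same argument'' as Theorem~\ref{OAthm3}; you usefully spell out the details the paper leaves implicit, namely the explicit $\widetilde{\theta}$ for a general $\theta$ and the check that $1_B^*=\widehat{1_B}=1_A$ (which Lemma~\ref{lem3.1} alone does not give). One small slip: in the $q_{k,0}$ case the pair $(a_k,b_k)$ need not lie in $(a_1/\theta)^2$ (the chain block of $\theta^*$ meets $B_0$ only in $a_1/\theta$, and $a_k$ need not be $\theta$-related to $a_1$), but $(a_k,b_k)\in\beta\subseteq\theta\subseteq\theta^*$ directly, which is all your argument needs.
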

Consequently, if $\theta \ngeq \beta$, then $\widehat{\theta} = \theta^*$.
\begin{proof}
  Lemma~\ref{lem3.1} implies that $\theta^* < \widehat{\theta}$ only if
  $\beta\leq \theta < 1_B$. On the other hand,  
  if $\beta\leq \theta < 1_B$, then we obtain
  $[\theta^*, \widehat{\theta}] \cong \two^{r-1}$ 
  by the same argument used to prove
  $[\beta^*, \widehat{\beta}] \cong \two^{m-1}$ in 
  Theorem~\ref{OAthm3}.  
\end{proof}

We now consider an example of a congruence lattice having a coatom $\beta$ that is not principal,
and we use the method described in this section to construct an overalgebra $\bA$ for which
$\beta^* < \widehat{\beta}$ in $\Con \bA$, and 
$\theta^* = \widehat{\theta}$ for all $\theta \ngeq \beta$ in $\Con\bB$.

\begin{example}
  
  Let $G$ be the group $C_2 \times A_4$ defined in \GAP\ as
  follows:\footnote{The \GAP\ command {\tt TransitiveGroup(12,7)} also gives a group
    isomorphic to $C_2 \times A_4$, but by defining it explicitly
    in terms of certain generators, we obtain more attractive partitions in the
    congruence lattice.}

  {\footnotesize
\begin{verbatim}
    gap> G:=Group([ (9,10)(11,12)(5,6)(7,8), 
      >               (3,7,12)(9,1,6)(11,4,8)(5,10,2), 
      >               (3,2)(9,11)(5,7)(1,4)(10,12)(6,8) ]);;
\end{verbatim}
  }

  \noindent This is a group of order 24 which acts transitively 
  on the set $\{1, 2, \dots, 12\}$.
  (If we let $H$ denote the stabilizer of a point, say $H:=G_1 \cong C_2$,
  then the group acts transitively by right multiplication on the set $G/H$ of right
  cosets. These two $G$-sets are of course isomorphic.)
  The congruence lattice of this algebra (which is isomorphic
  to the interval from $H$ up to $G$ in the subgroup lattice of $G$) is shown in 
  Figure \ref{fig:OverAlgebra-C2xA4}.
  After relabeling the elements to conform to our
  0-offset notation, the universe is
  $B:=\{0, 1, \dots, 11\}$, and 
  the non-trivial congruences are as follows:
  \begin{align*}
    \alpha &=|0, 1, 4, 5, 8, 9| 2, 3, 6, 7, 10, 11|\\
    \beta &= | 0, 1, 2, 3 | 4, 5, 6, 7 | 8, 9, 10, 11|\\
    \gamma_1 &=| 0, 1 | 2, 3 | 4, 5 | 6, 7 | 8, 9 | 10, 11|\\
    \gamma_2 &=| 0, 2 | 1, 3 | 4, 7 | 5, 6 | 8, 11 | 9, 10 |\\
    \gamma_3 &=| 0, 3 | 1, 2 | 4, 6 | 5, 7 | 8, 10 | 9, 11 |.
  \end{align*}

  \begin{figure}[h!]
    \centering
    \begin{tikzpicture}[scale=1]
      \node (0) at (.75,0) [draw, circle,inner sep=1.0pt] {};
      \node (1) at (-0,1) [draw, circle, inner sep=1.0pt] {};
      \node (2) at (0.75,1) [draw, circle, inner sep=1.0pt] {};
      \node (3) at (1.5,1) [draw, circle, inner sep=1.0pt] {};
      \node (4) at (0.75,2) [draw, circle, inner sep=1.0pt] {};
      \node (5) at (-0.75,2) [draw, circle, inner sep=1.0pt] {};
      \node (6) at (-0,3) [draw, circle, inner sep=1.0pt] {};
      \draw  (1,2) node {$\beta$};
      \draw  (-.98,2) node {$\alpha$};
      \draw  (-.25,.95) node {$\gamma_1$};
      \draw  (1,1) node {$\gamma_2$};
      \draw  (1.75,1) node {$\gamma_3$};

      \draw[semithick]
      (0) to (1)
      (0) to (2)
      (0) to (3)
      (1) to (4)
      (1) to (5)
      (2) to (4)
      (3) to (4)
      (4) to (6)
      (5) to (6);
    \end{tikzpicture}
    \caption{
      The congruence lattice of the permutational algebra $\<B, G\>$, where 
      $B = \{0, 1, \dots, 11\}$ and $G\cong C_2 \times A_4$.}
    \label{fig:OverAlgebra-C2xA4}
  \end{figure}
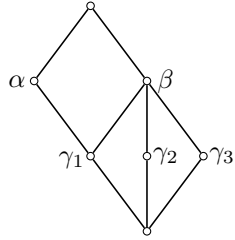

  Clearly, the coatom $\beta$ is not principal.  
  It is generated by $\{(0,3), (8,11)\}$, for example. 
  If our goal is to construct an overalgebra which has $\widehat{\beta} > \beta^*$
  in $\Con \bA$, and $\theta^* = \widehat{\theta}$ for all $\theta \ngeq \beta$ in
  $\Con\bB$, it is clear that the method described in the 
  Section~\ref{sec:overalgebras-i} will not work.
  For, if we base the overalgebra on tie-points $\{0,3\}$, then the universe is $A
  = B \cup B_1 \cup B_2$, where $B\cap B_1 = \{0\}$,  $B\cap B_2 = \{3\}$, and
  $B_1 \cap B_2 = \emptyset$, and the operations are $F_A := \{g e_0 : g\in G\}
  \cup \{e_0, e_1, e_2, s\}$. 
  Since $\beta$ has three congruence classes, by Theorem~\ref{OAthm2}
  the interval of all $\theta \in \Con\bA$ for which 
  $\theta\resB = \beta$ is $[\beta^*, \widehat{\beta}] \cong \two^2$.
  However, we also have $\gamma_3 =\Cg^\bB(0, 3)$, a congruence with 6 classes, so 
  again by Theorem~\ref{OAthm2}, $[\gamma_3^*, \widehat{\gamma_3}]\cong
  \two^5$.
  Thus, using this method it is not possible to obtain a non-trivial
  interval $[\beta^*, \widehat{\beta}]$ while preserving the original congruence
  lattice structure below $\beta$.  This is true no matter which pair $(x,y) \in
  \beta$ we choose as tie-points, since, in every case, the pair will belong to a
  congruence below $\beta$. 

  The procedure described in this subsection 
  does not have the same limitation.
  Indeed, if we set $(a_1, b_1) = (0, 3)$ and $(a_2, b_2) = (8, 11)$ in this 
  construction, then the universe of the overalgebra is $A =
  \bigcup_{i=0}^3 B_i$ where
  $B_0 = \{0, 1, \dots, 11 \}$, 
  $B_1 = \{0, 12, 13, \dots, 22 \}$,
  $B_2 = \{ 23, 24, \dots, 29, 30, 14, 31, 32, 33 \}$, and
  $B_3 = \{ 33, 34, \dots, 44 \}$.  (See Figure~\ref{fig:overalgebra2}.)

  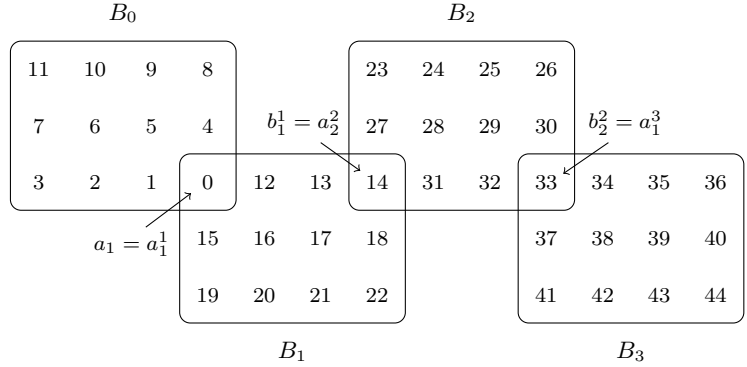
\begin{figure}[h!]
    \centering
        {\scalefont{.8}
          \begin{tikzpicture}[scale=.75]
            \draw[rounded corners] (0,2) rectangle (4,5); 
            \draw[rounded corners] (3,3) rectangle (7,0); 
            \draw[rounded corners] (6,2) rectangle (10,5); 
            \draw[rounded corners] (9,3) rectangle (13,0); 

            \draw[font=\small] (2, 5.5) node {$B_0$};
            \foreach \i in {0,1,2} {
              \foreach \j in {0,1,2,3} {
                \pgfmathtruncatemacro{\x}{4*\i+\j}
                \draw (3.5 -\j, 2.5 +\i) node {\x};
              }
            }

            \draw[font=\small] (5, -.5) node {$B_1$};
            \foreach \i in {0,1,2} {
              \foreach \j in {1,2,3} {
                \pgfmathtruncatemacro{\x}{4*\i+\j+11}
                \draw (3.5 +\j, 2.5 -\i) node {\x};
              }
            }
            \draw (3.5, 1.5) node {15};
            \draw (3.5, .5) node {19};

            \draw[font=\small] (8, 5.5) node {$B_2$};
            \foreach \i in {0,1} {
              \foreach \j in {0,1,2,3} {
                \pgfmathtruncatemacro{\x}{4*\i+\j+23}
                \draw (6.5 +\j, 4.5 -\i) node {\x};
              }
            }
            \draw (7.5, 2.5) node {31};
            \draw (8.5, 2.5) node {32};

            \draw[font=\small] (11, -.5) node {$B_3$};
            \foreach \i in {0,1,2} {
              \foreach \j in {0,1,2,3} {
                \pgfmathtruncatemacro{\x}{4*\i+\j+33}
                \draw (9.5 +\j, 2.5 -\i) node {\x};
              }
            }
            \draw (2.15, 1.4) node {$a_1=a_1^1$};
            \draw[->] (2.4, 1.7) to  (3.2,2.3);
            \draw (5.2, 3.55) node {$b^1_1 = a^2_2$};
            \draw[->] (5.5, 3.2) to  (6.2,2.7);
            \draw (10.9, 3.55) node {$b^2_2 = a^3_1$};
            \draw[->] (10.6,3.2) to (9.8, 2.65);


          \end{tikzpicture}
        }
        \caption{The universe of the overalgebra of the $(C_2\times A_4)$-set,
          arranged to reveal the congruences above $\beta^*$.}
        \label{fig:overalgebra2}
  \end{figure}

  Arranging the subreduct universes as in Figure~\ref{fig:overalgebra2}
  reveals the congruences above $\beta^*$.  
  In fact, the four congruences in the interval $[\beta^*, \widehat{\beta}]$ can be read off
  directly from the diagram.   For example, the congruence classes of $\beta^*$
  are shown in Figure~\ref{fig:overalgebra2cong}, while 
  the congruence $\widehat{\beta}$, in addition to these relations, joins blocks 
  $|4, 5, 6, 7|$ and $|37, 38, 39, 40|$, 
  as well as blocks $|8,9,10,11|$ and $|41,42,43,44|$.
  As for the congruences $\beta_\eps, \, \beta_{\eps'}$, one joins
  $|4, 5, 6, 7|$ and $|37, 38, 39, 40|$, while the other joins
  $|8,9,10,11|$ and $|41,42,43,44|$.  The full congruence lattice, $\Con \bA$,
  appears in Figure~\ref{fig:OverAlgebra-C2xA4-final}.

  \begin{figure}[h!]
    \centering
        {\scalefont{.8}
          \begin{tikzpicture}[scale=.75]
            \draw[rounded corners, dotted] (0,2) rectangle (4,5); 
            \draw[rounded corners, dotted] (3,3) rectangle (7,0); 
            \draw[rounded corners, dotted] (6,2) rectangle (10,5); 
            \draw[rounded corners, dotted] (9,3) rectangle (13,0); 

            \foreach \i in {0,1,2} {
              \foreach \j in {0,1,2,3} {
                \pgfmathtruncatemacro{\x}{4*\i+\j}
                \draw (3.5 -\j, 2.5 +\i) node {\x};
              }
            }

            \foreach \i in {0,1,2} {
              \foreach \j in {1,2,3} {
                \pgfmathtruncatemacro{\x}{4*\i+\j+11}
                \draw (3.5 +\j, 2.5 -\i) node {\x};
              }
            }
            \draw (3.5, 1.5) node {15};
            \draw (3.5, .5) node {19};

            \foreach \i in {0,1} {
              \foreach \j in {0,1,2,3} {
                \pgfmathtruncatemacro{\x}{4*\i+\j+23}
                \draw (6.5 +\j, 4.5 -\i) node {\x};
              }
            }
            \draw (7.5, 2.5) node {31};
            \draw (8.5, 2.5) node {32};

            \foreach \i in {0,1,2} {
              \foreach \j in {0,1,2,3} {
                \pgfmathtruncatemacro{\x}{4*\i+\j+33}
                \draw (9.5 +\j, 2.5 -\i) node {\x};
              }
            }

            \draw[rounded corners]  (.2,4.2) rectangle (3.8,4.8);
            \draw[rounded corners] (6.2,4.2) rectangle (9.8,4.8);

            \draw[rounded corners] (.2,3.2) rectangle (3.8,3.8);
            \draw[rounded corners] (6.2,3.2) rectangle (9.8,3.8);

            \draw[rounded corners] (.2,2.2) rectangle (12.8,2.8);

            \draw[rounded corners] (3.2,1.2) rectangle (6.8,1.8);
            \draw[rounded corners] (9.2,1.2) rectangle (12.8,1.8);

            \draw[rounded corners] (3.2,0.2) rectangle (6.8,0.8);
            \draw[rounded corners] (9.2,0.2) rectangle (12.8,0.8);

          \end{tikzpicture}
        }
        \caption{The universe of the overalgebra;
          solid lines delineate the congruence classes of $\beta^*$.}
        \label{fig:overalgebra2cong}
  \end{figure}
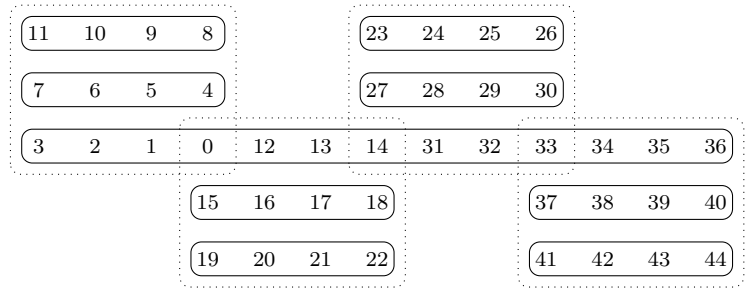

  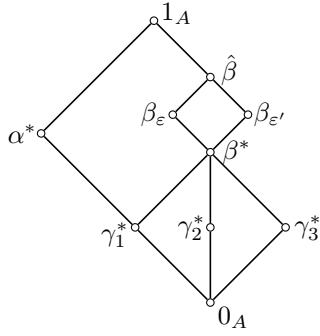
\begin{figure}[h!]
    \centering
        {\scalefont{1}
          \begin{tikzpicture}[scale=1]
            \node (0) at (1,0) [draw, circle,inner sep=1.0pt] {};
            \draw  (1.3,-.2) node {$0_A$};
            \node (6) at (0.25,3.75) [draw, circle, inner sep=1.0pt] {};
            \draw  (.55,3.85) node {$1_A$};

            \node (top) at (1,3) [draw, circle, inner sep=1.0pt] {};
            \draw  (1.25,3.1) node {$\hat{\beta}$};

            \node (b1) at (0.5,2.5) [draw, circle, inner sep=1.0pt] {};
            \draw  (0.25,2.5) node {$\beta_{\eps}$};
            \node (b2) at (1.5,2.5) [draw, circle, inner sep=1.0pt] {};
            \draw  (1.8,2.5) node {$\beta_{\eps'}$};

            \node (bot) at (1,2) [draw, circle, inner sep=1.0pt] {};
            \draw  (1.35,2) node {$\beta^*$};

            \node (5) at (-1.25,2.25) [draw, circle, inner sep=1.0pt] {};
            \draw  (-1.5,2.25) node {$\alpha^*$};

            \node (1) at (-0,1) [draw, circle, inner sep=1.0pt] {};
            \draw (-.27,.9) node {$\gamma_1^*$};
            \node (2) at (1,1) [draw, circle, inner sep=1.0pt] {};
            \draw  (.75,1) node {$\gamma_2^*$};
            \node (3) at (2,1) [draw, circle, inner sep=1.0pt] {};
            \draw  (2.3,1) node {$\gamma_3^*$};

            \draw[semithick]
            (0) to (1)
            (0) to (2)
            (0) to (3)
            (1) to (bot)
            (1) to (5)
            (2) to (bot)
            (3) to (bot)
            (bot) to (b1) to (top) to (b2) to (bot)
            (top) to (6)
            (5) to (6);
          \end{tikzpicture}
          \caption{
            The congruence lattice of the overalgebra $\<A, F_A\>$ of $\<B, G\>$, where
            $B = \{0, 1, \dots, 11\}$ and $G\cong C_2 \times A_4$.} 
          \label{fig:OverAlgebra-C2xA4-final}
        }
  \end{figure}

\end{example}

\subsection{Overalgebras III}
\label{sec:overalgebras-iii}
In Section~\ref{sec:overalgebras-i} we  constructed an algebra $\bA$ with
a congruence lattice $\Con \bA$ having interval sublattices $[\beta^*, \hbeta]$
that are isomorphic to products of powers of partition lattices.  We saw that the
construction has two main limitations.  First, the size of the partition
lattices is limited by the size of the congruence classes of $\beta \in
\Con\bB$.  Second, when $\beta$ is non-principal, it is impossible with this
construction to obtain a nontrivial inverse image $[\beta^*, \hbeta]$ without
also having nontrivial inverse images $[\theta^*, \htheta]$ for some $\theta \ngeq
\beta$.  In Section~\ref{sec:overalgebras-ii}, we presented a construction which
resolves the second limitation. However, the first limitation is even more
severe in that the resulting intervals $[\beta^*, \hbeta]$ are simply powers of
$\two$ -- i.e., Boolean algebras.  In this section, we present a generalization
of the previous constructions which overcomes both of the limitations mentioned above.

Let $\bB = \<B, F\>$ be a finite algebra, and suppose 
\[
\beta = \Cg^{\bB}((a_1, b_1), \dots, (a_{K-1},b_{K-1}))
\]
for some $a_1, \dots, a_{K-1}, b_1, \dots, b_{K-1} \in B$.
Define $B_0=B$ and, for some fixed $Q\geq 0$, let $B_1, B_2, \dots, B_{(2Q+1)K}$ be sets of
cardinality $|B| = n$.  As above, we use the label $x^i$ to denote the element of $B_i$ which
corresponds to $x\in B$ under the bijection.  
For ease of notation, let $M:=(2Q+1)$.  
We arrange the sets so that they intersect as follows:
\begin{align*}
  B_0\cap B_1 &=\{a_1\}=\{a_1^{1}\},\\
  B_1\cap B_2 &=\{b^1_1\}=\{a_2^{2}\},\\
  B_2\cap B_3 &=\{b^2_2\}=\{a_3^{3}\},\\
\vdots\\
B_{K-2}\cap B_{K-1} &= \{b_{K-2}^{K-2}\}=\{a_{K-1}^{K-1}\},\\
  B_{K-1}\cap B_K = B_K\cap B_{K+1}&=\{b^{K-1}_{K-1}\}=\{b^{K}_{K-1}\}=\{b^{K+1}_{K-1}\},\\
  B_{K+1}\cap B_{K+2}&=\{a^{K+1}_{K-1}\} =\{b^{K+2}_{K-2}\},\\
  B_{K+2}\cap B_{K+3}&=\{a^{K+2}_{K-2}\} =\{b^{K+3}_{K-3}\}, \dots
\end{align*}
\begin{align*}
\dots,  B_{2K-2}\cap B_{2K-1} &= \{a_{2}^{2K-2}\}=\{b_{1}^{2K-1}\},\\
  B_{2K-1}\cap B_{2K} = B_{2K}\cap  B_{2K+1}&=\{a^{2K-1}_{1}\}=\{a^{2K}_{1}\}=\{a^{2K+1}_{1}\},\\
  B_{2K+1}\cap B_{2K+2}&=\{b^{2K+1}_{1}\} =\{b^{2K+2}_{2}\},\\
  B_{2K+2}\cap B_{2K+3}&=\{b^{2K+2}_{2}\} =\{b^{2K+3}_{3}\},\\
  \vdots\\
  B_{MK-2}\cap B_{MK-1} &= \{b_{MK-2}^{K-2}\}=\{a_{MK-1}^{K-1}\},\\
  B_{MK-1}\cap B_{MK}&=\{b^{MK-1}_{K-1}\}=\{b^{MK}_{K-1}\}.
\end{align*}
All other intersections are empty. (See Figure~\ref{fig:OveralgebrasIII}.)

\begin{figure}[h!]
  \centering
      {\scalefont{.7}
        \begin{tikzpicture}[scale=.52]
          \draw (0, 3.4) node {$B$};
          \draw (0,3) ellipse (.6cm and 1.2cm);

          \draw (1,2) node {$B_1$};
          \draw (1,2) ellipse (1.2cm and .6cm);

          \draw (3,2) node {$B_2$};
          \draw (3,2) ellipse (1.2cm and .6cm);


          \draw[font=\Large] (5, 2) node {$\cdots$};

          \draw (7,2) node {$B_{K-2}$};
          \draw (7,2) ellipse (1.2cm and .6cm);

          \draw (9,2.2) node {$B_{K-1}$};
          \draw (9,2) ellipse (1.2cm and .6cm);

          \draw (10,.7) node {$B_{K}$};
          \draw (10,1) ellipse (.6cm and 1.2cm);

          \draw (11,2.2) node {$B_{K+1}$};
          \draw (11,2) ellipse (1.2cm and .6cm);

          \draw (13,2.2) node {$B_{K+2}$};
          \draw (13,2) ellipse (1.2cm and .6cm);

          \draw[font=\Large] (15, 2) node {$\cdots$};

          \draw (16.9,1.8) node {$B_{2K-1}$};
          \draw (17,2) ellipse (1.2cm and .6cm);

          \draw (18,3.3) node {$B_{2K}$};
          \draw (18,3) ellipse (.6cm and 1.2cm);

          \draw (19.1,1.8) node {$B_{2K+1}$};
          \draw (19,2) ellipse (1.2cm and .6cm);

          \draw[font=\Large] (21, 2) node {$\cdots$};

          \node (1) at (0,2) [fill,circle,inner sep=.6pt] {};
          \draw (0, .6) node {$a_1{=}a_1^1$};
          \draw (0, 1) to  (0,1.9);

          \node (2) at (2,2) [fill,circle,inner sep=.6pt] {};
          \draw (2, 3.3) node {$b^1_1 {=} a^2_2$};
          \draw (2, 3) to  (2,2.1);

          \node (3) at (4,2) [fill,circle,inner sep=.6pt] {};
          \draw (4, .6) node {$b^2_2 {=} a^3_3$};
          \draw (4, 1) to  (4,1.9);

          \node (4) at (6,2) [fill,circle,inner sep=.6pt] {};

          \node (5) at (8,2) [fill,circle,inner sep=.6pt] {};
          \draw (7.5, .6) node {$b^{K-2}_{K-2} {=} a^{K-1}_{K-1}$};
          \draw (8, 1) to  (8,1.9);

          \node (6) at (10,2) [fill,circle,inner sep=.6pt] {};
          \draw (10, 3.4) node {$b^{K-1}_{K-1} {=} b^{K}_{K-1}{=} b^{K+1}_{K-1}$};
          \draw (10, 3) to  (10,2.1);

          \node (7) at (12,2) [fill,circle,inner sep=.6pt] {};
          \draw (12.5, .6) node {$a^{K+1}_{K-1} {=} b^{K+1}_{K-2}$};
          \draw (12, 1.2) to  (12,1.9);

          \node (8) at (14,2) [fill,circle,inner sep=.6pt] {};

          \node (9) at (16,2) [fill,circle,inner sep=.6pt] {};
          \draw (16, 3.4) node {$b^{2K-1}_{1}$};
          \draw (16, 3) to  (16,2.1);

          \node (10) at (18,2) [fill,circle,inner sep=.6pt] {};
          \draw (18, .6) node {$a^{2K-1}_{1} {=} a^{2K}_{1}{=} a^{2K+1}_{1}$};
          \draw (18, 1) to  (18,1.9);

          \node (11) at (20,2) [fill,circle,inner sep=.6pt] {};
          \draw (20, 3.4) node {$b^{2K+1}_{1}$};
          \draw (20, 3) to  (20,2.1);
          
        \end{tikzpicture}
      }
      \caption{The universe of the overalgebra.}
      \label{fig:OveralgebrasIII}
\end{figure}
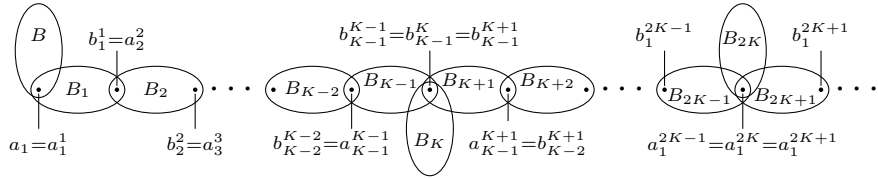
As usual, we put $A:=B_0\cup \dots\cup B_{MK}$, and we proceed to define
some unary operations on $A$.  

First, for $0\leq i, j \leq MK$, let $S_{i,j}:B_i \rightarrow B_j $ be the 
bijection $S_{i,j}(x\supi)=x\supj$, and note that $S_{i,i} = \id_{B_i}$.
Define the following subsets of even and odd multiples of $K$, respectively: 
$\sE = \{2qK : q = 0, 1, \dots, Q\}$ and 
$\sO= \{(2q+1)K : q = 0, 1, \dots, Q\}$. 
For each $\ell\in \sE$, let\\
\[
e_{\ell}(x)=
\begin{cases}
  S_{j,\ell}(x), &\text{ if $x\in B_{j}$ for some $j \in \sE$,}\\
  a^{\ell}_1, &\text{ otherwise.}
\end{cases}
\]
and, for $0 < i < K$,
\[
e_{\ell+i}(x)=
\begin{cases}
  a_i^{\ell+i}, &\text{ if $x\in B_{j}$ for some $j < \ell+i$,}\\ 
  x, &\text{ if $x\in B_{\ell+i}$,}\\
  b_i^{\ell+i}, &\text{ if $x\in B_{j}$ for some $j > \ell+i$.}
\end{cases}
\]
For each $\ell\in \sO$, let \\
\[
e_{\ell}(x)=
\begin{cases}
  S_{j,\ell}(x), &\text{ if $x\in B_{j}$ for some $j \in \sO$,}\\
  b^{\ell}_{K-1}, &\text{ otherwise.}
\end{cases}
\]
and, for $0 < i < K$,
\[
e_{\ell+i}(x)=
\begin{cases}
  b_{K-i}^{\ell+i}, &\text{ if $x\in B_{j}$ for some $j < \ell+i$,}\\ 
  x, &\text{ if $x\in B_{\ell+i}$,}\\
  a_{K-i}^{\ell+i}, &\text{ if $x\in B_{j}$ for some $j > \ell+i$,}
\end{cases}
\]
In other words, if $\ell\in \sE$, then $e_{\ell}$ maps each up-pointing set in
Figure~\ref{fig:OveralgebrasIII} bijectively onto the up-pointing set 
$B_{\ell}$, and maps all other points of $A$ to the tie-point
$a^{\ell}_1\in B_{\ell}$; 
if $\ell\in \sO$, then $e_{\ell}$ maps each down-pointing set in the figure onto the
down-pointing set $B_{\ell}$, and maps all other points to the tie-point
$b^{\ell}_{K-1}$.  For each set $B_{\ell+i}$ in between -- represented in the
figure by an ellipse with horizontal major axis -- there corresponds a map
$e_{\ell+i}$ which act as the identity on $B_{\ell+i}$ and maps all points in
$A$ left of $B_{\ell+i}$ to the left tie-point of $B_{\ell+i}$ and all points to
the right of $B_{\ell+i}$ to the right tie-point of $B_{\ell+i}$.

Finally, for $0\leq i, j\leq MK$, we define
$q_{i,j}=S_{i,j}\circ e_i$ and take the set of basic operations on $A$ to be
\[
F_A := \{f e_0 : f\in F\}  \cup \{q_{i,0} : 0\leq i \leq MK\}\cup \{q_{0,j} : 1\leq j \leq MK\}.
\]
We then consider the overalgebra $\bA := \< A, F_A\>$.   This overalgebra is,
once again, based on the specific congruence
$\beta = \Cg^{\bB}((a_1, b_1), \dots, (a_{K-1},b_{K-1})) \in \Con \bB$, and the following
theorem describes the inverse image of $\beta$ under $\resB$ -- that is, 
the interval $[\beta^*,\hbeta]$ in $\Con \bA$.
\begin{theorem}
  \label{thm-overalgebras-iii}
  Let $\bA = \< A, F_A\>$ be the overalgebra described above,
  and, for each $0\leq i \leq MK$, let $t_i$ denote a tie-point of the set 
  $B_i$.  Define
  \[ \beta^* = \bigcup_{j=0}^{MK} \beta^{\bB_j} \cup 
  \left(\bigcup_{i=0}^{MK}t_i/\beta^{\bB_i}\right)^2.
  \]
  Then, $\beta^* = \Cg^{\bA}(\beta)$.

  If $\beta$ has transversal $\{a_1, c_1, c_2, \dots, c_{m-1}\}$, then
  \begin{equation}
    \label{eq:OA5}
    \widehat{\beta} = \beta^* 
    \cup 
    \bigcup_{i=1}^{m-1}\left(\bigcup_{\ell \in \sE} c_i^\ell/\beta^{\bB_{\ell}}\right)^2
    \cup 
    \bigcup_{i=1}^{m-1}\left(\bigcup_{\ell \in \sO}
    c_i^\ell/\beta^{\bB_{\ell}}\right)^2.
  \end{equation}

  Moreover, $[\beta^*, \widehat{\beta}] \cong (\Eq|\sE|)^{m-1} \times (\Eq|\sO|)^{m-1}$. 
\end{theorem}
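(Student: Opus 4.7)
The plan is to follow the three-step template established in the proof of Theorem \ref{OAthm3}, adapted to the new geometry of $A = \bigcup_{j=0}^{MK} B_j$. First, show that the proposed $\beta^*$ is indeed a congruence of $\bA$ and is minimal above $\beta$. Second, show that the proposed right-hand side of \eqref{eq:OA5} is a congruence of $\bA$ whose restriction to $B$ is $\beta$, and that it coincides with $\widehat{\beta}$. Third, observe that equivalence relations sandwiched between $\beta^*$ and $\widehat{\beta}$ are automatically congruences, and count the blocks of $\widehat{\beta}/\beta^*$ to identify the interval as the claimed product of partition lattices.

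For step one, denote the right-hand side of the first displayed formula by $\beta^*$. It is manifestly an equivalence relation since the tie-point classes $t_i/\beta^{\bB_i}$ are chained together (consecutive ones share a tie-point by construction) and merged into a single block. To check that $f(\beta^*)\subseteq \beta^*$ for each $f\in F_A$, split into the usual two cases: (a) $(x,y)\in \beta^{\bB_j}$, handled as in Theorem \ref{OAthm3} by verifying that each $q_{i,0}$ and $q_{0,i}$ sends $(x,y)$ into some $\beta^{\bB_k}$; (b) $(x,y)$ lies in the square of the ``big'' tie-point block. For case (b) the key observation is that for $\ell\in\sE$ the map $e_\ell$ sends every point outside the $\sE$-labelled sets to the tie-point $a_1^\ell$, and acts as $S_{j,\ell}$ on $\sE$-labelled sets; similarly for $\ell\in\sO$. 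So each $e_\ell$ collapses the entire big block to a single $\beta^{\bB_\ell}$-class, and $f e_0$ collapses it to a $\beta$-class. Minimality of $\beta^*$ is then obtained by the familiar ``walk along the chain'' argument: if $\beta\subseteq\eta\in\Con\bA$, each $\beta^{\bB_j}$ lies in $\eta$ via $q_{0,j}q_{j,0}=\id$ on $B_j$, and consecutive tie-point classes are linked by an application of $q_{0,j+1}$ to the tie-point identity $b_i^j = a_{i+1}^{j+1}$ or its dual.

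For step two, let $\widetilde{\beta}$ denote the right-hand side of \eqref{eq:OA5}. The crucial new feature, and the main source of delicacy in the proof, is the parity split: the operation $q_{0,\ell}$ for $\ell\in\sE$ sends a block $c_i^j/\beta^{\bB_j}$ with $j\in\sE$ bijectively onto $c_i^\ell/\beta^{\bB_\ell}$ (via $S_{j,\ell}e_\ell$, since $e_\ell$ restricts to $S_{j,\ell}$ on $B_j$), but collapses every $B_j$ with $j\notin\sE$ onto the tie-point $a_1^\ell$, so the $\sO$-family is swallowed by $\beta^*$. A symmetric statement holds for $\ell\in\sO$, interchanging the roles of $\sE$ and $\sO$. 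Together with the intermediate operations $e_{\ell+i}$ (which factor through the tie-points of the chain between consecutive pivots and so send the entire $c_i^j$ block into the tie-point class), this shows $f(\widetilde{\beta})\subseteq \beta^*\subseteq\widetilde{\beta}$ for every $f\in F_A$, so $\widetilde{\beta}\in\Con\bA$. Since $\widetilde{\beta}\resB = \beta$, Lemma \ref{lem:residuation-lemma} gives $\widetilde{\beta}\leq\widehat{\beta}$. For the reverse containment, if $(x,y)\notin\widetilde{\beta}$ with, say, $x\in c_p^j/\beta^{\bB_j}$ and $y\in c_q^k/\beta^{\bB_k}$ where $(j,k)$ does not lie in $\sE^2\cup\sO^2$ with $p=q$, then some $e_\ell\circ f$ separates $x$ and $y$ into distinct $\beta$-classes: either $e_0$ directly (when $p\neq q$) or by choosing $\ell$ of the appropriate parity to exhibit $e_\ell(x)$ and $e_\ell(y)$ in different $\beta^{\bB_\ell}$-classes.

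For step three, the argument in step two actually showed $f(\widetilde{\beta})\subseteq\beta^*$ for all $f\in F_A$, so any equivalence relation $\theta$ with $\beta^*\subseteq\theta\subseteq\widehat{\beta}$ satisfies $f(\theta)\subseteq\beta^*\subseteq\theta$ and is thus in $\Con\bA$. Hence $[\beta^*,\widehat{\beta}]$ is literally the interval of all equivalences between $\beta^*$ and $\widehat{\beta}$, and it decomposes as $\prod_k \Eq(\sB_k)$ where $\sB_k$ ranges over the blocks of $\widehat{\beta}/\beta^*$. By inspection of \eqref{eq:OA5}, for each of the $m-1$ non-identity classes $c_i$ of $\beta$ there are exactly two non-singleton blocks: one $\sE$-block of size $|\sE|$ consisting of the $c_i^\ell/\beta^{\bB_\ell}$ for $\ell\in\sE$, and one $\sO$-block of size $|\sO|$. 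This yields $[\beta^*,\widehat{\beta}]\cong (\Eq|\sE|)^{m-1}\times(\Eq|\sO|)^{m-1}$. The principal obstacle throughout is the careful bookkeeping needed to show that no operation in $F_A$ mixes the $\sE$-family with the $\sO$-family; the doubled pivots at $B_{qK-1},B_{qK},B_{qK+1}$ are precisely what enforces this separation, and that is where the verification in step two must be done most carefully.
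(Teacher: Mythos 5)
Your proposal is correct and follows essentially the same route as the paper's own proof: the same two-case verification that $\beta^*$ is a congruence with the chain-walking minimality argument, the same use of the residuation lemma to identify $\widetilde{\beta}$ with $\widehat{\beta}$ after showing every operation maps $\widetilde{\beta}$ into $\beta^*$, and the same block-counting to get $(\Eq|\sE|)^{m-1}\times(\Eq|\sO|)^{m-1}$. The one point you flag as the main delicacy (the parity separation enforced by the doubled pivots) is exactly where the paper's verification also concentrates, so no gap.
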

\begin{remark}
  Recall that $m$ is the number of congruence classes in $\beta$.
  The number of up-pointing sets in Figure~\ref{fig:OveralgebrasIII}
  is $|\sE|$, while $|\sO|$ counts the number of
  down-pointing sets.  In our construction, we took 
  $|\sE| = |\sO| = Q+1$, but, apart from being notationally convenient, this
  choice was arbitrary; in fact, there's no reason $\sE$ and $\sO$ should be equal
  in number, and they could even be empty.  Choosing $\sO = \emptyset$, for
  example, would result in the interval 
  $[\beta^*, \widehat{\beta}] \cong (\Eq|\sE|)^{m-1}$.  Thus, for any $N$, we can
  construct an algebra $\bA$ that has
  $(\Eq N)^{m-1} \cong [\beta^*, \widehat{\beta}] < \Con \bA$.  

\end{remark}

\noindent {\it Proof of Theorem~\ref{thm-overalgebras-iii}.}
It is easy to check that $\beta^*$ is an equivalence relation on $A$, so we first
check that $f(\beta^*)\subseteq 
\beta^*$ for all $f\in F_A$.  This will establish that $\beta^*\in \Con\bA$.
Thereafter we show that $\beta \subseteq \eta \in \Con\bA$ implies 
$\beta^*\leq \eta$, which will prove that $\beta^*$ is the smallest congruence
of $\bA$ containing $\beta$, as claimed in the first part of the theorem.

Fix $(x,y) \in \beta^*$.  To show $(f(x), f(y)) \in \beta^*$ we consider two
possible cases.
\\[6pt]
\underline{Case 1}: $(x,y)\in \beta^{\bB_j}$ for some $0\leq j \leq (2q+1)K$.\\[4pt]
In this case it is easy to verify that $(q_{i,0}(x), q_{i,0}(y)) \in \beta$ and 
$(q_{0,i}(x), q_{0,i}(y)) \in \beta^{\bB_i}$  for all $0\leq i \leq
K+1$.  For example, if $(x,y)\in \beta^{\bB_j}$ with $1\leq j \leq K$, 
then $(q_{0,i}(x), q_{0,i}(y))  = (a_1^i, a_1^i)$ 
and $(q_{i,0}(x), q_{i,0}(y))$ is either $(b_i, b_i)$ or $(a_i,
a_i)$ depending on whether $i$ is below or above $j$, respectively. If $i=j$,
then $(q_{i,0}(x), q_{i,0}(y))$ is the pair in $B^2$ corresponding to
$(x,y)\in \beta^{\bB_j}$, so $(q_{i,0}(x), q_{i,0}(y))\in \beta$.
A special case is $(q_{0,0}(x), q_{0,0}(y)) \in \beta$.  Therefore,
$q_{0,0} = e_0$, implies $(f e_{0}(x), f e_{0}(y))\in \beta$
for all $f\in F_B$.
Altogether, we have proved that $(f(x),f(y))\in \beta^*$
for all $f\in F_A$.
\\[6pt]
\underline{Case 2}: $(x,y)\in \sB^2$ where $\sB := \bigcup_{i=0}^{MK}t_i/\beta^{\bB_i}$.
\\[4pt]
Note that  $e_0(\sB) = a_1/\beta$. Therefore, 
$(e_0(x),e_0(y)) \in  \beta$, so 
$(fe_0(x),fe_0(y)) \in  \beta$ for all $f\in F_B$.  Also,
\[
q_{0,k}(\sB) = S_{0,k} e_0(\sB) = S_{0,k}(a_1/\beta) = 
a_1^{k}/\beta^{\bB_{k}},
\]
which is a single block of $\beta^*$.
Similarly,
$e_k(\sB) = t_k/\beta^{\bB_k}$, so 
\[
q_{k,0}(\sB) = S_{k,0} e_k(\sB) = S_{k,0}(t_k/\beta^{\bB_k}) = S_{k,0}(t_k)/\beta,
\]
a single block of $\beta^*$.
Whence, $(x,y)\in \sB^2$ implies $(f(x), f(y)) \in \beta^*$ for all $f\in F_A$.

We have thus established that $\beta^*$ is a congruence of $\bA$ which
contains $\beta$.  We now show that $\beta^*$ is the smallest such congruence.  Indeed,
suppose $\beta \subseteq \eta \in \Con\bA$, and fix $(x,y)\in \beta^*$.
If $(x,y)\in \beta^{\bB_j}$ for some $0\leq j \leq MK$, then 
$(q_{j,0}(x), q_{j,0}(y)) = (S_{j,0}e_j(x), S_{j,0}e_j(y))= (S_{j,0}(x), S_{j,0}(y))\in \beta \subseteq \eta$, so 
$(x, y) = (q_{0,j}q_{j,0}(x), q_{0,j}q_{j,0}(y))\in \eta$.

If, instead of $(x,y)\in \beta^{\bB_j}$, we have 
$(x,y)\in \sB^2$, then without loss of generality $x\in a_i^i/\beta^{\bB_i}$ and 
$y\in a_j^j/\beta^{\bB_j}$ for some $0\leq i < j \leq K+1$.
Then, 
$(q_{i,0}(x),q_{i,0}(t_i)) \in \beta$ and 
$(q_{j,0}(t_j),q_{j,0}(y)) \in \beta$ and, since $i<j$, there is a sequence of tie points
$c_i^i, d_{i+1}^{i+1}, c_{i+1}^{i+1}, d_{i+2}^{i+2}, c_{i+2}^{i+2}, \dots,
c_j^j$ (where $\{c, d\} = \{a, b\}$) such that
\begin{equation}
  \label{eq:OA3}
  t_i \, \beta^{\bB_i}\, c_i^i = 
  d_{i+1}^{i+1} \, \beta^{\bB_{i+1}}\, c_{i+1}^{i+1} =
  d_{i+2}^{i+2} \, \beta^{\bB_{i+2}}\, c_{i+2}^{i+2}= \dots
  = c_j^j \, \beta^{\bB_{j}} \, t_j.
\end{equation}
We could sketch a diagram similar to the one given in the proof of
Theorem~\ref{OAthm3}, but it should be obvious by now that the relations~(\ref{eq:OA3})
imply $(t_i, t_j) \in \eta$.  Therefore, $\beta^* = \Cg^{\bA}(\beta)$.

Next we prove equation~(\ref{eq:OA5}).  Let $\tbeta$ denote the right hand side
of~(\ref{eq:OA5}).  We first show $\tbeta\in \Con\bA$.

Let 
\[
\CE:= \bigcup_{\ell \in \sE} c_i^\ell/\beta^{\bB_{\ell}} \quad \text{ and }
\quad 
\CO:=\bigcup_{\ell \in \sO} c_i^\ell/\beta^{\bB_{\ell}}.
\]
Note that $\CE$ is the join of the corresponding ($i$-th) $\beta$ blocks in the
up-pointing sets in Figure~\ref{fig:OveralgebrasIII}.  Thus, $\CE$ can be
visualized as a single slice through all of the up-pointing sets.  Similarly,
$\CO$ is the join of corresponding blocks in the down-pointing sets 
in Figure~\ref{fig:OveralgebrasIII}.  
If $0< i< K$ and $\ell \in \sE$, then
$e_{\ell+i}(\CE) = e_{\ell+i}(\CO) = \{a_i^{\ell+i}, b_i^{\ell+i}\}$.
Thus, for each such $k=\ell+i$ we have 
\[
q_{k0}(\CE) = S_{k0}\,e_k(\CE) = S_{k0}\,e_k(\CO) = q_{k0}(\CO)
= \{a_i, b_i\},
\]
a single block of $\beta$.
Similarly, if $0< i< K$ and $\ell \in \sO$, then
$e_{\ell+i}(\CE) = 
e_{\ell+i}(\CO) = \{a_{K-i}^{\ell+i}, b_{K-i}^{\ell+i}\}$.
Thus, for each such $k=\ell+i$ we have 
\[
q_{k0}(\CE) = S_{k0}e_k(\CE) = S_{k0}e_k(\CO) = q_{k0}(\CO)
= \{a_{K-i}, b_{K-i}\},
\]
which is also a single block of $\beta$.  
It follows that $q_{k0}(\tbeta)\subseteq \tbeta$ for all $k\notin \sE \cup \sO$.
If $k\in \sE$, then $e_k(\CE) = c_i^k/\beta^{\bB_k}$
and $e_k(\CO) = a_1^k$, so $q_{k0}(\CE) = c_i/\beta$ and 
$q_{k0}(\CO) = a_1$.  
Thus, $q_{k0}(\tbeta)\subseteq \tbeta$.
If $k\in \sO$, then 
$e_k(\CE) = b_{K-1}^k$ and $e_k(\CO) = c_i^k/\beta^{\bB_k}$,
so $q_{k0}(\CE) = b_{K-1} $ and 
$q_{k0}(\CO)= c_i/\beta$.  Thus, 
$q_{k0}(\tbeta)\subseteq \tbeta$.
Finally, $e_{0}(\CE) = c_i/\beta$ and 
$e_{0}(\CO) = a_1$, so, for each $f\in F_B$, the operation
$fe_{0}$ takes all of $\CE$ to a single $\beta$ class, and all of $\CO$ to a
single beta class.  That is, $fe_0(\tbeta)\subseteq \tbeta$ for all $f\in F_B$.
This completes the proof that $f(\tbeta)\subseteq \tbeta$ for all $f\in F_A$.

Since the
restriction of $\tbeta$ to $B$ is clearly $\tbeta \resB = \beta$, the
residuation lemma yields $\tbeta \leq \hbeta$, and we now prove
$\tbeta \geq \hbeta$.  Indeed, it is easy to see that, for each $(x,y)\notin
\tbeta$, there is an operation $f\in \Pol_1(\bA)$ such that $(e_0f(x),
e_0f(y))\notin \beta$, and thus $(x,y)\notin \hbeta$.  Verification of this
statement is trivial.  For example, if 
$x\in c_i^\ell/\beta^{\bB_\ell}$ 
for some $1\leq i < m, \, \ell \in \sE$
and $y\notin \CE$, then $e_0(x) \in c_i/\beta$ and $e_0(y)\notin c_i/\beta$,
so 
$(e_0(x), e_0(y))\notin \beta$.  To take a slightly less trivial case, suppose 
$x\in c_i^\ell/\beta^{\bB_\ell}$ 
for some $1\leq i < m, \, \ell \in \sO$
and $y\notin \CO$.  Then $(e_{\ell}(x), e_{\ell}(y))\notin \beta^{\bB_\ell}$,
so $(e_0q_{\ell 0}(x), e_0q_{\ell 0}(y)) = (q_{\ell 0}(x), q_{\ell 0}(y)) \notin
\beta$.  The few remaining cases are even easier to verify, so we omit them.
This completes the proof of~(\ref{eq:OA5}).

It remains to prove
$[\beta^*, \widehat{\beta}] \cong (\Eq|\sE|)^{m-1} \times (\Eq|\sO|)^{m-1}$. 
This follows trivially from what we have proved above.  For,
in proving that $\tbeta$ is a congruence, we showed that, in fact, each
operation $f\in F_A$ maps blocks of $\tbeta \, (= \hbeta)$ into blocks of $\beta^*$.  That is,
each operation collapses the interval $[\beta^*, \hbeta]$.  Therefore, every
equivalence relation on the set $A$ that lies between $\beta^*$ and $\hbeta$ is
respected by every operation of $\bA$. In other words,
\[
  [\beta^*, \hbeta] = \{\theta \in \Eq(A): \beta^* \leq \theta \leq \hbeta\}.
  \]
  In view of the configuration of the universe of $\bA$, 
  as shown in Figure~\ref{fig:OveralgebrasIII},
  it is clear that the interval sublattice 
  $\{\theta \in \Eq(A): \beta^* \leq \theta \leq \hbeta\}$
  is isomorphic to $(\Eq|\sE|)^{m-1} \times (\Eq|\sO|)^{m-1}$. 
  \hfill \qedsymbol

  \section{Conclusions}
  We have described an approach to building new finite algebras out of old
  which is useful in the following situation: given an algebra $\bB$ with a
  congruence lattice $\Con \bB$ of a particular shape, we seek an algebra
  $\bA$ with congruence lattice $\Con \bA$ which has $\Con \bB$ as a
  (non-trivial) homomorphic image; specifically, we construct
  $\bA$ so that $\resB : \Con \bA \rightarrow \Con \bB$ is a lattice epimorphism.
  We described the original example -- the ``triple-winged pentagon'' shown on
  the right of Figure~\ref{fig:sevens} -- found by 
\index{Freese, Ralph}%
Ralph Freese, which 
  motivated us to develop a general procedure for finding such finite algebraic
  representations.

  We mainly focused on a few specific overalgebra constructions.
  In each case, the congruence lattice that results has the same basic shape as
  the one with which we started, except that some congruences are replaced with
  intervals that are direct products of powers of partition lattices.  
  Thus we have identified a broad new class of finitely representable lattices.  
  However, the fact that the new intervals in these lattices must be products of
  partition lattices seems quite limiting, and this is the first limitation that we
  think future research might aim to overcome.  

  We envision potential variations on the constructions described herein,
  which might bring us closer toward the goal of replacing certain congruences
  $\beta\in \Con \bB$ with an more general finite lattices,
  $L\cong [\beta^*, \widehat{\beta}] \leq \Con \bA$.
  Using the constructions described above, we have found examples of overalgebras
  for which it is not possible to simply add operations 
  in order to eliminate \emph{all} relations strictly contained in the interval
  $(\beta^*,  \widehat{\beta})$.
  Nonetheless, we remain encouraged by the success of a very modest example
  in this direction, which we now describe.

  \begin{example}
    \label{ex:conclusion}
    Suppose $\<C, \dots\>$ is an arbitrary finite algebra with congruence lattice 
    $L_C := \Con \<C, \dots\>$. Relabel the elements so that $C = \{1, 2, \dots, N\}$.
    We show how to use the overalgebra construction described in
    Section~\ref{sec:overalgebras-i} to obtain a finite algebra with congruence
    lattice appearing in Figure~\ref{fig:conclusion}.\footnote{John
      Snow 
      \index{Snow, John}%
      has already proved that ``parallel sums'' of finitely representable
      lattices are finitely representable (See Lemmas 3.9 and 3.10
      of~\cite{Snow:2000}).
    }

    \begin{figure}[h!]
      \centering
      \begin{tikzpicture}[scale=.6]
        \node (bot) at (11,0.5)  [draw, circle, inner sep=1.0pt] {};
        \node (top) at (11,4.5)  [draw, circle, inner sep=1.0pt] {};
        \node (a) at (9.5,2.5)  [draw, circle, inner sep=1.0pt] {};
        \node (b) at (12,1.5)  [draw, circle, inner sep=1.0pt] {};
        \node (B) at (12,3.5)  [draw, circle, inner sep=1.0pt] {};
        \draw[semithick] 
        (bot) to (a) to (top) to (B)
        (b) to (bot);
        \draw [semithick]  
        (b) to [out=140,in=-140] (B)
        (B) to [out=-40,in=40] (b);
        \draw[font=\small] (12,2.5) node {$L_C$};
      \end{tikzpicture}
      \caption{
        $L_C$ an arbitrary finitely 
        representable lattice.}
      \label{fig:conclusion}
    \end{figure}
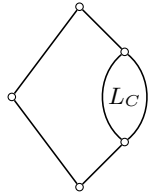

    Let $\bB = \<B, F_B\>$  be a unary algebra with universe 
    \[
    B = \{a_1, a_2, \dots, a_N, b_1, b_2, \dots, b_N \},
    \]
    and congruence lattice
    $\Con\bB = \{0_B, \alpha, \beta, 1_B\} \cong \two \times \two$, where
    \[
    \alpha = |a_1, b_1 | a_2, b_2 | \cdots | a_N, b_N | \quad \text{ and } \quad
    \beta = |a_1, a_2, \dots, a_N | b_1, b_2, \dots, b_N |.
    \]
    Such an algebra exists by the theorem of Berman\cite{Berman:1970}, and Quackenbush and
    Wolk~\cite{Quack:1971}.
    Let $B_1, B_2, \dots, B_N$ be sets of size $2N$ which intersect $B$ as
    follows:
    for all $1 \leq i < j \leq K$,
    \[
    B_0\cap B_i = \{b_i\}, \quad \text{ and } \quad B_i \cap B_j = \emptyset.
    \]
    If $\bA = \< A, F_A\>$ is the overalgebra constructed as in
    Section~\ref{sec:overalgebras-i}, then $\Con \bA$ is isomorphic to the lattice
    in Figure~\ref{fig:conclusion}, but with $L_C$ replaced with $\Eq(C)$.  Now
    expand the set $F_A$ of operations on $A$ as follows: for each $f\in F_C$,
    define $f_0: B\rightarrow B$ by $f_0(a_i) = a_{f(i)}$ and 
    $f_0(b_i) = b_{f(i)}$, and define $\hat{f}: A\rightarrow A$ by $\hat{f}(x) =
    f_0(s(x))$. Defining $F^+_A = F_A \cup \{\hat{f} : f\in F_C\}$, we claim that
    the congruence lattice of the algebra $\< A, F^+_A\>$ is (isomorphic to) the
    lattice appearing in Figure~\ref{fig:conclusion}.
  \end{example}

  As a final remark, we call attention to another obvious limitation of
  the methods describe in this chapter -- they cannot be used to find an
  algebra with congruence lattice isomorphic to the lattice $L_7$, which is the
  subject of Section~\ref{sec:except-seven-elem}.  
  This lattice is simple, so it is certainly not the inverse image under $\resB$ of some
  smaller lattice. 

\chapter{Open Questions}
\label{sec:open-questions}
We conclude this thesis by listing some open questions, the answers to which
will help us better understand finite algebras in general and finite groups in
particular.  It is the author's view that such progress will undoubtedly lead to
a solution to the \FLRP\ in the very near future.

Let $\bH(\sK)$ denote the class of homomorphic images of a class $\sK$
of algebras.
Let $\sL_3$ denote the class of 
\index{representable lattice}
\emph{representable} lattices; that is,
$L \in \sL_3$ if and only if $L \cong \Con \bA$ for some finite algebra $\bA$.
Let $\sL_{4}$ denote the class of 
\index{group representable lattice}
\emph{group representable} lattices; that is, $L \in \sL_{4}$ iff $L\cong [H, G]$ for some
finite groups $H\leq G$.
As we know, $\sL_3 \supseteq \sL_4$.
\begin{enumerate}
\item  Is $\sL_4$ is closed under homomorphic images, $\bH(\sL_4) = \sL_4$?  

\item Is $\bH(\sL_4) \subseteq \sL_3$ true?

\item Is $\bH(\sL_3) = \sL_3$ true?

\item Is $\sL_3 = \sL_4$ true?
In other words, if $L$ is 
the congruence lattice of a finite
algebra, is $L$ (isomorphic to) the congruence lattice of a transitive G-set?
Equivalently, is every congruence lattice of a finite algebra (isomorphic to) an
interval in the subgroup lattice of a finite group?  

\item Suppose $L \in \sL_4$. 
It is true that, 
$L_0 = \{ x\in L \mid x \leq \alpha \text{ or } \beta \leq x \} \in \sL_4$
for all $\alpha, \beta \in L$?  Note that, by the result of 
\index{Snow, John}%
John Snow (Lemma~\ref{lemma:union-filter-ideal}) 
this is true if we replace $\sL_4$ with $\sL_3$.

\item What other properties of groups, in addition to those described in
  Chapter~\ref{cha:subl-interv-enforc}, are interval sublattice enforceable
  (\ISLE) properties?

\item If a group property is \ISLE, is it true that the negation of that property cannot be \ISLE?
(This is~Conjecture~\ref{conjecture:isle-prop}.)

\item Is the lattice $M_7$ the congruence lattice of an algebra of cardinality
  less than $30!/10$?\\
(In~\cite{Feit:1983},  
\index{Feit, Walter}%
Walter Feit finds $M_7 \cong [H,A_{31}]$, where
  $|H|=31\cdot 5$, so $M_7$ is the congruence lattice of a transitive \Gset\ on
  $|A_{31}:H| = 30!/10$ elements.)

\item Is there a general characterization of the class of finite lattices that
  occur as congruence lattices of overalgebras?  As we pointed out in
  Section~\ref{ex:conclusion}, a simple lattice is not the congruence lattice
  of a (non-trivial) expansion of the type described in
  Chapter~\ref{cha:expans-finite-algebr}.  Are there other such
  properties, besides simplicity, describing lattices that cannot be
  the congruence lattice of an overalgebra?

\item Is the seven element lattice $L_{11}$ group representable? \\
  (Recall, we proved that $L_{11}$ is representable in
  Section~\ref{sec:seven-elem-latt} using the filter+ideal method which
  necessarily results in a non-permutational algebra.) 

\item Is every lattice with at most seven elements group representable?\\
  (In Section~\ref{sec:seven-elem-latt} we described the
  seven element lattices which are the most challenging to represent.  These
  appear in Figure~\ref{fig:sevens}.
  We saw that both $L_{13}$ and $L_{17}$ are group representable.
  Though we did not mention it above, we have also found the lattice $L_9$ 
  (which motivated the invention of overalgebras) as an interval in the subgroup
  lattice of $A_{10}$.  At the bottom of this interval is a subgroup of index
  25,400.  So the smallest \Gset\ we have found with congruence lattice
  isomorphic to $L_9$ is on 25,400 elements.  
  Clearly this is not the minimal representation of $L_9$. Indeed, in
  Example~\ref{ex:3.1} we constructed an overalgebra with 16
  elements that has a congruence lattice isomorphic to $L_9$.  We suspect it
  will not be very difficult to prove that the lattices $L_{19}$ and $L_{20}$
  are group representable.  Of the lattices appearing in
  Figure~\ref{fig:sevens} then, $L_7$ may not be representable,
  and $L_{11}$, though representable, seems difficult
  to find as an interval in a subgroup lattice of a finite group.)
\end{enumerate}

\appendix

\part{Appendix}
\chapter{Group Theory Background}
\label{cha:group-theory-backgr}
In this section we review some aspects of group theory that
are relevant to our problem of representing a finite lattice as the congruence
lattice of a finite algebra.

\section{Group actions and permutation groups}
\label{sec:group-acti-perm}
Let $G$ be a group, $\bA= \<A, \barG\>$ a \Gset, and let $\Sym(A)$ denote the group of
permutations of $A$.
For $a\in A$, the one-generated subalgebra $\<a\>\in \Sub(\bA)$ is
called the \defn{orbit} of $a$ in $\bA$. 
It is easily verified that $\<a\>$ is the set
$ \barG a :=  \{\barg a \mid g\in G\}$, and we often use the more suggestive 
$\barG a$ when referring to this orbit.

The orbits of the \Gset\ $\bA$ partition the set $A$ into disjoint
equivalence classes.  The equivalence relation $\sim$ is defined on $A^2$ as follows: 
$x \sim y$ if and only if $\barg x = y$ for some $g\in G$.
In fact, $\sim$ is a congruence relation of the algebra $\bA$ since,
$x \sim y$ implies $\barg x \sim \barg y$.
Thus, as mentioned above, each orbit is indeed a \emph{subalgebra} of $\bA$.

Keep in mind that $A$ is the disjoint union of the
orbits.  That is, if $\{a_1, \dots, a_r\}$ is a full
set of $\sim$-class representatives, then $A = \bigcup_{i=1}^r \barG a_i$ is a disjoint union.

A \Gset\ with only one orbit is called 
\defn{transitive}.  Equivalently, $\<A, \barG\>$ is a transitive \Gset\ if and
only if $(\forall a, b \in A)(\exists g\in G)(\barg a = b)$. 
In this case, we say that $G$ \emph{acts transitively} on $A$,
and occasionally we refer to the group $G$ itself as a \emph{transitive group} of \emph{degree} $|A|$.  

For $a\in A$, the set $\Stab_G(a) := \{g \in G \mid  \barg a  = a\}$ is called the 
\defn{stabilizer} of $a$.  It is easy to verify  that $\Stab_G(a)$ is a
subgroup of $G$.  An alternative notation for the stabilizer 
is $G_a := \Stab_G(a)$.

Let $\lambda : G \rightarrow \barG \leq \Sym(A)$ denote the permutation representation
of $G$; that is, $\lambda(g) = \barg$. Then 
\begin{equation}
  \label{eq:111}
\ker \lambda = \{g\in G  \mid  \barg a = a \text{ for all $a\in A$}\} = \bigcap_{a\in A}
\Stab_G(a)
= \bigcap_{a\in A} G_a.
\end{equation}
Therefore, $G/\ker\lambda \cong \lambda[G]\leq \Sym(A)$.
We say that the representation $\lambda$ of $G$ is \defn{faithful}, or 
that $G$ \emph{acts faithfully} on $A$, just in case $\ker \lambda = 1$. In
this case $\lambda : G \hookrightarrow \Sym(A)$, so $G$ itself is isomorphic to a subgroup of 
$\Sym(A)$, and we call $G$ a \defn{permutation group}.

If $H \leq G$ are groups, the \defn{core} of $H$ in $G$, denoted $\core_G(H)$,
is the largest normal subgroup of $G$ that is contained in $H$.  It is easy to see
that
\[
\core_G(H) = \bigcap_{g\in G}g H g^{-1}.
\]
A subgroup $H$ is called \emph{core-free} provided $\core_G(H)=1$.

Elements in the same orbit of a \Gset\
have conjugate stabilizers.  Specifically, if $a, b\in A$  and $g\in G$ are
such that $\barg a = b$, then 
$\stab{b} = \stab{\barg a} = g \,\stab{a}\, g^{-1}$.
If the \Gset\ happens to be transitive, then it is faithful if and only if the
stabilizer $\stab{a}$ is core-free in $G$. For,
\[
\ker \lambda = \bigcap_{a\in A} \stab{a}
= \bigcap_{g\in G} \stab{\barg a}
= \bigcap_{g\in G} g \,\stab{a}  \,g^{-1}.
\]
Thus $\stab{a}$ is core-free if and only if $\ker \lambda = 1$ if and only if
$G$ acts faithfully on $A$. 

In case $G$ is a transitive permutation group, we say that $G$ is 
\defn{regular} (or that $G$ \emph{acts regularly} on $A$, or that $\lambda
: G \rightarrow \barG$ is a \emph{regular representation})
provided $\stab{a} = 1$ for each $a\in A$; i.e.,
every non-identity element of $G$ is fixed-point-free.\footnote{The action of a
  regular permutation group is sometimes called a ``free'' action.} Equivalently,
$G$ is regular on $A$ if and only if for each $a, b \in A$ there is a unique $g\in G$ such
that $\barg a = b$.  In particular, $|G| = |A|$.

A \defn{block system} for $G$ is a partition of $A$
  that is preserved by the action of $G$.  In other words, a block system is a
  congruence relation of the algebra $\bA = \<A,\barG\>$.
The \defn{trivial block systems} are $0_A = |a_1|a_2|\cdots|a_i|\cdots$ and 
$1_A = |a_1 a_2 \cdots a_i \cdots|$.  The non-trivial block systems are called 
\defn{systems of imprimitivity}.

A nonempty subset $B\subseteq A$ is a \defn{block} for $\bA$ 
if for each $g \in G$ either $\barg B = B$ or $\barg B \cap B = \emptyset$.

Let $\bA = \<A, \barG\>$ be a transitive \Gset.  
In most group theory textbooks one finds the following definition: a group $G$
is called \defn{primitive} if $\bA$ has no systems of imprimitivity;
otherwise $G$ is called \defn{imprimitive}.  In other words, $G$ is
primitive if and only if the transitive \Gset\ $\<A, \barG\>$ is a 
\defn{simple algebra} -- that is, $\Con \<A, \barG\> \cong \2$.
In the author's view, this definition of primitive is meaningless and is the
source of unnecessary confusion.  Clearly \emph{every} finite group acts
transitively on the cosets of a maximal subgroup $H$ and the resulting \Gset\ has 
$\Con \<G/H, \barG\> \cong [H, G] \cong \2$.  This means that, according to the
usual definition, every finite group is primitive.
To make the definition more meaningful, we should require that a primitive group
be isomorphic to a permutation group.  That is, we call a transitive
permutation group \defn{primitive} if the induced algebra is simple.  
To see the distinction, take an arbitrary group $G$ acting on the cosets of a
subgroup $H$.  This action 
is faithful, and $G$ is a permutation group, if and only if 
$H$ is core-free.  If, in addition, $H$ is a maximal subgroup, then the
induced algebra $\<G/H, \barG\>$ is simple.  For these reasons, we will call a
group \defn{primitive} if and only if it has a core-free maximal subgroup.  
(Note that the terms ``primitive'' and ``imprimitive'' are used 
only with reference to \emph{transitive} \Gsets.)

\section{Classifying permutation groups}
\label{sec:class-perm-groups}
A permutation group is either transitive
or is a subdirect product of transitive groups, while a transitive group is
either primitive or is a subgroup of an iterated wreath product of primitive
groups. (See, e.g., Praeger~\cite{Praeger:2006}.)
Hence primitive groups can be viewed as the building blocks of all
permutations groups and their classification helps us to better understand
the structure of permutation groups in general.

The \defn{socle} of a group $G$ is the subgroup generated by the minimal normal
subgroups of $G$ and is denoted by $\Soc(G)$. By~\cite{Dixon:1996}, Corollary
4.3B, the socle 
of a finite primitive group is isomorphic to the direct product of one or more
copies of a simple group $T$.  The O'Nan-Scott Theorem classifies the primitive
permutation groups according to the structure of their socles.  The following
version of the theorem seems to be among the most useful, and it appears for
example in the Ph.D. thesis of 
\index{Coutts, Hannah}%
Hannah Coutts~\cite{Coutts:2010}.

\subsection{The O'Nan-Scott Theorem}
\label{sec:onan-scott-theorem}
\index{O'Nan-Scott Theorem}%
\index{Aschbacher-O'Nan-Scott Theorem}%
\begin{theorem}[O'Nan-Scott Theorem] 
Let $G$ be a primitive permutation
group of degree $d$, and let $N := \Soc(G) \cong T^m$ with $m \geq 1$. 
Then one of the following holds.
\end{theorem}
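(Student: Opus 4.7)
The plan is to follow the classical strategy of analysing the socle $N = \Soc(G) \cong T^m$ together with the action of the point stabiliser $G_\omega$, and to partition the possibilities according to (i) whether $T$ is abelian or nonabelian, and (ii) the structure of $N \cap G_\omega$. The first preliminary step is to verify that in a finite primitive group the socle is indeed a direct power of a single simple group; this follows from the fact that each minimal normal subgroup of a finite group is characteristically simple, together with the observation that for a primitive group at most two minimal normal subgroups exist, and when two exist they are isomorphic and regular.

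Next I would dispose of the abelian case. If $T$ is abelian then $T \cong C_p$ for some prime $p$, so $N$ is elementary abelian of order $p^m$. Primitivity forces $G_\omega$ to act irreducibly on $N$ (viewed as an $\mathbb{F}_p$-space) and $N$ to be regular, yielding the \emph{affine} (HA) type with $G \leq \AGL(m,p)$ acting on $\mathbb{F}_p^m$. This case is essentially bookkeeping once the irreducibility is established from the maximality of $G_\omega$.

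The bulk of the work lies in the nonabelian case $T$ nonabelian. Here I would fix the conjugation action $\phi: G \to \Sym(m)$ on the $m$ simple factors $T_1, \ldots, T_m$ of $N$ (this action is transitive because $N$ is a minimal normal subgroup of $G$), and study the subdirect subgroup $N \cap G_\omega \leq N$. The key structural lemma --- this is the main obstacle, and it is where Scott's original argument and Aschbacher's refinements concentrate --- shows that $N \cap G_\omega$ is itself a direct product of ``diagonal-type'' subgroups indexed by a $G$-invariant partition of $\{1, \ldots, m\}$; equivalently, one reduces to understanding the full diagonal subgroups of $T^k$ inside $N \cap G_\omega$. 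The analysis uses Goursat's lemma applied repeatedly together with the fact that $G_\omega$ is maximal in $G$.

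With this lemma in hand, the classification falls out by reading off the possibilities for the partition and the diagonal blocks:
\begin{itemize}
\item $m = 1$ yields the \emph{almost simple} (AS) type, $T \trianglelefteq G \leq \Aut(T)$;
\item $N \cap G_\omega = 1$ (so $N$ is regular and nonabelian) yields the \emph{twisted wreath} (TW) type, with $G$ expressible as a twisted wreath product $T \,\mathrm{tw}\, P$;
\item $N \cap G_\omega$ equal to a single full diagonal in $N$ yields the \emph{simple diagonal} (SD) type;
\item $N \cap G_\omega$ equal to a product of $k > 1$ diagonal subgroups in a block decomposition $T^m = (T^{m/k})^k$ yields the \emph{product action} (PA) type, with $G$ embedding into a wreath product $H \wr \Sym(k)$ in its product action on $\Delta^k$, where $H$ is primitive of simple diagonal or almost simple type on $\Delta$;
\item the mixed cases in which $G$ has two minimal normal subgroups (each regular) give the \emph{holomorphic} (HS, HC) types, obtained by identifying $G_\omega$ with a diagonal complement.
\end{itemize}
Finally, to conclude I would verify that the listed types are mutually exclusive (by comparing the isomorphism type of $N$, the index $|N : N \cap G_\omega|$, and whether $N$ is regular), and that each type does occur, by exhibiting the standard family of examples. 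Throughout, the single genuinely hard step is the Goursat-style analysis of subdirect subgroups of $T^m$ that are normalised by a transitive subgroup of $\Sym(m)$ and maximal subject to supplementing $N$ in $G$; everything else is organisational.
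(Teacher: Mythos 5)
This theorem is not proved in the dissertation at all: it is quoted as background in the appendix (Section~\ref{sec:onan-scott-theorem}), with attribution to the literature (the version in Hannah Coutts's thesis, and Dixon--Mortimer for the structure of the socle). So there is no ``paper's own proof'' to compare against; your proposal has to stand on its own as a proof of a deep classification theorem.

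As an outline of the standard argument (Scott, Aschbacher, and the Liebeck--Praeger--Saxl treatment) your plan is essentially correct in shape: reduce to the structure of $N\cap G_\omega$ as a subdirect subgroup of $T^m$ normalised by the transitive conjugation action on the factors, split on $T$ abelian versus nonabelian and on whether $N$ is regular, and read off the types. But the proposal contains a genuine gap where you yourself locate ``the main obstacle'': the structural lemma that $N\cap G_\omega$ is a product of full diagonal subgroups supported on the blocks of a $G_\omega$-invariant partition of $\{1,\dots,m\}$ is asserted, not proved. That lemma is the entire content of the theorem in the nonabelian non-regular case --- it is where Scott's lemma on subdirect subgroups of products of nonabelian simple groups, the maximality of $G_\omega$, and the normalisation by a transitive subgroup of $\Sym(m)$ all have to be combined --- and ``Goursat's lemma applied repeatedly'' is not a proof of it. Several smaller steps are also left unjustified: that a primitive group has at most two minimal normal subgroups, with the second forced to be the centraliser of the first and both regular; that in the abelian case maximality of $G_\omega$ forces irreducibility and a complement, so that $G$ really does embed in $\AGL(m,p)$; and that $m\geq 6$ in the twisted wreath case. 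Finally, your eight-type taxonomy (HA, HS, HC, TW, AS, SD, PA) is finer than the five-case version stated here, so you would still need to explain how the holomorph cases HS and HC fold into cases 2.(c) and 2.(b) of the statement (both have nonabelian non-regular socle $T^m$ with $m\geq 2$), and to verify the degree formulas $d=p^m$, $d=|T|^m$, $d=n^{m/l}$, $d=|T|^{m-1}$ claimed in each case.
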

\begin{enumerate}
\item 
$N$ is regular and
  \begin{enumerate}
  \item 
  \defn{Affine type} $T$ is cyclic of order $p$, so $|N| = p^m$ . Then 
$d = p^m$ and $G$ is permutation isomorphic to a subgroup of the affine
general linear group $\AGL(m,p)$. We call $G$ a group of \emph{affine type}.
\item \defn{Twisted wreath product type} $m \geq 6$, the group $T$ is 
  nonabelian and $G$ is a group of \emph{twisted wreath product type}, with
  $d = |T|^m$.
  \end{enumerate}
\item $N$ is non-regular and non-abelian and
  \begin{enumerate}
  \item 
\defn{Almost simple} $m = 1$ and $T \leq G \leq \Aut(T)$.
\item \defn{Product action} $m \geq 2$ and $G$ is permutation isomorphic to a
subgroup of the product action wreath product $P \wr S_{m/l}$ of degree
$d = nm/l$. The group $P$ is primitive of type 2.(a) or 2.(c), $P$ has
degree $n$ and $\Soc(P) \cong T^l$, where $l \geq 1$ divides $m$.
\item 
\defn{Diagonal type} $m \geq 2$ and $T^m \leq G \leq T^m . (\Out(T ) \times S_m)$, with
the diagonal action. The degree $d = |T|^{m-1}$.
  \end{enumerate}
\end{enumerate}

We can see immediately that there are no twisted wreath product type
groups of degree less than $60^6$ ($=46.656$ billion).
Note that this definition of product action groups is more restrictive
than that given by some authors. This is in order to make the O'Nan-Scott
classes disjoint.

\bibliography{wjd}

\def\cprime{$'$} \def\ocirc#1{\ifmmode\setbox0=\hbox{$#1$}\dimen0=\ht0
  \advance\dimen0 by1pt\rlap{\hbox to\wd0{\hss\raise\dimen0
  \hbox{\hskip.2em$\scriptscriptstyle\circ$}\hss}}#1\else {\accent"17 #1}\fi}
\begin{thebibliography}{10}

\bibitem{Aschbacher:2008}
Michael Aschbacher.
\newblock On intervals in subgroup lattices of finite groups.
\newblock {\em J. Amer. Math. Soc.}, 21(3):809--830, 2008.
\newblock \href {http://dx.doi.org/10.1090/S0894-0347-08-00602-4}
  {\path{doi:10.1090/S0894-0347-08-00602-4}}.

\bibitem{Lucchini:1997}
Robert Baddeley and Andrea Lucchini.
\newblock On representing finite lattices as intervals in subgroup lattices of
  finite groups.
\newblock {\em J. Algebra}, 196(1):1--100, 1997.
\newblock \href {http://dx.doi.org/10.1006/jabr.1997.7069}
  {\path{doi:10.1006/jabr.1997.7069}}.

\bibitem{BBE:2006}
Adolfo Ballester-Bolinches and Luis~M. Ezquerro.
\newblock {\em Classes of finite groups}, volume 584 of {\em Mathematics and
  Its Applications (Springer)}.
\newblock Springer, Dordrecht, 2006.

\bibitem{Basile:2001}
Alberto Basile.
\newblock {\em Second maximal subgroups of the finite alternating and symmetric
  groups.}
\newblock PhD thesis, Australian National University, Canberra, April 2001.

\bibitem{Berman:1970}
Joel Berman.
\newblock {\em Congruence lattices of finite universal algebras}.
\newblock PhD thesis, University of Washington, 1970.
\newblock Available from: \url{http://db.tt/mXUVTzSr}.

\bibitem{Birkhoff:1935}
Garrett Birkhoff.
\newblock On the structure of abstract algebras.
\newblock {\em Proc. Cmabridge Phil. Soc.}, 31:433--454, 1935.

\bibitem{Birkhoff:1940}
Garrett Birkhoff.
\newblock {\em Lattice {T}heory}.
\newblock American Mathematical Society, New York, 1940.

\bibitem{Borner:1999}
Ferdinand B{\"o}rner.
\newblock A remark on the finite lattice representation problem.
\newblock In {\em Contributions to general algebra, 11 ({O}lomouc/{V}elk\'e
  {K}arlovice, 1998)}, pages 5--38, Klagenfurt, 1999. Heyn.

\bibitem{Coutts:2010}
Hannah Coutts.
\newblock {\em Topics in Computational Group Theory: Primitive permutation
  groups and matrix group normalisers}.
\newblock PhD thesis, University of St.~Andrews, 2010.
\newblock Available from:
  \url{http://www-circa.mcs.st-and.ac.uk/Theses/HCoutts_thesis.pdf}.

\bibitem{Dedekind:1877}
Richard Dedekind.
\newblock {\"U}ber die {A}nzahl der {I}deal-classen in den verschiedenen
  {O}rdnungen eines endlichen {K}\"orpers.
\newblock In {\em {F}estschrift zur {S}aecularfeier des {G}eburtstages von
  {C}.~{F}.~{G}auss}, pages 1--55. {V}ieweg, {B}raunschweig, 1877.
\newblock see {G}es. {W}erke, {B}and {I}, 1930, 105--157.

\bibitem{gsets}
William DeMeo and Ralph Freese.
\newblock Congruence lattices of intransitive {G}-sets.
\newblock {\it preprint}, 2012.
\newblock Available from: \url{http://db.tt/tNzQsZl9}.

\bibitem{Dixon:1996}
John~D. Dixon and Brian Mortimer.
\newblock {\em Permutation groups}, volume 163 of {\em Graduate Texts in
  Mathematics}.
\newblock Springer-Verlag, New York, 1996.

\bibitem{Doerk:1992}
Klaus Doerk and Trevor Hawkes.
\newblock {\em Finite soluble groups}, volume~4 of {\em de Gruyter Expositions
  in Mathematics}.
\newblock Walter de Gruyter \& Co., Berlin, 1992.

\bibitem{Feit:1983}
Walter Feit.
\newblock An interval in the subgroup lattice of a finite group which is
  isomorphic to {$M_{7}$}.
\newblock {\em Algebra Universalis}, 17(2):220--221, 1983.
\newblock \href {http://dx.doi.org/10.1007/BF01194532}
  {\path{doi:10.1007/BF01194532}}.

\bibitem{Freese:1975}
Ralph Freese.
\newblock Congruence lattices of finitely generated modular lattices.
\newblock In {\em Proceedings of the {L}attice {T}heory {C}onference ({U}lm,
  1975)}, pages 62--70, Ulm, 1975. Univ. Ulm.

\bibitem{uacalc}
Ralph Freese, Emil Kiss, and Matthew Valeriote.
\newblock Universal {A}lgebra {C}alculator, 2008.
\newblock Available from: \url{http://www.uacalc.org}.

\bibitem{GAP4}
The GAP~Group.
\newblock {\em {GAP -- Groups, Algorithms, and Programming, Ver.~4.4.12}},
  2008.
\newblock Available from: \url{http://www.gap-system.org}.

\bibitem{GratzerSchmidt:1963}
G.~Gr{\"a}tzer and E.~T. Schmidt.
\newblock Characterizations of congruence lattices of abstract algebras.
\newblock {\em Acta Sci. Math. (Szeged)}, 24:34--59, 1963.

\bibitem{Gratzer:1968}
George Gr{\"a}tzer.
\newblock {\em Universal algebra}.
\newblock D. Van Nostrand Co., Inc., Princeton, N.J.-Toronto, Ont.-London,
  1968.

\bibitem{Isaacs:2008}
I.~Martin Isaacs.
\newblock {\em Finite group theory}, volume~92 of {\em Graduate Studies in
  Mathematics}.
\newblock American Mathematical Society, Providence, RI, 2008.

\bibitem{Jonsson:1972}
Bjarni J{\'o}nsson.
\newblock {\em Topics in universal algebra}.
\newblock Lecture Notes in Mathematics, Vol. 250. Springer-Verlag, Berlin,
  1972.

\bibitem{Kohler:1983}
Peter K{\"o}hler.
\newblock {$M_{7}$} as an interval in a subgroup lattice.
\newblock {\em Algebra Universalis}, 17(3):263--266, 1983.
\newblock \href {http://dx.doi.org/10.1007/BF01194535}
  {\path{doi:10.1007/BF01194535}}.

\bibitem{Kurzweil:1985}
Hans Kurzweil.
\newblock Endliche {G}ruppen mit vielen {U}ntergruppen.
\newblock {\em J. Reine Angew. Math.}, 356:140--160, 1985.
\newblock \href {http://dx.doi.org/10.1515/crll.1985.356.140}
  {\path{doi:10.1515/crll.1985.356.140}}.

\bibitem{McKenzie:1983}
Ralph McKenzie.
\newblock Finite forbidden lattices.
\newblock In {\em Universal Algebra and Lattice Theory ({P}uebla, 1982)},
  volume 1004 of {\em Lecture Notes in Math.}, pages 176--205, Berlin, 1983.
  Springer.

\bibitem{McKenzie:1984}
Ralph McKenzie.
\newblock A new product of algebras and a type reduction theorem.
\newblock {\em Algebra Universalis}, 18(1):29--69, 1984.
\newblock \href {http://dx.doi.org/10.1007/BF01182247}
  {\path{doi:10.1007/BF01182247}}.

\bibitem{alvi:1987}
Ralph~N. McKenzie, George~F. McNulty, and Walter~F. Taylor.
\newblock {\em Algebras, lattices, varieties. {V}ol. {I}}.
\newblock Wadsworth \& Brooks/Cole, Monterey, CA, 1987.

\bibitem{Netter:1986}
R.~Netter.
\newblock Eine bemerkung zu kongruenzverbanden.
\newblock preprint, 1986.

\bibitem{Palfy:1987}
P.~P. P{\'a}lfy.
\newblock Distributive congruence lattices of finite algebras.
\newblock {\em Acta Sci. Math. (Szeged)}, 51(1-2):153--162, 1987.

\bibitem{Palfy:1995}
P{\'e}ter~P{\'a}l P{\'a}lfy.
\newblock Intervals in subgroup lattices of finite groups.
\newblock In {\em Groups '93 {G}alway/{S}t.\ {A}ndrews, {V}ol.\ 2}, volume 212
  of {\em London Math. Soc. Lecture Note Ser.}, pages 482--494. Cambridge Univ.
  Press, Cambridge, 1995.
\newblock \href {http://dx.doi.org/10.1017/CBO9780511629297.014}
  {\path{doi:10.1017/CBO9780511629297.014}}.

\bibitem{Palfy:2001}
P{\'e}ter~P{\'a}l P{\'a}lfy.
\newblock Groups and lattices.
\newblock In {\em Groups {S}t. {A}ndrews 2001 in {O}xford. {V}ol. {II}}, volume
  305 of {\em London Math. Soc. Lecture Note Ser.}, pages 428--454, Cambridge,
  2003. Cambridge Univ. Press.
\newblock \href {http://dx.doi.org/10.1017/CBO9780511542787.014}
  {\path{doi:10.1017/CBO9780511542787.014}}.

\bibitem{Palfy:2009}
P{\'e}ter~P{\'a}l P{\'a}lfy.
\newblock The finite congruence lattice problem, September 2009.
\newblock Summer School on General Algebra and Ordered Sets Star{\'a}
  Lesn{\'a}, 6, 2009.
\newblock Available from: \url{http://db.tt/DydVmisY}.

\bibitem{Palfy:1980}
P{\'e}ter~P{\'a}l P{\'a}lfy and Pavel Pudl{\'a}k.
\newblock Congruence lattices of finite algebras and intervals in subgroup
  lattices of finite groups.
\newblock {\em Algebra Universalis}, 11(1):22--27, 1980.
\newblock \href {http://dx.doi.org/10.1007/BF02483080}
  {\path{doi:10.1007/BF02483080}}.

\bibitem{Praeger:2006}
Cheryl~E. Praeger.
\newblock Seminormal and subnormal subgroup lattices for transitive permutation
  groups.
\newblock {\em J. Aust. Math. Soc.}, 80(1):45--63, 2006.
\newblock \href {http://dx.doi.org/10.1017/S144678870001137X}
  {\path{doi:10.1017/S144678870001137X}}.

\bibitem{Pudlak:1976}
P.~Pudl{\'a}k and J.~T{\.u}ma.
\newblock Yeast graphs and fermentation of algebraic lattices.
\newblock In {\em Lattice theory ({P}roc. {C}olloq., {S}zeged, 1974)}, pages
  301--341. Colloq. Math. Soc. J\'anos Bolyai, Vol. 14. North-Holland,
  Amsterdam, 1976.

\bibitem{Pudlak:1980}
Pavel Pudl{\'a}k and Ji{\v{r}}{\'{\i}} T{\.u}ma.
\newblock Every finite lattice can be embedded in a finite partition lattice.
\newblock {\em Algebra Universalis}, 10(1):74--95, 1980.
\newblock \href {http://dx.doi.org/10.1007/BF02482893}
  {\path{doi:10.1007/BF02482893}}.

\bibitem{Quack:1971}
R.~Quackenbush and B.~Wolk.
\newblock Strong representation of congruence lattices.
\newblock {\em Algebra Universalis}, 1:165--166, 1971/72.

\bibitem{Robinson:1996}
Derek J.~S. Robinson.
\newblock {\em A course in the theory of groups}, volume~80 of {\em Graduate
  Texts in Mathematics}.
\newblock Springer-Verlag, New York, second edition, 1996.

\bibitem{Rose:1978}
John~S. Rose.
\newblock {\em A course on group theory}.
\newblock Dover Publications Inc., New York, 1994.
\newblock Reprint of the 1978 original [Cambridge University Press; MR0498810
  (58 \#16847)].

\bibitem{Rottlaender:1928}
Ada Rottlaender.
\newblock Nachweis der {E}xistenz nicht-isomorpher {G}ruppen von gleicher
  {S}ituation der {U}ntergruppen.
\newblock {\em Math. Z.}, 28(1):641--653, 1928.
\newblock \href {http://dx.doi.org/10.1007/BF01181188}
  {\path{doi:10.1007/BF01181188}}.

\bibitem{Schmidt:1982}
E.~Tam{\'a}s Schmidt.
\newblock {\em A survey on congruence lattice representations}, volume~42 of
  {\em Teubner-Texte zur Mathematik}.
\newblock BSB B. G. Teubner Verlagsgesellschaft, Leipzig, 1982.

\bibitem{Schmidt:1994}
Roland Schmidt.
\newblock {\em Subgroup lattices of groups}, volume~14 of {\em de Gruyter
  Expositions in Mathematics}.
\newblock Walter de Gruyter \& Co., Berlin, 1994.

\bibitem{Silcock:1977}
Howard~L. Silcock.
\newblock Generalized wreath products and the lattice of normal subgroups of a
  group.
\newblock {\em Algebra Universalis}, 7(3):361--372, 1977.

\bibitem{Snow:2000}
John~W. Snow.
\newblock A constructive approach to the finite congruence lattice
  representation problem.
\newblock {\em Algebra Universalis}, 43(2-3):279--293, 2000.
\newblock \href {http://dx.doi.org/10.1007/s000120050159}
  {\path{doi:10.1007/s000120050159}}.

\bibitem{Suzuki:1982}
Michio Suzuki.
\newblock {\em Group theory. {I}}, volume 247 of {\em Grundlehren der
  Mathematischen Wissenschaften [Fundamental Principles of Mathematical
  Sciences]}.
\newblock Springer-Verlag, Berlin, 1982.
\newblock Translated from the Japanese by the author.

\bibitem{Tuma:1986}
Ji{\v{r}}{\'{\i}} T{\ocirc{u}}ma.
\newblock Some finite congruence lattices. {I}.
\newblock {\em Czechoslovak Math. J.}, 36(111)(2):298--330, 1986.

\bibitem{Watatani:1996}
Yasuo Watatani.
\newblock Lattices of intermediate subfactors.
\newblock {\em J. Funct. Anal.}, 140(2):312--334, 1996.
\newblock \href {http://dx.doi.org/10.1006/jfan.1996.0110}
  {\path{doi:10.1006/jfan.1996.0110}}.

\bibitem{Whitman:1946}
Philip~M. Whitman.
\newblock Lattices, equivalence relations, and subgroups.
\newblock {\em Bull. Amer. Math. Soc.}, 52:507--522, 1946.

\end{thebibliography}
\bibliographystyle{plainurl}

\printindex

\end{document}